\newcommand{\R}{\mathbb{R}}
\newcommand{\N}{\mathbb{N}}
\newcommand{\Y}{\mathcal{Y}}
\newcommand{\demi}{\frac{1}{2}}
\newcommand{\eps}{\varepsilon}
\newcommand{\dps}{\displaystyle}
\newcommand{\VKL}{\mathcal{V}_{\rm KL}}
\newcommand{\M}{\mathcal{M}}
\newcommand{\GKL}{\mathcal{G}_{\rm KL}}
\let\div\relax
\DeclareMathOperator{\div}{div}
\DeclareMathOperator{\m}{m}
\let\P\relax
\newcommand{\P}{\mathcal{P}}
\let\O\relax
\newcommand{\O}{\mathcal{O}}
\newcommand{\E}{\mathcal{E}}
\newcommand{\Id}{\text{Id}}
\newtheorem{theorem}{Theorem}[section]
\newtheorem{lemma}[theorem]{Lemma}
\newtheorem{remark}[theorem]{Remark}
\newtheorem{proposition-definition}[theorem]{Proposition-Definition}
\title{Convergence of two-scale expansions for elastic heterogeneous plates}
\author{Virginie Ehrlacher$^{1,3}$, Arthur Leb\'ee$^2$, Fr\'ed\'eric Legoll$^{2,3}$ and Adrien Lesage$^{1,2,3}$
\\
{\footnotesize $^1$ CERMICS, ENPC, Institut Polytechnique de Paris, Marne-la-Vall\'ee, France}
\\
{\footnotesize $^2$ Navier, ENPC, Institut Polytechnique de Paris, Univ Gustave Eiffel, CNRS, Marne-la-Vall\'ee, France}
\\
{\footnotesize $^3$ MATHERIALS project-team, Inria, Paris, France}
\\
{\footnotesize \tt \{virginie.ehrlacher,arthur.lebee,frederic.legoll\}@enpc.fr, adr.lesage@gmail.com}
}
\date{\today}
\begin{document}

\maketitle

\begin{abstract}

%
The aim of this article is to prove strong convergence results on the difference between the solution to highly oscillatory problems posed in thin domains and its two-scale expansion. We first consider the case of the linear diffusion equation and establish such results in arbitrary dimensions, by using a straightforward adaptation of the classical arguments used for the homogenization of highly oscillatory problems posed on fixed (non-thin) domains. We next consider the linear elasticity problem, which raises challenging difficulties in its full generality. Under some classical assumptions on the symmetries of the elasticity tensor, the problem can be split into two independent problems, the {\em membrane} problem and the {\em bending} problem. Focusing on two-dimensional problems, we show that the membrane case can actually be addressed using a careful adaptation of classical arguments. In the bending case, the scheme of the proof used in the membrane and diffusion cases can however not be straightforwardly adapted. In that bending case, we establish the desired strong convergence results by using a different strategy of proof, which seems, up to our knowledge, to be new.
%

%
\end{abstract}

\section{Introduction}




In this article, we consider highly oscillatory problems posed in thin domains of $\mathbb{R}^d$. These problems typically read as
\begin{equation}\label{eq:diff_eq}
- \div \left( \mathcal{A}^\eps \nabla \widetilde{u}^\eps \right) = f \quad \mbox{in} \quad \Omega^\eps,
\end{equation}
where the matrix $\mathcal{A}^\eps$ (which is -- uniformly in $\eps$ -- bounded from below and from above, to ensure ellipticity of the problem and thus its well-posedness) varies at the small characteristic length-scale $\eps$. We concurrently consider two types of PDEs: (i) the diffusion equation~\eqref{eq:diff_eq}, where $\mathcal{A}^\eps$ is a $\mathbb{R}^{d \times d}$ matrix and $\widetilde{u}^\eps$ is scalar-valued and (ii) the linear elasticity problem, which can again be written in the form~\eqref{eq:diff_eq} where $\mathcal{A}^\eps$ is a fourth-order tensor and $\widetilde{u}^\eps$ is vector-valued.

The domain $\Omega^\eps$ is thin, in the sense that its width $\eps$ in the $d$-direction is small. A typical example is when
\begin{equation} \label{eq:thin_domain}
\Omega^\eps = \omega \times \left( -\eps/2, \eps/2\right),
\end{equation}
where $\omega$ is a bounded open subset of $\mathbb{R}^{d-1}$. Note that the width of the domain $\Omega^\eps$ is here equal to the characteristic length-scale of $\mathcal{A}^\eps$. Other choices could have been made, as discussed in Remark~\ref{rem:regime} below. For simplicity, we have assumed in~\eqref{eq:diff_eq} that the right-hand side $f$ does not depend on $\eps$. More general cases are considered below. Of course, Problem~\eqref{eq:diff_eq} should be complemented by appropriate boundary conditions, that are also made precise below. 

\medskip

The question we consider here is to identify the limit of $\widetilde{u}^\eps$ when $\eps$ goes to zero. In the case when the domain $\Omega$ on which the equation is posed actually does not depend on $\eps$, this is a very classical question of homogenization theory (see e.g. the classical textbooks~\cite{bensoussan2011asymptotic,jikov}, \cite{livre_blanc_lebris}, \cite[Chapter~1]{allaire2012shape} and also~\cite{engquist2008asymptotic,le2005systemes}). For a simple diffusive equation such as~\eqref{eq:diff_eq}, and assuming for instance homogeneous Dirichlet boundary conditions on $\partial \Omega$ and periodicity of the matrix $\mathcal{A}^\eps$ (that is $\mathcal{A}^\eps = \mathcal{A}_{\rm per}(\cdot/\epsilon)$ for a fixed periodic matrix $\mathcal{A}_{\rm per}$), it is well-known that $\widetilde{u}^\eps \in H^1_0(\Omega)$ converges to some $\widetilde{u}^\star \in H^1_0(\Omega)$, solution to a homogenized problem of the same form with a diffusion coefficient $\mathcal{A}^\star$ which is constant. The value of $\mathcal{A}^\star$ can easily be computed using the so-called {\em corrector functions}, which are solutions of some auxiliary problems posed over the unit periodic cell. The convergence of $\widetilde{u}^\eps$ to $\widetilde{u}^\star$ is strong in $L^2(\Omega)$ and weak in $H^1(\Omega)$. It is furthermore possible to introduce a two-scale expansion $\widetilde{u}^{\eps,1}$, explicitly built using the homogenized solution $\widetilde{u}^\star$ and the corrector functions, so that the difference $\widetilde{u}^\eps - \widetilde{u}^{\eps,1}$ {\em strongly} converges to 0 in $H^1(\Omega)$ when $\eps \to 0$. Similar results have been obtained for many different equations (besides the simple diffusion equation), and in particular for linear elasticity problems (see e.g.~\cite[Chap.~10]{CD}) of specific interest in this article. Likewise, many other settings have been considered besides the periodic setting (including the quasi-periodic setting, the random stationary setting, \dots).

\medskip

In this article, we consider the situation where the domain $\Omega^\eps$ on which the oscillatory problem is posed actually depends on $\eps$ and is given by~\eqref{eq:thin_domain}. In other words, we study problems posed on plates composed of an heterogeneous medium, where the typical size of the heterogeneities is of the same order as the (small) thickness of the plate (see Figure~\ref{fig:plate1}). For such problems, the homogenized limit of~\eqref{eq:diff_eq}--\eqref{eq:thin_domain} has been identified (both for the diffusion equation and the linear elasticity problem) in various cases, including the stratified case~\cite{gustafsson2003non,gustafsson2006compensated,marohnic2016non} (that is when $\mathcal{A}^\eps$ only depends on $x_d \in (-\eps/2,\eps/2)$), and the case of periodic heterogeneities in the transverse, in-plane directions~\cite{caillerieDiffusion,caillerieElasticite} (that is when $\mathcal{A}^\eps$ is $\eps \mathbb{Z}^{d-1}$-periodic with respect to $(x_1, \dots, x_{d-1}) \in \omega$), to name but a few. We also refer to~\cite{hornung2018stochastic,marohnic2015homogenization,tomasz2000plates} for recent homogenization results on plates with more general heterogeneities. Results have also been obtained for nonlinear problems: we refer e.g. to~\cite{hornung2014derivation,velvcic2015derivation} for nonlinear elasticity models. In all these works, the weak convergence of $\widetilde{u}^\eps$ towards the solution $\widetilde{u}^\star$ to a homogenized problem has been established.

\begin{figure}[htbp]
  \begin{center}
    \includegraphics[scale=0.45,angle=-90]{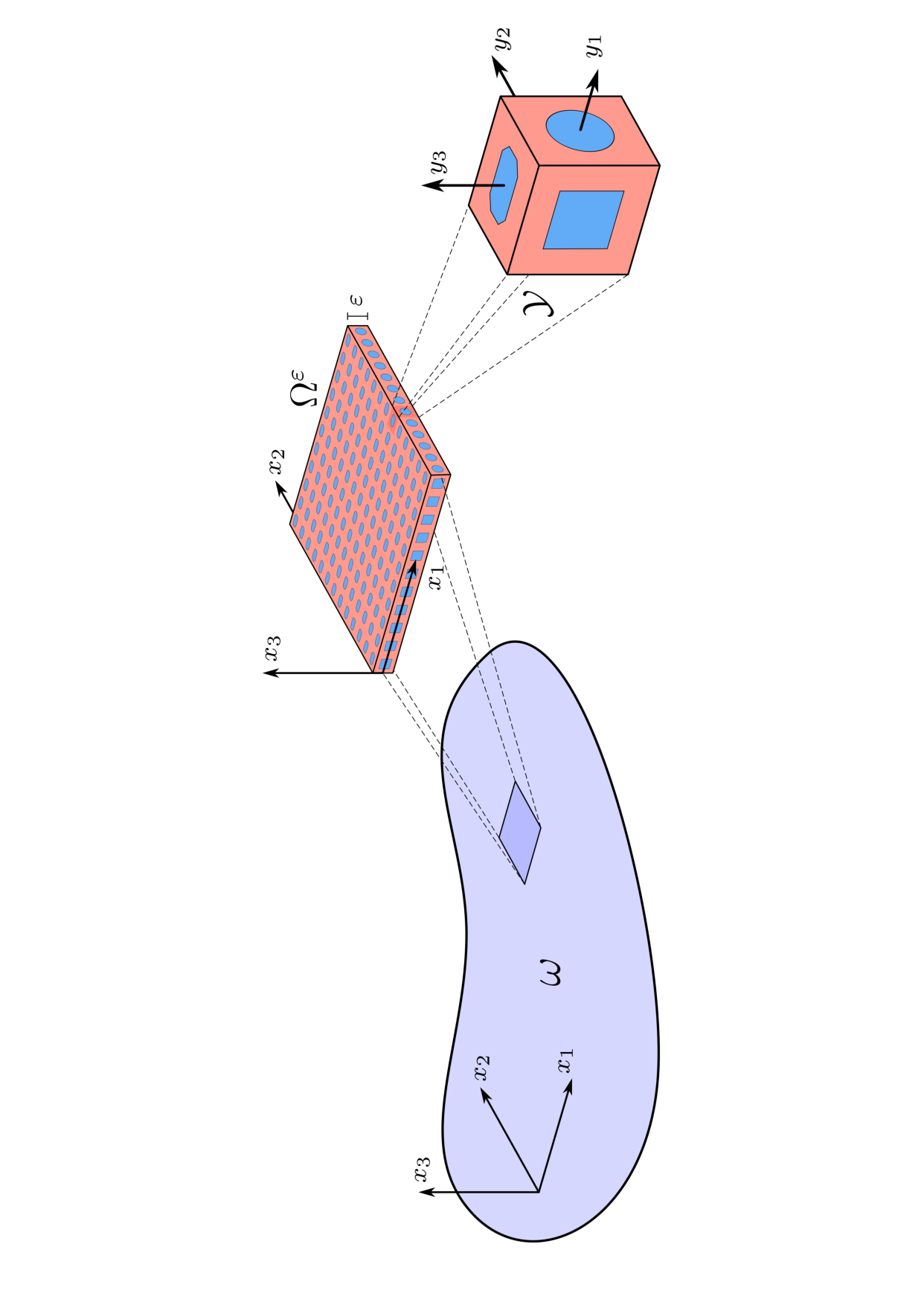}
  \end{center}
  \caption{The plate and its microstructure for $d=3$. \label{fig:plate1}}
\end{figure}

%

Following the general path of homogenization theory, the next step is to obtain a strong convergence (say in $H^1$), that is to build a relevant two-scale expansion $\widetilde{u}^{\eps,1}$ and to prove that the difference $\widetilde{u}^\eps - \widetilde{u}^{\eps,1}$ {\em strongly} converges to 0 in $H^1$ when $\eps \to 0$ (of course, since the domain $\Omega^\eps$ on which the oscillatory equation is posed depends itself on $\eps$, the domain on which the $H^1$ norm is considered should be carefully chosen). Surprisingly, this question has been addressed in very few cases, at least up to our knowledge. In that direction, strong convergence results have been obtained for {\em homogeneous} plates (where the parameter $\eps$ only encodes the small thickness of the computational domain) in~\cite{ciarlet1979justification,dauge,destuynder1981comparaison}. In this article, we focus on the case where the plate has periodic heterogeneities in its in-plane directions, which is a setting similar to the one considered in~\cite{caillerieDiffusion,caillerieElasticite} and for which weak convergence results have been established.

At first sight, it may be thought that such strong convergence results may easily be obtained by extending standard arguments used in the classical case (i.e. when the domain on which the equation is posed does not depend on $\eps$). This is indeed the case for the (scalar-valued) diffusion equation. However, the analysis that we present here shows that this is not always the case for the linear elasticity (vector-valued) problem, and that additional difficulties arise.

\medskip

As the title of this article suggests, we are mainly interested here in the linear elasticity problem. However, to proceed in a pedagogical manner, we first consider the case of the diffusion equation in Section~\ref{sec:diff}. In that first case, strong convergence results (such as Theorem~\ref{thconvforte_diffusion}, our main result in that case) can indeed be obtained, in arbitrary dimensions, by using standard arguments. The situation turns out to be different in the case of linear elasticity, which we address in Section~\ref{sec:elasticity}. We assume there that the mechanical composition of the heterogeneous material is symmetric with respect to its medium plane, which corresponds to assuming that the components of the elasticity tensor are either odd or even functions with respect to $x_d$ (see~\eqref{hyp:symA} below). This assumption is classical in the literature, and is in particular satisfied by isotropic materials. Under this assumption, we distinguish two situations, depending on the symmetries of the loading (i.e. the function $f$ in the right-hand side of the PDE) imposed on the plate: the {\em membrane case} and the {\em bending case}. The membrane case can be analyzed using a careful adaptation of standard arguments used in classical homogenization theory (our main result in that case is Theorem~\ref{thconvforte2}). However, in the bending case, the standard proof does not go through. The analysis of this case turns out to actually require specific arguments, inspired by some ideas present in~\cite{destuynder1981comparaison} to handle homogeneous plates, but the adaptation of which to the heterogeneous case is far from immediate. This alternative strategy of proof is thus new, at least up to our knowledge, and it culminates in Theorem~\ref{thconvforte3_a}, our main result in the bending case.



In both the membrane and the bending cases, we establish the desired strong convergence results only in dimension $d=2$. The restriction of our analysis to the two-dimensional case stems from the fact that we are only able to establish a technical result needed in the proof (namely Lemma~\ref{lemma:minZ2}, a result which generalizes to linear elasticity problems posed on thin domains a well-known result in homogenization theory, see e.g.~\cite[p.~27]{jikov}) when $d=2$. We leave the extension of this technical result in general dimensions to future works.

\medskip

Obtaining a strong convergence result as discussed above is of course interesting from the theoretical viewpoint, since it provides an accurate description of the solution to the oscillatory problem in its natural energy norm. It is also helpful for proving numerical analysis results. In particular, this type of results is a key ingredient to prove error bounds for the Multiscale Finite Element Method (MsFEM). This numerical approach, which is dedicated to approximating the solution to highly oscillatory problems of the type~\eqref{eq:diff_eq} (for a small, but not asymptotically small, scale $\eps$), proceeds by performing a variational approximation of~\eqref{eq:diff_eq} using pre-computed basis functions that are {\em adapted} to the problem (we refer to~\cite{efendiev2009multiscale,livre_blanc_lebris} and references therein, and also to~\cite{lebris_legoll_jcp}). They are indeed solutions to local problems defined using the same differential operator as the problem of interest. Using these problem-specific basis functions, the MsFEM approach yields an accurate approximation of the oscillatory solution using only a limited number of degrees of freedom, in contrast to standard Finite Element approaches (which would request taking a mesh size smaller than $\eps$ in order to provide an accurate solution). In addition, the MsFEM approach is applicable in general situations, and is not limited to the case when the highly oscillatory coefficient of the equation is periodic. In our companion article~\cite{MsFEM-ELLL}, we introduce several variants of the MsFEM approach for the case of elastic heterogeneous plates, and establish error bounds for these. The strong convergence results shown here are pivotal for their numerical analysis. 

%
%
%

\medskip

The article is organized as follows. In Section~\ref{sec:diff}, we consider the diffusion case (i.e. the scalar version of the problem). After introducing the plate problem (in Section~\ref{sec:def_plate_pb_diff}) and recalling the corresponding weak convergence result (namely, Theorem~\ref{limitdiff} in Section~\ref{sec:weak_diff}), we turn in Section~\ref{sec:strong_diff} to establishing a strong convergence result, namely Theorem~\ref{thconvforte_diffusion}. In Section~\ref{sec:elasticity}, we investigate the elasticity case (i.e. the vector version of the problem), and first introduce the plate problem (in Section~\ref{sec:def_plate_pb_elas}) and then recall the corresponding weak convergence result (namely, Theorem~\ref{limitel} in Section~\ref{sec:weakel}). We then explain how to split the problem into a membrane problem and a bending problem in Section~\ref{sec:prelim_el}. Section~\ref{sec:membrane} is devoted to establishing a strong convergence result, namely Theorem~\ref{thconvforte2}, in the membrane case. The bending case is eventually considered in Section~\ref{sec:bending}, which culminates with Theorem~\ref{thconvforte3_a} stating a strong convergence result in that case. We collect in the appendices several technical results, concerning the rescaling of plate problems (in Appendix~\ref{app:scaling}), $H_{\rm div}$ spaces (in Appendix~\ref{app:Hdiv}) and Korn inequalities (in Appendix~\ref{app:korn}). For the sake of completeness, we provide a proof of the homogenization limits (i.e. the weak convergence results) in Appendix~\ref{app:diff}.






\section{The diffusion case}\label{sec:diff}

Let $\omega \subset \mathbb{R}^{d-1}$ be a bounded and smooth domain (where $d \in \N^\star$ is the ambient dimension) and let $\dps \Omega := \omega \times \left( -\frac{1}{2},\frac{1}{2} \right)$. For a small parameter $0 < \eps \leq 1$, we introduce $\dps \Omega^\eps := \omega \times \left( -\frac{\eps}{2},\frac{\eps}{2} \right)$. The domain $\Omega^\eps$ is called a plate because $\eps$ is small compared to the characteristic size of $\omega$ (see Figure~\ref{fig:plate2}). We also denote by $n$ (respectively $n^\eps$) the outward normal unit vector to $\partial \Omega$ (respectively $\partial \Omega^\eps$). 

\smallskip

\begin{figure}[htbp]
  \begin{center}
    \includegraphics[scale=0.6]{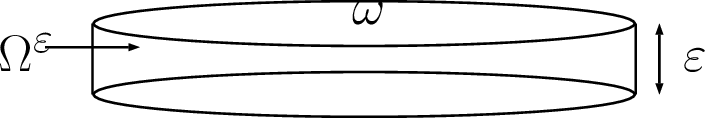}
    \end{center}
  \caption{Schematic representation of the plate $\Omega^\eps$. \label{fig:plate2}}
\end{figure}

Let $(e_i)_{1 \leq i \leq d}$ be the canonical basis of $\mathbb{R}^d$. For any $x=(x_i)_{1 \leq i \leq d} \in \mathbb{R}^d$, we set $x' := (x_i)_{1 \leq i \leq d-1} \in \R^{d-1}$. For any $M := (M_{ij})_{1 \leq i,j \leq d} \in \mathbb{R}^{d \times d}$, we set $M':= (M_{ij})_{1 \leq i,j \leq d-1} \in \mathbb{R}^{(d-1) \times (d-1)}$. The set of $d\times d$ symmetric matrices is denoted by $\mathbb{R}^{d\times d}_s$ and $c_-, c_+ >0$ are some fixed positive constants. We also define the periodic cells
\begin{equation} \label{eq:def_Y_Y}
Y := (0,1)^{d-1} \qquad \text{and} \qquad \mathcal{Y} := Y \times \left( -\frac{1}{2},\frac{1}{2} \right).
\end{equation}
For any $\dps f: \R^{d-1} \times \left(-\demi,\demi \right) \rightarrow \R^p$ and any $x' \in \R^{d-1}$, we denote
\begin{equation} \label{eq:def_m}
\m(f)(x'):= \int_{-1/2}^{1/2} f(x',x_d) \, dx_d
\end{equation}
the mean of $f$ over its last variable.

Throughout the article, we use the Einstein summation convention. Latin letters are used for indices running between $1$ and $d$ and greek letters for indices running between $1$ and $d-1$. Also, in all statements and proofs, $C$ denotes an arbitrary constant independent of $\eps$ and $\omega$.

\subsection{Definition of the plate problem} \label{sec:def_plate_pb_diff}

The notations we introduce now are specific to the diffusion case (i.e. to the current Section~\ref{sec:diff}). We refer to Section~\ref{sec:def_plate_pb_elas} for similar notations for the elasticity case. We denote by $\mathcal{M} \subset \mathbb{R}^{d\times d}_s$ the set of symmetric matrices $M$ such that
$$
\forall \xi \in \mathbb{R}^d, \quad |M \xi | \leq c_+ \, |\xi| \quad \text{and} \quad \xi^T M \xi \geq c_- \, |\xi|^2.
$$
Let $\dps A: \mathbb{R}^{d-1} \times \left( -\frac{1}{2}, \frac{1}{2} \right) \to \mathcal{M}$ be a matrix-valued field such that, for any $\dps x_d \in \left( -\frac{1}{2},\frac{1}{2} \right)$, the function $x' \in \mathbb{R}^{d-1} \mapsto A(x',x_d)$ is $Y$-periodic. For any $x = (x',x_d) \in \Omega$, we set
\begin{equation} \label{eq:utile}
  A^\eps(x) = A \left( \frac{x'}{\eps}, x_d \right).
\end{equation}
In addition, we define $\mathcal{A}^\eps$ by
$$
\forall x \in \Omega^\eps, \quad \mathcal{A}^\eps(x):=A^\eps\left(x', \frac{x_d}{\eps}\right) = A\left( \frac{x'}{\eps}, \frac{x_d}{\eps} \right).
$$
We set
$$
V^\eps := \left\{ v \in H^1(\Omega^\eps), \quad v=0 \text{ on } \partial \omega \times \left( -\frac{\eps}{2},\frac{\eps}{2} \right) \right\}.
$$
A function in $V^\eps$ thus vanishes on the lateral boundary of $\Omega^\eps$ (see Figure~\ref{fig:plate2}).

\medskip

For any $\eps >0$, let $\widetilde{f}^\eps \in L^2(\Omega^\eps)$, $g^\eps \in H^1(\omega)$ and $h^\eps_\pm \in L^2(\omega)$. We consider the following diffusion problem: find $\widetilde{u}^\eps \in V^\eps$ such that
\begin{equation} \label{pb:diff1}
\left\{
\begin{aligned}
-\div (\mathcal{A}^\eps \nabla \widetilde{u}^\eps) &= \widetilde{f}^\eps + \div(\mathcal{A}^\eps \nabla g^\eps) \quad \text{in $\Omega^\eps$},
\\
\mathcal{A}^\eps \nabla \widetilde{u}^\eps \cdot n^\eps &= \eps \, h^\eps_\pm - \mathcal{A}^\eps \nabla g^\eps \cdot n^\eps \quad \text{on $\omega \times \left\{ \pm \frac{\eps}{2} \right\}$}.
\end{aligned}
\right.
\end{equation}
In~\eqref{pb:diff1}, $\widetilde{f}^\eps$ is the load imposed in $\Omega^\eps$. The function $g^\eps$ is inserted as a possible extension of a non-trivial Dirichlet boundary condition (so that $\widetilde{u}^\eps + g^\eps$ does not necessarily vanish on $\partial \omega \times \left( -\eps/2,\eps/2 \right)$). A motivation for considering this general case is the fact that we use these strong convergence results in our companion article~\cite{MsFEM-ELLL} for the numerical analysis of MsFEM approaches, where such general Dirichlet boundary conditions appear. Note that $g^\eps$ does not depend on $x_d$. The function $h^\eps_\pm$ plays the role of a Neumann boundary condition on the top and bottom faces of the plate $\Omega^\eps$.

\begin{remark} \label{rem:regime}
  As is obvious on~\eqref{pb:diff1}, the thickness (denoted $\eta$ in this remark) of the plate is equal to the characteristic length-scale $\eps$ of variations of $\mathcal{A}^\eps$ in the in-plane directions. As pointed out above, different regimes for $\eta$ vs $\eps$ have been considered in the literature (e.g. sending $\eta$ to 0 before or after sending $\eps$ to 0). We refer e.g. to~\cite{caillerieDiffusion,griso2018homogenization,gustafsson2003non} for such studies. 
  The case of plates of rapidly varying thickness has also been considered in the literature, we refer e.g. to~\cite{kohn_vogelius86}.
\end{remark}

Since our goal is to study the asymptotic behaviour of $\widetilde{u}^\eps$ when $\eps$ goes to 0, it is convenient to rescale the problem and recast~\eqref{pb:diff1} as a problem set on $\Omega$, a domain independent of $\eps$ (see Figure~\ref{fig:rescaling}). This is of course a standard step when studying plate problems (see e.g.~\cite[eq.~(3.1)]{caillerieDiffusion}).

\smallskip

\begin{figure}[!htb]
\centering 
\includegraphics[scale=0.8]{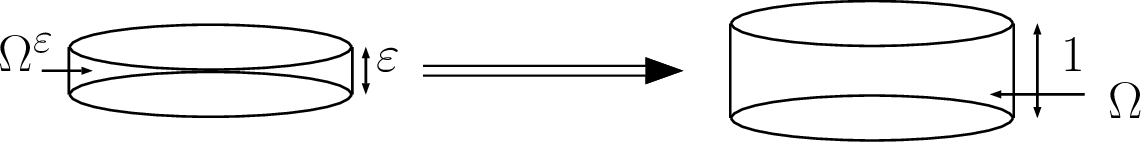}   
\caption{Rescaling of the domain \label{fig:rescaling}}
\end{figure}

\medskip

For any $u \in {\mathcal{D}}'(\R^d)$ and any $T \in ({\mathcal{D}}'(\R^d))^d$, we set
\begin{equation} \label{eq:def_nabla_prime}
\nabla' u = \partial_\alpha u \, e_\alpha = \sum_{\alpha = 1}^{d-1} \partial_\alpha u \, e_\alpha.
\end{equation}
We next introduce scaled operators as
$$
\nabla^\eps u := \partial_\alpha u \, e_\alpha + \frac{1}{\eps} \, \partial_d u \, e_d = \sum_{\alpha = 1}^{d-1} \partial_\alpha u \, e_\alpha + \frac{1}{\eps} \, \partial_d u \, e_d
$$
and
$$
\div^\eps T := \partial_\alpha T_\alpha+ \frac{1}{\eps} \, \partial_d T_d = \sum_{\alpha = 1}^{d-1} \partial_\alpha T_\alpha+ \frac{1}{\eps} \, \partial_d T_d.
$$
By construction, for any $u \in {\mathcal{D}}(\Omega)$ and any $T \in ({\mathcal{D}}(\Omega))^d$, we have $\dps \int_\Omega T \cdot \nabla^\eps u = - \int_\Omega u \, \div^\eps T$. We introduce
\begin{equation} \label{def:V}
V := \left\{ v \in H^1(\Omega), \quad v=0 \text{ on } \partial \omega \times \left( -\frac{1}{2},\frac{1}{2} \right) \right\}.
\end{equation}
It can be easily checked (see Appendix~\ref{app:scaling_scalaire} for details) that problem~\eqref{pb:diff1} is equivalent to finding $u^\eps \in V$ such that
\begin{equation} \label{pb:diff2}
\left\{
\begin{aligned}
  -\div^\eps (A^\eps \nabla^\eps u^\eps) &= f^\eps + \div^\eps(A^\eps \nabla^\eps g^\eps) \quad \text{in $\Omega$},
  \\
  A^\eps \nabla^\eps u^\eps \cdot n &= \eps \, h^\eps_\pm - A^\eps \nabla^\eps g^\eps \cdot n \quad \text{on $\omega \times \left\{ \pm \frac{1}{2} \right\}$}, 
\end{aligned}
\right.
\end{equation}
where $A^\eps$ is given by~\eqref{eq:utile} and where, for any $x = (x',x_d) \in \Omega$,
\begin{equation} \label{eq:scaling_u_scal}
u^\eps(x) = \widetilde{u}^\eps(x',\eps \, x_d) \qquad \text{and} \qquad f^\eps(x) = \widetilde{f}^\eps(x',\eps \, x_d).
\end{equation}
Note that, since $g^\eps$ does not depend on $x_d$, the same function appears in~\eqref{pb:diff1} and~\eqref{pb:diff2}. The scaled operator $\nabla^\eps$ is defined in such a way that $(\nabla^\eps u^\eps)(x) = (\nabla \widetilde{u}^\eps)(x',\eps \, x_d)$.

\medskip

The variational formulation of~\eqref{pb:diff2} reads as follows: find $u^\eps \in V$ such that
\begin{equation} \label{var:diff}
\forall v \in V, \quad a^\eps(u^\eps,v) = b^\eps(v), 
\end{equation}
where, for all $u,v \in V$, 
\begin{equation}\label{eq:aeps}
a^\eps(u,v) := \int_\Omega A^\eps \nabla^\eps u \cdot \nabla^\eps v
\end{equation}
and
\begin{equation}\label{eq:beps}
b^\eps(v) := \int_\Omega f^\eps \, v - \int_\Omega A^\eps \nabla^\eps g^\eps \cdot \nabla^\eps v + \int_\omega h^\eps_+ \, v\left(\cdot,\frac{1}{2}\right) + \int_\omega h^\eps_- \, v\left(\cdot,-\frac{1}{2}\right).
\end{equation}
The coercivity of the bilinear form $a^\eps$ (with a constant which is uniform with respect to $\eps$) is an easy consequence of the following Poincaré inequality and of the fact that $\| \nabla u \|_{L^2(\Omega)} \leq \| \nabla^\eps u \|_{L^2(\Omega)}$ for any $u \in V$ (since we have assumed that $0 < \eps \leq 1$). Using the Lax-Milgram theorem, we deduce that there exists a unique solution to~\eqref{var:diff}. 

\begin{lemma} \label{lemma:poincare}
Let $V$ be defined by~\eqref{def:V}. There exists a constant $C(\Omega)>0$ such that
$$
\forall u \in V, \quad \| u \|_{L^2(\Omega)} \leq C(\Omega) \, \| \nabla u \|_{L^2(\Omega)}.
$$
\end{lemma}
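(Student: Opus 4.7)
The plan is to apply the classical Poincaré inequality slice by slice in the transverse variable. The key observation is that the boundary condition built into $V$ concerns only the lateral boundary $\partial\omega \times (-1/2,1/2)$; this is exactly the condition one needs to turn each in-plane slice into an $H^1_0(\omega)$ function.

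First I would invoke the trace / Fubini-type regularity of $H^1(\Omega)$ functions: for $u \in V$, the slice $u(\cdot,x_d)$ belongs to $H^1(\omega)$ for almost every $x_d \in (-1/2,1/2)$, with
\[
\int_{-1/2}^{1/2} \|u(\cdot,x_d)\|_{H^1(\omega)}^2 \, dx_d \;\le\; \|u\|_{H^1(\Omega)}^2.
\]
The vanishing of $u$ on the whole lateral boundary $\partial\omega \times (-1/2,1/2)$ (together with the smoothness of $\omega$, so that the lateral boundary is Lipschitz) then implies, by the characterization of $H^1_0(\omega)$ via vanishing traces, that $u(\cdot,x_d) \in H^1_0(\omega)$ for almost every $x_d$.

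Next I would apply the standard Poincaré inequality on the bounded smooth domain $\omega \subset \mathbb{R}^{d-1}$: there exists $C(\omega)>0$ such that, for almost every $x_d$,
\[
\|u(\cdot,x_d)\|_{L^2(\omega)}^2 \;\le\; C(\omega) \, \|\nabla' u(\cdot,x_d)\|_{L^2(\omega)}^2,
\]
where $\nabla'$ is the in-plane gradient defined in~\eqref{eq:def_nabla_prime}. Integrating this inequality over $x_d \in (-1/2,1/2)$ and using Fubini's theorem yields
\[
\|u\|_{L^2(\Omega)}^2 \;\le\; C(\omega) \, \|\nabla' u\|_{L^2(\Omega)}^2 \;\le\; C(\omega)\, \|\nabla u\|_{L^2(\Omega)}^2,
\]
which is the claimed inequality with $C(\Omega) = \sqrt{C(\omega)}$.

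There is no real obstacle here: the only subtlety worth flagging is the justification that elements of $V$ do trace to $H^1_0(\omega)$ on almost every horizontal slice, which follows from standard trace theory and the fact that $\partial\omega \times (-1/2,1/2)$ has positive $(d-1)$-dimensional Hausdorff measure on each slice boundary. Everything else reduces to the classical Poincaré inequality on $\omega$ combined with Fubini.
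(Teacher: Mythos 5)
Your proof is correct. The paper does not actually supply a proof of this lemma (it is invoked as a classical Poincar\'e inequality, and the only related argument in the text is the scaling proof of Lemma~\ref{poincarebis}, which itself takes the unit-cell inequality~\eqref{eq:toto20} for granted), so there is nothing to compare against; your slice-by-slice route is the natural way to justify it. The one step you rightly flag --- that $u(\cdot,x_d)\in H^1_0(\omega)$ for a.e.\ $x_d$ --- is most cleanly closed by density rather than by trace theory on slices: approximate $u$ in $H^1(\Omega)$ by smooth functions vanishing in a neighbourhood of $\partial\omega\times[-1/2,1/2]$ (or, equivalently, extend $u$ by zero in the in-plane directions and note the extension stays in $H^1$), pass to a subsequence whose slices converge in $H^1(\omega)$ for a.e.\ $x_d$, and use that $H^1_0(\omega)$ is closed. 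With that in place the rest is exactly the classical Poincar\'e inequality on $\omega$ plus Fubini. As a small bonus, your argument gives the slightly stronger bound $\|u\|_{L^2(\Omega)}\leq C\,\|\nabla' u\|_{L^2(\Omega)}$ involving only the in-plane gradient, which is precisely the feature exploited later in the scaling proof of Lemma~\ref{poincarebis}.
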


\subsection{Homogenization result: weak convergence} \label{sec:weak_diff}


We first establish a priori bounds on $u^\eps$. Taking $v = u^\eps$ in~\eqref{var:diff}, using the fact that $g^\eps$ is independent of $x_d$ (so that $\nabla^\eps g^\eps = \nabla g^\eps$) for the second term of $b^\eps$ and a trace inequality for the last term, we get
$$
c_- \| \nabla^\eps u^\eps \|^2_{L^2(\Omega)} \leq \| f^\eps \|_{L^2(\Omega)} \| u^\eps \|_{L^2(\Omega)} + c_+ \| g^\eps \|_{H^1(\omega)} \| \nabla^\eps u^\eps \|_{L^2(\Omega)} + C \|h^\eps_\pm \|_{L^2(\omega)} \|u^\eps \|_{H^1(\Omega)}.
$$
Using Lemma~\ref{lemma:poincare} and next the fact that $\| \nabla u^\eps \|_{L^2(\Omega)} \leq \| \nabla^\eps u^\eps \|_{L^2(\Omega)}$, we deduce that
\begin{align*}
  c_- \| \nabla^\eps u^\eps \|^2_{L^2(\Omega)}
  &\leq
  C \| f^\eps \|_{L^2(\Omega)} \| \nabla u^\eps \|_{L^2(\Omega)} + c_+ \| g^\eps \|_{H^1(\omega)} \| \nabla^\eps u^\eps \|_{L^2(\Omega)} + C \|h^\eps_\pm \|_{L^2(\omega)} \| \nabla u^\eps \|_{L^2(\Omega)}
  \\
  &\leq
  C \| f^\eps \|_{L^2(\Omega)} \| \nabla^\eps u^\eps \|_{L^2(\Omega)} + c_+ \| g^\eps \|_{H^1(\omega)} \| \nabla^\eps u^\eps \|_{L^2(\Omega)} + C \|h^\eps_\pm \|_{L^2(\omega)} \| \nabla^\eps u^\eps \|_{L^2(\Omega)},
\end{align*}
hence
\begin{equation} \label{bornesigma}
  \| \nabla^\eps u^\eps \|_{L^2(\Omega)} \leq C \left( \|f^\eps \|_{L^2(\Omega)} + \|g^\eps\|_{H^1(\omega)} + \|h^\eps_\pm \|_{L^2(\omega)} \right),
\end{equation}
for some constant $C$ independent of $\eps$. Using again Lemma~\ref{lemma:poincare}, this implies that
\begin{equation} \label{borneu}
  \| u^\eps \|_{H^1(\Omega)} \leq C \left( \|f^\eps \|_{L^2(\Omega)} + \|g^\eps\|_{H^1(\omega)} + \|h^\eps_\pm \|_{L^2(\omega)} \right),
\end{equation}
for some constant $C$ independent of $\eps$.

\medskip

From now on in Section~\ref{sec:diff}, we assume that there exist $f \in L^2(\Omega)$, $g \in H^1(\omega)$ and $h_\pm \in L^2(\omega)$ such that, for all $\eps >0$, 
\begin{equation} \label{eq:inde_eps}
f^\eps = f, \qquad g^\eps = g \qquad \mbox{ and } \qquad h_\pm^\eps = h_\pm.
\end{equation}

\begin{remark} \label{rem:hyp_f_ind_eps}
Note that, in Theorem~\ref{limitdiff} below, it would be sufficient to assume that the sequence $(f^\eps)_{\eps >0}$ (respectively $(h_\pm^\eps)_{\eps >0}$) weakly converges to $f$ (respectively $h_\pm$) in $L^2(\Omega)$ (respectively in $L^2(\omega)$).
\end{remark}

Under Assumption~\eqref{eq:inde_eps}, we infer from~\eqref{borneu} that there exists $u^\star \in V$ such that, up to the extraction of a subsequence,
$$
u^\eps \underset{\eps \rightarrow 0}{\rightharpoonup} u^\star \text{ weakly in } {H^1(\Omega)}.
$$

\medskip

We now recall the well-known homogenization result of~\cite{caillerieDiffusion}. To that aim, we introduce the corrector functions associated to the problem. Let
$$
\mathcal{W}(\mathcal{Y}) := \left\{ v \in H^1_{\rm loc}\left(\mathbb{R}^{d-1} \times \left( -\frac{1}{2},\frac{1}{2} \right)\right), \forall z \in \left( -\frac{1}{2},\frac{1}{2} \right), \, v(\cdot,z) \text{ is $Y$-periodic and } \int_{\mathcal{Y}} v = 0 \right\}.
$$ 
For all $1\leq \alpha \leq d-1$, let $w^\alpha \in \mathcal{W}(\mathcal{Y})$ be the unique solution to
\begin{equation} \label{prcor1} 
\forall v\in\mathcal{W}(\mathcal{Y}), \qquad \int_{\mathcal{Y}} A (\nabla w^\alpha + e_\alpha) \cdot \nabla v = 0.
\end{equation}
The function $w^\alpha \in \mathcal{W}(\mathcal{Y})$ is equivalently the unique solution in $\mathcal{W}(\mathcal{Y})$ to
\begin{equation} \label{prcor2}
\left\{
\begin{aligned}
-\div A(\nabla w^\alpha + e_\alpha) &= 0 \quad \text{ in } \mathcal{Y}, \\
A(\nabla w^\alpha + e_\alpha) \cdot e_d &= 0 \quad \text{ on } \mathcal{Y}_+ \cup \mathcal{Y}_-,
\end{aligned}
\right.
\end{equation}
where $\dps \mathcal{Y}_\pm := Y \times \left\{ \pm \frac{1}{2} \right\}$ is the top (resp. bottom) face of $\mathcal{Y}$. We are now in position to state the homogenization theorem for the plate problem (recall that the notations $\nabla'$ and $\m(\cdot)$ used below are defined by~\eqref{eq:def_nabla_prime} and~\eqref{eq:def_m}, respectively). 

\begin{theorem}[from Theorem~8.1 of~\cite{caillerieDiffusion}] \label{limitdiff}
Under Assumption~\eqref{eq:inde_eps}, the sequence $(u^\eps)_{\eps >0}$ of solutions to~\eqref{var:diff} weakly converges to $u^\star$ in $H^1(\Omega)$ as $\eps$ goes to $0$, where the function $u^\star$ does not depend on $x_d$, belongs to $H^1_0(\omega)$ and is the unique solution to
\begin{equation} \label{eq:fv_star}
\forall \phi \in H^1_0(\omega), \quad \int_\omega A^\star \nabla' u^\star \cdot \nabla'\phi = \int_\omega \big( \m(f) + h_+ + h_- \big) \, \phi - \int_\omega A^\star \nabla' g \cdot \nabla' \phi,
\end{equation}
where $A^\star:=(A^\star_{\alpha \beta})_{1 \leq \alpha, \beta \leq d-1}$ is the homogenized matrix defined by
$$
A^\star_{\alpha \beta} := \int_{\mathcal{Y}} A (\nabla w^\alpha + e_\alpha) \cdot (\nabla w^\beta + e_\beta),
$$
where $w^\alpha$ is the solution to~\eqref{prcor2}. By construction, $A^\star$ is symmetric and coercive, and we also have $\dps A^\star_{\alpha \beta} = \int_{\mathcal{Y}} A (\nabla w^\alpha + e_\alpha) \cdot e_\beta$.
\end{theorem}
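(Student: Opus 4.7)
The plan is to combine the a priori estimate already derived with a two-scale convergence argument tailored to the thin-plate geometry. From \eqref{bornesigma}, the sequence $\nabla^\eps u^\eps$ is bounded in $L^2(\Omega)$, and by the definition of $\nabla^\eps$ this yields in particular $\| \partial_d u^\eps \|_{L^2(\Omega)} \leq C \eps$. Along any subsequence for which $u^\eps \rightharpoonup u^\star$ weakly in $H^1(\Omega)$, one therefore obtains $\partial_d u^\star = 0$, so $u^\star$ is independent of $x_d$; combined with the vanishing of $u^\eps$ on $\partial \omega \times (-1/2, 1/2)$, this places $u^\star$ in $H^1_0(\omega)$.

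To capture the in-plane oscillations while accommodating the transverse rescaling built into $\nabla^\eps$, I would apply two-scale convergence with respect to the cell $\mathcal{Y} = Y \times (-1/2, 1/2)$, treating the first $d-1$ microcoordinates $y \in Y$ as periodic while keeping $x_d \in (-1/2, 1/2)$ as the transverse microcoordinate of $\mathcal{Y}$. Adapting the classical Nguetseng--Allaire argument to this hybrid setting yields, up to a further extraction, a corrector $u_1 \in L^2(\omega; \mathcal{W}(\mathcal{Y}))$ such that $\nabla^\eps u^\eps$ two-scale converges to $\nabla' u^\star(x') + \nabla u_1(x', y, x_d)$, where $\nabla u_1$ denotes the gradient with respect to the microscopic arguments $(y, x_d) \in \mathcal{Y}$.

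To pass to the limit in \eqref{var:diff}, I would test against oscillating functions of the form $v^\eps(x) := \phi(x') + \eps\, \psi(x')\, \theta(x'/\eps, x_d)$ with $\phi, \psi \in \mathcal{D}(\omega)$ and $\theta \in \mathcal{W}(\mathcal{Y})$, for which a direct computation yields $\nabla^\eps v^\eps \to \nabla'\phi(x') + \psi(x')\, \nabla\theta(y, x_d)$ two-scale. Since $f$, $g$, $h_\pm$ are independent of $\eps$ and $g$ is independent of $x_d$, the identity $a^\eps(u^\eps, v^\eps) = b^\eps(v^\eps)$ passes to the limit and produces a coupled macro/micro identity on $\omega \times \mathcal{Y}$. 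Taking $\phi \equiv 0$ isolates a cell relation which, by comparison with \eqref{prcor1} and linearity, is solved by
\begin{equation*}
u_1(x', y, x_d) = \big(\partial_\alpha u^\star(x') + \partial_\alpha g(x')\big)\, w^\alpha(y, x_d).
\end{equation*}
Substituting this back with $\psi \equiv 0$ and using the definition of $A^\star$ produces exactly \eqref{eq:fv_star}; coercivity of $A^\star$ (inherited from that of $A$) together with Lax--Milgram then yields uniqueness of $u^\star$, hence convergence of the full sequence and not merely of a subsequence.

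The main obstacle is to set up the two-scale convergence framework correctly in this thin-plate setting: only the in-plane micro-coordinate $y$ is periodic, while $x_d$ plays the role of a bounded but non-periodic micro-coordinate with two free faces $\mathcal{Y}_\pm$, so admissible test functions and limit correctors must live in $\mathcal{W}(\mathcal{Y})$ rather than in a fully periodic space. This is precisely what produces the Neumann condition on $\mathcal{Y}_\pm$ in \eqref{prcor2}, and it must be tracked consistently through each choice of oscillating test function and each identification of two-scale limits.
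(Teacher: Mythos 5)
Your argument is correct in outline, but it follows a genuinely different route from the paper. The paper proves Theorem~\ref{limitdiff} (in Appendix~\ref{app:diff_preuve1}) by the oscillating test function (Tartar energy) method: the correctors are built into the test function $v = \phi + \eps\, w^\alpha(\cdot/\eps,x_d)\,\partial_\alpha\phi$ from the start, one integrates by parts, uses that $Z_\alpha = A(\nabla w^\alpha + e_\alpha)$ is divergence-free with $Z_\alpha\cdot e_d=0$ on $\mathcal{Y}_\pm$ to kill the boundary terms on $\Gamma_\pm$, and concludes with Rellich plus the weak convergence of periodic functions to their cell average (Lemma~\ref{limmoyenne}); no two-scale compactness is needed. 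You instead invoke a hybrid two-scale compactness theorem asserting that $\nabla^\eps u^\eps$ two-scale converges to $\nabla' u^\star + \nabla_{(y,x_d)} u_1$ with $u_1 \in L^2(\omega;\mathcal{W}(\mathcal{Y}))$, where the same $u_1$ governs both the in-plane limits of $\partial_\alpha u^\eps$ and the limit of $\eps^{-1}\partial_d u^\eps$. That statement is the real content of your approach and you assert rather than prove it; it is not the standard Nguetseng--Allaire lemma but its adaptation to simultaneous homogenization and dimension reduction, and establishing it (in particular the identification of the transverse component of the limit as $\partial_{x_d}u_1$ for the \emph{same} corrector $u_1$) requires a nontrivial duality argument. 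Granting that lemma, the rest of your proof is sound: the choice of $\mathcal{W}(\mathcal{Y})$ with no constraint on $\mathcal{Y}_\pm$ correctly encodes the Neumann condition of~\eqref{prcor2}, the $\phi\equiv 0$ test yields the cell problem and hence $u_1 = \partial_\alpha(u^\star+g)\,w^\alpha$, and the $\psi\equiv 0$ test yields~\eqref{eq:fv_star}. What each approach buys: the paper's method is more elementary and self-contained but requires guessing the correctors beforehand and carefully tracking the boundary terms on $\Gamma_\pm$; yours derives the cell problem rather than postulating it and produces the corrector $u_1$ as part of the limit (convenient for the subsequent strong convergence analysis), at the price of the compactness machinery. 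You should also supply the short Jensen-inequality argument for the coercivity of $A^\star$ (as in Step~7 of Appendix~\ref{app:diff_preuve2_memb}), since ``inherited from that of $A$'' hides the only place where the periodicity of $w^\alpha$ in the in-plane variables is used.
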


We note that the homogenized problem~\eqref{eq:fv_star} is again a diffusion type problem, posed on $\omega$, and with a constant diffusion coefficient. For the sake of completeness, we provide a proof of this result in Appendix~\ref{app:diff_preuve1}, using the method of the oscillating test function. 

\subsection{Strong convergence of the two-scale expansion} \label{sec:strong_diff}

From now on, we assume that the domain $\omega$ satisfies the following ``shape regularity'' assumption. Let $\rho_0$ be the diameter of the largest ball included in $\omega$. We assume that
\begin{equation} \label{eq:shape_regul}
  \frac{\text{diam $\omega$}}{\rho_0} \leq \eta,
\end{equation}
for some constant $\eta$. The motivation for this assumption is explained below Theorem~\ref{thconvforte_diffusion}.

\medskip

For any function $u$ of $H^1(\Omega)$, we define the norm
\begin{equation}\label{eq:defnorm}
\| u \|_{H^1_\eps(\Omega)} := \sqrt{ \frac{\| u \|^2_{L^2(\Omega)}}{\left[ \max\left(\eps,|\omega|^{\frac{1}{d-1}}\right) \right]^2} + \| \nabla^\eps u \|^2_{L^2(\Omega)} }.
\end{equation}
The second term $\| \nabla^\eps u \|_{L^2(\Omega)}$ is indeed the relevant energy norm for~\eqref{var:diff}, and the scaling of the first term is motivated by Lemma~\ref{poincarebis} below. 

\medskip

The aim of this section is to prove that the classical two-scale expansion, built upon the homogenized solution and the correctors, yields an approximation of $u^\eps$ which is converging in the $H^1_\eps(\Omega)$ norm as $\eps$ goes to $0$. More precisely, we are now in position to state our main result for the diffusion problem.

\begin{theorem} \label{thconvforte_diffusion}
Under Assumption~\eqref{eq:inde_eps}, consider the solution $u^\eps$ to~\eqref{var:diff} and its homogenized limit $u^\star$, solution to~\eqref{eq:fv_star}. Assume that $\omega$ satisfies~\eqref{eq:shape_regul}, that $\nabla (u^\star + g) \in (W^{1,\infty}(\omega))^d$ and that, for any $1 \leq \alpha \leq d-1$, the corrector $w^\alpha$ defined by~\eqref{prcor2} satisfies $\dps w^\alpha \in W^{1,\infty}\left(\mathbb{R}^{d-1} \times \left( -\demi, \demi \right)\right)$. Then, introducing the two-scale expansion
$$
u^{\eps,1}(x) := u^\star(x') + \eps \, w^\alpha \left(\frac{x'}{\eps},x_d\right) \partial_\alpha (u^\star+g)(x')
$$
for any $x=(x',x_d) \in \Omega$, it holds that
\begin{multline} \label{eq:titi7}
  \| u^\eps - u^{\eps,1} \|_{H^1_\eps(\Omega)} \leq C \, \sqrt{\eps} \, \Big( |\omega|^{\frac{d-2}{2(d-1)}} \, \|\nabla (u^\star+g) \|_{L^\infty(\omega)} \\ + \sqrt{\eps} \, |\omega|^{1/2} \, \| \nabla^2 (u^\star+g) \|_{L^\infty(\omega)} + \sqrt{\eps} \, \|h_\pm \|_{L^2(\omega)} + \sqrt{\eps} \, \| f \|_{L^2(\Omega)} \Big) 
\end{multline}
for some constant $C>0$ independent of $\eps$, $\omega$ (but depending on the constant $\eta$ of~\eqref{eq:shape_regul}), $u^\star$, $f$, $g$ and $h_\pm$. 
\end{theorem}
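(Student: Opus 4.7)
The plan is to adapt the classical energy estimate of periodic homogenization, with the key modification that the corrector must be truncated in a thin neighborhood of the lateral boundary $\partial\omega \times (-\tfrac{1}{2}, \tfrac{1}{2})$ so as to yield an admissible test function in $V$. I fix a smooth cutoff $\theta^\eps : \overline{\omega} \to [0,1]$ which vanishes on $\partial \omega$, equals $1$ outside an $\eps$-tubular neighborhood of $\partial \omega$, and satisfies $\|\nabla' \theta^\eps\|_{L^\infty(\omega)} \leq C/\eps$. Writing $u^\sharp := u^\star + g$, I introduce the truncated two-scale expansion
\begin{equation*}
\widehat u^{\eps,1}(x) := u^\star(x') + \eps\, \theta^\eps(x')\, w^\alpha\!\left(\frac{x'}{\eps}, x_d\right) \partial_\alpha u^\sharp(x'),
\end{equation*}
so that $\widehat r^\eps := u^\eps - \widehat u^{\eps,1}$ belongs to $V$. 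By the triangle inequality,
\begin{equation*}
\|u^\eps - u^{\eps,1}\|_{H^1_\eps(\Omega)} \leq \|\widehat r^\eps\|_{H^1_\eps(\Omega)} + \|\widehat u^{\eps,1} - u^{\eps,1}\|_{H^1_\eps(\Omega)},
\end{equation*}
and the second term, supported in a lateral boundary layer of volume $\leq C\eps\, |\partial \omega|$, is directly controlled using the $L^\infty$ bounds on $w^\alpha$, $\nabla w^\alpha$ and $\nabla u^\sharp$, combined with the shape-regularity estimate $|\partial \omega| \leq C(\eta)\, |\omega|^{(d-2)/(d-1)}$ afforded by~\eqref{eq:shape_regul}. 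This already produces a contribution of the form of the first term on the right-hand side of~\eqref{eq:titi7}.

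The heart of the argument is the estimate on $\|\nabla^\eps \widehat r^\eps\|_{L^2(\Omega)}$. Uniform coercivity of $a^\eps$ gives $c_- \|\nabla^\eps \widehat r^\eps\|^2_{L^2(\Omega)} \leq b^\eps(\widehat r^\eps) - a^\eps(\widehat u^{\eps,1}, \widehat r^\eps)$. A direct computation yields, up to a remainder $\Sigma^\eps$ gathering the interior term $\eps\, \theta^\eps w^\alpha \nabla' \partial_\alpha u^\sharp$ and the boundary-layer term $\eps\, (\nabla' \theta^\eps)\, w^\alpha \partial_\alpha u^\sharp$, the expansion
\begin{equation*}
\nabla^\eps \widehat u^{\eps,1}(x) = (1 - \theta^\eps(x'))\, \nabla' u^\sharp(x') - \nabla' g(x') + \theta^\eps(x')\, \partial_\beta u^\sharp(x')\, \big[\nabla w^\beta + e_\beta\big]\!\left(\frac{x'}{\eps}, x_d\right) + \Sigma^\eps(x).
\end{equation*}
Substituting this into $a^\eps(\widehat u^{\eps,1}, \widehat r^\eps)$, I appeal to the corrector problem~\eqref{prcor2}: its Neumann condition on $\mathcal{Y}_\pm$ is essential, as it matches the top and bottom faces $\omega \times \left\{\pm \tfrac{1}{2}\right\}$ of the plate and ensures that the cell-average of $A(\nabla w^\beta + e_\beta)$ is exactly $A^\star_{\alpha\beta} e_\alpha$ (the vanishing of the $e_d$-component being obtained by testing the corrector equation against $x_d$ and integrating by parts, using precisely the Neumann condition). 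Testing the homogenized formulation~\eqref{eq:fv_star} against $\m(\widehat r^\eps) \in H^1_0(\omega)$ then cancels the leading-order contribution.

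The surviving residuals split into four pieces reproducing exactly the four terms of~\eqref{eq:titi7}. The interior $\eps$-terms in $\Sigma^\eps$ give, via Cauchy--Schwarz, a contribution of order $\eps\, |\omega|^{1/2}\, \|\nabla^2 u^\sharp\|_{L^\infty}$. The boundary-layer terms, where $|\nabla' \theta^\eps| \leq C/\eps$ on a set of volume $\leq C \eps\, |\partial\omega|$, produce $\sqrt{\eps}\, |\omega|^{(d-2)/(2(d-1))}\, \|\nabla u^\sharp\|_{L^\infty}$ after using~\eqref{eq:shape_regul}. The discrepancies between $\widehat r^\eps$ and $\m(\widehat r^\eps)$ (appearing when identifying terms with the homogenized equation) or between $\widehat r^\eps(\cdot, \pm \tfrac{1}{2})$ and $\m(\widehat r^\eps)$ are controlled by the transverse Poincaré estimate $\|\widehat r^\eps - \m(\widehat r^\eps)\|_{L^2(\Omega)} \leq C \eps\, \|\nabla^\eps \widehat r^\eps\|_{L^2(\Omega)}$ together with an analogous trace bound, producing the factors $\eps\, \|f\|_{L^2(\Omega)}$ and $\eps\, \|h_\pm\|_{L^2(\omega)}$. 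Absorbing the resulting $\|\nabla^\eps \widehat r^\eps\|_{L^2}$ prefactors from the right-hand side closes the bound on $\|\nabla^\eps \widehat r^\eps\|_{L^2(\Omega)}$, and the $L^2$ part of the $H^1_\eps(\Omega)$ norm is finally recovered via the weighted Poincaré inequality of Lemma~\ref{poincarebis}, whose weight $\max(\eps, |\omega|^{1/(d-1)})$ in~\eqref{eq:defnorm} is tailored precisely for this.

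The main obstacle I anticipate is the careful bookkeeping of the $|\omega|$-dependence in the boundary-layer estimate, which is what forces the shape regularity assumption~\eqref{eq:shape_regul}, together with the fact that the corrector $w^\alpha$ depends both on the fast variable $x'/\eps$ and on the transverse coordinate $x_d$; the classical cancellations between the corrector field and the homogenized equation must therefore be performed on the full cell $\mathcal{Y}$, and they rely crucially on the Neumann condition in~\eqref{prcor2} to match the plate's top and bottom faces in the integration by parts.
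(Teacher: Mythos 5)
Your overall architecture coincides with the paper's: truncate the corrector near the lateral boundary with a cutoff satisfying $\eps\|\nabla\tau_\eps\|_{L^\infty}\leq C$, control the boundary layer of volume $C\,\eps\,|\omega|^{(d-2)/(d-1)}$, run the coercivity estimate on the remainder, use the homogenized formulation tested against $\m(\widehat r^\eps)\in H^1_0(\omega)$, and absorb the $f$- and $h_\pm$-discrepancies via the transverse Poincar\'e--Wirtinger estimate (Lemma~\ref{potmoy}) before concluding with the weighted Poincar\'e inequality of Lemma~\ref{poincarebis}. All of that is correct and matches Steps~1, 3 and~4 of the paper's proof.

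However, there is a genuine gap at the center of Step~2. After you substitute the expansion of $\nabla^\eps\widehat u^{\eps,1}$ and invoke the homogenized equation, the term that survives is not only your $\Sigma^\eps$ and the $\m$-discrepancies: you are left with
\begin{equation*}
\int_\Omega \left( \left[ \int_{\mathcal{Y}} A\,(e_\alpha+\nabla w^\alpha) \right] - A\Bigl(\tfrac{x'}{\eps},x_d\Bigr)\Bigl(e_\alpha+\nabla w^\alpha\Bigl(\tfrac{x'}{\eps},x_d\Bigr)\Bigr) \right) \partial_\alpha(u^\star+g)(x')\cdot\nabla^\eps\widehat r^\eps(x)\,dx,
\end{equation*}
i.e.\ the oscillating flux minus its cell average, paired against $\nabla^\eps\widehat r^\eps$. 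This term is \emph{not} cancelled by testing~\eqref{eq:fv_star} against $\m(\widehat r^\eps)$ (that step only replaces the cell-averaged flux by the data); it is merely mean-zero over $\mathcal{Y}$, so a Cauchy--Schwarz bound gives $O(1)$, not $O(\eps)$, and the estimate does not close. The paper handles exactly this term with Lemma~\ref{minZ}: because $Z_\alpha$ is $Y$-periodic, divergence-free, has zero average over $\mathcal{Y}$ \emph{and} vanishing normal trace $Z_\alpha\cdot e_d$ on $\mathcal{Y}_\pm$ (this last point being the main structural use of the Neumann condition in~\eqref{prcor2}, beyond the identity~\eqref{eq:titi6} you do mention), it can be written as $\eps$ times the scaled divergence of a skew-symmetric periodic potential $J$ (Lemma~\ref{lemmajikov}), and an integration by parts then produces the factor $\eps\,|\omega|^{1/2}\,\|\nabla^2(u^\star+g)\|_{L^\infty(\omega)}$ in~\eqref{eq:titi4}. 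Your list of ``surviving residuals'' omits this term entirely, and without the skew-symmetric potential argument (or an equivalent flux-corrector construction) the rate $\sqrt{\eps}$ in~\eqref{eq:titi7} cannot be obtained.
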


Up to lower order terms, we have
$$
\big( \nabla^\eps (u^{\eps,1}+g) \big)(x) \approx \nabla' (u^\star+g)(x') + (\nabla w)\left(\frac{x'}{\eps},x_d\right) \nabla'(u^\star+g)(x'),
$$
where the matrix $\nabla w$ is defined, for any $1 \leq i \leq d$ and $1 \leq \alpha \leq d-1$, by $(\nabla w)_{i \alpha} = \partial_i w^\alpha$. 

\medskip

Note that we do not simply prove that $\| u^\eps - u^{\eps,1} \|_{H^1_\eps(\Omega)} \leq \overline{C} \, \sqrt{\eps}$ for some constant $\overline{C}$ independent of $\eps$, but that we explicitly write how this constant depends on $\omega$. Tracking this dependence is important for our applications to MsFEM approaches considered in~\cite{MsFEM-ELLL}, where we write the above estimate for problems posed on local elements (and where $|\omega|$ is thus directly related to the size of the coarse mesh). In the same spirit, we will ask there that all the elements of the mesh satisfy the shape regularity assumption~\eqref{eq:shape_regul} for some constant $\eta$ independent of the element.

\begin{remark} \label{rem:regul_ustar_diff}
  We wish to point out that the assumption $\nabla (u^\star + g) \in (W^{1,\infty}(\omega))^d$ is a standard assumption when proving convergence rates of two-scale expansions (see, e.g.,~\cite[p.~28]{jikov}). Note that, in view of~\eqref{eq:fv_star}, this assumption implies that $\m(f) + h_+ + h_-$ belongs to $L^\infty(\omega)$. 
\end{remark}  

\medskip

The rest of this section is devoted to the proof of Theorem~\ref{thconvforte_diffusion}, which requires some preliminary results stated below. 
We first state a Poincar\'e estimate for the norm $\| \cdot \|_{H^1_\eps(\Omega)}$ defined in~\eqref{eq:defnorm}.

\begin{lemma} \label{poincarebis}
Let $V$ be defined by~\eqref{def:V}. Assume that $\omega$ satisfies~\eqref{eq:shape_regul}. There exists a constant $C>0$ independent of $\eps$ and $\omega$ (but depending on the constant $\eta$ of~\eqref{eq:shape_regul}) such that
\begin{equation} \label{eq:poinK}
\forall u \in V, \quad \| u \|_{L^2(\Omega)} \leq C \max\left(\eps,|\omega|^{\frac{1}{d-1}}\right) \| \nabla^\eps u \|_{L^2(\Omega)}.
\end{equation}
As a direct consequence of~\eqref{eq:poinK}, we have that
\begin{equation} \label{eq:poinK_corro}
\forall u \in V, \quad \| \nabla^\eps u \|_{L^2(\Omega)} \leq \| u \|_{H^1_\eps(\Omega)} \leq C \, \| \nabla^\eps u \|_{L^2(\Omega)},
\end{equation}
for some constant $C>0$ independent of $\eps$ and $\omega$.
\end{lemma}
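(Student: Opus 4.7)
The plan is to reduce the statement to the classical Poincaré inequality applied slice-by-slice in the in-plane variable, and then to use the shape-regularity assumption~\eqref{eq:shape_regul} to convert the resulting Poincaré constant into an explicit power of $|\omega|$. Since any $u\in V$ vanishes on $\partial\omega\times(-1/2,1/2)$, the trace $u(\cdot,x_d)$ belongs to $H^1_0(\omega)$ for almost every $x_d\in(-1/2,1/2)$, so the classical Poincaré inequality on $H^1_0(\omega)$ gives
$$
\int_\omega |u(x',x_d)|^2\,dx' \;\leq\; C_P(\omega)^2\int_\omega |\nabla' u(x',x_d)|^2\,dx',
$$
with $C_P(\omega)$ of order $\operatorname{diam}(\omega)$.

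Next, I would use~\eqref{eq:shape_regul} to bound $\operatorname{diam}(\omega)$ by $|\omega|^{1/(d-1)}$ up to a constant depending only on $\eta$. Indeed, $\omega$ contains a ball of diameter $\rho_0$, hence $|\omega|\geq c_{d-1}(\rho_0/2)^{d-1}$, where $c_{d-1}$ is the measure of the unit ball in $\mathbb{R}^{d-1}$; this yields $\rho_0 \leq C\,|\omega|^{1/(d-1)}$ and therefore $\operatorname{diam}(\omega)\leq \eta\,\rho_0 \leq C_\eta\,|\omega|^{1/(d-1)}$. Integrating the slice-wise inequality over $x_d\in(-1/2,1/2)$ and observing that
$$
\|\nabla' u\|_{L^2(\Omega)}^2 \;\leq\; \|\nabla' u\|_{L^2(\Omega)}^2 + \frac{1}{\eps^2}\|\partial_d u\|_{L^2(\Omega)}^2 \;=\; \|\nabla^\eps u\|_{L^2(\Omega)}^2,
$$
I obtain $\|u\|_{L^2(\Omega)}\leq C_\eta\,|\omega|^{1/(d-1)}\,\|\nabla^\eps u\|_{L^2(\Omega)}$. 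Since $|\omega|^{1/(d-1)}\leq \max\bigl(\eps,|\omega|^{1/(d-1)}\bigr)$, this establishes~\eqref{eq:poinK}.

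The corollary~\eqref{eq:poinK_corro} then follows immediately from the definition~\eqref{eq:defnorm}: the lower bound is obvious, and for the upper bound~\eqref{eq:poinK} implies that the $L^2$ contribution to $\|u\|_{H^1_\eps(\Omega)}^2$ is controlled by $C^2\|\nabla^\eps u\|_{L^2(\Omega)}^2$. There is no genuine obstacle; the only point that needs care is to keep track of the explicit dimensional scaling of the Poincaré constant in terms of $|\omega|$, which is precisely what the shape-regularity assumption provides, and to notice that the $\eps$ appearing in the $\max$ is only there to make the norm well-defined and robust in regimes where $|\omega|$ is very small — for the inequality itself, only the in-plane Poincaré estimate is used.
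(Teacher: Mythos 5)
Your proof is correct, but it follows a genuinely different route from the paper's. The paper argues by scaling: it first states the Poincar\'e inequality for functions vanishing on the lateral boundary of the reference domain $\widehat\omega\times(-1/2,1/2)$ with $\widehat\omega=(0,1)^{d-1}$, then rescales to $\omega_K=(0,K)^{d-1}$; because the reference inequality involves the \emph{full} gradient, the rescaling produces the two terms $K^2\|\nabla' u\|^2_{L^2}$ and $\eps^2\|\eps^{-1}\partial_d u\|^2_{L^2}$, hence the $\max(K,\eps)$, and the general shape-regular case is only asserted to follow ``by the same arguments.'' You instead apply the classical Poincar\'e inequality on $H^1_0(\omega)$ slice by slice in $x_d$, bound the Poincar\'e constant by $\operatorname{diam}(\omega)$, and convert the diameter into $|\omega|^{1/(d-1)}$ via the shape-regularity hypothesis~\eqref{eq:shape_regul}. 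This is arguably cleaner: it treats a general shape-regular $\omega$ directly rather than only cubes, and it yields the slightly sharper constant $C_\eta\,|\omega|^{1/(d-1)}$ with no $\max$ (the $\eps$-term in the paper's bound is an artifact of rescaling the full reference gradient, as you correctly observe). The one point you should make explicit is the standard fact that for $u\in V$ the slice $u(\cdot,x_d)$ belongs to $H^1_0(\omega)$ for a.e.\ $x_d$ — for instance by extending $u$ by zero in the in-plane directions, which stays in $H^1$ thanks to the vanishing lateral trace and the smoothness of $\omega$; this is routine but worth a sentence. The deduction of~\eqref{eq:poinK_corro} from~\eqref{eq:poinK} and~\eqref{eq:defnorm} is immediate, exactly as in the paper.
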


\begin{proof}
For any smooth bounded domain $\widetilde{\omega} \subset \R^{d-1}$, we denote 
$$
V(\widetilde{\Omega}) := \left\{ v \in H^1(\widetilde{\Omega}), \quad v=0 \text{ on } \partial \widetilde{\omega} \times \left( -\frac{1}{2},\frac{1}{2} \right) \right\}
$$
where $\dps \widetilde{\Omega}:= \widetilde{\omega} \times \left( -\frac{1}{2}, \frac{1}{2}\right)$. 

Let $\widehat{\omega} := (0,1)^{d-1}$ and $\dps \widehat{\Omega} := \widehat{\omega} \times \left(-\demi, \demi \right)$. Using the Poincar\'e inequality in $V(\widehat{\Omega})$, there exists some constant $C_P(\widehat{\Omega})$ such that
\begin{equation} \label{eq:toto20}
  \forall \widehat{u} \in V(\widehat{\Omega}), \quad \| \widehat{u} \|_{L^2(\widehat{\Omega})} \leq C_P(\widehat{\Omega}) \, \| \nabla \widehat{u} \|_{L^2(\widehat{\Omega})}.
\end{equation}
We next proceed by scaling. We introduce $\omega_K := (0,K)^{d-1}$ and $\dps \Omega_K := \omega_K \times \left(-\demi, \demi \right)$ for some $K>0$. For any $u_K \in V(\Omega_K)$, the function $\dps \widehat{u} : \widehat{\Omega} \ni x \mapsto u_K \left( K \, x',x_d \right)$ belongs to $V(\widehat{\Omega})$. A simple computation shows that
\begin{align*}
  \| u_K \|_{L^2(\Omega_K)}^2 &= K^{d-1} \, \| \widehat{u} \|_{L^2(\widehat{\Omega})}^2,
  \\
  \| \nabla' u_K \|_{L^2(\Omega_K)}^2 &= K^{d-3} \, \| \nabla' \widehat{u} \|_{L^2(\widehat{\Omega})}^2,
  \\
  \| \eps^{-1} \partial_d u_K\|_{L^2(\Omega_K)}^2 &= K^{d-1} \, \| \eps^{-1} \partial_d \widehat{u} \|_{L^2(\widehat{\Omega})}^2.
\end{align*}
Using~\eqref{eq:toto20}, we thus get that
\begin{align*}
  \| u_K \|_{L^2(\Omega_K)}^2
  & \leq C_P(\widehat{\Omega})^2 \, \big (K^2 \, \| \nabla' u_K \|_{L^2(\Omega_K)}^2 + \eps^2 \, \| \eps^{-1} \partial_d u_K\|_{L^2(\Omega_K)}^2 \big)
  \\
  & \leq C_P(\widehat{\Omega})^2 \, \max(K^2,\eps^2) \, \| \nabla^\eps u_K \|_{L^2(\Omega_K)}^2.
\end{align*}
Since $K=|\omega_K|^{\frac{1}{d-1}}$, we get
$$
\| u_K \|_{L^2(\Omega_K)} \leq C_P(\widehat{\Omega}) \, \max\left(|\omega_K|^{\frac{1}{d-1}}, \eps\right) \| \nabla^\eps u_K \|_{L^2(\Omega_K)},
$$
which proves~\eqref{eq:poinK} in the case $\omega=\omega_K$. In the case of a more general, shape regular domain $\omega$, the proof can be performed using the same arguments.


The estimate~\eqref{eq:poinK_corro} is a direct consequence of~\eqref{eq:poinK} and~\eqref{eq:defnorm}. This concludes the proof of Lemma~\ref{poincarebis}.
\end{proof}

We now proceed with a form of Poincar\'e-Wirtinger inequality.

\begin{lemma} \label{potmoy}
Let $h \in L^2(\omega)$ and $v \in H^1(\Omega)$. Then, for any $\dps z \in \left[ -\frac{1}{2},\frac{1}{2} \right]$, we have
$$
\left| \int_\omega \big( v(\cdot,z)-\m(v) \big) \, h \right| \leq \eps \, \| h \|_{L^2(\omega)} \, \|\nabla^\eps v \|_{L^2(\Omega)}.
$$
\end{lemma}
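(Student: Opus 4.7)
The plan is to exploit the fundamental theorem of calculus in the $x_d$ direction to turn $v(\cdot,z)-\m(v)$ into a quantity involving $\partial_d v$, and then to convert $\partial_d v$ into $\eps^{-1}\partial_d v$ (the transverse component of $\nabla^\eps v$) at the expense of a factor $\eps$. The key observation is that $\m(v)$ is an average in $x_d$, so writing $v(x',z)-\m(v)(x')$ as $\int_{-1/2}^{1/2}(v(x',z)-v(x',t))\,dt$ makes the dependence on $\partial_d v$ explicit. By density one may assume $v$ is smooth in $x_d$ for the computation.

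First I would write, for a.e.\ $x'\in\omega$ and every $z\in[-1/2,1/2]$,
\[
v(x',z)-\m(v)(x') \;=\; \int_{-1/2}^{1/2}\!\!\int_t^{z}\partial_d v(x',s)\,ds\,dt,
\]
and then bound the inner integral in absolute value by $\int_{-1/2}^{1/2}|\partial_d v(x',s)|\,ds$. This yields the pointwise estimate
\[
|v(x',z)-\m(v)(x')| \;\leq\; \int_{-1/2}^{1/2}|\partial_d v(x',s)|\,ds \;\leq\; \Bigl(\int_{-1/2}^{1/2}|\partial_d v(x',s)|^2\,ds\Bigr)^{1/2},
\]
where the last inequality is Cauchy--Schwarz in the $x_d$-variable on the interval of length $1$.

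Next I would multiply by $|h(x')|$, integrate over $\omega$, and apply Cauchy--Schwarz in $x'$ to obtain
\[
\Bigl|\int_\omega\bigl(v(\cdot,z)-\m(v)\bigr)h\Bigr| \;\leq\; \|h\|_{L^2(\omega)}\,\|\partial_d v\|_{L^2(\Omega)}.
\]
Finally, the definition of $\nabla^\eps$ gives $\|\partial_d v\|_{L^2(\Omega)}\leq \eps\,\|\nabla^\eps v\|_{L^2(\Omega)}$, and combining the two inequalities yields the claim. There is no real obstacle here; the only mild subtlety is to keep track of the factor $\eps$ carefully, which comes solely from the anisotropic scaling built into $\nabla^\eps$, and to notice that using an $L^\infty$-type bound in $z$ rather than an $L^2$-type bound in $z$ is harmless because we evaluate at a fixed slice $z$ (the length-$1$ interval ensures that the Cauchy--Schwarz step loses nothing).
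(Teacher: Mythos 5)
Your proof is correct and follows essentially the same route as the paper's: the identity $v(\cdot,z)-\m(v)=\int_{-1/2}^{1/2}\int_t^z\partial_d v(\cdot,s)\,ds\,dt$, Cauchy--Schwarz in $s$ and then in $x'$, and the observation that $\|\partial_d v\|_{L^2(\Omega)}\leq \eps\,\|\nabla^\eps v\|_{L^2(\Omega)}$. The only difference is cosmetic (the paper inserts the factor $\eps$ earlier by writing $\partial_d v=\eps\cdot\eps^{-1}\partial_d v$), so there is nothing to add.
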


\begin{proof}
Let $v \in H^1(\Omega)$. For any $z \in [-1/2,1/2]$, we have
\begin{align*}
  \left| v(\cdot,z)-\m(v) \right|
  &=
  \left| \int_{-1/2}^{1/2} \big( v(\cdot,z)-v(\cdot,t) \big) \, dt \right|
  \\
  &=
  \left| \int_{-1/2}^{1/2} \int_t^z \partial_d v(\cdot,s) \, ds \, dt \right|
  \\
  &\leq
  \eps \int_{-1/2}^{1/2} \left| \frac{1}{\eps} \, \partial_d v(\cdot,s) \right| ds.
\end{align*}
It follows that
\begin{align*}
  \| v(\cdot,z)-\m(v) \|^2_{L^2(\omega)}
  &=
  \int_\omega \left| v(\cdot,z)-\m(v) \right|^2
  \\
  &
  \leq \eps^2 \int_\omega \left| \int_{-1/2}^{1/2} \left| \frac{1}{\eps} \, \partial_d v(\cdot,s) \right| ds \right|^2
  \\ 
  & \leq
  \eps^2 \left\| \frac{1}{\eps} \, \partial_d v \right\|_{L^2(\Omega)}^2
  \\
  & \leq \eps^2 \| \nabla^\eps v \|_{L^2(\Omega)}^2.
\end{align*}	
For any $z \in [-1/2,1/2]$, we now write
$$
\left| \int_\omega \big( v(\cdot,z)-\m(v) \big) \, h \right| \leq \| h \|_{L^2(\omega)} \ \| v(\cdot,z)-\m(v) \|_{L^2(\omega)}.
$$
Collecting the above two estimates concludes the proof of Lemma~\ref{potmoy}.
\end{proof}

Lemma~\ref{minZ} is an adaptation of a technical result, already present in~\cite[p.~27]{jikov}, to the case of plates.

\begin{lemma} \label{minZ}
Let $V$ be defined by~\eqref{def:V}. Let $\dps Z \in \left[ L^2_{\rm loc}\left(\mathbb{R}^{d-1} \times \left( -\frac{1}{2},\frac{1}{2} \right) \right) \right]^d$ be a vector field such that
\begin{itemize}
 \item [(i)] for almost all $\dps z \in \left( -\frac{1}{2},\frac{1}{2} \right)$, the function $Z(\cdot,z)$ is $Y$-periodic;
 \item[(ii)] $\dps \int_{\mathcal{Y}} Z = 0$; 
 \item[(iii)] $\div Z = 0$ in $\dps \mathcal{D}'\left(\mathbb{R}^{d-1} \times \left( -\frac{1}{2},\frac{1}{2} \right) \right)$; 
 \item [(iv)] $Z \cdot e_d = 0$ on $\mathbb{R}^{d-1} \times \{-1/2\}$ and on $\mathbb{R}^{d-1} \times \{1/2\}$.
\end{itemize}
Then, there exists some $C$ independent of $\eps$ and $\omega$ such that, for any $\varphi$ in $W^{1,\infty}(\omega)$ and any $v \in V$,
\begin{equation}\label{eq:covid}
\left| \int_\Omega \varphi(x') \, Z\left( \frac{x'}{\eps},x_d \right) \cdot \nabla^\eps v(x',x_d) \, dx' \, dx_d \right| \leq C \, \eps \, |\omega|^{1/2} \, \| \nabla \varphi \|_{L^\infty(\omega)} \, \| \nabla^\eps v \|_{L^2(\Omega)}.
\end{equation}
\end{lemma}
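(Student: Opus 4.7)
The plan is to construct an antisymmetric matrix potential $G=(G_{ij})_{1\le i,j\le d}$ of $Z$, that is, a periodic (in $x'$) matrix-valued field with $G_{ij}=-G_{ji}$ and $Z_i=\partial_j G_{ij}$ in $\mathcal{D}'$, and then to integrate by parts in the left-hand side of~\eqref{eq:covid}. This is the classical strategy used in~\cite[p.~27]{jikov} for fully periodic fields, but it must be adapted to the plate geometry: in addition to antisymmetry and periodicity, the trace of $G_{\beta d}$ (for $1\le\beta\le d-1$) on the top and bottom faces of $\mathcal{Y}$ should vanish; otherwise, the boundary terms produced by the integration by parts in $x_d$ cannot be disposed of.

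To obtain such a $G$, I would first extend $Z$ to the doubled cell $\widetilde{\mathcal{Y}}:=Y\times(-1,1)$ by reflecting the in-plane components $Z_\alpha$ evenly across $x_d=\pm 1/2$ and the transverse component $Z_d$ oddly. Hypothesis~(iv) ensures that the odd extension of $Z_d$ is continuous across the reflection planes, so the distributional divergence of the extension $\widetilde Z$ still vanishes, and hypotheses~(ii)--(iii) carry over to $\widetilde Z$ on $\widetilde{\mathcal{Y}}$. Since $\widetilde Z$ is now periodic in all variables (period~$1$ in $x'$ and period~$2$ in $x_d$), the standard Fourier construction applies: defining $\widehat G_{ij}(\xi)=(\xi_j\widehat Z_i(\xi)-\xi_i\widehat Z_j(\xi))/(i|\xi|^2)$ for $\xi\ne 0$ and $\widehat G(0)=0$ yields an antisymmetric $G\in H^1(\widetilde{\mathcal{Y}})$ solving $\partial_j G_{ij}=\widetilde Z_i$. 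A direct inspection of the Fourier symbols, using the parity of $\widetilde Z_\alpha$ and $\widetilde Z_d$ in $x_d$ about $\pm 1/2$, shows that $G_{\beta d}$ inherits an odd symmetry about these planes, hence its trace on $x_d=\pm 1/2$ vanishes. This is the key extra property we seek.

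With $G$ in hand, set $G^\eps(x):=G(x'/\eps,x_d)$ and substitute $Z_i(x'/\eps,x_d)=\eps\,\partial_\alpha G^\eps_{i\alpha}(x)+\partial_d G^\eps_{id}(x)$ in the left-hand side of~\eqref{eq:covid}. Integration by parts (in $x_\alpha$ for the first piece, in $x_d$ for the second) produces boundary and interior terms. The boundary contributions on $\omega\times\{\pm 1/2\}$ vanish either because $G_{\beta d}|_{x_d=\pm 1/2}=0$ (for the $x_d$ integration by parts) or because the in-plane components of the outer normal are zero (for the $x_\alpha$ integration). On $\partial\omega\times(-1/2,1/2)$, the condition $v\in V$ kills the tangential derivatives of $v$, so $\partial_\beta v = n_\beta\,\partial_n v$ there, and then $G^\eps_{\beta\alpha}n_\alpha n_\beta=0$ by antisymmetry; moreover, $\partial_d v$ itself vanishes on this boundary. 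In the interior, the terms involving second derivatives of $v$, namely $\int \varphi G^\eps_{\beta\alpha}\partial_\alpha\partial_\beta v$, $\int \varphi G^\eps_{\beta d}\partial_d\partial_\beta v$ and $\int \varphi G^\eps_{d\alpha}\partial_\alpha\partial_d v$, either vanish by antisymmetry of $G$ combined with the symmetry $\partial_\alpha\partial_\beta=\partial_\beta\partial_\alpha$, or cancel pairwise. What survives is
\begin{equation*}
\int_\Omega \varphi(x')\,Z(x'/\eps,x_d)\cdot\nabla^\eps v(x',x_d)\,dx \;=\; -\eps\int_\Omega (\partial_\alpha\varphi)(x')\,G^\eps_{i\alpha}(x)\,(\nabla^\eps v)_i(x)\,dx.
\end{equation*}

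The estimate~\eqref{eq:covid} then follows from the Cauchy--Schwarz inequality applied to the right-hand side, together with the standard bound $\|G^\eps\|_{L^2(\Omega)}^2\le C\,|\omega|\,\|G\|_{L^2(\mathcal{Y})}^2$ for $Y$-periodic functions sampled at scale $\eps$ over $\omega\subset\R^{d-1}$; the constant $C$ in~\eqref{eq:covid} absorbs $\|G\|_{L^2(\mathcal{Y})}$, which depends only on $Z$. For $v\in H^1$ not smooth, the identity above is justified by a density argument. The expected main obstacle is Step~2, namely the construction of a potential enjoying the extra vanishing trace of $G_{\beta d}$ on $\mathcal{Y}_\pm$; this property is absent from the classical fully-periodic setting and is precisely what motivates the reflection extension.
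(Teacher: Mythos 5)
Your proof is correct, and it differs from the paper's at the crux of the adaptation of Jikov's argument to the plate geometry, namely the handling of the boundary terms on $\omega\times\{\pm 1/2\}$. The paper extends $Z$ merely $1$-periodically in $x_d$ (hypothesis~(iv) ensuring that this extension stays divergence-free across the interfaces), invokes the cited lemma of Jikov to obtain a skew-symmetric potential $J$, and accepts that $J_{\alpha d}$ does \emph{not} vanish on the faces: the resulting boundary contributions are then disposed of by observing that the traces of $J(\cdot/\eps,\cdot)$ at $x_d=\pm 1/2$ coincide by periodicity, so the two boundary terms combine into an expression involving $v(\cdot,1/2)-v(\cdot,-1/2)=\int_{-1/2}^{1/2}\partial_d v$, which is of order $\eps$ once $\frac{1}{\eps}\,\partial_d v$ is absorbed into $\nabla^\eps v$. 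You instead construct a better potential, by even/odd reflection onto a doubled cell, for which $G_{\beta d}$ genuinely has vanishing trace on the faces; your parity argument is sound, since the reflection is an isometry of the doubled torus commuting with $\Delta^{-1}$, both $\partial_d\Delta^{-1}\widetilde Z_\beta$ and $\partial_\beta\Delta^{-1}\widetilde Z_d$ are odd about $x_d=\pm1/2$, and an odd $H^1$ function has zero trace on the fixed hyperplane. Your route buys a cleaner integration by parts (all boundary terms on $\Gamma_\pm$ vanish outright) at the price of the reflection construction; the paper's buys a direct citation of the classical lemma at the price of a more delicate boundary estimate. Two minor caveats: your motivating claim that without the vanishing trace the boundary terms ``cannot be disposed of'' is too strong, since the paper disposes of them anyway; and your interior cancellations and lateral-boundary computations involve second derivatives and traces of $\nabla^\eps v$ that do not exist for a general $v\in V$, so the density argument you invoke (or, as in the paper, an $H_{\rm div}$-type integration-by-parts formula applied to the divergence-free field $\div^\eps\!\left[G^\eps\varphi\right]$) is genuinely needed to make the identity rigorous.
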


\begin{remark}
We recall that any function in $H_{\rm div}(D) := \left\{ Z \in (L^2(D))^d, \ \ \div Z \in L^2(D)\right\}$ (for any smooth domain $D\subset \R^d$) has a well-defined normal trace on $\partial D$ (see Appendix~\ref{app:Hdiv} for details). Assumption~(iv) in Lemma~\ref{minZ} thus makes sense.
\end{remark}

\begin{remark} \label{rem:CN_scalaire}
Assumption~(ii) is a necessary condition for~\eqref{eq:covid} to hold true. Consider indeed a function $v \in V$ independent of $x_d$: then, on the one hand, the right-hand side of~\eqref{eq:covid} obviously converges to 0 when $\eps \to 0$, and thus also the left-hand side, which in this case yields that
$$
\lim_{\eps \to 0} \int_\Omega \varphi(x') \, Z\left( \frac{x'}{\eps},x_d \right) \cdot \nabla v(x') \, dx' \, dx_d = 0.
$$
On the other hand, using Lemma~\ref{limmoyenne} and the fact that $Z$ is $Y$-periodic, we are in position to state that the limit of the left-hand side of~\eqref{eq:covid} is $\dps \int_\Omega \varphi(x') \, Z^\star(x_d) \cdot \nabla v(x') \, dx' \, dx_d$, with $\dps Z^\star(x_d) = \int_Y Z(x',x_d) dx'$ for any $x_d \in (-1/2,1/2)$. We thus deduce that $\dps \int_\Omega \varphi(x') \, Z^\star(x_d) \cdot \nabla v(x') \, dx' \, dx_d = 0$ for any $\varphi$ in $W^{1,\infty}(\omega)$ and any $v \in V$ independent of $x_d$. This implies that $\dps \int_{-1/2}^{1/2} Z^\star_\alpha(x_d) \, dx_d = 0$ for any $1 \leq \alpha \leq d-1$, and thus, in view of the definition of $Z^\star$, that $\dps \int_{\mathcal Y} Z_\alpha = 0$.

To show that Assumption~(ii) is indeed a necessary condition, we are left to show that~\eqref{eq:covid} implies $\dps \int_{\mathcal Y} Z_d = 0$. To this end, consider the function $v(x',x_d) = \eps \, \psi(x') \, \theta(x_d)$ for some $\psi \in C^\infty_0(\omega)$ and some $\theta \in C^\infty[-1/2,1/2]$. It belongs to $V$ and satisfies that $\nabla^\eps v$ is bounded. The right-hand side, and therefore the left-hand side, of~\eqref{eq:covid} hence converges to 0 when $\eps \to 0$. For this function $v$, the left-hand side of~\eqref{eq:covid} writes
$$
\eps \int_\Omega \varphi(x') \, Z_\alpha\left( \frac{x'}{\eps},x_d \right) \partial_\alpha \psi(x') \, \theta(x_d) \, dx' \, dx_d + \int_\Omega \varphi(x') \, Z_d\left( \frac{x'}{\eps},x_d \right) \psi(x') \, \theta'(x_d) \, dx' \, dx_d,
$$
which converges to $\dps \int_\Omega \varphi(x') \, Z_d^\star(x_d) \, \psi(x') \, \theta'(x_d) \, dx' \, dx_d$ in view of Lemma~\ref{limmoyenne}. Since $\varphi$ is arbitrary, this implies that $\dps \int_{-1/2}^{1/2} Z_d^\star(x_d) \, \theta'(x_d) \, dx_d = 0$, and therefore $Z_d^\star(x_d) = 0$ for any $x_d \in (-1/2,1/2)$ since $\theta \in C^\infty[-1/2,1/2]$ is arbitrary. This of course implies that $\dps \int_{-1/2}^{1/2} Z^\star_d(x_d) \, dx_d = 0$, and thus $\dps \int_{\mathcal Y} Z_d = 0$, in view of the definition of $Z^\star_d$. This completes the argument showing that Assumption~(ii) is indeed a necessary condition for~\eqref{eq:covid} to hold true.
\end{remark}

\begin{remark} \label{rem:CN_scalaire_comp}
  In view of the second part of Remark~\ref{rem:CN_scalaire}, one could think that a necessary condition for~\eqref{eq:covid} to hold true is $Z_d^\star(x_d) = 0$ for any $x_d \in (-1/2,1/2)$ (and not only $\dps \int_{\mathcal Y} Z_d = 0$). Actually, under Assumption~(iii), the statement
  \begin{equation} \label{eq:stat1}
    \int_{\mathcal Y} Z_d = 0
  \end{equation}
  is equivalent to the statement
  \begin{equation} \label{eq:stat2}
    Z_d^\star(x_d) = 0 \quad \text{for any $x_d \in (-1/2,1/2)$},
  \end{equation}
  as we now show. Of course, \eqref{eq:stat2} implies~\eqref{eq:stat1}. Assuming now that~\eqref{eq:stat1} holds, we integrate on $Y$ the equation $\div Z = 0$ (see Assumption~(iii)). Using the $Y$-periodicity of $Z$, we find $\dps \partial_d \int_Y Z_d = 0$, which means that $Z^\star_d$ is actually a constant. The condition~\eqref{eq:stat1} now implies that this constant vanishes, and we hence deduce~\eqref{eq:stat2}.
\end{remark}

The proof of Lemma~\ref{minZ} requires the following result (see~\cite[p.~6]{jikov}). We recall that $\dps L^2_{\rm per}(\mathbb{R}^d) = \left\{ f \in L^2_{\rm loc}(\mathbb{R}^d), \ \ \text{$f$ is $(0,1)^d$-periodic} \right\}$.

\begin{lemma}[from p.~6 of~\cite{jikov}]\label{lemmajikov}
Let $\dps p \in \left\{q \in (L^2_{\rm per}(\mathbb{R}^d))^d, \ \ \div q = 0 \text{ in } \mathbb{R}^d \right\}$. Then there exists a skew-symmetric matrix $J \in \left( H^1_{\rm per}(\mathbb{R}^d) \right)^{d \times d}$ such that
$$
\forall 1 \leq j \leq d, \qquad p_j - \int_{(0,1)^d} p_j = \partial_i J_{ij}, \qquad \int_{(0,1)^d} J_{ij} = 0.
$$
\end{lemma}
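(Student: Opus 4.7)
The plan is to construct $J$ as a curl-type combination of scalar periodic potentials obtained from Poisson problems on the torus, and then use the divergence-free hypothesis on $p$ to eliminate the unwanted trace-like term.

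First, I would reduce to the zero-mean case. Setting $\widetilde p_j := p_j - \int_{(0,1)^d} p_j$, each $\widetilde p_j$ is in $L^2_{\rm per}(\mathbb{R}^d)$ with vanishing mean over the periodic cell, and $\div \widetilde p = \div p = 0$ still holds (the correction is constant, hence has zero divergence). Working with $\widetilde p$ reduces the problem to building a skew-symmetric, zero-mean $J \in (H^1_{\rm per})^{d\times d}$ with $\partial_i J_{ij} = \widetilde p_j$.

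Next, for each $j$ I would solve the periodic Poisson problem: find $N_j \in H^1_{\rm per}(\mathbb{R}^d)$ with $\int_{(0,1)^d} N_j = 0$ and
\begin{equation*}
\Delta N_j = \widetilde p_j \quad \text{in } \mathbb{R}^d.
\end{equation*}
Since $\widetilde p_j \in L^2_{\rm per}$ has zero mean, Lax--Milgram applied on the space of zero-mean $H^1_{\rm per}$ functions yields a unique weak solution, and elliptic regularity on the torus gives $N_j \in H^2_{\rm per}$. I would then \emph{define}
\begin{equation*}
J_{ij} := \partial_i N_j - \partial_j N_i,
\end{equation*}
which is manifestly skew-symmetric, lies in $(H^1_{\rm per}(\mathbb{R}^d))^{d \times d}$, and satisfies $\int_{(0,1)^d} J_{ij} = 0$ by periodicity of $N_i, N_j$ (integrals of pure partial derivatives of periodic functions over the unit cell vanish).

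The key computation is then
\begin{equation*}
\partial_i J_{ij} = \partial_i\partial_i N_j - \partial_j(\partial_i N_i) = \Delta N_j - \partial_j(\partial_i N_i) = \widetilde p_j - \partial_j\bigl(\partial_i N_i\bigr).
\end{equation*}
The step I expect to be the crux is showing that the spurious term $\partial_j(\partial_i N_i)$ vanishes. For this I would apply $\partial_i$ to the defining equation $\Delta N_i = \widetilde p_i$ and sum over $i$, obtaining
\begin{equation*}
\Delta(\partial_i N_i) = \partial_i \widetilde p_i = \div \widetilde p = 0.
\end{equation*}
Thus $\partial_i N_i$ is harmonic on the torus, hence constant (by uniqueness of periodic harmonic functions, up to additive constants), so $\partial_j(\partial_i N_i) = 0$ for every $j$. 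This yields $\partial_i J_{ij} = \widetilde p_j = p_j - \int_{(0,1)^d} p_j$, which is exactly the required identity. The skew-symmetry, $H^1_{\rm per}$ regularity and zero-mean conditions having already been verified, this completes the construction.
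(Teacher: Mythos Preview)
Your proof is correct and is precisely the standard construction. Note that the paper does not actually supply a proof of this lemma: it is quoted from~\cite{jikov} (p.~6) and used as a black box in the proof of Lemma~\ref{minZ}. Your argument --- solving the periodic Poisson problems $\Delta N_j = \widetilde p_j$, setting $J_{ij} = \partial_i N_j - \partial_j N_i$, and killing the trace term $\partial_j(\partial_i N_i)$ via the observation that $\partial_i N_i$ is harmonic on the torus (hence constant) thanks to $\div \widetilde p = 0$ --- is exactly the classical proof found in the cited reference.
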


\begin{proof}[Proof of Lemma~\ref{minZ}]
Let $\overline{Z}$ be the periodic extension of $Z$ in the $e_d$ direction. We then have $\overline{Z} \in (L^2_{\rm per}(\mathbb{R}^d))^d$. Let us prove that $\div \overline{Z} = 0$ in $\mathcal{D}'(\R^d)$. Let $\psi \in \mathcal{D}(\R^d)$. There exists a compact set $K\subset \R^{d-1}$ and an integer $m \in \mathbb{N}^\star$ such that $\mbox{\rm Supp}\, \psi \subset K \times [-m-1/2 ; m+1/2]$. We compute
\begin{align*}
  \langle \div \overline{Z}, \psi \rangle_{\mathcal{D}'(\R^d), \mathcal{D}(\R^d)}
  & = -\int_{K\times (-m-1/2 ; m+1/2)} \overline{Z} \cdot \nabla \psi
  \\
  &= \sum_{k=-m}^m -\int_{K\times (k-1/2; k+1/2)} \overline{Z} \cdot \nabla \psi
  \\
  &= \sum_{k=-m}^m -\int_{K\times (-1/2; 1/2)} Z \cdot \nabla \psi(\cdot+k \, e_d).
\end{align*}
Using Assumption~(iv) and that $\psi(\cdot+k \, e_d) = 0$ on $\partial K \times \R$, we compute that
$$
\int_{K\times (-1/2; 1/2)} Z \cdot \nabla \psi(\cdot+k \, e_d) = - \int_{K\times (-1/2; 1/2)} \psi(\cdot+k \, e_d) \, \div Z. 
$$
Using Assumption~(iii), we deduce that $\dps \int_{K\times (-1/2; 1/2)} Z \cdot \nabla \psi(\cdot+k \, e_d) = 0$. We thus obtain that $\div \overline{Z} = 0$ in $\mathcal{D}'(\R^d)$.
	
\medskip
	
We are thus in position to use Lemma~\ref{lemmajikov} for $\overline{Z}$. Since $\dps \int_{\mathcal{Y}} \overline{Z} = 0$, there exists a skew-symmetric matrix-valued field $J \in \left( H^1_{\rm per}(\mathbb{R}^d) \right)^{d \times d}$ such that $\overline{Z}_j = \partial_i J_{ij}$ for any $1 \leq j \leq d$.

\medskip

Let $\varphi \in W^{1,\infty}(\omega)$ and $1 \leq j \leq d$. Denoting $J_{\cdot j} = \left( J_{ij} \right)_{1 \leq i \leq d} \in \R^d$, we compute, for any $x_d$ in $(-1/2,1/2)$, that
\begin{align}
  \left[ Z \left( \frac{\cdot}{\eps},x_d \right) \varphi \right]_j
  &= \left[ \overline{Z} \left( \frac{\cdot}{\eps},x_d \right) \varphi \right]_j
  \nonumber
  \\
  &= \partial_i J_{ij}\left( \frac{\cdot}{\eps},x_d \right) \varphi
  \nonumber
  \\
  &= \eps \div^\eps \left[ J_{\cdot j}\left( \frac{\cdot}{\eps},x_d \right) \right] \varphi
  \nonumber
  \\
  &= \eps \div^\eps \left[ J_{\cdot j}\left( \frac{\cdot}{\eps},x_d \right) \varphi \right] - \eps \, J_{\cdot j}\left( \frac{\cdot}{\eps},x_d \right) \cdot \nabla^\eps \varphi
  \nonumber
  \\
  &= \eps \, \widetilde{B}_j(\cdot,x_d) - \eps \, B_j(\cdot,x_d),
  \label{eq:diffe1}
\end{align}
where $\dps \widetilde{B}_j(\cdot,x_d) := \div^\eps \left[ J_{\cdot j}\left( \frac{\cdot}{\eps},x_d \right) \varphi \right]$ and $\dps B_j(\cdot,x_d) := J_{\cdot j}\left( \frac{\cdot}{\eps},x_d \right) \cdot \nabla^\eps \varphi$. Note that
\begin{align}
  & \div^\eps \widetilde{B}
  \nonumber
  \\
  &= \div^\eps \left( \widetilde{B}_j \, e_j \right) 
  \nonumber
  \\
  &= \div^\eps \left( \partial_\alpha \left[ J_{\alpha j}\left( \frac{\cdot}{\eps},x_d \right) \varphi \right] e_j + \frac{1}{\eps} \partial_d \left[ J_{d j}\left( \frac{\cdot}{\eps},x_d \right) \varphi \right] e_j \right) 
  \nonumber
  \\
  &= \partial_{\beta \alpha} \left[ J_{\alpha \beta} \left( \frac{\cdot}{\eps},x_d \right) \varphi \right] + \frac{1}{\eps} \left( \partial_{d \alpha} \left[ J_{\alpha d}\left( \frac{\cdot}{\eps},x_d \right) \varphi \right] + \partial_{\beta d} \left[ J_{d \beta}\left( \frac{\cdot}{\eps},x_d \right) \varphi \right] \right) + \frac{1}{\eps^2} \partial_{dd} \left[ J_{dd}\left( \frac{\cdot}{\eps},x_d \right) \varphi \right] 
  \nonumber
  \\
  &= 0 \quad [\text{because $J$ is skew symmetric}]. \label{eq:toto6}
\end{align}
We now write that, for any $v$ in $V$,
\begin{equation} \label{eq:toto}
  \int_\Omega \varphi(x') \, Z\left(\frac{x'}{\eps},x_d \right) \cdot \nabla^\eps v(x',x_d) \, dx' \, dx_d = \eps \int_\Omega \widetilde{B} \cdot \nabla^\eps v - \eps \int_\Omega B \cdot \nabla^\eps v.
\end{equation}
We are going to successively bound the two terms of~\eqref{eq:toto}. For the first term, we note that, by definition, $\widetilde{B}$ is in $(L^2(\Omega))^d$ and $\div^\eps \widetilde{B}$ is also in $L^2(\Omega)$, in view of~\eqref{eq:toto6}. We thus note that $\widetilde{B}$ has a well-defined normal trace (see Appendix~\ref{app:Hdiv} for details). Using~\eqref{eq:toto6} and an integration by part (see~\eqref{eq:IPP_scalaire}), we obtain
\begin{equation} \label{eq:toto10_a}
  \int_\Omega \widetilde{B} \cdot \nabla^\eps v = \frac{1}{\eps} \int_\omega \widetilde{B} \left(\cdot,\frac{1}{2}\right) \cdot e_d \ v\left(\cdot,\frac{1}{2}\right) - \frac{1}{\eps} \int_\omega \widetilde{B} \left(\cdot,-\frac{1}{2}\right) \cdot e_d \ v\left(\cdot,-\frac{1}{2}\right). 
\end{equation}
We next infer from~\eqref{eq:diffe1}, the definition of $B$ and the fact that $J$ is skew-symmetric that, for any $x_d$ in $(-1/2,1/2)$,
\begin{equation} \label{eq:diffe2}
\widetilde{B}(\cdot,x_d) = \frac{1}{\eps} \, Z\left(\frac{\cdot}{\eps}, x_d\right) \varphi - J\left(\frac{\cdot}{\eps}, x_d\right) \nabla^\eps \varphi.
\end{equation}
Since $\varphi \in W^{1,\infty}(\omega)$, we have $\dps \nabla^\eps \varphi = \nabla \varphi = \left( \begin{array}{c} \nabla' \varphi \\ 0 \end{array} \right) \in (L^\infty(\omega))^d$. Besides, since $J \in \left( H^1_{\rm per}(\R^d) \right)^{d \times d}$, we have
$$
J\left( \frac{\cdot}{\eps}, -\frac{1}{2}\right) \nabla^\eps \varphi = J\left( \frac{\cdot}{\eps}, \frac{1}{2}\right) \nabla^\eps \varphi = J\left( \frac{\cdot}{\eps}, \frac{1}{2}\right) \nabla \varphi \in (L^2(\omega))^d.
$$
We then infer from~\eqref{eq:diffe2}, the above relation and Assumption~(iv) that
$$
\widetilde{B}\left(\cdot, \pm \frac{1}{2}\right)\cdot e_d 
=
-e_d^T J\left( \frac{\cdot}{\eps}, \frac{1}{2}\right) \nabla \varphi.
$$
We hence deduce from~\eqref{eq:toto10_a} that
\begin{align}
  \left| \int_\Omega \widetilde{B} \cdot \nabla^\eps v \right| 
  &= \frac{1}{\eps} \left| \int_\omega e_d^T J\left( \frac{\cdot}{\eps}, \frac{1}{2}\right) \nabla \varphi \left[ v\left(\cdot,\frac{1}{2}\right) - v\left(\cdot,-\frac{1}{2}\right) \right] \right|
  \nonumber
  \\
  &= \left| \int_\omega \int_{t=-1/2}^{1/2} \frac{1}{\eps} \partial_d v(x',t) \ e_d^T J\left(\frac{x'}{\eps},\frac{1}{2}\right)\nabla \varphi(x') \,dt \,dx' \right|
  \nonumber
  \\
  & \leq \| \nabla^\eps v \|_{L^2(\Omega)} \left\| J\left( \frac{\cdot}{\eps}, \frac{1}{2} \right) \right\|_{L^2(\omega)} \| \nabla \varphi \|_{L^\infty(\omega)} 
  \nonumber
  \\
  & \leq C |\omega|^{1/2} \, \| \nabla^\eps v \|_{L^2(\Omega)} \left\| J\left(\cdot, \frac{1}{2}\right) \right\|_{L^2(Y)} \| \nabla \varphi \|_{L^\infty(\omega)} 
  \nonumber
  \\
  & \leq C |\omega|^{1/2} \, \| \nabla^\eps v \|_{L^2(\Omega)} \, \| J \|_{H^1(\mathcal{Y})} \, \| \nabla \varphi \|_{L^\infty(\omega)},
  \label{eq:bound1}
\end{align}
where we have used a trace inequality in the last line.

We now bound the second term of~\eqref{eq:toto}:
\begin{align}
  \left| \int_\Omega B \cdot \nabla^\eps v \right|
  &= \left| \int_\omega \int_{-1/2}^{1/2} (\nabla^\eps v(x',x_d))^T \, J\left( \frac{x'}{\eps},x_d \right) \nabla^\eps \varphi(x') \, dx' \, dx_d \right|
  \nonumber
  \\
  & \leq \| \nabla \varphi \|_{L^\infty(\omega)} \, \| \nabla^\eps v \|_{L^2(\Omega)} \left\| J\left( \frac{x'}{\eps},x_d \right) \right\|_{L^2(\Omega)}
  \nonumber
  \\
  & \leq C \, |\omega|^{1/2} \, \| \nabla \varphi \|_{L^\infty(\omega)} \, \| \nabla^\eps v \|_{L^2(\Omega)} \, \| J\|_{L^2(\mathcal{Y})}.
  \label{eq:bound2_a}
\end{align} 
Collecting~\eqref{eq:toto}, \eqref{eq:bound1} and~\eqref{eq:bound2_a}, we deduce that
$$
\left| \int_\Omega \varphi(x') \, Z\left( \frac{x'}{\eps},x_d \right) \cdot \nabla^\eps v(x',x_d) \, dx' \, dx_d \right| \leq C \eps \, |\omega|^{1/2} \, \| \nabla \varphi \|_{L^\infty(\omega)} \, \| \nabla^\eps v \|_{L^2(\Omega)}.
$$
This concludes the proof of Lemma~\ref{minZ}.
\end{proof}

We are now in position to prove Theorem~\ref{thconvforte_diffusion}.

\begin{proof}[Proof of Theorem~\ref{thconvforte_diffusion}]
The proof falls in four steps. In the first step, we correct for the boundary mismatch between $u^\eps$ and its approximation $u^{\eps,1}$. In Steps~2 and~3, we show that our approximation is accurate in the bulk of the domain. In Step~4, we collect all the estimates to reach the conclusion.
	
\medskip

\noindent
{\bf Step~1.} Let $\tau_\eps \in \mathcal{D}(\omega)$ such that $0 \leq \tau_\eps \leq 1$ in $\omega$ and such that $\tau_\eps(x') = 1$ for any $x' \in \omega$ such that $\text{dist}(x',\partial \omega) \geq \eps$. Since $\omega$ is smooth, we can choose $\tau_\eps$ such that $\eps \|\nabla \tau_\eps \|_{L^\infty(\omega)} \leq C$ for some $C>0$ independent of $\omega$ and $\eps$. We define $\omega_\eps := \{ x' \in \omega \text{ such that } \text{dist}(x',\partial \omega) \geq \eps \}$ and $\dps \Omega_\eps := \omega_\eps \times \left( -\frac{1}{2},\frac{1}{2} \right)$. Note that $|\Omega \setminus \Omega_\eps| \leq C \, \eps \, |\omega|^{\frac{d-2}{d-1}}$.

We introduce the function $v^{\eps,1}$ defined for $x = (x',x_d) \in \Omega$ by
$$
v^{\eps,1}(x) := u^\star(x') + \eps \, \tau_\eps(x') \, w^\alpha \left( \frac{x'}{\eps},x_d \right) \partial_\alpha (u^\star+g)(x').
$$
By definition of $\tau_\eps$, we have $v^{\eps,1} \in V$ and $v^{\eps,1} = u^{\eps,1}$ in $\Omega_\eps$. In this first step of the proof, we bound $\| u^{\eps,1} - v^{\eps,1} \|_{H^1_\eps(\Omega)}$. We compute that
$$
u^{\eps,1}(x) - v^{\eps,1}(x) = \eps \, (1-\tau_\eps(x')) \, w^\alpha \left( \frac{x'}{\eps},x_d \right) \partial_\alpha (u^\star+g)(x'),
$$
and thus
\begin{align}
  \| v^{\eps,1} - u^{\eps,1} \|^2_{L^2(\Omega)}
  &\leq
  \eps^2 \, \| 1-\tau_\eps \|^2_{L^2(\Omega \setminus \Omega_\eps)} \sup_{1 \leq \alpha \leq d-1} \| w^\alpha \|_{L^\infty}^2 \, \| \nabla(u^\star+g) \|^2_{L^\infty(\omega)}
  \nonumber
  \\
  &\leq
  C \, \eps^2 \, |\Omega \setminus \Omega_\eps| \, \| \nabla(u^\star+g) \|^2_{L^\infty(\omega)}
  \nonumber
  \\
  &\leq
  C \, \eps^3 \, |\omega|^{\frac{d-2}{d-1}} \, \| \nabla(u^\star+g) \|^2_{L^\infty(\omega)}
  \nonumber
  \\
  &\leq
  C \, \eps \, |\omega|^{\frac{d-2}{d-1}} \, \left[ \max\left(\eps,|\omega|^{\frac{1}{d-1}}\right) \right]^2 \, \| \nabla(u^\star+g) \|^2_{L^\infty(\omega)}.
  \label{diffapp3_L2}
\end{align}
We next compute that $\nabla^\eps u^{\eps,1} - \nabla^\eps v^{\eps,1} = E_0^\eps - E_1^\eps + \eps \, E_2^\eps$, where
\begin{align*}
  E_0^\eps(x) &= (1-\tau_\eps(x')) \, \nabla w^\alpha \left( \frac{x'}{\eps},x_d \right) \partial_\alpha (u^\star+g)(x'),
  \\ 
  E_1^\eps(x) &= \eps \, \nabla^\eps \tau_\eps(x') \, w^\alpha \left( \frac{x'}{\eps},x_d \right) \partial_\alpha (u^\star+g)(x'),
  \\
  E_2^\eps(x) &= (1-\tau_\eps(x')) \, w^\alpha \left( \frac{x'}{\eps},x_d \right) \nabla^\eps \partial_\alpha (u^\star+g)(x').
\end{align*}
We bound the above three terms in $L^2(\Omega)$ norm, using that $w^\alpha \in W^{1,\infty}$, $\nabla (u^\star+g)$ belongs to $(W^{1,\infty}(\omega))^d$, $0 \leq \tau_\eps \leq 1$ and $\eps \, \|\nabla \tau_\eps \|_{L^\infty(\omega)} \leq C$. Note that, by definition, $\partial_d \tau_\eps = 0$, and therefore $\nabla^\eps \tau_\eps = \nabla \tau_\eps$. Likewise, we have $\nabla^\eps \partial_\alpha (u^\star+g) = \nabla \partial_\alpha (u^\star+g)$. We thus obtain
\begin{align*}
  \|E_2^\eps\|^2_{L^2(\Omega)} & \leq \sup_{1 \leq \alpha \leq d-1} \| w^\alpha \|_{L^\infty}^2 \, \| \nabla^2 (u^\star+g) \|^2_{L^2(\Omega)}
  \\
  & \leq C \, |\omega| \, \| \nabla^2 (u^\star+g) \|^2_{L^\infty(\omega)},
  \\
  \| E_1^\eps\|^2_{L^2(\Omega)} & \leq C \, |\Omega \setminus \Omega_\eps| \, \sup_{1 \leq \alpha \leq d-1} \| w^\alpha \|_{L^\infty}^2 \, \| \nabla(u^\star+g) \|^2_{L^\infty(\omega)}
  \\
  & \leq C \, \eps \, |\omega|^{\frac{d-2}{d-1}} \, \| \nabla (u^\star+g) \|^2_{L^\infty(\omega)},
  \\
  \| E_0^\eps\|^2_{L^2(\Omega)} & \leq C \, |\Omega \setminus \Omega_\eps| \, \sup_{1 \leq \alpha \leq d-1} \| \nabla w^\alpha \|_{L^\infty}^2 \, \| \nabla(u^\star+g) \|^2_{L^\infty(\omega)}
  \\
  & \leq C \, \eps \, |\omega|^{\frac{d-2}{d-1}} \, \| \nabla (u^\star+g) \|^2_{L^\infty(\omega)}.
\end{align*}
We thus obtain
\begin{equation} \label{diffapp3}
  \| \nabla^\eps v^{\eps,1} - \nabla^\eps u^{\eps,1} \|^2_{L^2(\Omega)} \leq C \left( \eps \, |\omega|^{\frac{d-2}{d-1}} \, \|\nabla (u^\star+g) \|^2_{L^\infty(\omega)} + \eps^2 \, |\omega| \, \| \nabla^2 (u^\star+g) \|^2_{L^\infty(\omega)} \right).
\end{equation}
Collecting~\eqref{diffapp3_L2} and~\eqref{diffapp3}, we deduce
\begin{equation} \label{diffapp3_H1}
  \| v^{\eps,1} - u^{\eps,1} \|^2_{H^1_\eps(\Omega)} \leq C \, \eps \left( |\omega|^{\frac{d-2}{d-1}} \, \|\nabla (u^\star+g) \|^2_{L^\infty(\omega)} + \eps \, |\omega| \, \| \nabla^2 (u^\star+g) \|^2_{L^\infty(\omega)} \right),
\end{equation}
where we recall that the norm $\| \cdot \|_{H^1_\eps(\Omega)}$ is defined by~\eqref{eq:defnorm}.

\medskip

\noindent
{\bf Step~2.} We now bound $\overline{v}^\eps := u^\eps - v^{\eps,1}$. Using the coercivity of $A$, we write
\begin{align}
  c_- \| \nabla^\eps \overline{v}^\eps \|_{L^2(\Omega)}^2
  &\leq
  \int_\Omega \nabla^\eps \overline{v}^\eps \cdot A^\eps \nabla^\eps \overline{v}^\eps
  \nonumber
  \\
  &=
  \int_\Omega \nabla^\eps \overline{v}^\eps \cdot A^\eps \nabla^\eps (u^\eps - u^{\eps,1}) + \int_\Omega \nabla^\eps \overline{v}^\eps \cdot A^\eps \nabla^\eps (u^{\eps,1} - v^{\eps,1}),
  \label{estim2}
\end{align}
and we bound the second term of~\eqref{estim2} using~\eqref{diffapp3}:
\begin{align}
  & \int_\Omega \nabla^\eps \overline{v}^\eps \cdot A^\eps \nabla^\eps (u^{\eps,1} - v^{\eps,1})
  \nonumber
  \\
  &\leq
  C \, \|\nabla^\eps \overline{v}^\eps \|_{L^2(\Omega)} \, \|\nabla^\eps (u^{\eps,1} - v^{\eps,1}) \|_{L^2(\Omega)}
  \nonumber
  \\
  &\leq
  C \left( \sqrt{\eps} \, |\omega|^{\frac{d-2}{2(d-1)}} \, \|\nabla (u^\star+g) \|_{L^\infty(\omega)} + \eps \, |\omega|^{1/2} \, \| \nabla^2 (u^\star+g) \|_{L^\infty(\omega)} \right) \|\nabla^\eps \overline{v}^\eps \|_{L^2(\Omega)}.
  \label{eq:titi}
\end{align}
For the first term of~\eqref{estim2}, we write
$$
\int_\Omega \nabla^\eps \overline{v}^\eps \cdot A^\eps \nabla^\eps (u^\eps - u^{\eps,1}) = \int_\Omega A^\eps \nabla^\eps (u^\eps + g) \cdot \nabla^\eps \overline{v}^\eps - \int_\Omega A^\eps \nabla^\eps (u^{\eps,1}+g) \cdot \nabla^\eps \overline{v}^\eps.
$$
Defining the remainder terms $R^\eps_1$ and $R^\eps_2$ by
\begin{equation} \label{eq:r1}
  R^\eps_1 := \int_\Omega A^\eps \nabla^\eps (u^\eps + g) \cdot \nabla^\eps \overline{v}^\eps - \int_\Omega \left[ \int_{\mathcal{Y}} A (e_\alpha + \nabla w^\alpha) \right] \partial_\alpha (u^\star +g) \cdot \nabla^\eps \overline{v}^\eps
\end{equation}
and
\begin{equation} \label{eq:r2}
  R^\eps_2 := \int_\Omega A^\eps \nabla^\eps (u^{\eps,1} + g) \cdot \nabla^\eps \overline{v}^\eps - \int_\Omega A^\eps \left( e_\alpha + \nabla w^\alpha_\eps \right) \partial_\alpha (u^\star+g) \cdot \nabla^\eps \overline{v}^\eps,
\end{equation}
where we have used the short-hand notation $\dps \nabla w^\alpha_\eps(x) = \nabla w^\alpha \left(\frac{x'}{\eps},x_d \right)$, we deduce that
\begin{multline} \label{eq:titi2}
  \int_\Omega \nabla^\eps \overline{v}^\eps \cdot A^\eps \nabla^\eps (u^\eps - u^{\eps,1}) \\ = \int_\Omega \left( \left[ \int_{\mathcal{Y}} A ( e_\alpha + \nabla w^\alpha) \right] - A^\eps ( e_\alpha + \nabla w^\alpha_\eps) \right) \partial_\alpha (u^\star+g) \cdot \nabla^\eps \overline{v}^\eps + R^\eps_1 - R^\eps_2.
\end{multline}
To estimate the first term of~\eqref{eq:titi2}, we introduce the vector-valued function
\begin{equation} \label{eq:def_Z_scalaire}
Z_\alpha := \left[ \int_{\mathcal{Y}} A (e_\alpha + \nabla w^\alpha) \right] - A \left( e_\alpha + \nabla w^\alpha \right)
\end{equation}
and we recast~\eqref{eq:titi2} as
\begin{equation} \label{eq:titi3}
  \int_\Omega \nabla^\eps \overline{v}^\eps \cdot A^\eps \nabla^\eps (u^\eps - u^{\eps,1}) = \int_\Omega Z_\alpha \left(\frac{x'}{\eps},x_d\right) \partial_\alpha (u^\star+g)(x') \cdot \nabla^\eps \overline{v}^\eps(x) \, dx + R^\eps_1 - R^\eps_2.
\end{equation}
In view of the properties~\eqref{prcor2} satisfied by $w^\alpha$, we observe that $Z_\alpha$ satisfies the assumptions of Lemma~\ref{minZ}. This is obvious for Assumptions~(i), (ii) and~(iii). Assumption~(iv) stems from the second line of~\eqref{prcor2} and from~\eqref{prcor1} where we choose $v(x) = x_d$ as test function, which yields
\begin{equation} \label{eq:titi6}
\forall 1 \leq \alpha \leq d-1, \qquad \int_{\mathcal{Y}} A (e_\alpha + \nabla w^\alpha) \cdot e_d = 0.
\end{equation}
In addition, the function $\partial_\alpha (u^\star+g)$ belongs to $W^{1,\infty}(\omega)$ and $\overline{v}^\eps$ belongs to $V$. We are thus in position to use Lemma~\ref{minZ}. For any $1 \leq \alpha \leq d-1$, we thus infer that
\begin{equation} \label{eq:titi4}
\left| \int_\Omega Z_\alpha \left( \frac{x'}{\eps},x_d \right) \partial_\alpha (u^\star+g)(x') \cdot \nabla^\eps \overline{v}^\eps(x) \, dx \right| \leq C \, \eps \, |\omega|^{1/2} \, \| \nabla^2 (u^\star+g) \|_{L^\infty(\omega)} \, \| \nabla^\eps \overline{v}^\eps \|_{L^2(\Omega)}.
\end{equation}
Collecting~\eqref{estim2}, \eqref{eq:titi}, \eqref{eq:titi3} and~\eqref{eq:titi4}, we have shown that
\begin{multline} \label{eq:vendredi_}
\| \nabla^\eps \overline{v}^\eps \|_{L^2(\Omega)}^2 \leq C \left( \sqrt{\eps} \, |\omega|^{\frac{d-2}{2(d-1)}} \, \| \nabla (u^\star+g) \|_{L^\infty(\omega)} + \eps \, |\omega|^{1/2} \, \| \nabla^2 (u^\star+g) \|_{L^\infty(\omega)} \right) \|\nabla^\eps \overline{v}^\eps \|_{L^2(\Omega)} \\ + |R^\eps_1| + |R^\eps_2|.
\end{multline}
In the next step, we are going to show that
\begin{equation} \label{eq:titi5}
|R_1^\eps| + |R_2^\eps| \leq C \, \eps \left( |\omega|^{1/2} \, \| \nabla^2 (u^\star+g) \|_{L^\infty(\omega)} + \|h_\pm \|_{L^2(\omega)} + \| f \|_{L^2(\Omega)} \right) \| \nabla^\eps \overline{v}^\eps \|_{L^2(\Omega)}.
\end{equation}
Inserting this bound in~\eqref{eq:vendredi_}, we deduce that
\begin{multline*}
\| \nabla^\eps \overline{v}^\eps \|_{L^2(\Omega)} \leq C \, \Big( \sqrt{\eps} \, |\omega|^{\frac{d-2}{2(d-1)}} \, \| \nabla (u^\star+g) \|_{L^\infty(\omega)} + \eps \, |\omega|^{1/2} \, \| \nabla^2 (u^\star+g) \|_{L^\infty(\omega)} \\ + \eps \, \|h_\pm \|_{L^2(\omega)} + \eps \, \| f \|_{L^2(\Omega)} \Big).
\end{multline*}
Since $\overline{v}^\eps$ belongs to $V$, we can use the estimate~\eqref{eq:poinK_corro} stated in Lemma~\ref{poincarebis}, and we obtain from the above bound that
\begin{multline} \label{eq:france_inter}
\| \overline{v}^\eps \|_{H^1_\eps(\Omega)} \leq C \, \sqrt{\eps} \, \Big( |\omega|^{\frac{d-2}{2(d-1)}} \, \|\nabla (u^\star+g) \|_{L^\infty(\omega)} + \sqrt{\eps} \, |\omega|^{1/2} \, \| \nabla^2 (u^\star+g) \|_{L^\infty(\omega)} \\ + \sqrt{\eps} \, \|h_\pm \|_{L^2(\omega)} + \sqrt{\eps} \, \| f \|_{L^2(\Omega)} \Big).
\end{multline}

\medskip

\noindent
{\bf Step~3.} We show in this step the claimed bound~\eqref{eq:titi5} on $R^\eps_1$ and $R^\eps_2$. The bound on $R^\eps_2$ stems from inserting in~\eqref{eq:r2} the definition of $u^{\eps,1}$. Using again the short-hand notation $\dps \nabla w^\alpha_\eps(x) = \nabla w^\alpha \left(\frac{x'}{\eps},x_d \right)$, we compute
\begin{align*}
  R^\eps_2
  &=
  \int_\Omega A^\eps \left[ \nabla^\eps (u^{\eps,1}+g) - \left( e_\alpha + \nabla w^\alpha_\eps \right) \partial_\alpha (u^\star+g) \right] \cdot \nabla^\eps \overline{v}^\eps
  \\
  &= \eps \int_\Omega A^\eps(x) \left[ w^\alpha \left(\frac{x'}{\eps},x_d \right) \nabla \partial_\alpha (u^\star+g)(x') \right] \cdot \nabla^\eps \overline{v}^\eps(x),
\end{align*}
and thus
\begin{equation} \label{eq:bound_R2eps}
  | R^\eps_2 |
  \leq
  C \, \eps \, \| \nabla^2 (u^\star+g) \|_{L^2(\Omega)} \, \| \nabla^\eps \overline{v}^\eps \|_{L^2(\Omega)}
  \leq
  C \, \eps \, |\omega|^{1/2} \, \| \nabla^2 (u^\star+g) \|_{L^\infty(\omega)} \, \| \nabla^\eps \overline{v}^\eps \|_{L^2(\Omega)}.
\end{equation}
We now bound $R^\eps_1$ defined by~\eqref{eq:r1}. Using the variational formulation~\eqref{var:diff} with the test function $\overline{v}^\eps \in V$ defined in Step~2, the first term of $R^\eps_1$ reads
\begin{equation} \label{eq:laposte1}
\int_\Omega A^\eps \nabla^\eps (u^\eps+g) \cdot \nabla^\eps \overline{v}^\eps = \int_\Omega f \, \overline{v}^\eps + \int_{\Gamma_\pm} h_\pm \, \overline{v}^\eps,
\end{equation}
where $\dps \Gamma_\pm = \omega \times \left\{ \pm \frac{1}{2} \right\}$. Using~\eqref{eq:titi6}, the second term of $R^\eps_1$ reads 
\begin{align*}
  \int_\Omega \left[ \int_{\mathcal{Y}} A ( e_\alpha + \nabla w^\alpha) \right] \partial_\alpha (u^\star+g) \cdot \nabla^\eps \overline{v}^\eps
  &=
  \int_\Omega A^\star_{\alpha \beta} \, \partial_\alpha (u^\star+g) \, \partial_\beta \overline{v}^\eps
  \\
  &=
  \int_\Omega A^\star \nabla' (u^\star+g) \cdot \nabla' \overline{v}^\eps
  \\
  &=
  \int_\omega A^\star \nabla' (u^\star+g) \cdot \nabla' \m(\overline{v}^\eps),
\end{align*}
where we have used in the last line that $u^\star+g$ is independent from $x_d$. We next proceed by using the variational formulation~\eqref{eq:fv_star} of the homogenized problem and the fact that $\m(\overline{v}^\eps)$ belongs to $H^1_0(\omega)$ (and is thus an admissible test function for~\eqref{eq:fv_star}). We thus obtain that the second term of $R^\eps_1$ reads 
\begin{equation} \label{eq:laposte2}
\int_\Omega \left[ \int_{\mathcal{Y}} A ( e_\alpha + \nabla w^\alpha) \right] \partial_\alpha (u^\star+g) \cdot \nabla^\eps \overline{v}^\eps
=
\int_\omega (\m(f) + h_\pm) \m(\overline{v}^\eps)
=
\int_\Omega f \, \m(\overline{v}^\eps) + \int_\omega h_\pm \, \m(\overline{v}^\eps).
\end{equation}
We thus deduce from~\eqref{eq:laposte1} and~\eqref{eq:laposte2} that
\begin{equation} \label{eq:laposte3}
  |R^\eps_1|
  \leq
  \left| \int_\Omega f \, \big( \overline{v}^\eps - \m(\overline{v}^\eps) \big) \right|
  +
  \left| \int_{\Gamma_\pm} h_\pm \, \overline{v}^\eps - \int_\omega h_\pm \, \m(\overline{v}^\eps) \right|.
\end{equation}
Using Lemma~\ref{potmoy} with $z=\pm 1/2$, we can bound the second term of~\eqref{eq:laposte3}:
\begin{equation} \label{eq:laposte4}
  \left| \int_{\Gamma_\pm} h_\pm \, \overline{v}^\eps - \int_\omega h_\pm \, \m(\overline{v}^\eps) \right|
  \leq
  \eps \, \|h_\pm \|_{L^2(\omega)} \, \| \nabla^\eps \overline{v}^\eps \|_{L^2(\Omega)}.
\end{equation}
For the first term of~\eqref{eq:laposte3}, we write, using again Lemma~\ref{potmoy}, that
$$
\left| \int_\omega f(\cdot,z) \, \big( \overline{v}^\eps(\cdot,z) - \m(\overline{v}^\eps) \big) \right|
\leq
\eps \, \| f(\cdot,z) \|_{L^2(\omega)} \, \| \nabla^\eps \overline{v}^\eps \|_{L^2(\Omega)},
$$
and thus
\begin{equation} \label{eq:laposte5}
\left| \int_\Omega f \, \big( \overline{v}^\eps - \m(\overline{v}^\eps) \big) \right|
\leq
\eps \int_{-1/2}^{1/2} \| f(\cdot,z) \|_{L^2(\omega)} \, \| \nabla^\eps \overline{v}^\eps \|_{L^2(\Omega)} \, dz
\leq
\eps \, \| f \|_{L^2(\Omega)} \, \| \nabla^\eps \overline{v}^\eps \|_{L^2(\Omega)}.
\end{equation}
Collecting~\eqref{eq:laposte3}, \eqref{eq:laposte4} and~\eqref{eq:laposte5}, we deduce that
$$
|R^\eps_1| \leq \eps \, \big( \| f \|_{L^2(\Omega)} + \|h_\pm \|_{L^2(\omega)} \big) \, \| \nabla^\eps \overline{v}^\eps \|_{L^2(\Omega)}.
$$
Collecting this estimate with~\eqref{eq:bound_R2eps}, we obtain the claimed bound~\eqref{eq:titi5}.

\medskip

\noindent
{\bf Step~4.} Collecting~\eqref{diffapp3_H1} and~\eqref{eq:france_inter}, we deduce~\eqref{eq:titi7}, which concludes the proof of Theorem~\ref{thconvforte_diffusion}.
\end{proof}

\section{The elasticity case}\label{sec:elasticity}

We now turn to the linear elasticity problem.

\subsection{Definition of the plate problem} \label{sec:def_plate_pb_elas}

We use here the same notations as introduced at the beginning of Section~\ref{sec:diff}. We furthermore introduce some notations specific to the elasticity case considered in this Section~\ref{sec:elasticity}. First, for any $A,B\in\R^{d \times d}$, we denote by $A:B = A_{ij} \, B_{ij}$. In particular, for any $1 \leq i,j\leq d$ and any $A\in\R^{d \times d}$, we have $A : (e_i \otimes e_j) = e_i^T A e_j$.

Recalling that $\mathbb{R}^{d\times d}_s$ is the set of symmetric matrices of size $d \times d$, we denote by $\mathcal{M}$ the set of tensors $M \in \R^{d \times d \times d \times d}$ such that
$$
\forall \xi \in \mathbb{R}^{d\times d}_s, \qquad |M \xi| \leq c_+ \, |\xi| \quad \text{and} \quad \xi : M \xi \geq c_- \, |\xi|^2,
$$
and which have the following symmetries: 
\begin{equation} \label{eq:symm_A}
\forall 1 \leq i,j,k,l \leq d, \quad M_{ijkl} = M_{jikl} = M_{ijlk} = M_{klij}.
\end{equation}
Let $\dps A: \mathbb{R}^{d-1} \times \left( -\frac{1}{2}, \frac{1}{2} \right) \to \mathcal{M}$ be such that, for any $\dps x_d \in \left(-\frac{1}{2},\frac{1}{2} \right)$, the function $x' \in \R^{d-1} \mapsto A(x',x_d)$ is $Y$-periodic. For any $x = (x',x_d) \in \Omega$, we set
\begin{equation} \label{eq:utile_elas}
A^\eps(x) = A \left( \frac{x'}{\eps}, x_d \right).
\end{equation}
In addition, we define $\mathcal{A}^\eps$ by
$$
\forall x \in \Omega^\eps, \quad \mathcal{A}^\eps(x):=A^\eps\left(x', \frac{x_d}{\eps}\right) = A\left( \frac{x'}{\eps}, \frac{x_d}{\eps} \right).
$$
We denote
$$
V^\eps := \left\{v \in \left( H^1(\Omega^\eps) \right)^d, \quad v=0 \text{ on } \partial \omega \times \left( -\frac{\eps}{2},\frac{\eps}{2} \right) \right\}.
$$
For any $v \in \left( {\mathcal{D}}'(\R^d) \right)^d$, let $\dps e(v):= \frac{1}{2} \left( \nabla v + \nabla v^T \right)$ denote the symmetric gradient of $v$.

\medskip

For any $\eps >0$, let $\widetilde{f}^\eps \in \left( L^2(\Omega^\eps) \right)^d$, $\widetilde{g}^\eps \in \left( H^1(\Omega^\eps) \right)^d$ and $\widetilde{h}^\eps_\pm \in \left( L^2(\omega) \right)^d$. The plate linear elasticity problem reads as follows: find $\widetilde{u}^\eps \in V^\eps$ such that
\begin{equation}\label{pbelas_eps}
\left\{
\begin{aligned}
  -\div (\mathcal{A}^\eps e(\widetilde{u}^\eps)) &= \widetilde{f}^\eps + \div(\mathcal{A}^\eps e(\widetilde{g}^\eps)) \quad \text{in $\Omega^\eps$},
  \\
\mathcal{A}^\eps e(\widetilde{u}^\eps) \cdot n^\eps &= \eps \, \widetilde{h}^\eps_\pm - \mathcal{A}^\eps e(\widetilde{g}^\eps) \cdot n^\eps \quad \text{on $\omega \times \left\{ \pm \frac{\eps}{2} \right\}$}. 
\end{aligned}
\right.
\end{equation}
Similarly to the plate diffusion problem~\eqref{pb:diff1}, $\widetilde{f}^\eps$ in~\eqref{pbelas_eps} is the load imposed in $\Omega^\eps$. The function $\widetilde{g}^\eps$ is inserted as a possible extension of a non-trivial Dirichlet boundary condition (so that $\widetilde{u}^\eps + \widetilde{g}^\eps$ does not necessarily vanish on $\partial \omega \times \left( -\eps/2,\eps/2 \right)$). Note that, in contrast to the plate diffusion problem, $\widetilde{g}^\eps$ may depend on $x_d$. The function $\widetilde{h}^\eps_\pm$ plays the role of a Neumann boundary condition (i.e. a traction boundary condition for this elasticity problem) on the top and bottom faces of the plate $\Omega^\eps$.

\medskip

As already pointed out in Section~\ref{sec:diff}, we consider the case of non-homogeneous Dirichlet boundary conditions and we track below the dependence of our estimates with respect to the size of $\omega$ since we have in mind the application of these results to the numerical analysis of MsFEM approaches (see~\cite{MsFEM-ELLL}), where these two points are needed.

\medskip

As in the diffusion case, we are going to rescale the problem in the $d$-direction, in order to work with problems posed on a domain $\Omega$ independent of $\eps$ (recall Figure~\ref{fig:rescaling}). We follow here~\cite[eq.~(1.4)]{dauge} (see also~\cite[Sec.~1.3]{ciarlet1988mathematical_vol2}).

\medskip

For any $u \in \left( \mathcal{D}'(\R^d) \right)^d$ and $T \in \left( \mathcal{D}'(\R^d) \right)^{d \times d}$, we define the scaled operators $e^\eps$ and $\div^\eps$ by
$$
e^\eps_{\alpha \beta}(u) := e_{\alpha \beta}(u), \qquad e^\eps_{\alpha d}(u) := \frac{1}{\eps} \, e_{\alpha d}(u) \qquad \text{and} \qquad e^\eps_{dd}(u) := \frac{1}{\eps^2} \, e_{dd}(u)
$$
for any $1 \leq \alpha, \beta \leq d-1$, and
$$
\begin{cases}
  \dps [ \div^\eps T ]_\alpha := \partial_\beta T_{\alpha \beta} + \frac{1}{\eps} \, \partial_d T_{\alpha d} = \sum_{\beta = 1}^{d-1} \partial_\beta T_{\alpha \beta} + \frac{1}{\eps} \, \partial_d T_{\alpha d},
  \\
  \dps [ \div^\eps T]_d := \frac{1}{\eps} \, \partial_\beta T_{d \beta} + \frac{1}{\eps^2} \, \partial_d T_{dd} = \sum_{\beta = 1}^{d-1} \frac{1}{\eps} \, \partial_\beta T_{d \beta} + \frac{1}{\eps^2} \, \partial_d T_{dd}.
  \end{cases}
$$
For any $u \in \left( \mathcal{D}(\Omega) \right)^d$ and any $T \in \left( \mathcal{D}(\Omega) \right)^{d \times d}$ such that $T(x)$ is a symmetric matrix for any $x \in \Omega$, we have $\dps \int_\Omega T : e^\eps(u) = - \int_\Omega u \cdot \div^\eps T$. We denote
\begin{equation}\label{def:V2}
V := \left\{ v \in \left( H^1(\Omega) \right)^d, \quad v=0 \text{ on } \partial\omega \times \left( -\frac{1}{2}, \frac{1}{2} \right) \right\}.
\end{equation} 
It can then be easily seen (see Appendix~\ref{app:scaling_vectoriel} for details) that problem~\eqref{pbelas_eps} is equivalent to finding $u^\eps \in V$ such that
\begin{equation} \label{pbelas}
\left\{
\begin{aligned}
  - \div^\eps (A^\eps e^\eps(u^\eps)) &= f^\eps + \div^\eps(A^\eps e^\eps(g^\eps)) \quad \text{in $\Omega$},
  \\
  A^\eps e^\eps(u^\eps) \cdot n &= \begin{pmatrix} \eps \, (h^\eps_\pm)' \\ \eps^2 \, (h^\eps_\pm)_d \end{pmatrix} - A^\eps e^\eps(g^\eps) \cdot n \quad \text{on $\omega \times \left\{\pm \frac{1}{2}\right\}$},
\end{aligned}
\right.
\end{equation}
where $A^\eps$ is given by~\eqref{eq:utile_elas} and where, for any $x = (x',x_d) \in \Omega$ and any $1 \leq \alpha \leq d-1$,
\begin{equation} \label{eq:scaling_u_el}
  \begin{array}{rlcrl}
  u_\alpha^\eps(x) &= \widetilde{u}_\alpha^\eps(x',\eps \, x_d) \qquad &\text{and}& \qquad u_d^\eps(x) &= \eps \, \widetilde{u}_d^\eps(x',\eps \, x_d),
  \\ \noalign{\vskip 2pt}
  f_\alpha^\eps(x) &= \widetilde{f}_\alpha^\eps(x',\eps \, x_d) \qquad &\text{and}& \qquad f_d^\eps(x) &= \eps^{-1} \, \widetilde{f}_d^\eps(x',\eps \, x_d),
  \\ \noalign{\vskip 2pt}
  h_\alpha^\eps(x') &= \widetilde{h}_\alpha^\eps(x') \qquad &\text{and}& \qquad h_d^\eps(x') &= \eps^{-1} \, \widetilde{h}_d^\eps(x'),
  \\ \noalign{\vskip 2pt}
  g_\alpha^\eps(x) &= \widetilde{g}_\alpha^\eps(x',\eps \, x_d) \qquad &\text{and}& \qquad g_d^\eps(x) &= \eps \, \widetilde{g}_d^\eps(x',\eps \, x_d),
  \end{array}
\end{equation}
where, in the third line, we have written $h$ as a short-hand notation for $h_\pm$. Note that the scaled operator $e^\eps$ and the displacement $u^\eps$ are defined in such a way that $(e^\eps(u^\eps))(x) = (e(\widetilde{u}^\eps))(x',\eps \, x_d)$.

\medskip

The variational formulation of~\eqref{pbelas} reads as follows:
\begin{equation} \label{formvarelast}
\text{Find $u^\eps \in V$ such that, for any $v \in V$}, \qquad a^\eps(u^\eps,v) = b^\eps(v),
\end{equation}
where
$$
a^\eps(u^\eps,v) := \int_\Omega A^\eps e^\eps(u^\eps) : e^\eps(v)
$$
and
$$
b^\eps(v) := \int_\Omega f^\eps \cdot v - \int_\Omega A^\eps e^\eps(g^\eps) : e^\eps(v) + \int_\omega h^\eps_+ \cdot v\left(\cdot,\frac{1}{2}\right) + \int_\omega h^\eps_- \cdot v\left(\cdot,-\frac{1}{2}\right).
$$ 
We are going to use the following Korn inequality (see~\cite{ciarlet1988mathematical}), a proof of which is given in Appendix~\ref{app:korn} (see Lemma~\ref{lem:Korn3}): there exists $C(\Omega)$ such that
\begin{equation} \label{eq:korn}
  \forall u \in V, \quad \| u \|_{H^1(\Omega)} \leq C(\Omega) \, \| e(u) \|_{L^2(\Omega)}.
\end{equation}
Using this inequality, the fact that $\| e(u) \|_{L^2(\Omega)} \leq \| e^\eps(u) \|_{L^2(\Omega)}$ for any $u \in V$ (since we have assumed that $0 < \eps \leq 1$) and the Lax-Milgram theorem, one easily obtains that there exists a unique solution to~\eqref{formvarelast}.

\medskip

We now establish a priori bounds on $u^\eps$. Taking $v = u^\eps$ in~\eqref{formvarelast} and using a trace inequality, we get
\begin{multline*}
c_- \| e^\eps(u^\eps) \|^2_{L^2(\Omega)} \leq \|f^\eps \|_{L^2(\Omega)} \, \| u^\eps \|_{L^2(\Omega)} \\ + c_+ \, \|e^\eps(g^\eps)\|_{L^2(\Omega)} \, \| e^\eps(u^\eps) \|_{L^2(\Omega)} + C \, \|h^\eps_\pm\|_{L^2(\omega)} \, \| u^\eps \|_{H^1(\Omega)}.
\end{multline*}
Using~\eqref{eq:korn} and the fact that $\| e(u^\eps) \|_{L^2(\Omega)} \leq \| e^\eps(u^\eps) \|_{L^2(\Omega)}$, we deduce that
\begin{multline*}
  c_- \| e^\eps(u^\eps) \|^2_{L^2(\Omega)} \leq C \, \|f^\eps \|_{L^2(\Omega)} \, \| e^\eps(u^\eps) \|_{L^2(\Omega)} \\ + c_+ \, \|e^\eps(g^\eps)\|_{L^2(\Omega)} \, \| e^\eps(u^\eps) \|_{L^2(\Omega)} + C \, \|h^\eps_\pm\|_{L^2(\omega)} \, \| e^\eps(u^\eps) \|_{L^2(\Omega)}
\end{multline*}
and thus
\begin{equation}\label{est:sig}
  \| e^\eps(u^\eps) \|_{L^2(\Omega)} \leq C \left( \|f^\eps \|_{L^2(\Omega)} + \|e^\eps(g^\eps)\|_{L^2(\Omega)} + \|h^\eps_\pm\|_{L^2(\omega)} \right)
\end{equation}
for some constant $C>0$ independent of $\eps$. Using again~\eqref{eq:korn}, we infer that
\begin{equation}\label{est:u}
 \| u^\eps \|_{H^1(\Omega)} \leq C \left( \|f^\eps \|_{L^2(\Omega)} + \|e^\eps(g^\eps)\|_{L^2(\Omega)} + \|h^\eps_\pm\|_{L^2(\omega)} \right)
\end{equation}
for some constant $C>0$ independent of $\eps$.

\medskip

To obtain bounds independent of $\eps>0$ on $\left( \| u^\eps \|_{H^1(\Omega)} \right)_{\eps>0}$ and $\left( \| e^\eps(u^\eps) \|_{L^2(\Omega)} \right)_{\eps >0}$, it is convenient to work with sequences $\left( \|f^\eps \|_{L^2(\Omega)} \right)_{\eps >0}$, $\left( \|e^\eps(g^\eps)\|_{L^2(\Omega)} \right)_{\eps >0}$ and $\left( \|h^\eps_\pm\|_{L^2(\omega)} \right)_{\eps>0}$ which are bounded. 

\medskip

From now on in this Section~\ref{sec:elasticity}, we assume that there exist $f \in (L^2(\Omega))^d$ and $h_\pm \in (L^2(\omega))^d$ such that, for all $\eps >0$, 
\begin{equation} \label{eq:inde_eps_vec}
f^\eps = f \quad \mbox{ and } \quad h_\pm^\eps = h_\pm.
\end{equation}
As pointed out in Remark~\ref{rem:hyp_f_ind_eps} in the diffusion case, the assumptions on $(f^\eps)_{\eps >0}$ and $(h_\pm^\eps)_{\eps >0}$ could probably be weakened. We postpone the precise assumption we make on the sequence $(g^\eps)_{\eps >0}$ until Section~\ref{sec:weakel}. Provided that the sequence $\left(\|e^\eps(g^\eps)\|_{L^2(\Omega)} \right)_{\eps >0}$ is bounded, we infer from~\eqref{est:u} and~\eqref{eq:inde_eps_vec} that there exists $u^\star \in V$ such that, up to the extraction of a subsequence, 
$$
u^\eps \underset{\eps \rightarrow 0}{\rightharpoonup} u^\star \text{ weakly in } \left(H^1(\Omega)\right)^d.
$$

\subsection{Homogenization result: weak convergence}\label{sec:weakel}

The aim of this section is to recall the result of~\cite{caillerieElasticite}, where the homogenized limit of~\eqref{formvarelast} has been identified. We introduce the set
$$
\mathcal{W}(\mathcal{Y}) := \left\{ v \in \left(H^1_{\rm loc}\left(\mathbb{R}^{d-1} \times \left( -\frac{1}{2},\frac{1}{2} \right)\right)\right)^d, \forall z \in \left( -\frac{1}{2},\frac{1}{2} \right), \, v(\cdot,z) \text{ is $Y$-periodic and } \int_{\mathcal{Y}} v = 0 \right\}.
$$
Let $w^{\alpha \beta} \in \mathcal{W}(\mathcal{Y})$ be the unique solution to the problem
\begin{equation} \label{el-prcor1} 
\forall v \in \mathcal{W}(\mathcal{Y}), \quad \int_{\mathcal{Y}} A \big( e(w^{\alpha \beta}) + e_\alpha \otimes e_\beta \big) : e(v) = 0 
\end{equation}
for any $1\leq \alpha,\beta \leq d-1$. Note that, because of the symmetry properties of $A$, we also have $\dps \int_{\mathcal{Y}} A \left( e(w^{\alpha \beta}) + \frac{1}{2} (e_\alpha \otimes e_\beta + e_\beta \otimes e_\alpha) \right) : e(v) = 0$ for any $v \in \mathcal{W}(\mathcal{Y})$, and thus $w^{\alpha \beta} = w^{\beta \alpha}$. The function $w^{\alpha \beta}$ is equivalently the unique solution in $\mathcal{W}(\mathcal{Y})$ to
\begin{equation} \label{el-prcor1_edp} 
\left\{
\begin{aligned}
-\div A \big( e(w^{\alpha \beta}) + e_\alpha \otimes e_\beta \big) &= 0 \quad \text{ in } \mathcal{Y}, \\ 
\left[ A \big( e(w^{\alpha \beta}) + e_\alpha \otimes e_\beta \big) \right] e_d &= 0 \quad \text{ on } \mathcal{Y}_\pm,
\end{aligned}
\right.
\end{equation}
where $\dps \mathcal{Y}_\pm := Y \times \left\{\pm \frac{1}{2} \right\}$. 

\medskip

Let $W^{\alpha \beta} \in \mathcal{W}(\mathcal{Y})$ be the unique solution to the problem
\begin{equation} \label{el-prcor2} 
  \forall v \in \mathcal{W}(\mathcal{Y}), \quad \int_{\mathcal{Y}} A \big( e(W^{\alpha \beta}) - x_d \, e_\alpha \otimes e_\beta \big) : e(v) = 0
\end{equation}
for any $1\leq \alpha,\beta \leq d-1$. The function $W^{\alpha \beta}$ is equivalently the unique solution in $\mathcal{W}(\mathcal{Y})$ to
\begin{equation} \label{el-prcor2_edp} 
  \left\{
  \begin{aligned}
    -\div A \big( e(W^{\alpha \beta}) - x_d \, e_\alpha \otimes e_\beta \big) &= 0 \quad \text{ in } \mathcal{Y}, \\
    \left[ A \big( e(W^{\alpha \beta}) - x_d \, e_\alpha \otimes e_\beta \big) \right] e_d &= 0 \quad \text{ on } \mathcal{Y}_\pm.
  \end{aligned}
  \right.
\end{equation}	

\medskip

We define the set of the {\em Kirchoff-Love displacements} as follows:
$$
\VKL := \left\{ v \in \left(H^1(\Omega)\right)^{d-1} \times H^2_0(\omega), \, \exists \widehat{v} \in \left(H^1_0(\omega)\right)^{d-1} \times H^2_0(\omega), \, v_\alpha = \widehat{v}_\alpha - x_d \, \partial_\alpha \widehat{v}_d, \, v_d = \widehat{v}_d \right\},
$$
where $H^2_0(\omega)$ is the closure of $\mathcal{D}(\omega)$ in $H^2(\omega)$. For any $v \in \VKL$, we use the notation $\widehat{v}$ to denote the (unique) corresponding element of $\left(H^1_0(\omega)\right)^{d-1} \times H^2_0(\omega)$.

Let us also denote 
$$
\GKL := \left\{ g \in \left(H^1(\Omega)\right)^{d-1} \times H^2(\omega), \, \exists \widehat{g} \in \left(H^1(\omega)\right)^{d-1} \times H^2(\omega), \, g_\alpha = \widehat{g}_\alpha - x_d \, \partial_\alpha \widehat{g}_d, \, g_d = \widehat{g}_d \right\}.
$$
For any $g \in \GKL$, we denote by $\widehat{g}$ the (unique) corresponding element of $\left(H^1(\omega)\right)^{d-1} \times H^2(\omega)$.

It then holds that $\VKL \subset \GKL$ and that, for any $g \in \GKL$, the sequence $\left( \|e^\eps(g)\|_{L^2(\Omega)} \right)_{\eps >0}$ is bounded. Observe indeed that
$$
\forall g \in \GKL, \quad \forall 1 \leq i \leq d, \quad e^\eps_{id}(g) = 0,
$$
and therefore $e^\eps(g) = e(g)$ for any $g \in \GKL$. In addition, for any $1 \leq \alpha,\beta \leq d-1$, we have $e_{\alpha \beta}(g) = e_{\alpha \beta}(\widehat{g}) - x_d \, \partial_{\alpha \beta} \widehat{g}_d$. 

From now on, we make the assumption that the function $g^\eps$ appearing in the linear form $b^\eps$ of~\eqref{formvarelast} is such that
\begin{equation} \label{eq:assump_g}
  \text{there exists $g \in \GKL$ such that, for all $\eps>0$, we have $g^\eps = g$.}
\end{equation}  

\medskip

We recall the following result from~\cite{caillerieElasticite} (for the sake of completeness, a partial proof of Theorem~\ref{limitel} is provided in Appendices~\ref{app:diff_preuve2_memb} and~\ref{app:diff_preuve2_bend}). 

\begin{theorem}[from Theorem~6.2 of~\cite{caillerieElasticite}] \label{limitel}
Under Assumptions~\eqref{eq:inde_eps_vec} and~\eqref{eq:assump_g}, the sequence $(u^\eps)_{\eps >0}$ of solutions to~\eqref{formvarelast} weakly converges to $u^\star$ in $\left(H^1(\Omega)\right)^d$, where the function $u^\star$ belongs to $\VKL$ and is the unique solution to
\begin{multline} \label{formvarelasthomog}
  \forall \phi \in \VKL, \quad \int_\omega K^\star \, \mathcal{P} u^\star : \mathcal{P} \phi = \int_\omega (\m(f)+h_++h_-) \cdot \widehat{\phi} - \int_\omega \m(x_d \, f_\alpha) \, \partial_\alpha \widehat{\phi}_d \\ - \frac{1}{2} \int_\omega \big( (h_+)_\alpha - (h_-)_\alpha \big) \, \partial_\alpha \widehat{\phi}_d - \int_\omega K^\star \, \mathcal{P} g : \mathcal{P} \phi,
\end{multline}
where the homogenized tensor is given by
$$
K^\star := \begin{pmatrix} K^\star_{11} & K^\star_{12} \\ (K^\star_{12})^T & K^\star_{22} \end{pmatrix},
$$
where each subtensor is defined as follows: for all $1\leq \alpha, \beta, \gamma, \delta \leq d-1$,
\begin{align}
  (K^\star_{11})_{\alpha \beta \gamma \delta} &:= \int_\Y A \big( e(w^{\alpha \beta}) + e_\alpha \otimes e_\beta \big) : \big( e(w^{\gamma \delta}) + e_\gamma \otimes e_\delta \big),
  \nonumber
  \\
  (K^\star_{12})_{\alpha \beta \gamma \delta} &:= \int_\Y A \big( e(w^{\alpha \beta}) + e_\alpha \otimes e_\beta \big) : \big( e(W^{\gamma \delta}) - x_d \, e_\gamma \otimes e_\delta \big),
  \label{eq:def_Kstar12}
  \\
  (K^\star_{22})_{\alpha \beta \gamma \delta} &:= \int_\Y A \big( e(W^{\alpha \beta}) - x_d \, e_\alpha \otimes e_\beta \big) : \big( e(W^{\gamma \delta}) - x_d \, e_\gamma \otimes e_\delta \big),
  \label{eq:def_Kstar22}
  \\
  ((K^\star_{12})^T)_{\alpha \beta \gamma \delta} &:= (K^\star_{12})_{\gamma \delta \alpha \beta},
  \nonumber
\end{align}
and where
\begin{equation*}
\mathcal{P} : \left\{ 
\begin{array}{ccc}
\GKL &\rightarrow &L^2\left(\omega;\left( \R_s^{(d-1)\times (d-1)}\right)^2 \right) \\
v &\mapsto& \left(
\begin{array}{c} e'(\widehat{v}') \\ \nabla^2_{d-1} \widehat{v}_d \\ \end{array}\right)
\end{array}\right. .
\end{equation*}
In the above expression of $K^\star$, the correctors $w^{\alpha\beta}$ and $W^{\alpha\beta}$ are the solutions to~\eqref{el-prcor1_edp} and~\eqref{el-prcor2_edp}, respectively.

By construction, $K^\star$ is symmetric, in the sense that $(K^\star_{11})_{\alpha \beta \gamma \delta} = (K^\star_{11})_{\beta \alpha \gamma \delta} = (K^\star_{11})_{\alpha \beta \delta \gamma}$ (and likewise for $K^\star_{22}$ and $K^\star_{12}$) and $(K^\star_{11})_{\alpha \beta \gamma \delta} = (K^\star_{11})_{\gamma \delta \alpha \beta}$ (and likewise for $K^\star_{22}$). In addition, $K^\star$ is coercive, in the sense that there exists $\widetilde{c}_- > 0$ (one can take $\widetilde{c}_- = c_-/12$) such that, for any $\sigma$ and $\tau$ in $\R_s^{(d-1)\times(d-1)}$, we have $\dps K^\star \begin{pmatrix} \sigma \\ \tau \end{pmatrix} : \begin{pmatrix} \sigma \\ \tau \end{pmatrix} \geq \widetilde{c}_- \begin{pmatrix} \sigma \\ \tau \end{pmatrix} : \begin{pmatrix} \sigma \\ \tau \end{pmatrix}$.
\end{theorem}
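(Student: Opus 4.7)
The plan is to apply the method of oscillating test functions, adapted to the plate geometry and relying simultaneously on the membrane correctors $w^{\alpha\beta}$ and the bending correctors $W^{\alpha\beta}$.

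First, from the a priori estimate~\eqref{est:u} combined with Assumptions~\eqref{eq:inde_eps_vec}--\eqref{eq:assump_g} and the observation that $\|e^\eps(g)\|_{L^2(\Omega)} = \|e(g)\|_{L^2(\Omega)}$ for $g \in \GKL$, we extract a subsequence along which $u^\eps \rightharpoonup u^\star$ weakly in $(H^1(\Omega))^d$ for some $u^\star \in V$. The uniform $L^2$ bound on $e^\eps(u^\eps)$ from~\eqref{est:sig}, combined with the scaling relations $e^\eps_{\alpha d} = \eps^{-1} e_{\alpha d}$ and $e^\eps_{dd} = \eps^{-2} e_{dd}$, forces $e_{\alpha d}(u^\eps) \to 0$ and $e_{dd}(u^\eps) \to 0$ strongly in $L^2(\Omega)$. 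Passing to the weak limit gives $e_{\alpha d}(u^\star) = 0$ and $e_{dd}(u^\star) = 0$, so that $u^\star_d$ is independent of $x_d$ and $u^\star_\alpha = \widehat u^\star_\alpha - x_d \partial_\alpha \widehat u^\star_d$ with $\widehat u^\star_\alpha$ recovered from $\m(u^\star_\alpha)$. The homogeneous Dirichlet conditions are inherited from the trace on $\partial \omega \times (-1/2, 1/2)$, placing $u^\star$ in $\VKL$ with $\widehat u^\star_d \in H^2_0(\omega)$ and $\widehat u^\star{}' \in (H^1_0(\omega))^{d-1}$.

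For any $\phi \in \VKL$, I next introduce an oscillating test function of the form $v^\eps = \phi + \eps\, \Theta^1_\eps + \eps^2\, \Theta^2_\eps$, where the $\alpha$-components of the correction are of order $\eps$ and the $d$-component is of order $\eps^2$ (an asymmetric scaling mirroring~\eqref{eq:scaling_u_el}), and where the building block of the correction is
\begin{equation*}
  w^{\gamma\delta}\bigl(\tfrac{\cdot}{\eps}, x_d\bigr)\, e_{\gamma\delta}(\widehat{\phi}')(x') \;-\; W^{\gamma\delta}\bigl(\tfrac{\cdot}{\eps}, x_d\bigr)\, \partial_{\gamma\delta} \widehat{\phi}_d(x'),
\end{equation*}
multiplied by a smooth cutoff $\theta_\eps$ localising the correction away from $\partial \omega$ so that $v^\eps \in V$. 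A direct computation of $e^\eps(v^\eps)$ shows that, up to an $L^2$-negligible remainder, it is the sum of the effective Kirchhoff--Love strain $e(\widehat{\phi}') - x_d\, \nabla^2 \widehat{\phi}_d$ and an oscillating profile built from the combined corrector strains $e(w^{\gamma\delta}) + e_\gamma \otimes e_\delta$ and $e(W^{\gamma\delta}) - x_d\, e_\gamma \otimes e_\delta$ weighted by $e_{\gamma\delta}(\widehat{\phi}')$ and $-\partial_{\gamma\delta} \widehat{\phi}_d$ respectively. Inserting $v^\eps$ into~\eqref{formvarelast} and passing to the limit, the linear form $b^\eps(v^\eps)$ converges, through the decomposition $\phi_\alpha = \widehat{\phi}_\alpha - x_d \partial_\alpha \widehat{\phi}_d$ and $\phi_d = \widehat{\phi}_d$, to exactly the right-hand side of~\eqref{formvarelasthomog}; the bilinear form $a^\eps(u^\eps, v^\eps)$ converges, by standard mean-value convergence of products of oscillating periodic functions with smooth functions (Lemma~\ref{limmoyenne}), to the quadratic form $\int_\omega K^\star\, \P u^\star : \P \phi$, the block structure of~\eqref{eq:def_Kstar12}--\eqref{eq:def_Kstar22} arising from pairing the (membrane, bending) microscopic contributions of $u^\eps$ against those of $v^\eps$. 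Coercivity of $K^\star$ provides uniqueness of the solution to~\eqref{formvarelasthomog}, upgrading subsequential weak convergence to convergence of the whole sequence.

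The main obstacle is the rigorous identification of the two-scale limits of $\eps^{-1} e_{\alpha d}(u^\eps)$ and $\eps^{-2} e_{dd}(u^\eps)$, which are $L^2$-bounded but do not correspond to ordinary first-order derivatives of any limit function of $u^\eps$. One must show that these scaled strains admit a microscopic profile generated by a two-scale corrector $u^1 \in \W(\Y)$ decomposing as $e_{\gamma\delta}(\widehat{u}^\star{}')\, w^{\gamma\delta} - \partial_{\gamma\delta} \widehat{u}^\star_d\, W^{\gamma\delta}$. Either via two-scale convergence, or via carefully chosen integrations by parts against the $v^\eps$ above (the calculation being symmetric with respect to the ansatz used for the test function), this step is what produces the cross-coupling block $K^\star_{12}$ in the homogenized tensor, and its justification is the heart of the proof; the remainder is linear-algebraic manipulation of the corrector equations~\eqref{el-prcor1_edp}--\eqref{el-prcor2_edp} together with the symmetry properties of $A$.
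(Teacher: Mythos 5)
Your Step~1 (identification of the Kirchhoff--Love structure of $u^\star$) and your choice of oscillating test function match the paper's Appendix~\ref{app:diff_preuve2_memb} (up to a sign: the correction must carry $+\,W^{\gamma\delta}(\cdot/\eps,x_d)\,\partial_{\gamma\delta}\widehat{\phi}_d$, not $-$, if the limit is to produce the combination $e(W^{\gamma\delta})-x_d\,e_\gamma\otimes e_\delta$ appearing in~\eqref{eq:def_Kstar12}--\eqref{eq:def_Kstar22}; also no cutoff is needed since $\widehat{\phi}\in(\mathcal{D}(\omega))^d$). The gap is in the passage to the limit in $a^\eps(u^\eps,v^\eps)$. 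Your claim that this term converges ``by standard mean-value convergence of products of oscillating periodic functions with smooth functions (Lemma~\ref{limmoyenne})'' is not correct as stated: $A^\eps e^\eps(u^\eps):e^\eps(v^\eps)$ is the product of two merely weakly convergent sequences, and Lemma~\ref{limmoyenne} says nothing about it. You acknowledge this in your last paragraph, but you then only list two possible routes (two-scale convergence of the scaled strains, or integrations by parts) without carrying either out --- and this is precisely the heart of the proof, not a detail that can be deferred.

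Moreover, neither route closes as easily as you suggest. The paper's actual argument never identifies a two-scale profile for $\eps^{-1}e_{\alpha d}(u^\eps)$ or $\eps^{-2}e_{dd}(u^\eps)$; instead it uses the symmetry of $A$ to pair $e^\eps(u^\eps)$ against the divergence-free corrector fluxes $Z^w_{\alpha\beta}$, $Z^W_{\alpha\beta}$, integrates by parts (the boundary terms on $\Gamma_\pm$ vanish by the Neumann conditions in~\eqref{el-prcor1_edp}--\eqref{el-prcor2_edp}), and invokes only the strong $L^2$ convergence of $u^\eps$. Even then a residual term $c^\eps_d(u^\eps,\phi)$ of the form $\eps^{-1}\int_\Omega u^\eps_d\,(\cdots)$ survives (see~\eqref{eq:def_ceps_d}), which the a priori bounds do not control: $u^\eps_d$ is only bounded in $L^2(\Omega)$. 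This is the genuine obstruction your sketch misses. The paper can only dispose of it under the symmetry assumption~\eqref{hyp:symA}: in the membrane case the parity $u^\eps_d\in\O$ yields $\|u^\eps_d\|_{L^2(\Omega)}\leq C\eps^2$ (Step~3b), while in the bending case $u^\eps_d\to u^\star_d\neq 0$ and the whole strategy fails, forcing the completely different proof of Appendix~\ref{app:diff_preuve2_bend} via the limit stress $\Sigma^\star$ and Lemma~\ref{lem:Sigmaab}, itself restricted to $d=2$. This is why the paper only offers a \emph{partial} proof of Theorem~\ref{limitel} and cites~\cite{caillerieElasticite} for the general statement; your proposal, as written, does not reach even the partial result.
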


In the definition of $\mathcal{P}$, we recall that, to any $v \in \GKL$ is associated a (unique) function $\widehat{v} \in \left(H^1(\omega)\right)^{d-1} \times H^2(\omega)$. We then write $\dps \widehat{v} = \begin{pmatrix} \widehat{v}' \\ \widehat{v}_d \end{pmatrix}$ with $\widehat{v}' \in \left(H^1(\omega)\right)^{d-1}$ and $\widehat{v}_d \in H^2(\omega)$. Then, $e'(\widehat{v}')$ is the $(d-1) \times (d-1)$ symmetric matrix given by $\dps \Big[ e'(\widehat{v}') \Big]_{\alpha \beta} = e_{\alpha \beta}(\widehat{v})$ and $\nabla^2_{d-1} \widehat{v}_d$ is the Hessian matrix of $\widehat{v}_d$, that is the $(d-1) \times (d-1)$ symmetric matrix given by $\dps \Big[ \nabla^2_{d-1} \widehat{v}_d \Big]_{\alpha \beta} = \partial_{\alpha \beta}\widehat{v}_d$.

\medskip

As is well-known, in the simple case when $K^\star_{12}$ vanishes, the homogenized problem~\eqref{formvarelasthomog} can be written in the form of two uncoupled problems. The first one consists in finding $U^\star \in \left(H^1_0(\omega)\right)^{d-1}$ (where $U^\star$ corresponds to $(\widehat{u^\star})'$, that is the $(d-1)$-dimensional vector collecting the $d-1$ first components of the KL representative of $u^\star$) such that
$$
\forall \Phi \in \left(H^1_0(\omega)\right)^{d-1}, \quad \int_\omega K^\star_{11} \, e'(U^\star) : e'(\Phi) = \int_\omega (\m(f')+h'_++h'_-) \cdot \Phi - \int_\omega K^\star_{11} \, e'(\widehat{g}') : e'(\Phi),
$$
where $e'(\Phi)$ is the $(d-1)$-dimensional symmetric gradient of $\Phi: \omega \to \R^{d-1}$. We hence observe that $U^\star$ is the solution to a linear elasticity problem posed in $\omega$, with a constant elasticity tensor.

The second one consists in finding $U^\star \in H^2_0(\omega)$ (where $U^\star$ now corresponds to $(\widehat{u^\star})_d$, that is the $d$-th component of the KL representative of $u^\star$) such that, for any $\Phi \in H^2_0(\omega)$,
\begin{multline*}
\int_\omega K^\star_{22} \, \nabla^2_{d-1} U^\star : \nabla^2_{d-1} \Phi = \int_\omega \big( \m(f_d)+(h_+)_d+(h_-)_d \big) \, \Phi - \int_\omega \m(x_d \, f_\alpha) \, \partial_\alpha \Phi \\ - \frac{1}{2} \int_\omega \big( (h_+)_\alpha - (h_-)_\alpha \big) \, \partial_\alpha \Phi - \int_\omega K^\star_{22} \, \nabla^2_{d-1} \widehat{g}_d : \nabla^2_{d-1} \Phi,
\end{multline*}
where $\nabla^2_{d-1} \Phi$ is the $(d-1) \times (d-1)$ Hessian matrix of $\Phi: \omega \to \R$. We hence observe that $U^\star$ is the solution to a beam-type problem posed in $\omega$.

\subsection{Strong convergence of a two-scale expansion: preliminary ingredients} \label{sec:prelim_el}

In what follows, we are going to state some strong convergence results similar to Theorem~\ref{thconvforte_diffusion} in the elasticity case. We mention that, in the case of homogeneous plates, expansions at an arbitrary order in $\eps$ (which then simply encodes the small thickness of the plate) were studied in~\cite{dauge}. 

We first present some preliminary results in Section~\ref{sec:prelel}. Then, in Section~\ref{sec:symmetry}, we present the symmetry assumptions on the elasticity tensor $A$ needed to state our results. These assumptions are classical, and using them, it is possible to split the problem of interest into two uncoupled problems, a membrane problem and a bending problem. We state our main results on strong convergence in the following sections, namely in Section~\ref{sec:membrane} for the membrane case (see Theorem~\ref{thconvforte2}) and in Section~\ref{sec:bending} for the bending case (see Theorem~\ref{thconvforte3_a}). 

We stress here the following point. While, in the membrane case, the proof of strong convergence follows from a careful adaptation of the arguments used in the diffusion case investigated in Section~\ref{sec:diff}, the proof in the bending case is much more involved and requires a different strategy of proof. We will discuss this in more details at the beginning of Section~\ref{sec:bending}. 

\subsubsection{Preliminary results}\label{sec:prelel} 

The aim of this section is to prove some auxiliary results which are useful in the sequel. 

\begin{lemma} \label{potmoy2}
  Let $h \in L^2(\omega)$ and $v \in (H^1(\Omega))^d$. Then, for any $\dps z \in \left[ -\frac{1}{2},\frac{1}{2} \right]$, we have
  $$
  \left| \int_\omega \big( v_d(\cdot,z)-\m(v_d) \big) \, h \right| \leq \eps^2 \, \| h \|_{L^2(\omega)} \, \| e^\eps(v) \|_{L^2(\Omega)}.
  $$
\end{lemma}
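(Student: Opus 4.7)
The plan is to follow the same line of argument as in Lemma~\ref{potmoy}, the only new ingredient being the observation that $\partial_d v_d$ is precisely the component of the (ordinary) symmetric gradient that carries the strongest scaling factor: by definition of the scaled operator, $\partial_d v_d = e_{dd}(v) = \eps^2 \, e^\eps_{dd}(v)$. This is what will turn the $\eps$ of Lemma~\ref{potmoy} into $\eps^2$.

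Concretely, I would first write, for any $z \in [-1/2,1/2]$ and almost every $x' \in \omega$,
$$
v_d(x',z) - \m(v_d)(x') = \int_{-1/2}^{1/2} \big( v_d(x',z) - v_d(x',t) \big)\, dt = \int_{-1/2}^{1/2} \int_t^z \partial_d v_d(x',s)\, ds\, dt,
$$
which is legitimate since $v_d \in H^1(\Omega)$. Using $|\partial_d v_d| = \eps^2 \, |e^\eps_{dd}(v)|$ and the crude bound $\left| \int_t^z \cdot \right| \leq \int_{-1/2}^{1/2} |\cdot|$, I get
$$
\big| v_d(x',z) - \m(v_d)(x') \big| \leq \eps^2 \int_{-1/2}^{1/2} \big| e^\eps_{dd}(v)(x',s) \big| \, ds.
$$

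Squaring, integrating over $\omega$ and applying Cauchy–Schwarz in the $s$-variable then yields
$$
\| v_d(\cdot,z) - \m(v_d) \|_{L^2(\omega)}^2 \leq \eps^4 \int_\omega \int_{-1/2}^{1/2} \big| e^\eps_{dd}(v)(x',s) \big|^2 ds\, dx' \leq \eps^4 \, \| e^\eps(v) \|_{L^2(\Omega)}^2,
$$
since $\big| e^\eps_{dd}(v) \big| \leq \big| e^\eps(v) \big|$. The conclusion follows by a final Cauchy–Schwarz in $x'$:
$$
\left| \int_\omega \big( v_d(\cdot,z) - \m(v_d) \big) h \right| \leq \| h \|_{L^2(\omega)} \, \| v_d(\cdot,z) - \m(v_d) \|_{L^2(\omega)} \leq \eps^2 \, \| h \|_{L^2(\omega)} \, \| e^\eps(v) \|_{L^2(\Omega)}.
$$

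There is no genuine obstacle here: the argument is a verbatim adaptation of the proof of Lemma~\ref{potmoy}, and the only point worth stressing is the scaling identity $\partial_d v_d = \eps^2 \, e^\eps_{dd}(v)$ which explains why the prefactor $\eps$ of the scalar case is promoted to $\eps^2$ in the elasticity setting. One may note that the same strategy would not give a factor $\eps^2$ for $v_\alpha$ with $\alpha \leq d-1$, because $\partial_d v_\alpha$ only involves $e^\eps_{\alpha d}(v)$, which carries a single factor of $\eps$; this is consistent with the membrane/bending dichotomy highlighted later.
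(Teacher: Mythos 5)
Your proof is correct and is essentially identical to the one given in the paper: the same telescoping identity through $\m(v_d)$, the same observation that $\partial_d v_d = e_{dd}(v) = \eps^2 \, e^\eps_{dd}(v)$, and the same Cauchy--Schwarz steps. Nothing to add.
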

The proof of Lemma~\ref{potmoy2} is an easy adaptation of the proof of Lemma~\ref{potmoy}.

\begin{proof}
Let $v \in (H^1(\Omega))^d$. For any $z \in [-1/2,1/2]$, we have
\begin{align*}
  \left| v_d(\cdot,z)-\m(v_d) \right|
  &=
  \left| \int_{-1/2}^{1/2} \big( v_d(\cdot,z)-v_d(\cdot,t) \big) \, dt \right|
  \\
  &=
  \left| \int_{-1/2}^{1/2} \int_t^z \partial_d v_d(\cdot,s) \, ds \, dt \right|
  \\
  &\leq
  \eps^2 \int_{-1/2}^{1/2} \left| \frac{1}{\eps^2} \, \partial_d v_d(\cdot,s) \right| ds.
\end{align*}
It follows that
\begin{align*}
  \| v_d(\cdot,z)-\m(v_d) \|^2_{L^2(\omega)}
  &=
  \int_\omega \left| v_d(\cdot,z)-\m(v_d) \right|^2
  \\
  &
  \leq \eps^4 \int_\omega \left| \int_{-1/2}^{1/2} \left| \frac{1}{\eps^2} \, \partial_d v_d(\cdot,s) \right| ds \right|^2
  \\ 
  & \leq
  \eps^4 \, \left\| \frac{1}{\eps^2} \, \partial_d v_d \right\|_{L^2(\Omega)}^2
  \\
  & \leq \eps^4 \, \| e^\eps (v) \|_{L^2(\Omega)}^2.
\end{align*}	
For any $z \in [-1/2,1/2]$, we now write
$$
\left| \int_\omega \big( v_d(\cdot,z)-\m(v_d) \big) \, h \right| \leq \| h \|_{L^2(\omega)} \ \| v_d(\cdot,z)-\m(v_d) \|_{L^2(\omega)}.
$$
Collecting the above two estimates concludes the proof of Lemma~\ref{potmoy2}.
\end{proof}

\medskip

Our next result states some estimates on the $L^2$ norm of the trace on $\dps \Gamma_\pm = \omega \times \left\{ \pm \frac{1}{2} \right\}$ of a function $v \in L^2(\Omega)$ such that $\partial_d v \in L^2(\Omega)$.

\begin{lemma}\label{lem:trace}
  Let $v\in L^2(\Omega)$ such that $\partial_d v\in L^2(\Omega)$. We have
  \begin{equation}\label{eq:ineqtrace}
    \|v\|_{L^2(\Gamma_\pm)} \leq \sqrt{2} \left( \|v\|_{L^2(\Omega)} + \|\partial_d v\|_{L^2(\Omega)} \right).
  \end{equation}
\end{lemma}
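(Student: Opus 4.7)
The plan is to prove the trace inequality by reducing to a one-dimensional argument on vertical slices of the plate $\Omega = \omega \times (-1/2, 1/2)$. By a standard density argument, it suffices to establish the inequality for $v \in C^\infty(\overline{\Omega})$ and then pass to the limit using the hypothesis $v, \partial_d v \in L^2(\Omega)$.

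First I would fix $x' \in \omega$ and work on the one-dimensional fiber $t \mapsto v(x',t)$ on $[-1/2, 1/2]$. For any $t \in [-1/2, 1/2]$, the fundamental theorem of calculus yields
\begin{equation*}
v\left(x',\tfrac{1}{2}\right) = v(x',t) + \int_t^{1/2} \partial_d v(x', s) \, ds.
\end{equation*}
Applying the elementary inequality $(a+b)^2 \leq 2a^2 + 2b^2$ together with the Cauchy--Schwarz inequality (recalling that $|1/2 - t| \leq 1$), I obtain
\begin{equation*}
\left| v\left(x',\tfrac{1}{2}\right) \right|^2 \leq 2\, |v(x',t)|^2 + 2 \int_{-1/2}^{1/2} |\partial_d v(x', s)|^2 \, ds.
\end{equation*}

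Next I would integrate this pointwise bound with respect to $t \in [-1/2, 1/2]$; since the left-hand side is independent of $t$ and the interval has length $1$, no factor is introduced, and I get
\begin{equation*}
\left| v\left(x',\tfrac{1}{2}\right) \right|^2 \leq 2 \int_{-1/2}^{1/2} |v(x', t)|^2 \, dt + 2 \int_{-1/2}^{1/2} |\partial_d v(x', s)|^2 \, ds.
\end{equation*}
Integrating over $x' \in \omega$ and using Fubini's theorem gives
\begin{equation*}
\| v \|_{L^2(\Gamma_+)}^2 \leq 2\, \| v \|_{L^2(\Omega)}^2 + 2\, \| \partial_d v \|_{L^2(\Omega)}^2.
\end{equation*}
Taking square roots and using $\sqrt{a^2 + b^2} \leq a + b$ for nonnegative $a,b$ yields the desired estimate on $\Gamma_+$. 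The argument on $\Gamma_-$ is strictly analogous (one writes $v(x', -1/2) = v(x',t) - \int_{-1/2}^t \partial_d v(x', s) \, ds$).

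There is no real obstacle here: the only subtlety is the density argument needed to justify the pointwise evaluation of $v$ on the faces $\Gamma_\pm$, but this is standard since functions with $v, \partial_d v \in L^2(\Omega)$ admit well-defined $L^2$-traces on horizontal slices (this can be seen by smoothing in the $x'$ variables and noting that the estimate above depends only on $v$ and $\partial_d v$, so it passes to the limit).
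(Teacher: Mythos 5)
Your proof is correct and follows essentially the same route as the paper: write $v$ on the face via the fundamental theorem of calculus starting from an arbitrary height $t$, apply $(a+b)^2\leq 2a^2+2b^2$ with Cauchy--Schwarz, and average over $t$ before integrating in $x'$ (the paper merely swaps the order of the two integrations). The final step $\sqrt{a^2+b^2}\leq a+b$ matches the paper's conclusion, so nothing further is needed.
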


\begin{proof}
For almost all $\dps (x',z) \in \omega \times (-1/2,1/2)$, we have 
$$
v\left(x', \pm \frac{1}{2}\right) = v(x',z) + \int_z^{\pm 1/2} \partial_d v(x',t)\,dt.
$$
Integrating with respect to $x'$, this implies that, for almost all $z \in (-1/2,1/2)$, 
\begin{align*}
  \int_{\Gamma_\pm}|v|^2
  & = \int_\omega \left| v\left(x', \pm\frac{1}{2} \right) \right|^2 \, dx'
  \\
  & \leq 2 \left( \int_\omega |v(x',z)|^2 \, dx' + \int_\omega \left| \int_z^{\pm 1/2} \partial_d v(x',t) \, dt \right|^2 \, dx' \right)
  \\
  & \leq 2 \left( \int_\omega |v(x',z)|^2 \, dx' + \int_\omega \int_{-1/2}^{1/2} \left| \partial_d v(x',t) \right|^2 \, dt \, dx' \right)
  \\
  & = 2 \left( \int_\omega |v(x',z)|^2 \, dx' + \int_\Omega \left| \partial_d v \right|^2 \right).
\end{align*}
Integrating the above inequality with respect to $z$ in $(-1/2,1/2)$ yields that
$$
\|v\|_{L^2(\Gamma_\pm)}^2 \leq 2 \left( \|v\|_{L^2(\Omega)}^2 + \|\partial_d v\|_{L^2(\Omega)}^2 \right),
$$
and thus the claimed result.
\end{proof}

\medskip

We also need the following Poincaré estimate, which is a generalization of Lemma~\ref{poincarebis} to the elasticity setting. To properly state this result, we introduce a weighted $L^2$ norm: for any $v \in (L^2(\Omega))^d$, we set
\begin{equation} \label{eq:def_L2_w}
\| v \|_{L^2_w(\Omega)} = \sqrt{ |\omega|^{\frac{-2}{d-1}} \, \| v_d \|^2_{L^2(\Omega)} + \sum_{\alpha=1}^{d-1} \| v_\alpha \|^2_{L^2(\Omega)} }.
\end{equation}

\begin{lemma}\label{poincarebis2}
  Let $V$ be defined by~\eqref{def:V2}. Assume that $\omega$ satisfies the shape regularity assumption~\eqref{eq:shape_regul}. There exists a constant $C>0$ independent of $\eps$ and $\omega$ (but depending on the constant $\eta$ of~\eqref{eq:shape_regul}) such that, for any $u$ in $V$,
  \begin{equation} \label{ineq:1_vec}
    \| u \|_{L^2_w(\Omega)} \leq C \, \max\left(|\omega|^{\frac{1}{d-1}}, \eps^2 \, |\omega|^{-\frac{1}{d-1}}\right) \| e^\eps(u) \|_{L^2(\Omega)}.
  \end{equation}
\end{lemma}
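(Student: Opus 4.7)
The plan is to parallel the proof of Lemma~\ref{poincarebis}: first establish the estimate on a fixed reference domain, then transfer it by a carefully chosen anisotropic scaling. On the unit reference $\widehat{\Omega} := (0,1)^{d-1} \times (-1/2,1/2)$, the Korn inequality~\eqref{eq:korn} (applied on $\widehat{\Omega}$ itself) already provides a universal constant $C_0$ such that
$$
\| \widehat{u} \|_{L^2(\widehat{\Omega})} \leq C_0 \, \| e(\widehat{u}) \|_{L^2(\widehat{\Omega})}
$$
for every $\widehat{u}$ in the natural analogue $V(\widehat{\Omega})$ of $V$ on $\widehat{\Omega}$.

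The second step is the key one. Setting $\omega_K := (0,K)^{d-1}$ with $K := |\omega_K|^{1/(d-1)}$ and picking $u_K \in V(\Omega_K)$, I would introduce $\widehat{u} \in V(\widehat{\Omega})$ defined, for $(y',y_d) \in \widehat{\Omega}$, by
$$
\widehat{u}_\alpha(y',y_d) := u_{K,\alpha}(K y', y_d) \text{ for } 1 \leq \alpha \leq d-1, \qquad \widehat{u}_d(y',y_d) := \frac{1}{K} \, u_{K,d}(K y', y_d).
$$
The factor $1/K$ on the transverse component is essential: it is precisely what is needed for the mixed shear components $e_{\alpha d}(\widehat{u})$ to involve both $\partial_d u_{K,\alpha}$ and $\partial_\alpha u_{K,d}$ with a common weight, matching the structure of the scaled strain $e^\eps(u_K)$. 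A straightforward computation indeed yields
$$
e_{\alpha \beta}(\widehat{u})(y) = K \, e^\eps_{\alpha \beta}(u_K)(K y', y_d), \quad e_{\alpha d}(\widehat{u})(y) = \eps \, e^\eps_{\alpha d}(u_K)(K y', y_d), \quad e_{dd}(\widehat{u})(y) = \frac{\eps^2}{K} \, e^\eps_{dd}(u_K)(K y', y_d).
$$

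Plugging these identities into the reference inequality above, performing the change of variable $x' = K y'$ (Jacobian $K^{d-1}$), and multiplying the outcome by $K^{d-1}$, one obtains
$$
\sum_{\alpha=1}^{d-1} \| u_{K,\alpha} \|_{L^2(\Omega_K)}^2 + K^{-2} \, \| u_{K,d} \|_{L^2(\Omega_K)}^2 \leq C_0^2 \, \max\!\left( K^2, \eps^2, \frac{\eps^4}{K^2} \right) \| e^\eps(u_K) \|_{L^2(\Omega_K)}^2.
$$
The left-hand side is exactly $\| u_K \|_{L^2_w(\Omega_K)}^2$. Since $\eps^2 \leq \max(K^2, \eps^4/K^2)$ in both regimes $K \geq \eps$ and $K \leq \eps$, the maximum simplifies to $[\max(K, \eps^2/K)]^2$, which yields~\eqref{ineq:1_vec} when $\omega = \omega_K$. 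The case of a general shape-regular $\omega$ follows by the very same scaling argument, exactly as in Lemma~\ref{poincarebis}, the resulting constant depending only on $\eta$ from~\eqref{eq:shape_regul}.

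The main difficulty is concentrated in identifying the correct anisotropic rescaling factor $1/K$ on the transverse component $\widehat{u}_d$: an isotropic choice (natural when scaling a classical Poincar\'e or Korn inequality) would produce an $e_{\alpha d}(\widehat{u})$ mixing the factor $K$ with $\eps$ in an incompatible way and would not reproduce the correct weighted norm on the left-hand side. Once this scaling is fixed, the remainder of the argument is a careful bookkeeping of powers of $K$ and $\eps$, and it produces exactly the balance $\max(K, \eps^2/K)$ appearing in~\eqref{ineq:1_vec}.
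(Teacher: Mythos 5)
Your proof is correct and follows essentially the same route as the paper: a Korn inequality on the reference domain $V(\widehat{\Omega})$ combined with an anisotropic rescaling in which the transverse displacement component carries a relative factor $1/K$ with respect to the in-plane components, leading to the bound $\max(K^2,\eps^2,\eps^4/K^2)$ and then to $[\max(K,\eps^2/K)]^2$. The paper's rescaling is your $\widehat{u}$ multiplied globally by $K$ (it sets $\widehat{u}_\alpha = K\,u_{K,\alpha}(K\cdot,\cdot)$ and $\widehat{u}_d = u_{K,d}(K\cdot,\cdot)$), which is immaterial since the inequality is homogeneous; all your intermediate identities and the final bookkeeping check out.
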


\begin{proof}
For any smooth bounded domain $\widetilde{\omega} \subset \R^{d-1}$, we denote 
$$
V(\widetilde{\Omega}) := \left\{ v \in \left( H^1(\widetilde{\Omega}) \right)^d, \quad v=0 \text{ on } \partial \widetilde{\omega} \times \left( -\frac{1}{2},\frac{1}{2} \right) \right\}
$$
where $\dps \widetilde{\Omega}:= \widetilde{\omega} \times \left( -\frac{1}{2}, \frac{1}{2}\right)$.

Let $\widehat{\omega} := (0,1)^{d-1}$ and $\dps \widehat{\Omega} := \widehat{\omega} \times \left(-\demi, \demi \right)$. Using the Korn inequality in $V(\widehat{\Omega})$ (see Lemma~\ref{lem:Korn3}), there exists some constant $C_{\rm Korn}(\widehat{\Omega})$ such that
\begin{equation}
\label{eq:toto200}
\forall \widehat{u} \in V(\widehat{\Omega}), \quad \| \widehat{u} \|_{L^2(\widehat{\Omega})} \leq C_{\rm Korn}(\widehat{\Omega}) \, \| e(\widehat{u}) \|_{L^2(\widehat{\Omega})}.
\end{equation}
To prove~\eqref{ineq:1_vec}, we proceed by scaling. Introduce $\omega_K := (0,K)^{d-1}$ and $\dps \Omega_K := \omega_K \times \left(-\demi, \demi \right)$ for some $K>0$. For any $u_K \in V(\Omega_K)$, we define the function $\widehat{u}$ on $\widehat{\Omega}$ by
$$
\forall x \in \widehat{\Omega}, \qquad \widehat{u}_\alpha(x) = K \, u_{K,\alpha}(K \, x', x_d), \qquad \widehat{u}_d(x) = u_{K,d}(K \, x', x_d).
$$
The function $\widehat{u}$ belongs to $V(\widehat{\Omega})$ and simple computations show that
\begin{align*}
  \| u_K \|_{L^2_w(\Omega_K)}^2 &= K^{d-1} \left( K^{-2} \, \| \widehat{u}' \|_{L^2(\widehat{\Omega})}^2 + K^{-2} \, \| \widehat{u}_d \|_{L^2(\widehat{\Omega})}^2 \right),
  \\
  \| e^\eps_{\alpha \beta}(u_K) \|_{L^2(\Omega_K)}^2 &= K^{d-5} \, \| e_{\alpha \beta}(\widehat{u}) \|_{L^2(\widehat{\Omega})}^2,
  \\
  \| e^\eps_{\alpha d}(u_K) \|_{L^2(\Omega_K)}^2 &= K^{d-3} \, \eps^{-2} \, \| e_{\alpha d}(\widehat{u}) \|_{L^2(\widehat{\Omega})}^2,
  \\
  \| e^\eps_{dd}(u_K) \|_{L^2(\Omega_K)}^2 &= K^{d-1} \, \eps^{-4} \, \| e_{dd}(\widehat{u}) \|_{L^2(\widehat{\Omega})}^2,
\end{align*}
where we recall that the weighted $L^2_w(\Omega_K)$ norm is defined by~\eqref{eq:def_L2_w}. Using~\eqref{eq:toto200}, we thus get that
\begin{align*}
  \| u_K \|_{L^2_w(\Omega_K)}^2
  & \leq K^{d-3} \, \| \widehat{u} \|_{L^2(\widehat{\Omega})}^2
  \\
  & \leq C_{\rm Korn}(\widehat{\Omega})^2 \, K^{d-3} \, \| e(\widehat{u}) \|_{L^2(\widehat{\Omega})}^2
  \\
  & \leq C_{\rm Korn}(\widehat{\Omega})^2 \, \max\left(K^2, \eps^2, \eps^4 \, K^{-2} \right) \| e^\eps(u_K) \|_{L^2(\Omega_K)}^2.
\end{align*}
Using that $\max\left(1, \eps^2 \, K^{-2}, \eps^4 \, K^{-4} \right) = \max\left(1, \eps^4 \, K^{-4} \right)$ and that $K = |\omega_K|^{\frac{1}{d-1}}$, we deduce that
$$
\| u_K \|_{L^2_w(\Omega_K)} \leq C_{\rm Korn}(\widehat{\Omega}) \, \max\left(|\omega_K|^{\frac{1}{d-1}}, \eps^2 \, |\omega_K|^{-\frac{1}{d-1}}\right) \| e^\eps(u_K) \|_{L^2(\Omega_K)},
$$
which proves~\eqref{ineq:1_vec} in the case $\omega=\omega_K$. In the case of a more general, shape regular domain $\omega$, the proof can be performed using the same arguments. This concludes the proof of Lemma~\ref{poincarebis2}.
%
\end{proof}

\subsubsection{Use of symmetries}\label{sec:symmetry}

In all what follows, we make additional symmetry assumptions on the problem. These assumptions enable us to split the problem into two independent problems, which are commonly called the {\em membrane case} and the {\em bending case} in the literature. We define
\begin{align*}
  \E &:= \left\{ v\in L^2(\Omega) \text{ s.t., for almost any $x'\in\omega$}, \ \left( -\frac{1}{2}, \frac{1}{2}\right) \ni x_d \mapsto v(x',x_d) \text{ is even} \right\},
  \\
  \O &:= \left\{ v\in L^2(\Omega) \text{ s.t., for almost any $x'\in\omega$}, \ \left( -\frac{1}{2}, \frac{1}{2}\right) \ni x_d \mapsto v(x',x_d) \text{ is odd} \right\},
\end{align*}
and point out that
$$
(L^2(\Omega))^d = (\E^{d-1} \times \O) \oplus (\O^{d-1} \times \E),
$$
where the decomposition is orthogonal with respect to the $(L^2(\Omega))^d$ scalar product. This orthogonal decomposition has the following consequence: it holds that
$$
\VKL = \VKL^{\mathcal{M}} \oplus \VKL^{\mathcal{B}},
$$
where
$$
\VKL^{\mathcal{M}} := (H^1_0(\omega))^{d-1} \times \{0\} \subset \E^{d-1} \times \O,
$$
and
$$
\VKL^{\mathcal{B}} := \left\{ v \in \left(H^1(\Omega)\right)^{d-1} \times H^2_0(\omega), \ \ \exists \widehat{v_d} \in H^2_0(\omega), \ v_\alpha = - x_d \, \partial_\alpha \widehat{v}_d, \ v_d = \widehat{v}_d \right\} \subset \O^{d-1} \times \E.
$$
Similarly, we have
$$
\GKL = \GKL^{\mathcal{M}} \oplus \GKL^{\mathcal{B}},
$$
where
$$
\GKL^{\mathcal{M}} := (H^1(\omega))^{d-1} \times \{0\} \subset \E^{d-1} \times \O,
$$
and
$$
\GKL^{\mathcal{B}} := \left\{ v \in \left(H^1(\Omega)\right)^{d-1} \times H^2(\omega), \ \ \exists \widehat{v_d} \in H^2(\omega), \ v_\alpha = - x_d \, \partial_\alpha \widehat{v}_d, \ v_d = \widehat{v}_d \right\}\subset \O^{d-1} \times \E.
$$
From now on, we make the following additional assumptions on the tensor-valued field $A$: for all $1\leq \alpha, \beta, \gamma, \delta \leq d-1$,
\begin{equation} \label{hyp:symA}
  A_{\alpha \beta \gamma \delta}, A_{\alpha \beta d d}, A_{\alpha d\beta d}, A_{dddd} \in \E,
  \qquad
  A_{\alpha d d d}, A_{\alpha \beta \sigma d} \in \O.
\end{equation}
This is a classical assumption for plate problems (see e.g.~\cite[Section~7]{caillerieElasticite}, and in particular Eq.~(7.2) there). 

\begin{remark}
  In the case when the plate is only composed of isotropic materials, Assumption~\eqref{hyp:symA} amounts to assuming that the material is symmetric with respect to its medium plane $\{ x \in \Omega, \ x_d = 0 \}$. Indeed, for an isotropic material, many coefficients of $A$ vanish. The only non-vanishing coefficients are $A_{iiii}(x) = 2\mu(x) + \lambda(x)$ for any $1 \leq i \leq d$, $A_{ijij}(x) = 2\mu(x)$ and $A_{iijj}(x) = \lambda(x)$ for any $1 \leq i,j \leq d$ with $i \neq j$. Assume the material is symmetric with respect to its medium plane: then $\mu$ and $\lambda$ are even functions of $x_d$, and hence all non-vanishing coefficients of $A$ belong to $\E$. The assumption~\eqref{hyp:symA} is therefore satisfied.
\end{remark}

We also introduce two sets of assumptions on the data of the problem, namely $f$, $g$, $h_-$ and $h_+$. In the membrane case, we assume that
\begin{equation} \label{eq:ass_membrane}
  f \in \E^{d-1} \times \O, \quad g \in \GKL^{\mathcal{M}}, \quad \text{$(h_+)_\alpha = (h_-)_\alpha$ for all $1 \leq \alpha \leq d-1$ and $(h_+)_d = - (h_-)_d$}.
\end{equation}
In the bending case, we assume that
\begin{equation} \label{eq:ass_bending}
f \in \O^{d-1} \times \E, \quad g \in \GKL^{\mathcal{B}}, \quad \text{$(h_+)_\alpha = -(h_-)_\alpha$ for all $1\leq \alpha \leq d-1$ and $(h_+)_d = (h_-)_d$}.
\end{equation}
A typical example of a membrane (resp. bending) loading case is when $g = 0$, $h_\pm = 0$ and $f = e_1$ (resp. $f = e_d$). 

\medskip

We then have the following result on the symmetry properties of the solution $u^\eps$ to~\eqref{formvarelast} in the membrane and the bending cases. 

\begin{lemma} \label{lem:membrane}
Assume that $A$ satisfies~\eqref{hyp:symA}. Then, the solution $u^\eps$ to~\eqref{formvarelast} satisfies $u^\eps \in \E^{d-1} \times \O$ in the membrane case~\eqref{eq:ass_membrane}, whereas it satisfies $u^\eps \in \O^{d-1} \times \E$ in the bending case~\eqref{eq:ass_bending}.
\end{lemma}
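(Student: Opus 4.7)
The plan is to use a symmetrization argument together with the uniqueness of the solution to~\eqref{formvarelast}. Let $T : \Omega \to \Omega$ be the reflection $T(x',x_d) = (x',-x_d)$. In the membrane case, introduce the operator $S$ acting on vector fields $v : \Omega \to \R^d$ by
\[
(Sv)_\alpha(x) := v_\alpha(T(x)) \quad (1\le\alpha\le d-1), \qquad (Sv)_d(x) := -v_d(T(x)),
\]
and in the bending case define $S$ analogously but with the opposite sign pattern: $(Sv)_\alpha(x) := -v_\alpha(T(x))$ and $(Sv)_d(x) := v_d(T(x))$. In both cases $S$ maps $V$ into itself, since $T$ preserves $\omega \times(-1/2,1/2)$ and fixes the lateral boundary $\partial\omega\times(-1/2,1/2)$ where the Dirichlet condition is imposed. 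The goal is to show that $Su^\eps$ solves the same variational problem as $u^\eps$; by the uniqueness granted by Lax--Milgram, this forces $Su^\eps = u^\eps$, which is exactly the parity conclusion stated in the lemma.

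The key elementary computation, to be carried out first, is a parity rule for the scaled strain: denoting by $n_d(i,j)$ the number of indices equal to $d$ among $(i,j)$, a direct calculation (chain rule applied to $v\circ T$) gives
\[
e^\eps_{ij}(Sv)(x) = (-1)^{n_d(i,j)+\chi} \, e^\eps_{ij}(v)(T(x)),
\]
where $\chi = 0$ in the membrane case and $\chi = 1$ in the bending case (the extra sign comes from the sign convention chosen on components of $Sv$, which is constant across components). The assumption~\eqref{hyp:symA} translates exactly to $A^\eps_{ijkl}(T(x)) = (-1)^{n_d(i,j,k,l)} A^\eps_{ijkl}(x)$ where $n_d(i,j,k,l)$ is the number of $d$-indices, which one checks case by case on the list of coefficients appearing in~\eqref{hyp:symA}.

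Next, I would plug $Sv$ as test function in~\eqref{formvarelast}, perform the change of variables $y=T(x)$ (of unit Jacobian) in every integral over $\Omega$, and use the two parity rules above. The bilinear form yields, after the parity of $A^\eps$ cancels an even number of signs,
\[
\int_\Omega A^\eps e^\eps(u^\eps) : e^\eps(Sv) = \int_\Omega A^\eps e^\eps(S u^\eps) : e^\eps(v).
\]
For the right-hand side, the assumptions~\eqref{eq:ass_membrane} (resp.~\eqref{eq:ass_bending}) on $f$, $g$, $h_\pm$ are precisely designed so that $\int_\Omega f\cdot Sv = \int_\Omega f\cdot v$, $Sg = g$ (hence $\int_\Omega A^\eps e^\eps(g):e^\eps(Sv) = \int_\Omega A^\eps e^\eps(g):e^\eps(v)$ by the same argument applied to $g$ instead of $u^\eps$), and the exchange on the top/bottom faces produced by $T$ combines with the sign relations between $h_+$ and $h_-$ to leave the sum $\int_\omega h_+\cdot v(\cdot,1/2)+\int_\omega h_-\cdot v(\cdot,-1/2)$ invariant. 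Consequently $Su^\eps$ satisfies~\eqref{formvarelast}, whence $S u^\eps = u^\eps$.

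The bookkeeping with signs is the only delicate point: one must verify that, in the product $A^\eps_{ijkl}\,e^\eps_{ij}(u^\eps)\,e^\eps_{kl}(Sv)$, the total sign picked up under $x_d\mapsto -x_d$ equals $(-1)^{n_d(i,j,k,l)+n_d(k,l)+\chi} = (-1)^{n_d(i,j)+\chi}$, which matches exactly the sign defining $e^\eps(Su^\eps)$. Once this check is done and the parities of $f$, $g$, $h_\pm$ are used as above, both cases are obtained with identical proofs, the only change being the definition of $S$ and the parity hypotheses invoked on the data.
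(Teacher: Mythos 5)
Your proof is correct. The parity bookkeeping is right: the rule $e^\eps_{ij}(Sv)(x) = (-1)^{n_d(i,j)+\chi}e^\eps_{ij}(v)(T(x))$ checks out component by component (the $1/\eps$ and $1/\eps^2$ scalings are parity-neutral since they only affect magnitudes), Assumption~\eqref{hyp:symA} is exactly the statement $A_{ijkl}\circ T = (-1)^{n_d(i,j,k,l)}A_{ijkl}$, and the identity $n_d(i,j,k,l)+n_d(k,l)\equiv n_d(i,j) \pmod 2$ closes the loop for the bilinear form. The verification of $b^\eps(Sv)=b^\eps(v)$ from~\eqref{eq:ass_membrane} (resp.~\eqref{eq:ass_bending}) — including $Sg=g$ for $g\in\GKL^{\M}$ (resp.\ $\GKL^{\B}$) and the exchange of the $\Gamma_\pm$ terms — also works as you describe.

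The route is genuinely different from the paper's, though built on the same elementary parity computations. The paper does not introduce the reflection operator $S$; instead it uses the orthogonal splitting $(L^2(\Omega))^d = (\E^{d-1}\times\O)\oplus(\O^{d-1}\times\E)$, shows that $a^\eps(u,v)=0$ whenever $u$ and $v$ lie in opposite parity classes and that $b^\eps(v)=0$ on the ``wrong'' class, then writes $u^\eps = u^\eps_{\rm even}+u^\eps_{\rm odd}$ and kills the spurious component via $a^\eps(u^\eps_{\rm odd},u^\eps_{\rm odd}) = b^\eps(u^\eps_{\rm odd})=0$ and coercivity. Your argument instead exhibits $Su^\eps$ as a second solution and invokes uniqueness from Lax--Milgram. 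The two are equivalent (the paper's decomposition is precisely the $\pm1$-eigenspace decomposition of your involution $S$, and its block-diagonality of $a^\eps$ is your commutation relation $a^\eps(u,Sv)=a^\eps(Su,v)$), so neither buys more generality; the paper's version avoids defining $S$ and is marginally shorter to write, while yours makes the underlying symmetry of the problem more explicit and transfers verbatim to any setting where uniqueness holds. One small point to keep in mind in either formulation: one must note that $V$ is stable under the reflection (equivalently, that both components of the orthogonal decomposition of an element of $V$ remain in $V$), which you do address.
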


\begin{proof}
For any $v \in V \cap (\O^{d-1} \times \E)$, we can check that $e_{\alpha \beta}(v) \in \O$, $e_{\alpha d}(v) \in \E$ and $e_{dd}(v) \in \O$. For any $u \in V \cap (\E^{d-1} \times \O)$, we likewise have $e_{\alpha \beta}(u) \in \E$, $e_{\alpha d}(u) \in \O$ and $e_{dd}(u) \in \E$. Using~\eqref{hyp:symA}, we thus obtain that, for any $u \in V \cap (\E^{d-1} \times \O)$ and any $v \in V \cap (\O^{d-1} \times \E)$,
$$
\int_\Omega A^\eps e^\eps(u) : e^\eps(v) = 0.
$$
We next see that, in the membrane case~\eqref{eq:ass_membrane}, we have $b^\eps(v) = 0$ for any $v \in V \cap (\O^{d-1} \times \E)$, where we recall that $b^\eps$ is the linear form in the right-hand side of~\eqref{formvarelast}. Similarly, in the bending case~\eqref{eq:ass_bending}, we have $b^\eps(v) = 0$ for any $v \in V \cap (\E^{d-1} \times \O)$.

By orthogonal decomposition, we now write the solution $u^\eps$ to~\eqref{formvarelast} as $u^\eps = u^\eps_{\rm even} + u^\eps_{\rm odd}$, where $u^\eps_{\rm even} \in \E^{d-1} \times \O$ and $u^\eps_{\rm odd} \in \O^{d-1} \times \E$. Assume we are in the membrane case (a similar argument can be made in the bending case). We then have
$$
a^\eps(u^\eps_{\rm odd},u^\eps_{\rm odd}) = a^\eps(u^\eps_{\rm even} + u^\eps_{\rm odd},u^\eps_{\rm odd}) = a^\eps(u^\eps,u^\eps_{\rm odd}) = b^\eps(u^\eps_{\rm odd}) = 0,
$$
which implies, since $a^\eps$ is coercive, that $u^\eps_{\rm odd}$ vanishes and thus that $u^\eps \in \E^{d-1} \times \O$.
\end{proof}

The symmetry of $A$ also implies the following symmetry properties on the corrector functions.

\begin{lemma}\label{lem:symcorr}
  Assume that $A$ satisfies~\eqref{hyp:symA}. Then, for all $1\leq \alpha, \beta \leq d-1$, the membrane corrector $w^{\alpha\beta}$ solution to~\eqref{el-prcor1} satisfies $w^{\alpha\beta} \in \E^{d-1} \times \O$ and likewise, the bending corrector $W^{\alpha\beta}$ solution to~\eqref{el-prcor2} satisfies $W^{\alpha\beta} \in \O^{d-1}\times \E$. In addition, the tensor $K^\star_{12}$ defined by~\eqref{eq:def_Kstar12} satisfies $K^\star_{12} = 0$. 
\end{lemma}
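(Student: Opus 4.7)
The plan is to adapt the symmetry-decomposition argument used in the proof of Lemma~\ref{lem:membrane} to the periodic cell and the corrector equations. The starting observation is that any $v \in \mathcal{W}(\mathcal{Y})$ admits an orthogonal decomposition $v = v_e + v_o$ with $v_e \in \mathcal{W}(\mathcal{Y}) \cap (\E^{d-1}\times\O)$ and $v_o \in \mathcal{W}(\mathcal{Y}) \cap (\O^{d-1}\times\E)$, obtained by taking the $x_d$-even part of $v_1,\dots,v_{d-1}$ together with the $x_d$-odd part of $v_d$ (and conversely for $v_o$). Both pieces are $Y$-periodic, and membership in $\mathcal{W}(\mathcal{Y})$ is preserved because, by $x_d$-parity, the ``wrong-parity'' half contributes zero to the mean over $\mathcal{Y}$.

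The key step is to establish the following orthogonality property: for every $u \in \mathcal{W}(\mathcal{Y}) \cap (\E^{d-1}\times\O)$ and every $v \in \mathcal{W}(\mathcal{Y}) \cap (\O^{d-1}\times\E)$,
\[
\int_{\mathcal{Y}} A\, \sigma : e(v) = 0 \qquad \text{for } \sigma \in \{\, e(u),\ e_\alpha\otimes e_\beta,\ x_d\, e_\alpha\otimes e_\beta\,\}, \quad 1 \le \alpha,\beta\le d-1.
\]
This is a careful index-by-index check. As in the proof of Lemma~\ref{lem:membrane}, if $u \in \E^{d-1}\times\O$ then $e_{ab}(u)$ is even in $x_d$ when the multi-index $ab$ contains zero or two $d$'s and odd when it contains exactly one $d$; if $v \in \O^{d-1}\times\E$ the parities are reversed. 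Combining this with the parity of $A_{abcd}$ dictated by~\eqref{hyp:symA} (even when the number of $d$-indices is even, odd when it is odd), one checks in each of the nine cases that the triple product $A_{abcd}\, e_{ab}(u)\, e_{cd}(v)$ is odd in $x_d$, hence integrates to zero over $\mathcal{Y}$. The same count, applied to the constant tensor $e_\alpha\otimes e_\beta$ (nonzero only in the $\alpha\beta$ slot, with even parity) and to $x_d\, e_\alpha\otimes e_\beta$ (same support, with odd parity), handles the two ``source'' choices of $\sigma$.

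Given the orthogonality, the three conclusions follow in parallel. For~\eqref{el-prcor1}, write $w^{\alpha\beta} = w_e + w_o$ and test against $v = w_o \in \mathcal{W}(\mathcal{Y}) \cap (\O^{d-1}\times\E)$: the cross term $\int_{\mathcal{Y}} A\, e(w_e) : e(w_o)$ and the source term $\int_{\mathcal{Y}} A\,(e_\alpha\otimes e_\beta) : e(w_o)$ both vanish, so coercivity forces $e(w_o) = 0$. The only $Y$-periodic infinitesimal rigid motions on $\mathcal{Y}$ are constants (translations along $e_1,\dots,e_{d-1}$ and the rotations coupling $x'$ and $x_d$ fail periodicity in $x'$), and since $w_o$ has zero mean, $w_o = 0$; thus $w^{\alpha\beta} \in \E^{d-1}\times\O$. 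Testing~\eqref{el-prcor2} with the even part $W_e$ of $W^{\alpha\beta}$ gives in the same way $W_e = 0$, hence $W^{\alpha\beta}\in \O^{d-1}\times\E$. Finally, in~\eqref{eq:def_Kstar12} the tensor $e(w^{\alpha\beta})+e_\alpha\otimes e_\beta$ has the ``membrane'' parity pattern while $e(W^{\gamma\delta})-x_d\, e_\gamma\otimes e_\delta$ has the ``bending'' pattern, so the orthogonality above (extended by linearity to such tensors) yields $(K^\star_{12})_{\alpha\beta\gamma\delta} = 0$.

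The main obstacle is the bookkeeping in the parity check that underlies the orthogonality property: one must enumerate the nine combinations of $(\#d\text{'s in }ab, \#d\text{'s in }cd)$ and verify, case by case, that the symmetries~\eqref{hyp:symA} conspire with the parities of $e(u)$ and $e(v)$ to make every term odd in $x_d$. Once this is done, reducing the corrector and $K^\star_{12}$ claims to coercivity plus a short rigid-motion argument is essentially mechanical.
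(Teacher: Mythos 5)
Your proof is correct and follows exactly the strategy the paper intends: the authors omit the proof of this lemma precisely because it "follows the same lines as the proof of Lemma~\ref{lem:membrane}", namely the even/odd decomposition, the parity-based orthogonality of the bilinear form under~\eqref{hyp:symA}, and coercivity to kill the wrong-parity component (with the periodic-rigid-motion and zero-mean argument correctly supplied on the cell $\mathcal{Y}$). The parity bookkeeping and the extension to the source terms $e_\alpha\otimes e_\beta$ and $x_d\,e_\alpha\otimes e_\beta$, and hence to $K^\star_{12}$, are all accurate.
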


The proof of Lemma~\ref{lem:symcorr} follows the same lines as the proof of Lemma~\ref{lem:membrane} and we omit it here for the sake of brevity. 


\medskip

Similarly to Lemma~\ref{lem:membrane} which characterizes the symmetry properties of the solution to the oscillatory problem, we have the following result, which characterizes the symmetry properties of the solution $u^\star$ to the homogenized problem~\eqref{formvarelasthomog}.

\begin{lemma}\label{lem:ustarsym}
  Assume that $A$ satisfies~\eqref{hyp:symA} and let $u^\star$ be the solution to~\eqref{formvarelasthomog}. In the membrane case~\eqref{eq:ass_membrane}, we have $\dps u^\star = \left( \begin{array}{c} \widehat{u}^\star \\ 0 \end{array} \right)$, where $\widehat{u}^\star \in (H^1_0(\omega))^{d-1}$ is the unique solution to
\begin{equation} \label{formvar:pb_membrane}
  \forall \widehat{v} \in (H^1_0(\omega))^{d-1}, \quad \int_\omega K^\star_{11} \, e'(\widehat{u}^\star) : e'(\widehat{v}) = \int_\omega \big( \m(f') + h'_+ + h'_- \big) \cdot \widehat{v} - \int_\omega K^\star_{11} \, e'(g') : e'(\widehat{v}).
\end{equation}
We thus have $u^\star \in \VKL^{\mathcal{M}}$.

\medskip

In the bending case~\eqref{eq:ass_bending}, we have $u^\star = \left( \begin{array}{c} -x_d \, \nabla' \widehat{u}_d^\star \\ \widehat{u}_d^\star \end{array} \right)$, where $\widehat{u}_d^\star \in H^2_0(\omega)$ is the unique solution to
\begin{multline} \label{formvar:pb_bending}
  \forall v_d \in H^2_0(\omega), \quad \int_\omega K^\star_{22} \, \nabla^2_{d-1} \widehat{u}_d^\star : \nabla^2_{d-1} v_d = \int_\omega \big( \m(f_d) + (h_+)_d + (h_-)_d \big) \, v_d \\ - \int_\omega \m(x_d \, f') \cdot \nabla' v_d - \frac{1}{2} \int_\omega (h'_+-h'_-) \cdot \nabla' v_d - \int_\omega K^\star_{22} \, \nabla^2_{d-1} g_d : \nabla^2_{d-1} v_d,
\end{multline}
where $\nabla^2_{d-1} v_d$ is the $(d-1) \times (d-1)$ Hessian matrix of $v_d : \omega \to \R$. We thus have $u^\star \in \VKL^{\mathcal{B}}$.
\end{lemma}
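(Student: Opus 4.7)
The plan is to exploit the orthogonal decomposition $\VKL=\VKL^{\mathcal{M}}\oplus\VKL^{\mathcal{B}}$ together with the vanishing of the cross-coupling tensor $K^\star_{12}$ provided by Lemma~\ref{lem:symcorr}. The latter fact is the essential ingredient, because it turns the bilinear form on the left-hand side of~\eqref{formvarelasthomog} into a block-diagonal form: for any $u,\phi\in\VKL$ with decompositions $u=u_M+u_B$ and $\phi=\phi_M+\phi_B$, direct inspection of $\mathcal{P}$ shows that $\mathcal{P} u_M=\begin{pmatrix}e'(\widehat{u_M}')\\0\end{pmatrix}$ and $\mathcal{P} u_B=\begin{pmatrix}0\\\nabla^2_{d-1}(\widehat{u_B})_d\end{pmatrix}$, so that
\[
\int_\omega K^\star\mathcal{P}u:\mathcal{P}\phi
=\int_\omega K^\star_{11}\,e'(\widehat{u_M}'):e'(\widehat{\phi_M}')
+\int_\omega K^\star_{22}\,\nabla^2_{d-1}(\widehat{u_B})_d:\nabla^2_{d-1}(\widehat{\phi_B})_d.
\]

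Next I would treat the membrane case. Writing $u^\star=u^\star_M+u^\star_B$ with $u^\star_M\in\VKL^{\mathcal{M}}$ and $u^\star_B\in\VKL^{\mathcal{B}}$, I would test~\eqref{formvarelasthomog} against an arbitrary $\phi\in\VKL^{\mathcal{B}}$, for which $\widehat{\phi}'=0$. All four terms involving $\widehat{\phi}$ or $\partial_\alpha\widehat{\phi}_d$ on the right-hand side then vanish thanks to~\eqref{eq:ass_membrane}: indeed $f_d\in\O$ gives $\m(f_d)=0$, $(h_+)_d+(h_-)_d=0$ by assumption, $f_\alpha\in\E$ gives $x_d f_\alpha\in\O$ and hence $\m(x_d f_\alpha)=0$, and $(h_+)_\alpha-(h_-)_\alpha=0$. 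The last term of the right-hand side, $-\int_\omega K^\star\mathcal{P}g:\mathcal{P}\phi$, also vanishes since $g\in\GKL^{\mathcal{M}}$ has a purely membrane $\mathcal{P}g$ while $\mathcal{P}\phi$ is purely bending, and $K^\star_{12}=0$. By the block-diagonal identity above, the left-hand side reduces to $\int_\omega K^\star_{22}\nabla^2_{d-1}(\widehat{u^\star_B})_d:\nabla^2_{d-1}\widehat{\phi}_d$. Choosing $\phi=u^\star_B$ and using the coercivity of $K^\star_{22}$ together with $(\widehat{u^\star_B})_d\in H^2_0(\omega)$ (whose gradient is controlled by its Hessian via Poincaré) yields $u^\star_B=0$. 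Hence $u^\star\in\VKL^{\mathcal{M}}$, and testing next against any $\phi\in\VKL^{\mathcal{M}}$ the remaining terms match exactly~\eqref{formvar:pb_membrane}, whose well-posedness in $(H^1_0(\omega))^{d-1}$ follows from Lax--Milgram applied to $K^\star_{11}$ (coercive by the coercivity of $K^\star$).

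The bending case is entirely symmetric. Testing~\eqref{formvarelasthomog} against $\phi\in\VKL^{\mathcal{M}}$ and using~\eqref{eq:ass_bending} (which gives $f_\alpha\in\O$, hence $\m(f_\alpha)=0$, and $(h_+)_\alpha+(h_-)_\alpha=0$), together with $g\in\GKL^{\mathcal{B}}$ and $K^\star_{12}=0$, makes the right-hand side vanish. The same block-diagonal decoupling then forces $u^\star_M=0$ via the coercivity of $K^\star_{11}$ and the Korn/Poincaré inequality in $(H^1_0(\omega))^{d-1}$. Testing next against $\phi\in\VKL^{\mathcal{B}}$ recovers~\eqref{formvar:pb_bending}, again well-posed by Lax--Milgram applied to $K^\star_{22}$ on $H^2_0(\omega)$.

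The only nontrivial step is really the block-diagonalization of the bilinear form, which is handed to us by Lemma~\ref{lem:symcorr}; the rest is a systematic parity bookkeeping on the four right-hand-side terms of~\eqref{formvarelasthomog} using~\eqref{eq:ass_membrane} or~\eqref{eq:ass_bending}. I do not expect any genuine obstacle: the mild care required is just to keep straight that for $\phi\in\VKL^{\mathcal{B}}$ the KL-lift $\widehat{\phi}$ has $\widehat{\phi}'=0$ but $\phi'=-x_d\nabla'\widehat{\phi}_d\neq 0$, so that contributions from $\phi'$ must be expressed through $\nabla'\widehat{\phi}_d$ (producing the $\m(x_d f_\alpha)\partial_\alpha\widehat{\phi}_d$ and $((h_+)_\alpha-(h_-)_\alpha)\partial_\alpha\widehat{\phi}_d$ terms that are already written in~\eqref{formvarelasthomog}), and symmetrically for $\phi\in\VKL^{\mathcal{M}}$.
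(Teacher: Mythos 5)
Your proof is correct and follows exactly the route the paper intends: the paper omits the proof of this lemma, but the decoupling of~\eqref{formvarelasthomog} via $K^\star_{12}=0$ (Lemma~\ref{lem:symcorr}) together with the parity bookkeeping on the right-hand side under~\eqref{eq:ass_membrane} or~\eqref{eq:ass_bending} is precisely the argument set up by the discussion following Theorem~\ref{limitel}. Your parity checks (e.g. $\m(f_d)=0$, $\m(x_d f_\alpha)=0$, cancellation of the $h_\pm$ combinations) and the use of coercivity of $K^\star_{11}$, $K^\star_{22}$ with Korn/Poincaré to kill the complementary component are all sound.
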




We next proceed with Lemma~\ref{lemma:minZ2}, which is an extension of Lemma~\ref{minZ} to the elasticity case. Its proof is much more challenging than that of Lemma~\ref{minZ}. In particular, we have to restrict ourselves to the two-dimensional case, because the divergence of the matrix $\widetilde{B}^T$ which appears in the proof has a simple expression in this case (see~\eqref{eq:toto62_ba} and~\eqref{eq:toto62_bb}). The extension of this result to higher-dimensional cases remains an open question.


\begin{lemma} \label{lemma:minZ2}
Let $V$ be defined by~\eqref{def:V2}. Let $\dps Z \in \left[ L^2_{\rm loc}\left(\mathbb{R}^{d-1} \times \left( -\frac{1}{2},\frac{1}{2} \right) \right) \right]^{d\times d}$ be a matrix field such that
\begin{itemize}
\item[(i)] for almost all $\dps z \in \left( -\frac{1}{2},\frac{1}{2} \right)$, the function $Z(\cdot,z)$ is $Y$-periodic;
\item[(ii)] $\dps \int_{\mathcal{Y}} Z = 0$;
\item[(iii)] $\dps \int_{\mathcal{Y}} x_d \, Z = 0$;
\item[(iv)] $\div Z = 0$ in $\dps \left[ \mathcal{D}'\left(\mathbb{R}^{d-1} \times \left( -\frac{1}{2},\frac{1}{2} \right) \right) \right]^d$; 
\item[(v)] $Z \, e_d = 0$ on $\mathbb{R}^{d-1} \times \{-1/2\}$ and on $\mathbb{R}^{d-1} \times \{1/2\}$;
\item[(vi)] $Z$ is symmetric in the sense that $Z_{ij} = Z_{ji}$ for all $1\leq i,j \leq d$.
\end{itemize}
We assume that $d=2$. Then, there exists some $C$ independent of $\eps$ and $\omega$ such that, for any $\varphi$ in $W^{2,\infty}(\omega)$ and any $v \in V$,
\begin{equation} \label{eq:covid2}
\left| \int_\Omega \varphi(x') \, Z\left( \frac{x'}{\eps},x_d \right) : e^\eps(v)(x',x_d) \, dx' \, dx_d \right| \leq C \, \eps \, |\omega|^{1/2} \, \| \nabla \varphi \|_{W^{1,\infty}(\omega)} \, \| e^\eps(v) \|_{L^2(\Omega)}.
\end{equation}
\end{lemma}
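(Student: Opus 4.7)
The plan is to imitate the proof of the scalar Lemma~\ref{minZ}, replacing Jikov's lemma applied to a vector field by a row-wise application of Lemma~\ref{lemmajikov} to the matrix-valued field $Z$. First I would extend $Z$ periodically in $y_d$ to $\bar Z$ on $\R^d$; using assumptions~(iv) and~(v), the same distributional computation as in the proof of Lemma~\ref{minZ} yields $\div \bar Z = 0$ in $(\mathcal{D}'(\R^d))^d$. Since (ii) ensures each entry $\bar Z_{ij}$ has zero mean over $(0,1)^d$, applying Lemma~\ref{lemmajikov} row by row produces, for each $i\in\{1,\ldots,d\}$, a skew-symmetric matrix field $J^i \in (H^1_{\rm per}(\R^d))^{d\times d}$ with vanishing mean such that $Z_{ij} = \partial_k J^i_{kj}$. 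Mimicking the scaling identity from the scalar proof, one obtains $\varphi(x')\, Z_{ij}(x'/\eps,x_d) = \eps\, \tilde B_{ij}(x) - \eps\, B_{ij}(x)$, with
\[
\tilde B_{ij}(x) = \sum_\alpha \partial_\alpha\big[\varphi\, J^i_{\alpha j}(x'/\eps,x_d)\big] + \eps^{-1}\partial_d\big[\varphi\, J^i_{dj}(x'/\eps,x_d)\big], \qquad B_{ij}(x) = \sum_\alpha \partial_\alpha\varphi\, J^i_{\alpha j}(x'/\eps,x_d),
\]
so that $I = \eps\int_\Omega \tilde B : e^\eps(v) - \eps\int_\Omega B : e^\eps(v)$. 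The second integral is controlled directly by $C\eps|\omega|^{1/2}\|\nabla\varphi\|_{L^\infty}\|J\|_{L^2(\mathcal{Y})}\|e^\eps(v)\|_{L^2(\Omega)}$, of the required order.

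For the $\tilde B$ term I would integrate by parts: $\int \tilde B : e^\eps(v) = -\int v\cdot \div^\eps \tilde B^{\rm sym} + \mathrm{BT}_{\Gamma_\pm}$, with the boundary term on $\Gamma_\pm$ handled using (v) (which kills the $Z^\eps$-contribution to $\tilde B$ there, leaving only $J$-terms) together with Lemma~\ref{lem:trace}. The skew-symmetry of each $J^i$ forces $\div^\eps \tilde B = 0$, hence $\div^\eps \tilde B^{\rm sym} = \tfrac12 \div^\eps \tilde B^T$. The two-dimensional assumption enters at this point: when $d=2$, each skew-symmetric $J^i$ is determined by the single entry $J^i_{12}$, and combining the Jikov identities $\partial_1^y J^1_{12} = Z_{12}$ and $\partial_2^y J^2_{12} = -Z_{12}$ with $\div^y Z = 0$, a direct computation yields the simple closed-form expressions
\[
[\div^\eps \tilde B^T]_1 = \eps^{-1}\partial_1\varphi\, Z^\eps_{11}, \qquad [\div^\eps \tilde B^T]_2 = \eps^{-2}\partial_1\varphi\, Z^\eps_{12} + \eps^{-1}\partial_1^2\varphi\, J^1_{12}(x'/\eps,x_d).
\]
Substituting back and multiplying by $\eps$, the three surviving integrals are estimated using (i) the Poincaré/trace bounds of Lemmas~\ref{lem:trace} and~\ref{poincarebis2}; (ii) the fact that $\langle J^1_{12}\rangle_1 = \langle Z_{12}\rangle_1 = 0$ (the former by Jikov's normalisation in 2D, the latter by integrating $\div^y Z = 0$ in $y_1$), which supports the standard oscillation-trick integration by parts producing an extra factor of $\eps$; and (iii) assumption~(iii), which allows writing the residual $y_1$-mean $\langle Z_{11}\rangle_1(y_d)$ as $H''(y_d)$ with $H = H' = 0$ at $y_d = \pm 1/2$, so that two integrations by parts in $y_d$ absorb this residue against $e^\eps(v)$.

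The main obstacle is the last step. Contrary to the scalar setting of Lemma~\ref{minZ}, $\div^\eps \tilde B^T$ does not vanish, and its two components carry different negative powers of $\eps$, so each surviving contribution must be eliminated by a further integration by parts combined with the kinematic identity $\partial_1 v_2 + \partial_2 v_1 = 2\eps\, e^\eps_{12}(v)$ (which trades the uncontrolled $\partial_1 v_2$ or $\partial_2 v_1$ for one another modulo an $O(\eps)$ term). In $d=2$ the combined effect of the moment condition~(iii) and the reduction of each skew-symmetric $J^i$ to a single scalar entry makes this bookkeeping tractable; in $d \geq 3$ both simplifications fail (each $J^i$ has $d(d-1)/2 > 1$ independent components and additional moment conditions would be required to cancel the analogous residual terms), which is precisely why the extension to general dimensions is left open.
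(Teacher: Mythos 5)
Your proposal follows essentially the same route as the paper's proof: row-wise application of Jikov's lemma to the rows of $Z$, the decomposition $\varphi\,Z^\eps=\eps\widetilde{B}-\eps B$ with $\div^\eps\widetilde{B}=0$, symmetrisation and the explicit two-dimensional computation of $\div^\eps(\widetilde{B}^T)$, the swap $\partial_1 v_2=2\eps\,e^\eps_{12}(v)-\partial_2 v_1$, and the use of the moment conditions (ii)--(iii) to eliminate the $y_1$-mean $\mu$ of $Z_{11}$ (your device $\mu=H''$ with $H=H'=0$ at $\pm 1/2$ followed by two integrations by parts in $x_d$ is exactly equivalent to the paper's double Taylor expansion of $v_1$ and $v_2$ across the thickness). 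The one step to make explicit is the $\eps^{-2}$-weighted boundary term $\eps^{-2}\int_\omega \overline{B}_{22}\left(\cdot,\pm\frac12\right)v_2\left(\cdot,\pm\frac12\right)$: a one-sided trace estimate via Lemma~\ref{lem:trace} cannot absorb the $\eps^{-2}$, and one must instead use the $y_d$-periodicity of $J^2$ (so that the coefficients on the two faces coincide) to combine the two contributions into $v_2\left(\cdot,\frac12\right)-v_2\left(\cdot,-\frac12\right)=\eps^2\int_{-1/2}^{1/2}e^\eps_{22}(v)$, as the paper does.
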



We recall that any function in $H_{\rm div}(D)$ (for any smooth domain $D\subset \R^d$) has a well-defined normal trace on $\partial D$ (see Appendix~\ref{app:Hdiv} for details). Assumption~(v) in Lemma~\ref{lemma:minZ2} thus makes sense.

\begin{remark} \label{rem:CN_vectorielle}
In the spirit of Remark~\ref{rem:CN_scalaire}, we show that Assumptions~(ii) and~(iii) are necessary conditions for~\eqref{eq:covid2} to hold true. 

Consider indeed some $v \in \VKL$. We then observe that $\dps e^\eps(v) = \left( \begin{array}{cc} e'(\widehat{v}') - x_d \nabla^2 \, \widehat{v}_d & 0 \\ 0 & 0 \end{array} \right)$ is independent of $\eps$. On the one hand, the right-hand side, and therefore the left-hand side, of~\eqref{eq:covid2} thus converges to 0 when $\eps \to 0$, which yields that
$$
\lim_{\eps \to 0} \int_\Omega \varphi(x') \, Z\left( \frac{x'}{\eps},x_d \right) : \left( \begin{array}{cc} e'(\widehat{v}') - x_d \, \nabla^2 \widehat{v}_d & 0 \\ 0 & 0 \end{array} \right) dx' \, dx_d = 0.
$$
On the other hand, using the fact that $Z$ is $Y$-periodic and Lemma~\ref{limmoyenne}, we obtain that the left-hand side of~\eqref{eq:covid2} converges to $\dps \int_\Omega \varphi(x') \, Z^\star(x_d) : \left( \begin{array}{cc} e'(\widehat{v}) - x_d \, \nabla^2 \widehat{v}_d & 0 \\ 0 & 0 \end{array} \right) dx' \, dx_d$, with $\dps Z^\star(x_d) = \int_Y Z(x',x_d) dx'$ for any $x_d \in (-1/2,1/2)$. Since $\varphi$ is arbitrary, we deduce that $\dps \int_{-1/2}^{1/2} Z^\star_{\gamma \delta}(x_d) \, \big( e_{\gamma \delta}(\widehat{v}) - x_d \, \partial_{\gamma \delta} \widehat{v}_d \big) dx_d = 0$. Since $\widehat{v}'$ and $\widehat{v}_d$ only depend on $x'$ and are independent, this implies that $\dps \int_{-1/2}^{1/2} Z^\star_{\gamma \delta}(x_d) \, dx_d = 0$ and $\dps \int_{-1/2}^{1/2} x_d \, Z^\star_{\gamma \delta}(x_d) \, dx_d = 0$ for any $1 \leq \gamma,\delta \leq d-1$, which means, in view of the definition of $Z^\star$, that $\dps \int_{\mathcal Y} Z_{\gamma \delta} = 0$ and $\dps \int_{\mathcal Y} x_d \, Z_{\gamma \delta} = 0$.

To show that Assumptions~(ii) and~(iii) are indeed necessary conditions, we are left to show that~\eqref{eq:covid2} implies $\dps \int_{\mathcal Y} Z_{id} = \int_{\mathcal Y} x_d \, Z_{id} = 0$ for any $1 \leq i \leq d$. To this end, we first consider the function $v(x',x_d) = \eps^2 \, \psi(x') \, \theta(x_d) \, e_d$ for some $\psi \in C^\infty_0(\omega)$ and some $\theta \in C^\infty[-1/2,1/2]$. It belongs to $V$ and satisfies that $e^\eps(v)$ is bounded, thus the right-hand side, and therefore the left-hand side, of~\eqref{eq:covid2} converges to 0 when $\eps \to 0$. For this function $v$, and up to terms that obviously vanish in the limit $\eps \to 0$, the left-hand side of~\eqref{eq:covid2} writes
$$
\int_\Omega \varphi(x') \, Z_{dd}\left( \frac{x'}{\eps},x_d \right) \psi(x') \, \theta'(x_d) \, dx' \, dx_d,
$$
and thus converges to $\dps \int_\Omega \varphi(x') \, Z_{dd}^\star(x_d) \, \psi(x') \, \theta'(x_d) \, dx' \, dx_d$ in view of Lemma~\ref{limmoyenne}. Since $\varphi$ is arbitrary, this implies that $\dps \int_{-1/2}^{1/2} Z_{dd}^\star(x_d) \, \theta'(x_d) \, dx_d = 0$, and therefore $Z_{dd}^\star(x_d) = 0$ for any $x_d \in (-1/2,1/2)$ since $\theta \in C^\infty[-1/2,1/2]$ is arbitrary. This of course implies that $\dps \int_{-1/2}^{1/2} Z_{dd}^\star(x_d) \, dx_d = 0$ and $\dps \int_{-1/2}^{1/2} x_d \, Z_{dd}^\star(x_d) \, dx_d = 0$, and thus $\dps \int_{\mathcal Y} Z_{dd} = 0$ and $\dps \int_{\mathcal Y} x_d \, Z_{dd} = 0$, in view of the definition of $Z_{dd}^\star$.

Second, we consider the function $v(x',x_d) = \eps \, \psi(x') \, \theta(x_d) \, e_\alpha$ for some $1 \leq \alpha \leq d-1$, $\psi \in C^\infty_0(\omega)$ and some $\theta \in C^\infty[-1/2,1/2]$. The same computations as above lead to the fact that $\dps \int_\Omega \varphi(x') \, Z_{d\alpha}^\star(x_d) \, \psi(x') \, \theta'(x_d) \, dx' \, dx_d = 0$, and thus $Z_{d\alpha}^\star(x_d) = 0$ for any $x_d \in (-1/2,1/2)$. This of course implies that $\dps \int_{-1/2}^{1/2} Z_{d\alpha}^\star(x_d) \, dx_d = 0$ and $\dps \int_{-1/2}^{1/2} x_d \, Z_{d\alpha}^\star(x_d) \, dx_d = 0$, and thus $\dps \int_{\mathcal Y} Z_{d\alpha} = 0$ and $\dps \int_{\mathcal Y} x_d \, Z_{d\alpha} = 0$, in view of the definition of $Z_{d\alpha}^\star$. This completes the argument showing that Assumptions~(ii) and~(iii) are indeed necessary conditions for~\eqref{eq:covid2} to hold true.
\end{remark}

\begin{remark}
  As in the scalar case (see Remark~\ref{rem:CN_scalaire_comp}), one could think, in view of the second part of Remark~\ref{rem:CN_vectorielle}, that a necessary condition for~\eqref{eq:covid2} to hold true is $Z_{di}^\star(x_d) = 0$ for any $x_d \in (-1/2,1/2)$ and any $1 \leq i \leq d$. Actually, under Assumption~(iv), the statement
  \begin{equation} \label{eq:stat3}
    \forall 1 \leq i \leq d, \quad \int_{\mathcal Y} Z_{di} = 0 \quad \text{and} \quad \int_{\mathcal Y} x_d \, Z_{di} = 0
  \end{equation}
  is equivalent to the statement
  \begin{equation} \label{eq:stat4}
    \forall 1 \leq i \leq d, \quad Z_{di}^\star(x_d) = 0 \quad \text{for any $x_d \in (-1/2,1/2)$},
  \end{equation}
  as we now show. Of course, \eqref{eq:stat4} implies~\eqref{eq:stat3}. Assuming now that~\eqref{eq:stat3} holds, we integrate on $Y$ the vectorial equation $\div Z = 0$ (see Assumption~(iv)). Using the $Y$-periodicity of $Z$, we find $\dps \partial_d \int_Y Z_{di} = 0$ for any $1 \leq i \leq d$, which means that $Z^\star_{di}$ is actually a constant. The first part of condition~\eqref{eq:stat3} now implies that this constant vanishes, and we hence deduce~\eqref{eq:stat4}.
\end{remark}
  


\begin{proof}[Proof of Lemma~\ref{lemma:minZ2}]
For any $1 \leq i \leq d$, let $Z^i$ be the vector-valued function such that $[ Z^i ]_j = Z_{ij}$ for any $1 \leq j \leq d$. By definition of the divergence operator, we have $[ \div Z ]_i = \div Z^i$. The vector field $Z^i$ thus satisfies all assumptions of Lemma~\ref{minZ}. Introducing the periodic extension $\overline{Z} \in (L^2_{\rm per}(\mathbb{R}^d))^{d \times d}$ of $Z$ in the $e_d$ direction, and using arguments similar to the ones used at the beginning of the proof of Lemma~\ref{minZ}, we obtain that, for any $1 \leq i \leq d$, there exists a skew-symmetric matrix-valued field $J^i \in \left( H^1_{\rm per}(\mathbb{R}^d) \right)^{d \times d}$ such that $\overline{Z}_{ij} = [ \, \overline{Z}^i \, ]_j = \partial_k J^i_{k,j}$ for any $1 \leq i,j \leq d$. The proof falls in three steps.


\medskip

\noindent
{\bf Step~1.} Let $\varphi \in W^{2,\infty}(\omega)$ and $1 \leq i,j \leq d$. Denoting $J^i_{\cdot j} = \left( J^i_{k,j} \right)_{1 \leq k \leq d} \in \R^d$, we compute, for any $x_d$ in $(-1/2,1/2)$, that
\begin{align}
  \left[ Z \left( \frac{\cdot}{\eps},x_d \right) \varphi \right]_{ij}
  &= \left[ \overline{Z} \left( \frac{\cdot}{\eps},x_d \right) \varphi \right]_{ij}
  \nonumber
  \\
  &= \partial_k J^i_{k,j}\left( \frac{\cdot}{\eps},x_d \right) \varphi
  \nonumber
  \\
  &= \eps \div^\eps \left[ J^i_{\cdot,j}\left( \frac{\cdot}{\eps},x_d \right) \right] \varphi
  \nonumber
  \\
  &= \eps \div^\eps \left[ J^i_{\cdot,j}\left( \frac{\cdot}{\eps},x_d \right) \varphi \right] - \eps \, J^i_{\cdot,j}\left( \frac{\cdot}{\eps},x_d \right) \cdot \nabla^\eps \varphi
  \nonumber
  \\
  &= \eps \, \widetilde{B}_{ij}(\cdot,x_d) - \eps \, B_{ij}(\cdot,x_d),
  \label{eq:diffe1_el}
\end{align}
where $\dps \widetilde{B}_{ij}(\cdot,x_d) := \div^\eps \left[ J^i_{\cdot,j}\left( \frac{\cdot}{\eps},x_d \right) \varphi \right]$ and $\dps B_{ij}(\cdot,x_d) := J^i_{\cdot,j}\left( \frac{\cdot}{\eps},x_d \right) \cdot \nabla^\eps \varphi$. Note that
$$
\left[ \div^\eps \widetilde{B} \right]_\gamma = \div^\eps (\widetilde{B}_{\gamma \, \cdot})
\qquad \text{and} \qquad
\left[ \div^\eps \widetilde{B} \right]_d = \frac{1}{\eps} \, \div^\eps (\widetilde{B}_{d \, \cdot}).
$$
We next compute, for any $1 \leq i \leq d$, that
\begin{align*}
  & \div^\eps (\widetilde{B}_{i \, \cdot})
  \\
  &= \div^\eps (\widetilde{B}_{ij} \, e_j)
  \\
  &= \div^\eps \left( \partial_\alpha \left[ J^i_{\alpha,j}\left( \frac{\cdot}{\eps},x_d \right) \varphi \right] e_j + \frac{1}{\eps} \partial_d \left[ J^i_{d,j}\left( \frac{\cdot}{\eps},x_d \right) \varphi \right] e_j \right) 
  \\
  &= \partial_{\beta \alpha} \left[ J^i_{\alpha, \beta} \left( \frac{\cdot}{\eps},x_d \right) \varphi \right] + \frac{1}{\eps} \left( \partial_{d \alpha} \left[ J^i_{\alpha, d}\left( \frac{\cdot}{\eps},x_d \right) \varphi \right] + \partial_{\beta d} \left[ J^i_{d, \beta}\left( \frac{\cdot}{\eps},x_d \right) \varphi \right] \right) + \frac{1}{\eps^2} \partial_{dd} \left[ J^i_{d, d}\left( \frac{\cdot}{\eps},x_d \right) \varphi \right] 
  \\
  &= 0 \quad [\text{because $J^i$ is skew symmetric}]
\end{align*}
and thus
\begin{equation} \label{eq:toto62_a}
  \div^\eps \widetilde{B} = 0.
\end{equation}
We note that the matrix $\widetilde{B}$ is not symmetric, in general. There is no reason for $\div^\eps (\widetilde{B}^T)$ to vanish. We now compute this quantity, the expression of which will be useful below. We observe that
\begin{equation} \label{eq:covid3}
\left[ \div^\eps (\widetilde{B}^T) \right]_\gamma = \div^\eps (\widetilde{B}_{\cdot \, \gamma})
\qquad \text{and} \qquad
\left[ \div^\eps (\widetilde{B}^T) \right]_d = \frac{1}{\eps} \, \div^\eps (\widetilde{B}_{\cdot \, d}).
\end{equation}
We next compute, for any $1 \leq i \leq d$, that
\begin{align*}
  & \div^\eps (\widetilde{B}_{\cdot \, i})
  \\
  &= \div^\eps (\widetilde{B}_{ji} \, e_j)
  \\
  &= \div^\eps \left( \partial_\alpha \left[ J^j_{\alpha,i}\left( \frac{\cdot}{\eps},x_d \right) \varphi \right] e_j + \frac{1}{\eps} \partial_d \left[ J^j_{d,i}\left( \frac{\cdot}{\eps},x_d \right) \varphi \right] e_j \right) 
  \\
  &= \partial_{\beta \alpha} \left[ J^\beta_{\alpha, i} \left( \frac{\cdot}{\eps},x_d \right) \varphi \right] + \frac{1}{\eps} \left( \partial_{d \alpha} \left[ J^d_{\alpha, i}\left( \frac{\cdot}{\eps},x_d \right) \varphi \right] + \partial_{\beta d} \left[ J^\beta_{d, i}\left( \frac{\cdot}{\eps},x_d \right) \varphi \right] \right) + \frac{1}{\eps^2} \, \partial_{dd} \left[ J^d_{d, i}\left( \frac{\cdot}{\eps},x_d \right) \varphi \right].
\end{align*}
We are next going to use that $d=2$. We first compute the above quantity for $i=1$. In that case, the first two terms vanish (because the only choice for $\alpha$ is $\alpha = 1 = i$ and $J^\beta$ and $J^d$ are skew-symmetric), and hence
\begin{align}
  \div^\eps (\widetilde{B}_{\cdot \, 1})
  &=
  \frac{1}{\eps} \, \partial_{12} \left[ J^1_{2,1}\left( \frac{\cdot}{\eps},x_2 \right) \varphi \right] + \frac{1}{\eps^2} \, \partial_{22} \left[ J^2_{2,1}\left( \frac{\cdot}{\eps},x_2 \right) \varphi \right]
  \nonumber
  \\
  &=
  \frac{1}{\eps} \, \partial_1 \left[ \partial_2 J^1_{2,1}\left( \frac{\cdot}{\eps},x_2 \right) \varphi \right] + \frac{1}{\eps^2} \, \partial_{22} J^2_{2,1}\left( \frac{\cdot}{\eps},x_2 \right) \varphi \qquad \text{[$\varphi$ ind. of $x_2$]}
  \nonumber
  \\
  &=
  \frac{1}{\eps} \, \partial_1 \left[ \overline{Z}_{11}\left( \frac{\cdot}{\eps},x_2 \right) \varphi \right] + \frac{1}{\eps^2} \, \partial_2 \overline{Z}_{21}\left( \frac{\cdot}{\eps},x_2 \right) \varphi
  \nonumber
  \\
  &=
  \frac{1}{\eps} \, \overline{Z}_{11}\left( \frac{\cdot}{\eps},x_2 \right) \partial_1 \varphi,
  \label{eq:toto62_ba}
\end{align}
where we have used, in the last line, that $\overline{Z}$ is symmetric and that $\overline{Z}_{1\cdot}$ is divergence-free. Considering now the case $i=2$, we compute
\begin{align}
  \div^\eps (\widetilde{B}_{\cdot \, 2})
  &=
  \partial_{11} \left[ J^1_{1,2} \left( \frac{\cdot}{\eps},x_2 \right) \varphi \right] + \frac{1}{\eps} \, \partial_{12} \left[ J^2_{1,2}\left( \frac{\cdot}{\eps},x_2 \right) \varphi \right]
  \nonumber
  \\
  &=
  \partial_1 \left[ J^1_{1,2} \left( \frac{\cdot}{\eps},x_2 \right) \partial_1 \varphi \right] + \frac{1}{\eps} \, \partial_1 \left[ \partial_1 J^1_{1,2} \left( \frac{\cdot}{\eps},x_2 \right) \varphi \right] + \frac{1}{\eps} \, \partial_1 \left[ \partial_2 J^2_{1,2}\left( \frac{\cdot}{\eps},x_2 \right) \varphi \right]
  \nonumber
  \\
  &=
  \partial_1 \left[ J^1_{1,2} \left( \frac{\cdot}{\eps},x_2 \right) \partial_1 \varphi \right] + \frac{1}{\eps} \, \partial_1 \left[ \overline{Z}_{12} \left( \frac{\cdot}{\eps},x_2 \right) \varphi \right] - \frac{1}{\eps} \, \partial_1 \left[ \overline{Z}_{21}\left( \frac{\cdot}{\eps},x_2 \right) \varphi \right]
  \nonumber
  \\
  &=
  \partial_1 \left[ J^1_{1,2} \left( \frac{\cdot}{\eps},x_2 \right) \partial_1 \varphi \right],
  \label{eq:toto62_bb}
\end{align}
where we have used at the third line that $J^2$ is skew-symmetric and at the last line that $\overline{Z}$ is symmetric. These computations confirm that, in general, $\div^\eps (\widetilde{B}^T)$ does not vanish.

\medskip

Since $\widetilde{B}$ is not symmetric, it is useful for the sequel to introduce the symmetric matrix $\overline{B} = \widetilde{B} + \widetilde{B}^T$. In the following, we will need to evaluate $\overline{B}$ on $\R^{d-1} \times \{ \pm 1/2 \}$. To this end, we proceed as follows. We infer from~\eqref{eq:diffe1_el} and from the definition of $B$ that, for any $x_d$ in $(-1/2,1/2)$ and any $1\leq i,j \leq d$,
\begin{equation} \label{eq:diffe2_el}
\widetilde{B}_{ij}(\cdot,x_d) = \frac{1}{\eps}Z_{ij}\left(\frac{\cdot}{\eps}, x_d\right) \varphi + J^i_{\cdot,j}\left(\frac{\cdot}{\eps}, x_d\right) \cdot \nabla^\eps \varphi.
\end{equation}
Since $\varphi$ is independent of $x_d$, we have $\dps \nabla^\eps \varphi = \nabla \varphi = \left( \begin{array}{c} \nabla' \varphi \\ 0 \end{array} \right)$. Besides, since $J^i$ is $\mathcal{Y}$-periodic, we have, for any $1 \leq i,j \leq d$, 
$$
J^i_{\cdot,j} \left( \frac{\cdot}{\eps}, -\frac{1}{2}\right) \cdot \nabla^\eps \varphi = J^i_{\cdot,j} \left( \frac{\cdot}{\eps}, \frac{1}{2}\right) \cdot \nabla^\eps \varphi = J^i_{\alpha, j}\left( \frac{\cdot}{\eps}, \frac{1}{2}\right) \partial_\alpha \varphi.
$$
We then infer from~\eqref{eq:diffe2_el}, the above relation and Assumptions~(v) and~(vi) that
$$
\widetilde{B}_{dj}\left(\cdot, \pm \frac{1}{2}\right)
=
J^d_{\alpha, j} \left( \frac{\cdot}{\eps}, \frac{1}{2}\right) \partial_\alpha \varphi,
\qquad
\widetilde{B}_{jd}\left(\cdot, \pm \frac{1}{2}\right)
=
J^j_{\alpha, d} \left( \frac{\cdot}{\eps}, \frac{1}{2}\right) \partial_\alpha \varphi,
$$
and therefore
\begin{equation} \label{eq:covid5}
\overline{B}_{dj}\left(\cdot, \pm \frac{1}{2}\right)
=
\widetilde{B}_{dj}\left(\cdot, \pm \frac{1}{2}\right) + \widetilde{B}_{jd}\left(\cdot, \pm \frac{1}{2}\right)
=
\left[ J^d_{\alpha, j} \left( \frac{\cdot}{\eps}, \frac{1}{2}\right) + J^j_{\alpha, d} \left( \frac{\cdot}{\eps}, \frac{1}{2}\right) \right] \partial_\alpha \varphi.
\end{equation}

\medskip

\noindent
{\bf Step~2.} Inserting~\eqref{eq:diffe1_el} in the quantity we wish to bound, we write that, for any $v \in V$,
\begin{align}
  \int_\Omega \varphi(x') \, Z\left( \frac{x'}{\eps},x_d \right) : e^\eps(v)(x',x_d) \, dx' \, dx_d
  &=
  \eps \int_\Omega \widetilde{B} : e^\eps(v) - \eps \int_\Omega B: e^\eps(v)
  \nonumber
  \\
  &=
  \frac{\eps}{2} \int_\Omega \overline{B} : e^\eps(v) - \eps \int_\Omega B: e^\eps(v),
  \label{eq:toto2}
\end{align}
where we recall that $\overline{B} = \widetilde{B} + \widetilde{B}^T$. We are going to successively bound the two terms of~\eqref{eq:toto2}.

The second term of~\eqref{eq:toto2} is easy to estimate. Using the fact that $\varphi$ does not depend on the variable $x_d$ and the definition of $B$, we compute
\begin{align}
  \left| \int_\Omega B : e^\eps(v) \right|
  &=
  \left| \int_\omega \int_{-1/2}^{1/2} J^i_{\alpha,j}\left( \frac{x'}{\eps},x_d \right) \partial_\alpha \varphi(x') \, e^\eps_{ij}(v)(x',x_d) \, dx' \, dx_d \right|
  \nonumber
  \\
  & \leq
  \left\| J\left( \frac{x'}{\eps},x_d \right) \right\|_{L^2(\Omega)} \, \| \nabla \varphi \|_{L^\infty(\omega)} \, \| e^\eps(v) \|_{L^2(\Omega)} 
  \nonumber
  \\
  & \leq
  C \, |\omega|^{1/2} \, \| J\|_{L^2(\mathcal{Y})} \, \| \nabla \varphi \|_{L^\infty(\omega)} \, \| e^\eps(v) \|_{L^2(\Omega)}.
  \label{eq:bound2_b}
\end{align}

The remainder of the proof is devoted to estimating the first term of~\eqref{eq:toto2}. We note that, by definition, $\overline{B}$ is in $(L^2(\Omega))^{d\times d}$ and its divergence belongs to $(L^2(\Omega))^d$, in view of~\eqref{eq:toto62_a}, \eqref{eq:toto62_ba} and~\eqref{eq:toto62_bb} and the fact that $\varphi \in W^{2,\infty}(\omega)$. We thus note that $\overline{B}$ has a well-defined normal trace (see Appendix~\ref{app:Hdiv} for details). Using the integration by parts relation~\eqref{eq:IPP_elas} (which is valid since $\overline{B}$ is a symmetric matrix) and~\eqref{eq:toto62_a}, we obtain that, for any $v \in V$,
\begin{align}
  \int_\Omega \overline{B} : e^\eps(v)
  =
  & - \int_\Omega v \cdot \div^\eps (\widetilde{B}^T)
  \nonumber
  \\
  & + \frac{1}{\eps} \left[\int_\omega \overline{B}_{d \beta}\left(\cdot,\frac{1}{2}\right) v_\beta\left(\cdot,\frac{1}{2}\right) - \int_\omega \overline{B}_{d\beta}\left(\cdot,-\frac{1}{2}\right) v_\beta\left(\cdot,-\frac{1}{2}\right) \right]
  \nonumber
  \\ 
  & + \frac{1}{\eps^2} \left[\int_\omega \overline{B}_{dd}\left(\cdot,\frac{1}{2}\right) v_d\left(\cdot,\frac{1}{2}\right) - \int_\omega \overline{B}_{dd}\left(\cdot,-\frac{1}{2}\right) v_d\left(\cdot,-\frac{1}{2}\right) \right].
  \label{eq:toto10_b}
\end{align}
We use~\eqref{eq:covid3}, \eqref{eq:toto62_ba} and~\eqref{eq:toto62_bb} to compute the first term of the right-hand side of~\eqref{eq:toto10_b}:
\begin{equation} \label{eq:covid4}
  \int_\Omega v \cdot \div^\eps (\widetilde{B}^T)
  =
  \frac{1}{\eps} \int_\Omega v_1 \, \overline{Z}_{11}^\eps \, \partial_1 \varphi
  +
  \frac{1}{\eps} \int_\Omega v_2 \, \partial_1 \left[ J^{1,\eps}_{1,2} \, \partial_1 \varphi \right],
\end{equation}
where $\overline{Z}_{11}^\eps$ is the function defined by $\dps \overline{Z}_{11}^\eps(x_1,x_2) = \overline{Z}_{11}\left( \frac{x_1}{\eps},x_2 \right)$ and likewise $\dps J^{1,\eps}_{1,2}(x_1,x_2) = J^1_{1,2} \left( \frac{x_1}{\eps},x_2 \right)$. We recast the second term of the right-hand side of~\eqref{eq:covid4} using an integration by parts and the fact that $J^1$ is skew-symmetric:
\begin{align}
  & \frac{1}{\eps} \int_\Omega v_2 \, \partial_1 \left[ J^{1,\eps}_{1,2} \, \partial_1 \varphi \right]
  \nonumber
  \\
  &=
  -
  \frac{1}{\eps} \int_\Omega J^{1,\eps}_{1,2} \, \partial_1 \varphi \, \partial_1 v_2
  \nonumber
  \\
  &=
  -
  \frac{1}{\eps} \int_\Omega J^{1,\eps}_{1,2} \, \partial_1 \varphi \, (\partial_1 v_2 + \partial_2 v_1)
  +
  \frac{1}{\eps} \int_\Omega J^{1,\eps}_{1,2} \, \partial_1 \varphi \, \partial_2 v_1
  \nonumber
  \\
  &=
  -
  2 \int_\Omega J^{1,\eps}_{1,2} \, \partial_1 \varphi \, e^\eps_{12}(v)
  -
  \frac{1}{\eps} \int_\Omega J^{1,\eps}_{2,1} \, \partial_1 \varphi \, \partial_2 v_1
  \nonumber
  \\
  &=
  -
  2 \int_\Omega J^{1,\eps}_{1,2} \, \partial_1 \varphi \, e^\eps_{12}(v)
  +
  \frac{1}{\eps} \int_\Omega \partial_2 J^{1,\eps}_{2,1} \, \partial_1 \varphi \, v_1
  \nonumber
  \\
  & \qquad \qquad
  -
  \frac{1}{\eps} \int_\omega J^1_{2,1} \left( \frac{\cdot}{\eps},\frac{1}{2} \right) \partial_1 \varphi \, v_1\left(\cdot,\frac{1}{2} \right)
  +
  \frac{1}{\eps} \int_\omega J^1_{2,1} \left( \frac{\cdot}{\eps},-\frac{1}{2} \right) \partial_1 \varphi \, v_1\left(\cdot,-\frac{1}{2} \right)
  \nonumber
  \\
  &=
  -
  2 \int_\Omega J^{1,\eps}_{1,2} \, \partial_1 \varphi \, e^\eps_{12}(v)
  +
  \frac{1}{\eps} \int_\Omega \overline{Z}_{11}^\eps \, \partial_1 \varphi \, v_1
  \nonumber
  \\
  & \qquad \qquad
  +
  \frac{1}{\eps} \int_\omega \overline{B}_{21} \left(\cdot,\frac{1}{2} \right) v_1\left(\cdot,\frac{1}{2} \right)
  -
  \frac{1}{\eps} \int_\omega \overline{B}_{21} \left(\cdot,-\frac{1}{2} \right) v_1\left(\cdot,-\frac{1}{2} \right),
  \label{eq:dur}
\end{align}
where we have used in the last line that, in view of~\eqref{eq:covid5} and the skew-symmetry of $J^2$ and $J^1$,
$$
\overline{B}_{21}\left(\cdot, \pm \frac{1}{2}\right)
=
J^1_{1,2} \left( \frac{\cdot}{\eps}, \pm \frac{1}{2}\right) \partial_1 \varphi
=
- J^1_{2,1} \left( \frac{\cdot}{\eps}, \pm \frac{1}{2}\right) \partial_1 \varphi.
$$
Collecting~\eqref{eq:toto10_b} and~\eqref{eq:covid4}, we deduce that
\begin{align}
  \int_\Omega \overline{B} : e^\eps(v)
  =
  & - \frac{1}{\eps} \int_\Omega v_1 \, \overline{Z}_{11}^\eps \, \partial_1 \varphi
  -
  \frac{1}{\eps} \int_\Omega v_2 \, \partial_1 \left[ J^{1,\eps}_{1,2} \, \partial_1 \varphi \right]
  \nonumber
  \\
  & + \frac{1}{\eps} \left[\int_\omega \overline{B}_{21}\left(\cdot,\frac{1}{2}\right) v_1\left(\cdot,\frac{1}{2}\right) - \int_\omega \overline{B}_{21}\left(\cdot,-\frac{1}{2}\right) v_1\left(\cdot,-\frac{1}{2}\right) \right]
  \nonumber
  \\ 
  & + \frac{1}{\eps^2} \left[\int_\omega \overline{B}_{22}\left(\cdot,\frac{1}{2}\right) v_2\left(\cdot,\frac{1}{2}\right) - \int_\omega \overline{B}_{22}\left(\cdot,-\frac{1}{2}\right) v_2\left(\cdot,-\frac{1}{2}\right) \right]
  \nonumber
  \\
  =
  & - \frac{2}{\eps} \int_\Omega v_1 \, \overline{Z}_{11}^\eps \, \partial_1 \varphi
  +
  2 \int_\Omega J^{1,\eps}_{1,2} \, \partial_1 \varphi \, e^\eps_{12}(v)
  \nonumber
  \\
  & + \frac{2}{\eps^2} \int_\omega J^2_{1,2}\left(\frac{\cdot}{\eps},\frac{1}{2}\right) \partial_1 \varphi \left[ v_2\left(\cdot,\frac{1}{2}\right) - v_2\left(\cdot,-\frac{1}{2}\right) \right]
  \nonumber
  \\
  =
  & - \frac{2}{\eps} \int_\Omega v_1 \, \overline{Z}_{11}^\eps \, \partial_1 \varphi
  +
  2 \int_\Omega J^{1,\eps}_{1,2} \, \partial_1 \varphi \, e^\eps_{12}(v)
  \nonumber
  \\
  & + 2 \int_\omega \int_{t=-1/2}^{1/2} J^2_{1,2}\left(\frac{x_1}{\eps},\frac{1}{2}\right) \partial_1 \varphi(x_1) \, e^\eps_{22}(v)(x_1,t) \, dt \, dx_1,
  \label{eq:covid6}
\end{align}
where we have used in the second equality the identity~\eqref{eq:dur}, the fact that $\dps \overline{B}_{22}\left(\cdot, \pm \frac{1}{2}\right) = 2 \, J^2_{1,2}\left(\frac{\cdot}{\eps},\frac{1}{2}\right) \partial_1 \varphi$ (see~\eqref{eq:covid5}) and in the third equality that $\eps^{-2} \, \partial_2 v_2 = e^\eps_{22}(v)$.

\medskip

We are now in position to bound the second and third terms of~\eqref{eq:covid6}. For the second one, we write
\begin{align} 
  \left| 2 \int_\Omega J^{1,\eps}_{1,2} \, \partial_1 \varphi \, e^\eps_{12}(v) \right|
  &\leq
  2 \, \left\| J^{1,\eps}_{1,2} \right\|_{L^2(\Omega)} \| \nabla \varphi \|_{L^\infty(\omega)} \, \| e^\eps(v) \|_{L^2(\Omega)}
  \nonumber
  \\
  & \leq
  C \, |\omega|^{1/2} \, \| J^1\|_{L^2(\mathcal{Y})} \, \| \nabla \varphi \|_{L^\infty(\omega)} \, \| e^\eps(v) \|_{L^2(\Omega)}.
  \label{eq:covid7}
\end{align}
For the third term of~\eqref{eq:covid6}, we write
\begin{align}
  & \left| 2 \int_\omega \int_{t=-1/2}^{1/2} J^2_{1,2}\left(\frac{x_1}{\eps},\frac{1}{2}\right) \partial_1 \varphi(x_1) \, e^\eps_{22}(v)(x_1,t) \, dt \, dx_1 \right|
  \nonumber
  \\
  &\leq
  2 \, \left\| J^2\left( \frac{\cdot}{\eps}, \frac{1}{2} \right) \right\|_{L^2(\omega)} \| \nabla \varphi \|_{L^\infty(\omega)} \, \| e^\eps(v) \|_{L^2(\Omega)} 
  \nonumber
  \\
  & \leq C \, |\omega|^{1/2} \, \left\| J^2\left(\cdot, \frac{1}{2}\right) \right\|_{L^2(Y)} \| \nabla \varphi \|_{L^\infty(\omega)} \, \| e^\eps(v) \|_{L^2(\Omega)} 
  \nonumber
  \\
  & \leq C \, |\omega|^{1/2} \, \| J^2 \|_{H^1(\mathcal{Y})} \, \| \nabla \varphi \|_{L^\infty(\omega)} \, \| e^\eps(v) \|_{L^2(\Omega)}. 
  \label{eq:covid8}
\end{align}
We claim that the first term of~\eqref{eq:covid6} satisfies the following bound:
\begin{equation} \label{eq:covid9}
  \left| \frac{2}{\eps} \int_\Omega v_1 \, \overline{Z}_{11}^\eps \, \partial_1 \varphi \right| \leq C \, |\omega|^{1/2} \, \| e^\eps(v) \|_{L^2(\Omega)} \, \| \nabla \varphi \|_{W^{1,\infty}(\omega)}.
\end{equation}
Collecting~\eqref{eq:toto2}, \eqref{eq:bound2_b}, \eqref{eq:covid6}, \eqref{eq:covid9}, \eqref{eq:covid7} and~\eqref{eq:covid8}, we obtain~\eqref{eq:covid2}.

\medskip

\noindent
{\bf Step~3.} We are now left with showing~\eqref{eq:covid9}. Recall here that $Y=(0,1)$, since $d=2$. To that aim, we split $\overline{Z}_{11}$ as
$$
\overline{Z}_{11}(x_1,x_2) = \overline{z}_{11}(x_1,x_2) + \mu(x_2) \qquad \text{with} \qquad \mu(x_2) = \int_0^1 \overline{Z}_{11}(s,x_2) \, ds,
$$
which ensures that $\dps \int_0^1 \overline{z}_{11}(s,x_2) \, ds = 0$. We next introduce $\dps \Lambda_{11}(x_1,x_2) = \int_0^{x_1} \overline{z}_{11}(s,x_2) \, ds$. Since $\overline{Z}_{11}$ is periodic with respect to its first variable, so is $\overline{z}_{11}$. Since the average (with respect to its first variable) of $\overline{z}_{11}$ vanishes, we see that $\Lambda_{11}$ is periodic of its first variable. We next write
\begin{equation} \label{eq:covid10}
  \frac{1}{\eps} \int_\Omega v_1 \, \overline{Z}_{11}^\eps \, \partial_1 \varphi
  =
  \frac{1}{\eps} \int_\Omega v_1(x) \, \partial_1 \Lambda_{11}\left( \frac{x_1}{\eps},x_2 \right) \partial_1 \varphi(x_1)
  +
  \frac{1}{\eps} \int_\Omega v_1(x) \, \mu(x_2) \, \partial_1 \varphi(x_1)
\end{equation}
and successively bound the two terms. For the first one, we have
$$
\frac{1}{\eps} \int_\Omega v_1(x) \, \partial_1 \Lambda_{11}\left( \frac{x_1}{\eps},x_2 \right) \partial_1 \varphi(x_1)
=
\int_\Omega v_1 \, \partial_1 \Lambda_{11}^\eps \, \partial_1 \varphi
=
- \int_\Omega \Lambda_{11}^\eps \, \partial_1 (v_1 \, \partial_1 \varphi),
$$
where we have set $\dps \Lambda_{11}^\eps(x_1,x_2) = \Lambda_{11}\left( \frac{x_1}{\eps},x_2 \right)$. Using that $\Lambda_{11}$ is periodic of its first variable, we deduce that the first term of~\eqref{eq:covid10} can be bounded as
\begin{align}
  \left| \frac{1}{\eps} \int_\Omega v_1(x) \, \partial_1 \Lambda_{11}\left( \frac{x_1}{\eps},x_2 \right) \partial_1 \varphi(x_1) \right|
  &\leq
  2 \, \left\| \Lambda_{11}^\eps \right\|_{L^2(\Omega)} \| v_1 \|_{H^1(\Omega)} \, \| \nabla \varphi \|_{W^{1,\infty}(\omega)}
  \nonumber
  \\
  &\leq
  C \, | \omega |^{1/2} \, \| \Lambda_{11} \|_{L^2(\mathcal{Y})} \, \| e(v) \|_{L^2(\Omega)} \, \| \nabla \varphi \|_{W^{1,\infty}(\omega)}
  \nonumber
  \\
  &\leq
  C \, | \omega |^{1/2} \, \| e^\eps(v) \|_{L^2(\Omega)} \, \| \nabla \varphi \|_{W^{1,\infty}(\omega)},
  \label{eq:dur2}
\end{align}
where we have used a Korn inequality (see Lemma~\ref{lem:Korn3}) in the second line.

To estimate the second term of~\eqref{eq:covid10}, we first observe, using Assumption~(ii), that
\begin{equation} \label{eq:covid11}
0 = \int_{\mathcal{Y}} \overline{Z}_{11} = \int_{\mathcal{Y}} \overline{z}_{11} + \int_{\mathcal{Y}} \mu = \int_{-1/2}^{1/2} \mu,
\end{equation}
where we have used that $\dps \int_0^1 \overline{z}_{11}(s,x_2) \, ds = 0$. Similarly, using Assumption~(iii), we write that
\begin{equation} \label{eq:covid12}
0 = \int_{\mathcal{Y}} x_2 \, \overline{Z}_{11} = \int_{\mathcal{Y}} x_2 \, \overline{z}_{11} + \int_{\mathcal{Y}} x_2 \, \mu = \int_{-1/2}^{1/2} x_2 \, \mu,
\end{equation}
where we have again used that the average of $\overline{z}_{11}$ with respect to its first variable vanishes. We thus note that the second term of~\eqref{eq:covid10} would vanish if $v_1$ was an affine function in term of $x_2$. This motivates the idea to use a Taylor-like expansion on $v_1$ to estimate the second term of~\eqref{eq:covid10}. More precisely, we write this term as follows:
\begin{align}
  & \frac{1}{\eps} \int_\Omega v_1(x_1,x_2) \, \mu(x_2) \, \partial_1 \varphi(x_1) \, dx
  \nonumber
  \\
  &=
  \frac{1}{\eps} \int_\Omega \left[ v_1\left(x_1,-\frac{1}{2}\right) + \int_{-1/2}^{x_2} \partial_2 v_1(x_1,s) \, ds \right] \mu(x_2) \, \partial_1 \varphi(x_1) \, dx
  \nonumber
  \\
  &=
  \frac{1}{\eps} \int_\Omega \left[ v_1\left(x_1,-\frac{1}{2}\right) + 2 \int_{-1/2}^{x_2} e_{12}(v)(x_1,s) \, ds - \int_{-1/2}^{x_2} \partial_1 v_2(x_1,s) \, ds \right] \mu(x_2) \, \partial_1 \varphi(x_1) \, dx
  \nonumber
  \\
  &=
  2 \int_\Omega \left[ \int_{-1/2}^{x_2} e^\eps_{12}(v)(x_1,s) \, ds \right] \mu(x_2) \, \partial_1 \varphi(x_1) \, dx - \frac{1}{\eps} \int_\Omega \left[ \int_{-1/2}^{x_2} \partial_1 v_2(x_1,s) \, ds \right] \mu(x_2) \, \partial_1 \varphi(x_1) \, dx,
  \label{eq:covid13}
\end{align}
where we have used~\eqref{eq:covid11} to obtain the last equality. Using a Fubini argument, we write
$$
\int_\Omega \left[ \int_{-1/2}^{x_2} e^\eps_{12}(v)(x_1,s) \, ds \right] \mu(x_2) \, \partial_1 \varphi(x_1) \, dx
=
\int_{(x_1,s) \in \Omega} e^\eps_{12}(v)(x_1,s) \, \partial_1 \varphi(x_1) \left[ \int_s^{1/2} \mu(x_2) \, dx_2 \right].
$$
Using that $\dps \left| \int_s^{1/2} \mu(x_2) \, dx_2 \right| \leq \| \mu \|_{L^2(-1/2,1/2)}$ for any $s \in (-1/2,1/2)$, we estimate the first term of~\eqref{eq:covid13} as follows:
\begin{align}
\left| \int_\Omega \left[ \int_{-1/2}^{x_2} e^\eps_{12}(v)(x_1,s) \, ds \right] \mu(x_2) \, \partial_1 \varphi(x_1) \, dx \right|
&\leq
\| \mu \|_{L^2(-1/2,1/2)} \, \int_{(x_1,s) \in \Omega} \left| e^\eps_{12}(v)(x_1,s) \, \partial_1 \varphi(x_1) \right|
\nonumber
\\
&\leq
\| \overline{Z} \|_{L^2(\mathcal{Y})} \, \| e^\eps_{12}(v) \|_{L^2(\Omega)} \, \| \nabla \varphi \|_{L^2(\Omega)}
\nonumber
\\
&\leq
C \, |\omega|^{1/2} \, \| e^\eps_{12}(v) \|_{L^2(\Omega)} \, \| \nabla \varphi \|_{L^\infty(\omega)}.
 \label{eq:covid14}
\end{align}
Using that $v_2(x_1,\cdot)$ vanishes for any $x_1 \in \partial \omega$, we integrate by parts in the second term of~\eqref{eq:covid13} and write
\begin{align*}
  & - \frac{1}{\eps} \int_\Omega \left[ \int_{-1/2}^{x_2} \partial_1 v_2(x_1,s) \, ds \right] \mu(x_2) \, \partial_1 \varphi(x_1) \, dx
  \\
  &=
   - \frac{1}{\eps} \int_\Omega \partial_1 \left[ \int_{-1/2}^{x_2} v_2(x_1,s) \, ds \right] \mu(x_2) \, \partial_1 \varphi(x_1) \, dx
  \\
  &=
  \frac{1}{\eps} \int_\Omega \left[ \int_{-1/2}^{x_2} v_2(x_1,s) \, ds \right] \mu(x_2) \, \partial_{11} \varphi(x_1) \, dx
  \\
  &=
  \frac{1}{\eps} \int_\Omega \left[ \int_{-1/2}^{x_2} \left\{ v_2\left(x_1,-\frac{1}{2}\right) + \int_{-1/2}^s \partial_2 v_2(x_1,s') \, ds' \right\} ds \right] \mu(x_2) \, \partial_{11} \varphi(x_1) \, dx
  \\
  &=
  \frac{1}{\eps} \int_\Omega \left[ \left(x_2+\frac{1}{2} \right) v_2\left(x_1,-\frac{1}{2}\right) + \int_{-1/2}^{x_2} \left\{ \int_{-1/2}^s \eps^2 \, e_{22}^\eps(v)(x_1,s') \, ds' \right\} ds \right] \mu(x_2) \, \partial_{11} \varphi(x_1) \, dx.
\end{align*}
The integral of the first term above vanishes in view of~\eqref{eq:covid11} and~\eqref{eq:covid12}. We thus deduce that the second term of~\eqref{eq:covid13} reads as
\begin{align*}
  & - \frac{1}{\eps} \int_\Omega \left[ \int_{-1/2}^{x_2} \partial_1 v_2(x_1,s) \, ds \right] \mu(x_2) \, \partial_1 \varphi(x_1) \, dx
  \\
  &=
  \eps \int_\Omega \left[ \int_{-1/2}^{x_2} \left\{ \int_{-1/2}^s e_{22}^\eps(v)(x_1,s') \, ds' \right\} ds \right] \mu(x_2) \, \partial_{11} \varphi(x_1) \, dx
  \\
  &=
  \eps \int_{(x_1,s') \in \Omega} e_{22}^\eps(v)(x_1,s') \, \partial_{11} \varphi(x_1) \left[ \int_{s=s'}^{1/2} \int_{x_2=s}^{1/2} \mu(x_2) \, ds \, dx_2 \right] dx_1 \, ds'.
\end{align*}
Using that $\dps \left| \int_{s=s'}^{1/2} \int_{x_2=s}^{1/2} \mu(x_2) \, ds \, dx_2 \right| \leq \| \mu \|_{L^2(-1/2,1/2)}$ for any $s' \in (-1/2,1/2)$, we estimate the second term of~\eqref{eq:covid13} as follows:
\begin{align}
  & \left| - \frac{1}{\eps} \int_\Omega \left[ \int_{-1/2}^{x_2} \partial_1 v_2(x_1,s) \, ds \right] \mu(x_2) \, \partial_1 \varphi(x_1) \, dx \right|
  \nonumber
  \\
  & \leq
  \eps \, \| \mu \|_{L^2(-1/2,1/2)} \, \int_{(x_1,s') \in \Omega} \left| e_{22}^\eps(v)(x_1,s') \, \partial_{11} \varphi(x_1) \right|
  \nonumber
  \\
  &\leq
  \eps \, \| \overline{Z} \|_{L^2(\mathcal{Y})} \, \| e^\eps_{22}(v) \|_{L^2(\Omega)} \, \| \nabla^2 \varphi \|_{L^2(\Omega)}
  \nonumber
  \\
  &\leq
  C \, \eps \, |\omega|^{1/2} \, \| e^\eps_{22}(v) \|_{L^2(\Omega)} \, \| \nabla^2 \varphi \|_{L^\infty(\omega)}.
 \label{eq:covid15}
\end{align}
Collecting~\eqref{eq:covid10}, \eqref{eq:dur2}, \eqref{eq:covid13}, \eqref{eq:covid14} and~\eqref{eq:covid15}, we deduce that
$$
\left| \frac{1}{\eps} \int_\Omega v_1 \, \overline{Z}_{11}\left( \frac{\cdot}{\eps},x_2 \right) \partial_1 \varphi \right|
\leq
C \, |\omega|^{1/2} \, \| e^\eps(v) \|_{L^2(\Omega)} \, \| \nabla \varphi \|_{W^{1,\infty}(\omega)},
$$
and thus~\eqref{eq:covid9}. This concludes the proof of Lemma~\ref{lemma:minZ2}.
\end{proof}

\subsection{Strong convergence result: the membrane case}\label{sec:membrane}


For any $u \in (H^1(\Omega))^d$, we define the norm $\| u \|_{H^1_\eps(\Omega)}$ as follows (compare with~\eqref{eq:defnorm}):
\begin{equation} \label{eq:defnorm_vec}
\|u\|_{H^1_\eps(\Omega)} := \sqrt{ \frac{\| u \|_{L^2_w(\Omega)}^2}{\left[ \max\left(|\omega|^{\frac{1}{d-1}}, \eps^2 \, |\omega|^{-\frac{1}{d-1}}\right) \right]^2} + \| e^\eps(u) \|_{L^2(\Omega)}^2 },
\end{equation}
where we recall that the weighted $L^2_w(\Omega)$ norm is defined by~\eqref{eq:def_L2_w}. The second term $\| e^\eps(u) \|_{L^2(\Omega)}$ is indeed the relevant energy norm for~\eqref{formvarelast}, and the scaling of the first term (as well as the use of a weighted $L^2$ norm) is motivated by Lemma~\ref{poincarebis2}. As a direct consequence of~\eqref{ineq:1_vec}, we have that
\begin{equation} \label{eq:poinK_corro_vec}
\forall u \in V, \quad \| e^\eps(u) \|_{L^2(\Omega)} \leq \| u \|_{H^1_\eps(\Omega)} \leq C \, \| e^\eps(u) \|_{L^2(\Omega)},
\end{equation}
for some constant $C>0$ independent of $\eps$ and $\omega$.


\medskip

In this section, we assume that we are in the membrane case~\eqref{eq:ass_membrane}. The aim of this section is to prove the following strong convergence result, which is our main result in the membrane case. Recall that, in this case, $u^\star$ belongs to $\VKL^{\mathcal{M}}$ (see Lemma~\ref{lem:ustarsym}).

\begin{theorem} \label{thconvforte2}
  Under Assumptions~\eqref{eq:inde_eps_vec} and~\eqref{eq:assump_g}, consider the solution $u^\eps$ to~\eqref{formvarelast} and its homogenized limit $u^\star$, solution to~\eqref{formvarelasthomog}. Assume that $A$ satisfies the symmetries~\eqref{hyp:symA}, that we are in the membrane case~\eqref{eq:ass_membrane} and that $\omega$ satisfies~\eqref{eq:shape_regul}. 
  We assume that
\begin{equation} \label{eq:hyp_regul_f}
\text{$f_\alpha \in L^2\left( \left(-\frac{1}{2}, \frac{1}{2}\right), H^1(\omega)\right)$ for any $1 \leq \alpha \leq d-1$ and $f_d \in L^2(\Omega)$}
\end{equation}
and that
\begin{equation} \label{eq:hyp_regul_h}
\text{$(h_\pm)_\alpha \in H^1(\omega)$ for any $1 \leq \alpha \leq d-1$ and $(h_\pm)_d \in L^2(\omega)$}.
\end{equation}
We also assume that $e(u^\star+g)$ belongs to $(W^{2,\infty}(\omega))^{d \times d}$ and that, for any $1 \leq \alpha,\beta \leq d-1$, the corrector $w^{\alpha \beta}$ defined by~\eqref{el-prcor1} satisfies $\dps w^{\alpha \beta} \in \left[ W^{1,\infty}\left(\mathbb{R}^{d-1} \times \left( -\demi, \demi \right) \right) \right]^d$. We additionally assume that $d=2$. We introduce the following two-scale expansion: for any $1 \leq \gamma \leq d-1$, let
$$
u_\gamma^{\eps,1}(x) := u_\gamma^\star(x') + \eps \, w_\gamma^{\alpha \beta} \left(\frac{x'}{\eps},x_d \right) e_{\alpha \beta}(u^\star+g)(x')
$$
for any $x=(x',x_d) \in \Omega$, and
$$
u_d^{\eps,1}(x) := \eps^2 \, w_d^{\alpha \beta} \left(\frac{x'}{\eps},x_d \right) e_{\alpha \beta}(u^\star+g)(x').
$$
Then, there exists a constant $C>0$ independent of $\eps$, $\omega$ (but depending on the constant $\eta$ of~\eqref{eq:shape_regul}), $u^\star$, $f$, $g$ and $h_\pm$ such that
\begin{multline} \label{eq:titi7_el}
  \| u^\eps - u^{\eps,1} \|_{H^1_\eps(\Omega)} \leq C \, \sqrt{\eps} \, \Big( |\omega|^{\frac{d-2}{2(d-1)}} \, \| e(u^\star+g) \|_{L^\infty(\omega)} \\ + \sqrt{\eps} \, |\omega|^{1/2} \, \| \nabla \big( e(u^\star+g) \big) \|_{W^{1,\infty}(\omega)} + \sqrt{\eps} \, {\cal N}^{\rm memb}_\eps(f,h_\pm) \Big),
\end{multline}
where the norm $\| \cdot \|_{H^1_\eps(\Omega)}$ is defined by~\eqref{eq:defnorm_vec} and where ${\cal N}^{\rm memb}_\eps(f,h_\pm)$ is defined by
\begin{multline} \label{eq:def_norme_N_pre}
  {\cal N}^{\rm memb}_\eps(f,h_\pm) = \sum_{\alpha=1}^{d-1} \| f_\alpha \|_{L^2(\Omega)} + \eps \, \| f_d \|_{L^2(\Omega)} + \eps \sum_{\alpha,\beta=1}^{d-1} \| \partial_\beta f_\alpha \|_{L^2(\Omega)} \\ + \sum_{\alpha=1}^{d-1} \| (h_\pm)_\alpha \|_{L^2(\omega)} + \eps \, \| (h_\pm)_d \|_{L^2(\omega)} + \eps \sum_{\alpha,\beta=1}^{d-1} \| \partial_\beta (h_\pm)_\alpha \|_{L^2(\omega)}.
\end{multline}
%
%
\end{theorem}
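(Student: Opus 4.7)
The plan is to mimic the four-step architecture of the proof of Theorem~\ref{thconvforte_diffusion}, promoted to the vector-valued setting. The main tools replacing their scalar counterparts will be the Korn--Poincaré inequality of Lemma~\ref{poincarebis2}, the mean-value estimate of Lemma~\ref{potmoy2} (together with its scalar version Lemma~\ref{potmoy}), and, crucially, the matrix-valued oscillatory-integral estimate of Lemma~\ref{lemma:minZ2}, from which the restriction $d=2$ is inherited. The symmetry assumption~\eqref{hyp:symA}, the membrane loading hypothesis~\eqref{eq:ass_membrane}, and the corresponding symmetry results of Lemmas~\ref{lem:membrane}, \ref{lem:symcorr} and~\ref{lem:ustarsym} will play a structural role throughout, both to make the ansatz admissible and to verify the non-trivial moment assumption of Lemma~\ref{lemma:minZ2}.

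\textbf{Steps 1 and 2 (boundary correction, coercivity, oscillatory integral).} First pick a cut-off $\tau_\eps \in \mathcal{D}(\omega)$ with $0 \leq \tau_\eps \leq 1$, $\tau_\eps \equiv 1$ on $\omega_\eps := \{x' \in \omega : \mathrm{dist}(x',\partial\omega) \geq \eps\}$ and $\eps\,\|\nabla\tau_\eps\|_{L^\infty} \leq C$, and define $v^{\eps,1}$ by inserting the factor $\tau_\eps$ in front of each corrector contribution of $u^{\eps,1}$. Since $u^\star \in \VKL^{\mathcal{M}}$ gives $u^\star_d = 0$ and $u^\star_\gamma \in H^1_0(\omega)$, the function $v^{\eps,1}$ belongs to $V$ and coincides with $u^{\eps,1}$ on $\Omega_\eps$; bounding $\|u^{\eps,1} - v^{\eps,1}\|_{H^1_\eps(\Omega)}$ reduces to $L^\infty$ estimates on the thin strip $\Omega \setminus \Omega_\eps$ (of measure $O(\eps|\omega|^{(d-2)/(d-1)})$) and yields the first two contributions to~\eqref{eq:titi7_el}. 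Setting $\overline{v}^\eps := u^\eps - v^{\eps,1} \in V$, coercivity of $A^\eps$ controls $\|e^\eps(\overline{v}^\eps)\|_{L^2}^2$ by $\int_\Omega A^\eps e^\eps(\overline{v}^\eps) : e^\eps(u^\eps - u^{\eps,1})$ plus a term already handled in Step~1. Expanding $e^\eps(u^{\eps,1}+g)$ via the chain rule, re-pairing, then substituting the variational formulation~\eqref{formvarelast} on the $u^\eps$ side and the membrane homogenized formulation~\eqref{formvar:pb_membrane} tested against $\m(\overline{v}^\eps)' \in (H^1_0(\omega))^{d-1}$ on the $u^\star$ side, one obtains a main oscillatory integral
$$\int_\Omega Z^{\alpha\beta}\!\left(\tfrac{x'}{\eps},x_d\right) : e^\eps(\overline{v}^\eps)(x)\; e_{\alpha\beta}(u^\star+g)(x')\,dx, \quad Z^{\alpha\beta} := \left[\int_{\mathcal{Y}} A\big(e(w^{\alpha\beta})+e_\alpha\otimes e_\beta\big)\right] - A\big(e(w^{\alpha\beta})+e_\alpha\otimes e_\beta\big),$$
together with remainders $R_2^\eps$ (terms where derivatives fall on $e_{\alpha\beta}(u^\star+g)$, each carrying an explicit $\eps$) and $R_1^\eps$ (loading defects treated in Step~4).

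\textbf{Step 3 (applying Lemma~\ref{lemma:minZ2}).} Assumptions~(i), (ii), (iv), (v), (vi) for $Z^{\alpha\beta}$ follow directly from~\eqref{el-prcor1}--\eqref{el-prcor1_edp}; in particular (v) uses that $v = x_d\,e_i$ lies in $\mathcal{W}(\mathcal{Y})$ and yields, by~\eqref{eq:symm_A}, that $\bigl[\int_\mathcal{Y} A(e(w^{\alpha\beta})+e_\alpha \otimes e_\beta)\bigr] e_d = 0$. The non-trivial assumption~(iii), $\int_{\mathcal{Y}} x_d\, Z^{\alpha\beta} = 0$, must be checked entrywise. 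Combining~\eqref{hyp:symA} with the parity $w^{\alpha\beta} \in \E^{d-1} \times \O$ from Lemma~\ref{lem:symcorr} gives $Z^{\alpha\beta}_{\gamma\delta}, Z^{\alpha\beta}_{dd} \in \E$, so $x_d\, Z^{\alpha\beta}_{\gamma\delta}$ and $x_d\, Z^{\alpha\beta}_{dd}$ are odd in $x_d$ and integrate to zero. The off-diagonal entries $Z^{\alpha\beta}_{\gamma d}$ are in $\O$ by the same parity analysis, which is not sufficient to conclude; instead, testing~\eqref{el-prcor1} against $v = \tfrac{1}{2}(x_d^2 - 1/12)\,e_\gamma \in \mathcal{W}(\mathcal{Y})$ and using~\eqref{eq:symm_A} yields $\int_\mathcal{Y} x_d\, [A(e(w^{\alpha\beta})+e_\alpha \otimes e_\beta)]_{\gamma d} = 0$, which (together with $\int_\mathcal{Y} x_d = 0$) gives $\int_\mathcal{Y} x_d\, Z^{\alpha\beta}_{\gamma d} = 0$. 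Lemma~\ref{lemma:minZ2} then bounds the oscillatory integral by $C\eps|\omega|^{1/2}\,\|e(u^\star+g)\|_{W^{2,\infty}(\omega)}\,\|e^\eps(\overline{v}^\eps)\|_{L^2(\Omega)}$, while $R_2^\eps$ is handled by Cauchy--Schwarz at cost $O(\eps|\omega|^{1/2}\|\nabla e(u^\star+g)\|_{L^\infty})$.

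\textbf{Step 4 (load remainder $R_1^\eps$), and the expected obstacle.} The essence of $R_1^\eps$ is that, after substituting~\eqref{formvarelast} and~\eqref{formvar:pb_membrane}, it reduces to loading defects of the form $\int_\Omega f \cdot \bigl(\overline{v}^\eps - (\m(\overline{v}^\eps)',0)\bigr) + \int_{\Gamma_\pm} h_\pm \cdot \bigl(\overline{v}^\eps - (\m(\overline{v}^\eps)',0)\bigr)$. This comparison is legitimate because in the membrane case~\eqref{eq:ass_membrane} the bending moments $\m(x_d f_\alpha)$ and $(h_+)_\alpha - (h_-)_\alpha$ appearing in the full homogenized system~\eqref{formvarelasthomog} vanish, so~\eqref{formvar:pb_membrane} captures the correct homogenized flux. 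By Lemma~\ref{lem:membrane} applied to $u^\eps$ (and to $v^{\eps,1}$, whose parities are inherited from those of $u^\star$, $g$ and $w^{\alpha\beta}$), one has $\overline{v}^\eps \in \E^{d-1} \times \O$, so in particular $\m(\overline{v}^\eps_d) = 0$. Lemma~\ref{potmoy} then provides a factor $\eps$ on the tangential defect $\overline{v}^\eps_\gamma - \m(\overline{v}^\eps_\gamma)$, while Lemma~\ref{potmoy2} provides a factor $\eps^2$ on the transverse component $\overline{v}^\eps_d$; these two distinct scalings are precisely what the norm~\eqref{eq:def_norme_N_pre} encodes. The main obstacle I foresee is that naive Cauchy--Schwarz bounds of the type $\|f\|_{L^2}\|\overline{v}^\eps\|_{L^2}$ and $\|h_\pm\|_{L^2}\|\overline{v}^\eps\|_{L^2(\Gamma_\pm)}$ are insufficient to recover the $\sqrt{\eps}$ rate on the tangential components $f'$ and $h'_\pm$; an additional integration by parts in $x'$ transferring one derivative from $\overline{v}^\eps$ onto $f_\alpha$ and $(h_\pm)_\alpha$ is needed to extract the missing $\eps$, which explains both the presence of $\partial_\beta f_\alpha$ and $\partial_\beta (h_\pm)_\alpha$ in~\eqref{eq:def_norme_N_pre} and the regularity hypotheses~\eqref{eq:hyp_regul_f}--\eqref{eq:hyp_regul_h}. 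Once $R_1^\eps$ and $R_2^\eps$ are assembled, dividing by $\|e^\eps(\overline{v}^\eps)\|_{L^2}$ and invoking~\eqref{eq:poinK_corro_vec} closes the estimate and yields~\eqref{eq:titi7_el}.
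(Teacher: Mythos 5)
Your proposal follows essentially the same route as the paper's proof: the same four-step architecture with the cut-off correction $v^{\eps,1}$, the same decomposition into an oscillatory integral controlled by Lemma~\ref{lemma:minZ2} plus remainders $R_1^\eps$ and $R_2^\eps$, and the same mean-value/integration-by-parts mechanism (this is precisely Lemma~\ref{potmoy3}, which you essentially re-derive, and which is indeed what must replace a direct use of Lemma~\ref{potmoy}, since $\|\nabla^\eps\overline{v}^\eps\|_{L^2}$ is not controlled by $\|e^\eps(\overline{v}^\eps)\|_{L^2}$) to extract the $\eps$ factors from the tangential load terms using $\overline{v}^\eps_d\in\O$. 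Your verification of Assumption~(iii) of Lemma~\ref{lemma:minZ2} for the entries $Z^{\alpha\beta}_{\gamma d}$ via the test function $\tfrac12(x_d^2-\tfrac{1}{12})\,e_\gamma$ is in fact more careful than the paper's bare parity claim: under~\eqref{hyp:symA} those entries are odd, not even, in $x_d$, so parity alone yields Assumption~(ii) but not~(iii) for them, and your extra argument (or, equivalently, the observation in the paper's own remark that Assumptions~(ii) and~(iv) imply~(iii) for the $d$-row) is genuinely needed.
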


\begin{remark} \label{rem:regul_ustar_memb}
  As for the diffusion case (see Remark~\ref{rem:regul_ustar_diff}), we wish to point out that the assumption $e(u^\star + g) \in (W^{2,\infty}(\omega))^{d \times d}$ is a standard assumption when proving convergence rates of two-scale expansions (see, e.g.,~\cite[p.~28]{jikov}). Note that, in view of~\eqref{formvarelasthomog}, which can be recast as~\eqref{formvar:pb_membrane} in the membrane case, this assumption implies that $\m(f') + h'_+ + h'_-$ belongs to $(L^\infty(\omega))^{d-1}$.
\end{remark}

Up to lower order terms, we have 
$$
e^\eps(u^{\eps,1}+g)(x) \approx \left[ e_\alpha \otimes e_\beta + e(w^{\alpha \beta})\left(\frac{x'}{\eps},x_d\right) \right] e_{\alpha \beta}(u^\star+g)(x').
$$

\medskip

The proof of Theorem~\ref{thconvforte2} somewhat follows similar lines as the proof of Theorem~\ref{thconvforte_diffusion}. It relies on Lemma~\ref{lemma:minZ2}, which is however more involved than the corresponding result (i.e. Lemma~\ref{minZ}) of the scalar case, which is used to prove Theorem~\ref{thconvforte_diffusion}. In the statement of Theorem~\ref{thconvforte2}, the restriction to the case $d=2$ stems from the fact that Lemma~\ref{lemma:minZ2} has only been established in this case. Should Lemma~\ref{lemma:minZ2} hold in higher-dimensional settings, so would Theorem~\ref{thconvforte2}.

\medskip

Before turning to the proof of Theorem~\ref{thconvforte2}, we state a result similar to Lemma~\ref{potmoy2}, but only valid in the membrane case:

\begin{lemma} \label{potmoy3}
  Let $h \in H^1(\omega)$ and $v \in V$, where we recall that $V$ is defined by~\eqref{def:V2}. We assume that $v_d \in \O$. Let $1 \leq \alpha \leq d-1$. Then, for any $\dps z \in \left[ -\frac{1}{2},\frac{1}{2} \right]$, we have
  $$
  \left| \int_\omega \big( v_\alpha(\cdot,z)-\m(v_\alpha) \big) \, h \right| \leq \Big( 2 \, \eps \, \| h \|_{L^2(\omega)} + \eps^2 \, \| \partial_\alpha h \|_{L^2(\omega)} \Big) \, \| e^\eps(v) \|_{L^2(\Omega)}.
  $$
\end{lemma}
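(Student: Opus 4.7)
The plan is to mimic the approach of Lemma~\ref{potmoy} but to work around the fact that $\partial_d v_\alpha$ is not directly controlled by $e^\eps(v)$. The key identity is
\[
\partial_d v_\alpha = 2\eps\, e^\eps_{\alpha d}(v) - \partial_\alpha v_d,
\]
which follows immediately from the definition $e^\eps_{\alpha d}(v) = \tfrac{1}{2\eps}\bigl(\partial_\alpha v_d + \partial_d v_\alpha\bigr)$. Starting from the Fubini representation
\[
v_\alpha(\cdot,z) - \m(v_\alpha) = \int_{-1/2}^{1/2}\!\!\int_{t}^{z} \partial_d v_\alpha(\cdot,s)\,ds\,dt,
\]
substituting the identity above and multiplying by $h$ yields a decomposition $\int_\omega (v_\alpha(\cdot,z)-\m(v_\alpha))\, h = T_1 + T_2$, where $T_1$ contains $e^\eps_{\alpha d}(v)$ and $T_2$ contains $\partial_\alpha v_d$.

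The first term is handled directly: applying Fubini and Cauchy-Schwarz (first in $s$, then in $x'$) to
\[
T_1 = 2\eps \int_{-1/2}^{1/2}\!\!\int_{t}^{z}\!\int_\omega e^\eps_{\alpha d}(v)(x',s)\, h(x')\, dx'\, ds\, dt
\]
gives, using that the double integral in $(s,t)$ has total mass at most $1$,
\[
|T_1| \leq 2\eps\, \|h\|_{L^2(\omega)}\, \|e^\eps_{\alpha d}(v)\|_{L^2(\Omega)} \leq 2\eps\, \|h\|_{L^2(\omega)}\, \|e^\eps(v)\|_{L^2(\Omega)}.
\]

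For the second term, I would integrate by parts in the in-plane variable $x_\alpha$: since $v \in V$, the function $v_d(\cdot,s)$ vanishes on $\partial\omega$, so
\[
T_2 = \int_{-1/2}^{1/2}\!\!\int_{t}^{z}\!\int_\omega v_d(x',s)\, \partial_\alpha h(x')\, dx'\, ds\, dt.
\]
Here the hypothesis $v_d \in \mathcal{O}$ becomes essential: it gives $\m(v_d)=0$, so for each fixed slice $s \in [-1/2,1/2]$ we can write $\int_\omega v_d(\cdot,s)\,\partial_\alpha h = \int_\omega\bigl(v_d(\cdot,s)-\m(v_d)\bigr)\,\partial_\alpha h$ and invoke Lemma~\ref{potmoy2} with the test function $\partial_\alpha h \in L^2(\omega)$, yielding
\[
\left|\int_\omega v_d(\cdot,s)\,\partial_\alpha h\right| \leq \eps^2\,\|\partial_\alpha h\|_{L^2(\omega)}\,\|e^\eps(v)\|_{L^2(\Omega)}.
\]
Integrating over $s$ and $t$ (with $(s,t)$-mass at most $1$) gives $|T_2| \leq \eps^2\,\|\partial_\alpha h\|_{L^2(\omega)}\,\|e^\eps(v)\|_{L^2(\Omega)}$.

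Combining the bounds on $T_1$ and $T_2$ produces the claimed inequality. There is no real obstacle here, the only subtle point being to see that $\partial_d v_\alpha$ must be split via the symmetric gradient to extract an $x_\alpha$-derivative of $v_d$, which is then transferred onto $h$ by integration by parts so that the oddness of $v_d$ can be exploited through Lemma~\ref{potmoy2}; without the assumption $v_d \in \mathcal{O}$, the mean-value term $\m(v_d)$ would survive and spoil the estimate.
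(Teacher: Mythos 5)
Your proof is correct and follows essentially the same route as the paper's: the same splitting $\partial_d v_\alpha = 2\eps\,e^\eps_{\alpha d}(v) - \partial_\alpha v_d$, the same bound on $T_1$, and the same integration by parts in $x_\alpha$ for $T_2$. The only (legitimate) difference is that, for the final estimate on $T_2$, you invoke Lemma~\ref{potmoy2} with test function $\partial_\alpha h$ after noting $\m(v_d)=0$, whereas the paper re-derives the pointwise bound $|v_d(x',s)|\leq\int_{-1/2}^{1/2}|\partial_d v_d(x',s')|\,ds'$ from the oddness of $v_d$ at $x_d=\pm 1/2$; both exploit $v_d\in\O$ in an equivalent way.
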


\begin{proof}[Proof of Lemma~\ref{potmoy3}]
Let $v \in V$ and let $1 \leq \alpha \leq d-1$. For any $z \in [-1/2,1/2]$, we have
\begin{multline} \label{eq:lundi1}
v_\alpha(\cdot,z)-\m(v_\alpha)
=
\int_{-1/2}^{1/2} \big( v_\alpha(\cdot,z)-v_\alpha(\cdot,t) \big) \, dt
\\ =
\int_{-1/2}^{1/2} \int_t^z \partial_d v_\alpha(\cdot,s) \, ds \, dt
=
T_1(\cdot,z) - T_2(\cdot,z),
\end{multline}
with
$$
T_1(\cdot,z) = \eps \int_{-1/2}^{1/2} \int_t^z \frac{2}{\eps} \, e_{\alpha d}(v)(\cdot,s) \, ds \, dt,
\qquad
T_2(\cdot,z) = \int_{-1/2}^{1/2} \int_t^z \partial_\alpha v_d(\cdot,s) \, ds \, dt.
$$
It follows that, for any $z \in [-1/2,1/2]$,
$$
\| T_1(\cdot,z) \|^2_{L^2(\omega)}
=
\int_\omega \left| T_1(\cdot,z) \right|^2
\leq
4 \, \eps^2 \int_\omega \left| \int_{-1/2}^{1/2} \left| e^\eps_{\alpha d}(v)(\cdot,s) \right| ds \right|^2
\leq
4 \, \eps^2 \, \| e^\eps (v) \|_{L^2(\Omega)}^2,
$$
and thus
\begin{equation} \label{eq:lundi3}
\left| \int_\omega T_1(\cdot,z) \, h \right| \leq \| h \|_{L^2(\omega)} \, \| T_1(\cdot,z) \|_{L^2(\omega)} \leq 2 \, \eps \, \| h \|_{L^2(\omega)} \, \| e^\eps (v) \|_{L^2(\Omega)}.
\end{equation}
We now turn to the second term of~\eqref{eq:lundi1} and write
\begin{align}
  \int_\omega T_2(x',z) \, h(x') \, dx' 
  &=
  \int_\omega \int_{-1/2}^{1/2} \int_t^z h(x') \, \partial_\alpha v_d(x',s) \, ds \, dt \, dx'
  \nonumber
  \\
  &=
  - \int_\omega \int_{-1/2}^{1/2} \int_t^z \partial_\alpha h(x') \, v_d(x',s) \, ds \, dt \, dx',
  \label{eq:lundi2}
\end{align}
where we have used an integration by part and the fact that $v_d(x',s) = 0$ for any $x' \in \partial \omega$.

We next write the following identities:
\begin{align*}
  v_d(x',s) &= v_d(x',-1/2) + \int_{-1/2}^s \partial_d v_d(x',s') \, ds',
  \\
  v_d(x',s) &= v_d(x',1/2) - \int_s^{1/2} \partial_d v_d(x',s') \, ds'.
\end{align*}
Adding them up, and using that $v_d \in \O$, we deduce that
$$
v_d(x',s) = \frac{1}{2} \int_{-1/2}^s \partial_d v_d(x',s') \, ds' - \frac{1}{2} \int_s^{1/2} \partial_d v_d(x',s') \, ds',
$$
and hence that, for any $s \in (-1/2,1/2)$,
$$
| v_d(x',s) | \leq \int_{-1/2}^{1/2} | \partial_d v_d(x',s') | \, ds'.
$$
Inserting this relation in~\eqref{eq:lundi2}, we thus obtain
$$
\left| \int_\omega T_2(\cdot,z) \, h \right|
\leq
\int_\omega \int_{-1/2}^{1/2} | \partial_\alpha h(x') \, \partial_d v_d(x',s') | \, ds' \, dx'
\leq
\eps^2 \, \| \partial_\alpha h \|_{L^2(\omega)} \, \| e^\eps(v) \|_{L^2(\Omega)}.
$$
Collecting~\eqref{eq:lundi1}, \eqref{eq:lundi3} and the above bound yields the claimed estimate and concludes the proof of Lemma~\ref{potmoy3}.
\end{proof}

\begin{proof}[Proof of Theorem~\ref{thconvforte2}]
The proof falls in four steps. In the first step, we correct for the boundary mismatch between $u^\eps$ and its approximation $u^{\eps,1}$. In Steps~2 and~3, we show that $u^{\eps,1}$ is close to $u^\eps$ in the bulk of the domain. The desired conclusion is reached in Step~4 by collecting all the estimates.
	
\medskip

\noindent
{\bf Step~1.} Let $\tau_\eps \in \mathcal{D}(\omega)$ such that $0 \leq \tau_\eps \leq 1$ in $\omega$ and such that $\tau_\eps(x') = 1$ for any $x' \in \omega$ such that $\text{dist}(x',\partial \omega) \geq \eps$. Since $\omega$ is smooth, we can choose $\tau_\eps$ such that $\eps \|\nabla \tau_\eps \|_{L^\infty(\omega)} \leq C$ for some $C>0$ independent of $\omega$ and $\eps$. We define $\omega_\eps := \{ x' \in \omega \text{ such that } \text{dist}(x',\partial \omega) \geq \eps \}$ and $\dps \Omega_\eps := \omega_\eps \times \left( -\frac{1}{2},\frac{1}{2} \right)$. Note that $|\Omega \setminus \Omega_\eps| \leq C \, \eps \, |\omega|^{\frac{d-2}{d-1}}$.

We introduce the function $v^{\eps,1}$ defined for $x = (x',x_d) \in \Omega$ by
\begin{align*}
  \forall 1 \leq \gamma \leq d-1, \qquad v^{\eps,1}_\gamma(x) & := u^\star_\gamma(x') + \eps \, \tau_\eps(x') \, w_\gamma^{\alpha \beta} \left( \frac{x'}{\eps}, x_d \right) e_{\alpha \beta}(u^\star+g)(x'),
  \\
  v^{\eps,1}_d(x) & := \eps^2 \, \tau_\eps(x') \, w_d^{\alpha \beta} \left( \frac{x'}{\eps}, x_d \right) e_{\alpha \beta}(u^\star+g)(x').
\end{align*}
By definition of $\tau_\eps$, we have $v^{\eps,1} \in V$ and $v^{\eps,1} = u^{\eps,1}$ in $\Omega_\eps$. In this first step of the proof, we bound $\| u^{\eps,1} - v^{\eps,1} \|_{H^1_\eps(\Omega)}$. We compute that 
\begin{gather*}
u^{\eps,1}_\gamma(x) - v^{\eps,1}_\gamma(x) = \eps \, (1-\tau_\eps(x')) \, w_\gamma^{\alpha \beta} \left( \frac{x'}{\eps},x_d \right) e_{\alpha \beta}(u^\star+g)(x'),
\\
u^{\eps,1}_d(x) - v^{\eps,1}_d(x) = \eps^2 \, (1-\tau_\eps(x')) \, w_d^{\alpha \beta} \left( \frac{x'}{\eps},x_d \right) e_{\alpha \beta}(u^\star+g)(x').
\end{gather*}
We thus get that
\begin{align}
  \sum_{\gamma=1}^{d-1} \| v^{\eps,1}_\gamma - u^{\eps,1}_\gamma \|^2_{L^2(\Omega)}
  &\leq
  C \, \eps^2 \, \| 1-\tau_\eps \|^2_{L^2(\Omega \setminus \Omega_\eps)} \sup_{1 \leq \alpha,\beta \leq d-1} \| w^{\alpha \beta} \|_{L^\infty}^2 \, \| e(u^\star+g) \|^2_{L^\infty(\omega)}
  \nonumber
  \\
  &\leq
  C \, \eps^2 \, |\Omega \setminus \Omega_\eps| \, \| e(u^\star+g) \|^2_{L^\infty(\omega)}
  \nonumber
  \\
  &\leq
  C \, \eps^3 \, |\omega|^{\frac{d-2}{d-1}} \, \| e(u^\star+g) \|^2_{L^\infty(\omega)}
  \nonumber
  \\
  &\leq
  C \, \eps \, |\omega|^{\frac{d-2}{d-1}} \, \left[ \max\left(|\omega|^{\frac{1}{d-1}}, \eps^2 \, |\omega|^{-\frac{1}{d-1}}\right) \right]^2 \, \| e(u^\star+g) \|^2_{L^\infty(\omega)},
  \label{diffapp3_L2_el_gamma}
\end{align}
where the last estimate stems from the fact that $\dps \eps \leq \max\left(|\omega|^{\frac{1}{d-1}}, \eps^2 \, |\omega|^{-\frac{1}{d-1}}\right)$. We also have that
\begin{align}
  |\omega|^{\frac{-2}{d-1}} \, \| v^{\eps,1}_d - u^{\eps,1}_d \|^2_{L^2(\Omega)}
  &\leq
  C \, \eps^5 \, |\omega|^{\frac{-2}{d-1}} \, |\omega|^{\frac{d-2}{d-1}} \, \| e(u^\star+g) \|^2_{L^\infty(\omega)}
  \nonumber
  \\
  &\leq
  C \, \eps \, |\omega|^{\frac{d-2}{d-1}} \, \left[ \max\left(|\omega|^{\frac{1}{d-1}}, \eps^2 \, |\omega|^{-\frac{1}{d-1}}\right) \right]^2 \, \| e(u^\star+g) \|^2_{L^\infty(\omega)},
  \label{diffapp3_L2_el_d}
\end{align}
where the last estimate stems from the fact that $\dps \eps^2 \, |\omega|^{\frac{-1}{d-1}} \leq \max\left(|\omega|^{\frac{1}{d-1}}, \eps^2 \, |\omega|^{-\frac{1}{d-1}}\right)$. Collecting~\eqref{diffapp3_L2_el_gamma} and~\eqref{diffapp3_L2_el_d}, we thus deduce that
\begin{equation} \label{diffapp3_L2_el}
  \| v^{\eps,1} - u^{\eps,1} \|^2_{L^2_w(\Omega)} \leq C \, \eps \, |\omega|^{\frac{d-2}{d-1}} \, \left[ \max\left(|\omega|^{\frac{1}{d-1}}, \eps^2 \, |\omega|^{-\frac{1}{d-1}}\right) \right]^2 \, \| e(u^\star+g) \|^2_{L^\infty(\omega)}.
\end{equation}
We next compute that $e^\eps(u^{\eps,1} - v^{\eps,1}) = E_0^\eps - E_1^\eps + \eps \, E_2^\eps$, where
\begin{align*}
  E_0^\eps(x) &= (1-\tau_\eps(x')) \, e(w^{\alpha \beta})\left( \frac{x'}{\eps},x_d \right) e_{\alpha \beta} (u^\star+g)(x'),
  \\
  E_1^\eps(x) &= \eps \, \nabla \tau_\eps(x') \otimes^s w^{\alpha \beta}\left( \frac{x'}{\eps},x_d \right) e_{\alpha \beta} (u^\star+g)(x'),
  \\
  E_2^\eps(x) &= (1-\tau_\eps(x')) \, w^{\alpha \beta} \left( \frac{x'}{\eps},x_d \right) \otimes^s \nabla(e_{\alpha \beta} (u^\star+g))(x'),
\end{align*}
where, for any vectors $q$ and $p$ in $\R^d$, $p \otimes^s q = (p \otimes q + q \otimes p)/2$. We bound the above three terms in $L^2(\Omega)$ norm, using that $w^{\alpha \beta} \in (W^{1,\infty})^d$, $e(u^\star+g)$ belongs to $(W^{1,\infty}(\omega))^{d \times d}$, $0 \leq \tau_\eps \leq 1$ and that $\eps \, \|\nabla \tau_\eps \|_{L^\infty(\omega)} \leq C$. We thus obtain that
\begin{align*}
  \|E_2^\eps\|^2_{L^2(\Omega)} & \leq C \, \sup_{1 \leq \alpha,\beta \leq d-1} \| w^{\alpha \beta} \|_{L^\infty}^2 \, \| \nabla \big( e(u^\star+g) \big) \|^2_{L^2(\Omega)}
  \\
  & \leq C \, |\omega| \, \| \nabla \big( e(u^\star+g) \big) \|^2_{L^\infty(\omega)},
  \\
  \| E_1^\eps\|^2_{L^2(\Omega)} & \leq C \, |\Omega \setminus \Omega_\eps| \, \sup_{1 \leq \alpha,\beta \leq d-1} \| w^{\alpha \beta} \|_{L^\infty}^2 \, \| e(u^\star+g) \|^2_{L^\infty(\omega)} 
  \\
  & \leq C \, \eps \, |\omega|^{\frac{d-2}{d-1}} \, \| e(u^\star+g) \|^2_{L^\infty(\omega)},
  \\
  \| E_0^\eps\|^2_{L^2(\Omega)} & \leq C \, |\Omega \setminus \Omega_\eps| \, \sup_{1 \leq \alpha, \beta \leq d-1} \| \nabla w^{\alpha \beta} \|_{L^\infty}^2 \, \| e(u^\star+g) \|^2_{L^\infty(\omega)}
  \\
  & \leq C \, \eps \, |\omega|^{\frac{d-2}{d-1}} \, \| e(u^\star+g) \|^2_{L^\infty(\omega)}.  
\end{align*}
This implies that
\begin{equation} \label{diffapp4}
  \| e^\eps(v^{\eps,1} - u^{\eps,1}) \|^2_{L^2(\Omega)} \leq C \, \eps \left( |\omega|^{\frac{d-2}{d-1}} \, \| e(u^\star+g) \|^2_{L^\infty(\omega)} + \eps \, |\omega| \, \| \nabla \big( e(u^\star+g) \big) \|^2_{L^\infty(\omega)} \right).
\end{equation}
Collecting~\eqref{diffapp3_L2_el} and~\eqref{diffapp4}, we deduce
\begin{equation} \label{diffapp3_H1_el}
  \| v^{\eps,1} - u^{\eps,1} \|^2_{H^1_\eps(\Omega)} \leq C \, \eps \left( |\omega|^{\frac{d-2}{d-1}} \, \| e(u^\star+g) \|^2_{L^\infty(\omega)} + \eps \, |\omega| \, \| \nabla \big( e(u^\star+g) \big) \|^2_{L^\infty(\omega)} \right),
\end{equation}
where we recall that the norm $\| \cdot \|_{H^1_\eps(\Omega)}$ is defined by~\eqref{eq:defnorm_vec}.

\medskip
  
\noindent 
{\bf Step~2.} We now bound $\overline{v}^\eps := u^\eps - v^{\eps,1}$. Using the coercivity of $A^\eps$, we have
\begin{align}
  c_- \| e^\eps(\overline{v}^\eps) \|_{L^2(\Omega)}^2
  &\leq
  \int_\Omega A^\eps e^\eps(\overline{v}^\eps) : e^\eps(\overline{v}^\eps)
  \nonumber
  \\
  &=
  \int_\Omega A^\eps e^\eps(u^\eps - u^{\eps,1}) : e^\eps(\overline{v}^\eps) + \int_\Omega A^\eps e^\eps(u^{\eps,1} - v^{\eps,1}) : e^\eps(\overline{v}^\eps).
  \label{estim2_el}
\end{align}
We bound the second term of~\eqref{estim2_el} using~\eqref{diffapp4}:
\begin{multline}\label{eq:titi_el}
  \left| \int_\Omega A^\eps e^\eps(u^{\eps,1} - v^{\eps,1}) : e^\eps(\overline{v}^\eps) \right| \\ \leq C \left( \sqrt{\eps} \, |\omega|^{\frac{d-2}{2(d-1)}} \, \| e(u^\star+g) \|_{L^\infty(\omega)} + \eps \, |\omega|^{1/2} \, \| \nabla \big( e(u^\star+g) \big) \|_{L^\infty(\omega)} \right) \| e^\eps(\overline{v}^\eps) \|_{L^2(\Omega)}.
\end{multline}
For the first term of~\eqref{estim2_el}, we write
$$
\int_\Omega A^\eps e^\eps(u^\eps - u^{\eps,1}) : e^\eps(\overline{v}^\eps) = \int_\Omega A^\eps e^\eps(u^\eps + g) : e^\eps(\overline{v}^\eps) - \int_\Omega A^\eps e^\eps( u^{\eps,1} + g) : e^\eps(\overline{v}^\eps). 
$$
We define the remainder terms $R^\eps_1$ and $R^\eps_2$ by
\begin{equation} \label{eq:r12}
  R^\eps_1 := \int_\Omega A^\eps e^\eps(u^\eps + g) : e^\eps(\overline{v}^\eps) - \int_\Omega \left[ \int_\Y A\left( e_\alpha \otimes e_\beta + e(w^{\alpha \beta}) \right) \right] e_{\alpha \beta}(u^\star+g) : e^\eps(\overline{v}^\eps)
\end{equation}
and
\begin{equation} \label{eq:r22}
  R^\eps_2 := \int_\Omega A^\eps e^\eps(u^{\eps,1} + g) : e^\eps(\overline{v}^\eps) - \int_\Omega A^\eps(x) \left( e_\alpha \otimes e_\beta + e(w^{\alpha \beta}) \left(\frac{x'}{\eps},x_d \right) \right) e_{\alpha \beta}(u^\star+g)(x') : e^\eps(\overline{v}^\eps)(x).
\end{equation}
We then compute that
\begin{multline} \label{eq:titi2_el}
  \int_\Omega A^\eps e^\eps(u^\eps - u^{\eps,1}) : e^\eps(\overline{v}^\eps)
  =
  R^\eps_1 - R^\eps_2 \\
  + \int_\Omega \left( \left[ \int_\Y A\left( e_\alpha \otimes e_\beta + e(w^{\alpha \beta}) \right) \right] - A^\eps \left( e_\alpha \otimes e_\beta + e(w^{\alpha \beta})_\eps \right) \right) e_{\alpha \beta}(u^\star + g) : e^\eps(\overline{v}^\eps),
\end{multline}
where we have used the short-hand notation $\dps e(w^{\alpha \beta})_\eps(x) = e(w^{\alpha \beta}) \left(\frac{x'}{\eps},x_d \right)$.

In the next step, we are going to show that
\begin{equation} \label{eq:titi5_el}
|R^1_\eps| + |R^2_\eps| \leq C \, \eps \, \Big( |\omega|^{1/2} \, \| \nabla \big( e(u^\star+g) \big) \|_{L^\infty(\omega)} + {\cal N}^{\rm memb}_\eps(f,h_\pm) \Big) \, \| e^\eps(\overline{v}^\eps) \|_{L^2(\Omega)},
\end{equation}
where ${\cal N}^{\rm memb}_\eps(f,h_\pm)$ is defined by~\eqref{eq:def_norme_N_pre}. We are thus left to bound the last term of~\eqref{eq:titi2_el}. To that aim, introduce the matrix-valued function
$$
Z_{\alpha \beta} := \left[ \int_{\mathcal{Y}} A \big( e_\alpha \otimes e_\beta + e(w^{\alpha \beta}) \big) \right] - A \left( e_\alpha \otimes e_\beta + e(w^{\alpha \beta}) \right)
$$
and recast~\eqref{eq:titi2_el} as
\begin{equation} \label{eq:titi3_el}
\int_\Omega A^\eps e^\eps(u^\eps - u^{\eps,1}) : e^\eps(\overline{v}^\eps) = R^\eps_1 - R^\eps_2 + \int_\Omega Z_{\alpha \beta} \left(\frac{x'}{\eps},x_d\right) e_{\alpha \beta}(u^\star + g)(x') : e^\eps(\overline{v}^\eps)(x) \, dx.
\end{equation}
In view of the properties~\eqref{el-prcor1_edp} satisfied by $w^{\alpha \beta}$, we observe that $Z_{\alpha \beta}$ satisfies the assumptions of Lemma~\ref{lemma:minZ2}. This is obvious for Assumptions~(i), (ii), (iv) and~(vi). Assumption~(v) stems from the second line of~\eqref{el-prcor1_edp} and from~\eqref{el-prcor1} where we choose $v(x) = x_d \, e_i$ as test function (for any $1 \leq i \leq d$), which yields
\begin{equation} \label{eq:titi6_el}
\forall 1 \leq \alpha,\beta \leq d-1, \quad \forall 1 \leq i \leq d, \quad \int_{\mathcal{Y}} A \big( e_\alpha \otimes e_\beta + e(w^{\alpha \beta}) \big) : (e_d \otimes e_i) = 0.
\end{equation}
Assumption~(iii) is satisfied because of a parity argument: the membrane corrector $w^{\alpha \beta}$ belongs to $\mathcal{E}^{d-1} \times \mathcal{O}$ for any $1 \leq \alpha,\beta \leq d-1$ (see Lemma~\ref{lem:symcorr}) and $A$ satisfies the symmetries~\eqref{hyp:symA}, which implies that $Z_{\alpha \beta}$ is a function which is even with respect to $x_d$ (and Assumption~(iii) hence holds). In addition, the function $e_{\alpha \beta}(u^\star+g)$ belongs to $W^{2,\infty}(\omega)$ and $\overline{v}^\eps$ belongs to $V$. We are thus in position to use Lemma~\ref{lemma:minZ2}. For any $1\leq \alpha,\beta \leq d-1$, we thus infer that
\begin{equation} \label{eq:titi4_el}
\left| \int_\Omega Z_{\alpha \beta} \left( \frac{x'}{\eps},x_d \right) e_{\alpha \beta}(u^\star+g)(x') : e^\eps(\overline{v}^\eps)(x) \, dx \right| \leq C \, \eps \, |\omega|^{1/2} \, \| \nabla \big( e(u^\star+g) \big) \|_{W^{1,\infty}(\omega)} \, \| e^\eps(\overline{v}^\eps) \|_{L^2(\Omega)}.
\end{equation}
Collecting~\eqref{estim2_el}, \eqref{eq:titi_el}, \eqref{eq:titi3_el}, \eqref{eq:titi5_el} and~\eqref{eq:titi4_el}, we have shown that
\begin{multline*}
\| e^\eps(\overline{v}^\eps) \|_{L^2(\Omega)} \leq C \Big( \eps \, |\omega|^{1/2} \, \| \nabla \big( e(u^\star+g) \big) \|_{W^{1,\infty}(\omega)} \\ + \sqrt{\eps} \, |\omega|^{\frac{d-2}{2(d-1)}} \, \| e(u^\star+g) \|_{L^\infty(\omega)} + \eps \, {\cal N}^{\rm memb}_\eps(f,h_\pm) \Big).
\end{multline*}
Since $\overline{v}^\eps$ belongs to $V$, we can use the estimate~\eqref{eq:poinK_corro_vec}, and we obtain from the above bound that
\begin{multline} \label{eq:tutu}
\| \overline{v}^\eps \|_{H^1_\eps(\Omega)} \leq C \, \sqrt{\eps} \Big( \sqrt{\eps} \, |\omega|^{1/2} \, \| \nabla \big( e(u^\star+g) \big) \|_{W^{1,\infty}(\omega)} \\ + |\omega|^{\frac{d-2}{2(d-1)}} \, \| e(u^\star+g) \|_{L^\infty(\omega)} + \sqrt{\eps} \, {\cal N}^{\rm memb}_\eps(f,h_\pm) \Big).
\end{multline}

\medskip

\noindent 
{\bf Step~3.} We show in this step the claimed estimate~\eqref{eq:titi5_el} on $R^\eps_1$ and $R^\eps_2$. The bound on $R^\eps_2$ comes from inserting in~\eqref{eq:r22} the definition of $u^{\eps,1}$. Using again the short-hand notation $\dps e(w^{\alpha \beta})_\eps(x) = e(w^{\alpha \beta}) \left(\frac{x'}{\eps},x_d \right)$ and $\dps w^{\alpha \beta}_\eps(x) = w^{\alpha \beta}\left(\frac{x'}{\eps},x_d \right)$, we write
\begin{align*}
  R^\eps_2
  &=
  \int_\Omega A^\eps \Big[ e^\eps(u^{\eps,1}+g) - \left( e_\alpha \otimes e_\beta + e(w^{\alpha \beta})_\eps \right) e_{\alpha \beta} (u^\star+g) \Big] : e^\eps(\overline{v}^\eps)
  \\
  &=
  \eps \int_\Omega A^\eps \Big[ w^{\alpha \beta}_\eps \otimes^s \nabla( e_{\alpha \beta} (u^\star +g)) \Big] : e^\eps(\overline{v}^\eps),
\end{align*}
where we recall that $p \otimes^s q = (p \otimes q + q \otimes p)/2$ for any vectors $q$ and $p$ in $\R^d$. We thus obtain
\begin{align}
  | R^\eps_2 |
  & \leq
  C \, \eps \, \| \nabla \big( e(u^\star+g) \big) \|_{L^2(\Omega)} \| e^\eps(\overline{v}^\eps) \|_{L^2(\Omega)}
  \nonumber
  \\
  & \leq
  C \, \eps \, |\omega|^{1/2} \, \| \nabla \big( e(u^\star+g) \big) \|_{L^\infty(\omega)} \| e^\eps(\overline{v}^\eps) \|_{L^2(\Omega)}.
  \label{eq:bound_R2eps_el}
\end{align}
We now bound $R^\eps_1$ defined by~\eqref{eq:r12}. Using the variational formulation~\eqref{formvarelast} with the test function $\overline{v}^\eps \in V$ defined in Step~2, the first term of $R^\eps_1$ reads
$$
\int_\Omega A^\eps e^\eps(u^\eps+g) : e^\eps(\overline{v}^\eps) = \int_\Omega f \cdot \overline{v}^\eps + \int_{\Gamma_\pm} h_\pm \cdot \overline{v}^\eps,
$$
where we recall that $\dps \Gamma_\pm = \omega \times \left\{ \pm \frac{1}{2} \right\}$. Using~\eqref{eq:titi6_el}, we see that $\dps \left[ \int_{\mathcal{Y}} A \big( e_\alpha \otimes e_\beta + e(w^{\alpha \beta}) \big) \right] e_d = 0$ for any $1 \leq \alpha,\beta \leq d-1$ and the second term of $R^\eps_1$ thus reads
\begin{align*}
  \int_\Omega \left[ \int_{\mathcal{Y}} A \big( e_\alpha \otimes e_\beta + e(w^{\alpha \beta}) \big) \right] e_{\alpha \beta}(u^\star+g) : e^\eps(\overline{v}^\eps) 
  &=
  \int_\Omega (K_{11}^\star)_{\alpha \beta \gamma \delta} \, e_{\alpha \beta}(u^\star+g) \, e_{\gamma \delta}(\overline{v}^\eps)
  \\
  &=
  \int_\Omega K_{11}^\star \, e'(u^\star+g) : e'(\overline{v}^\eps)
  \\
  &=
  \int_\omega K_{11}^\star \, e'(u^\star+g) : e'(\m(\overline{v}^\eps)),
\end{align*}
where we have used in the last line that $u^\star+g \in \GKL^\M$ and is thus independent from $x_d$. We proceed by using the variational formulation~\eqref{formvar:pb_membrane} of the homogenized problem in the membrane case and the fact that $\m(\overline{v}^\eps)$ belongs to $(H^1_0(\omega))^d$ (thus $\m((\overline{v}^\eps)') \in (H^1_0(\omega))^{d-1}$ is an admissible test function for~\eqref{formvar:pb_membrane}). We thus obtain that the second term of $R^\eps_1$ reads
\begin{align}
  \int_\Omega \left[ \int_{\mathcal{Y}} A \big( e_\alpha \otimes e_\beta + e(w^{\alpha \beta}) \big) \right] e_{\alpha \beta}(u^\star+g) : e^\eps(\overline{v}^\eps)
  &=
  \int_\omega K_{11}^\star \, e'(u^\star+g) : e'\big(\m((\overline{v}^\eps)')\big)
  \nonumber
  \\
  &=
  \int_\omega \big( \m(f') + h'_\pm \big) \cdot \m((\overline{v}^\eps)').
  \label{eq:pbmbending}
\end{align}
We thus deduce that
\begin{align}
  R^\eps_1
  &= \int_\Omega f \cdot \overline{v}^\eps - \int_\omega \m(f') \cdot \m((\overline{v}^\eps)') + \int_{\Gamma_+} h_+ \cdot \overline{v}^\eps - \int_\omega h'_+ \cdot \m((\overline{v}^\eps)') + \int_{\Gamma_-} h_- \cdot \overline{v}^\eps - \int_\omega h'_- \cdot \m((\overline{v}^\eps)')
  \nonumber
  \\
  &= \int_\Omega f \cdot \overline{v}^\eps - \int_\Omega f' \cdot \m((\overline{v}^\eps)') + \int_{\Gamma_+} h_+ \cdot \overline{v}^\eps - \int_\omega h_+ \cdot \m(\overline{v}^\eps) + \int_{\Gamma_-} h_- \cdot \overline{v}^\eps - \int_\omega h_- \cdot \m(\overline{v}^\eps),
  \label{eq:vac1}
\end{align}
where we have used in the last line, for the boundary terms, that $\overline{v}^\eps_d \in \O$ (recall indeed that, in the membrane case, $u^\eps_d \in \O$ in view of Lemma~\ref{lem:membrane} and $w_d^{\alpha \beta} \in \O$ in view of Lemma~\ref{lem:symcorr}), which implies that $\m((\overline{v}^\eps)_d) = 0$. 

Using again that $\m((\overline{v}^\eps)_d) = 0$, we next write, for the first terms of~\eqref{eq:vac1}, that
$$
\int_\Omega f \cdot \overline{v}^\eps - \int_\Omega f' \cdot \m((\overline{v}^\eps)') = \int_\Omega f_i \, \big(\overline{v}^\eps_i - \m(\overline{v}^\eps_i) \big),
$$
and thus, using Lemma~\ref{potmoy2} for the component $i=d$ and Lemma~\ref{potmoy3} for the other components (which is possible since $\overline{v}^\eps \in V$ and $\overline{v}^\eps_d \in \O$), we obtain
\begin{align}
  & \left| \int_\Omega f \cdot \overline{v}^\eps - \int_\Omega f' \cdot \m((\overline{v}^\eps)') \right|
  \nonumber
  \\
  &\leq
  \sum_{i=1}^d \int_{-1/2}^{1/2} \left| \int_\omega f_i(\cdot,z) \, \big( \overline{v}_i^\eps(\cdot, z) - \m(\overline{v}_i^\eps) \big) \right| \, dz
  \nonumber
  \\
  & \leq
  \|e^\eps( \overline{v}^\eps)\|_{L^2(\Omega)} \left( \int_{-1/2}^{1/2} \eps^2 \, \| f_d(\cdot,z) \|_{L^2(\omega)} + \sum_{\alpha=1}^{d-1} \int_{-1/2}^{1/2} 2 \, \eps \, \| f_\alpha(\cdot,z) \|_{L^2(\omega)} + \eps^2 \, \| \partial_\alpha f_\alpha(\cdot,z) \|_{L^2(\omega)} \right)
  \nonumber
  \\
  & \leq C \, \Big( \eps \, \| f' \|_{L^2(\Omega)} + \eps^2 \, \| f_d \|_{L^2(\Omega)} + \eps^2 \, \| \nabla' f' \|_{L^2(\Omega)} \Big) \, \|e^\eps( \overline{v}^\eps)\|_{L^2(\Omega)}
  \nonumber
  \\
  & \leq C \, \eps \, {\cal N}^{\rm memb}_\eps(f,h_\pm) \, \|e^\eps( \overline{v}^\eps)\|_{L^2(\Omega)},
  \label{eq:vac2}
\end{align}
where we recall that ${\cal N}^{\rm memb}_\eps(f,h_\pm)$ is defined by~\eqref{eq:def_norme_N_pre}. We next turn to the boundary terms of~\eqref{eq:vac1}. Using again Lemmas~\ref{potmoy2} and~\ref{potmoy3}, we write that
\begin{align}
  & \left| \int_{\Gamma_+} h_+ \cdot \overline{v}^\eps - \int_\omega h_+ \cdot \m(\overline{v}^\eps) \right|
  \nonumber
  \\
  &\leq
  \sum_{i=1}^d \left| \int_\omega (h_+)_i \, \big( \overline{v}^\eps_i - \m(\overline{v}^\eps_i) \big) \right|
  \nonumber
  \\
  &\leq
  C \, \Big( \eps \, \| (h_+)' \|_{L^2(\omega)} + \eps^2 \, \| (h_+)_d \|_{L^2(\omega)} + \eps^2 \, \| \nabla' (h_+)' \|_{L^2(\omega)} \Big) \, \| e^\eps(\overline{v}^\eps) \|_{L^2(\Omega)}
  \nonumber
  \\
  &\leq
  C \, \eps \, {\cal N}^{\rm memb}_\eps(f,h_\pm) \, \| e^\eps(\overline{v}^\eps) \|_{L^2(\Omega)},
  \label{eq:vac3}
\end{align}
and likewise for $h_-$.

Collecting~\eqref{eq:vac1}, \eqref{eq:vac2} and~\eqref{eq:vac3}, we deduce that
$$
|R^1_\eps| \leq C \, \eps \, {\cal N}^{\rm memb}_\eps(f,h_\pm) \, \|e^\eps( \overline{v}^\eps)\|_{L^2(\Omega)}.
$$
Collecting this estimate with~\eqref{eq:bound_R2eps_el}, we obtain the claimed bound~\eqref{eq:titi5_el}.
  
\medskip

\noindent
{\bf Step~4.} Collecting~\eqref{diffapp3_H1_el} and~\eqref{eq:tutu}, we deduce~\eqref{eq:titi7_el}, which concludes the proof of Theorem~\ref{thconvforte2}.
\end{proof}


\subsection{Strong convergence result: the bending case}\label{sec:bending}

Stating a similar strong convergence result in the bending case is a much more intricate task than in the membrane case. We stress here the fact that the arguments used for the proof of Theorem~\ref{thconvforte_diffusion} (on the scalar problem) and Theorem~\ref{thconvforte2} (on the membrane case) cannot be applied here. Indeed, notice that in the proof of Theorem~\ref{thconvforte2}, and more precisely in~\eqref{eq:pbmbending}, we have used $\m((\overline{v}^\eps)') \in (H^1_0(\omega))^{d-1}$ as a test function in the variational formulation~\eqref{formvar:pb_membrane} of the homogenized problem. However, in the bending case, it is not clear how to construct from $\overline{v}^\eps \in V$ an admissible test function for the homogenized problem~\eqref{formvar:pb_bending} (namely a test function in $H^2_0(\omega)$). Here, we manage to circumvent this difficulty by using a completely different strategy of proof, inspired by some arguments present in~\cite{destuynder1981comparaison} to handle homogeneous plates (for that case, we also refer to the introductory exposition provided in~\cite[Sec.~1.4.2]{adrien_phd}), but the adaptation of which to the heterogeneous case is far from immediate. Our analysis culminates in Theorem~\ref{thconvforte3_a}, our main result in the bending case.

\subsubsection{Technical results} \label{sec:tech_res_bending}

To state our main result for the bending case, we first need to state some intermediate results, which are collected in Lemmas~\ref{lem:lemma1}, \ref{lem:Sigmaab}, \ref{lem:lem00} and~\ref{lem:lem2} below.

\medskip

Let us define
\begin{equation} \label{eq:def_sigma_eps}
\sigma^\eps := A^\eps e^\eps(u^\eps + g).
\end{equation}
Using~\eqref{eq:inde_eps_vec} and Assumption~\eqref{eq:assump_g} (which implies that $e^\eps(g) = e(g)$), we infer from~\eqref{est:sig} that there exists $C>0$ independent of $\eps$ such that 
\begin{equation} \label{eq:bureau}
\|\sigma^\eps\|_{L^2(\Omega)} \leq C \left( \|f\|_{L^2(\Omega)} + \|e(g)\|_{L^2(\Omega)} + \|h_\pm\|_{L^2(\omega)} \right). 
\end{equation}
Theorem~\ref{limitel} provides information (in terms of $u^\star$) about $\dps \int_\Omega \sigma^\eps : e^\eps(v)$ when $v\in \VKL$. We indeed have
\begin{align*}
  \int_\Omega \sigma^\eps : e^\eps(v)
  &=
  \int_\Omega A^\eps e^\eps(u^\eps + g) : e^\eps(v)
  \\
  &=
  \int_\Omega f \cdot v + \int_{\Gamma_\pm} h_\pm \cdot v
  \\
  &=
  \int_\Omega f \cdot \widehat{v} + \int_{\Gamma_\pm} h_\pm \cdot \widehat{v} - \int_\Omega x_d \, f' \cdot \nabla' \widehat{v}_d - \int_{\Gamma_\pm} x_d \, h'_\pm \cdot \nabla' \widehat{v}_d
  \\
  &=
  \int_\omega \m(f) \cdot \widehat{v} + \int_\omega h_\pm \cdot \widehat{v} - \int_\omega \m(x_d \, f') \cdot \nabla' \widehat{v}_d - \demi \int_\omega (h'_+ - h'_-) \cdot \nabla' \widehat{v}_d
  \\
  &=
  \int_\omega K^\star \, \mathcal{P} (u^\star+g) : \mathcal{P} v,
\end{align*}
where we have used at the second line the variational formulation~\eqref{formvarelast} of the oscillatory problem and at the last line the variational formulation~\eqref{formvarelasthomog} of the homogenized problem, which is only valid for $v\in \VKL$.

To state a strong convergence theorem, we need to identify the limit of $\dps \int_\Omega \sigma^\eps : e^\eps(v)$ for any $v\in V$, and not only for $v\in \VKL$. To this aim, we introduce the following quantities, for any $1 \leq \alpha, \beta \leq d-1$:
\begin{equation} \label{eq:def_grand_Sigma}
  \Sigma^\eps_{\alpha\beta} := \sigma_{\alpha\beta}^\eps,
  \qquad
  \Sigma^\eps_{\alpha d} := \frac{1}{\eps} \, \sigma_{\alpha d}^\eps
  \qquad \text{and} \qquad
  \Sigma^\eps_{dd} := \frac{1}{\eps^2} \, \sigma_{dd}^\eps.
\end{equation}
By definition, we have $\dps \int_\Omega \sigma^\eps : e^\eps(v) = \int_\Omega \Sigma^\eps : e(v)$ for any $v \in V$. The main idea of the following lemmas is to show that there exists some $\Sigma^\star$ regular enough such that
$$
\forall v \in V, \qquad \int_\Omega \Sigma^\eps : e(v) = \int_\Omega \Sigma^\star : e(v),
$$
and to relate some components of $\Sigma^\star$ with $u^\star$.

\begin{lemma}\label{lem:lemma1}
There exists a symmetric matrix-valued field $\Sigma^\star := \left( \Sigma^\star_{ij} \right)_{1\leq i,j\leq d}$ such that, for any $1\leq \alpha, \beta \leq d-1$, we have
\begin{gather*}
  \Sigma^\star_{\alpha\beta} \in L^2(\Omega), \qquad \Sigma^\star_{\alpha d} \in L^2\left(\left(-\frac{1}{2}, \frac{1}{2}\right), H^{-1}(\omega)\right), \qquad \Sigma^\star_{dd} \in L^2\left(\left(-\frac{1}{2}, \frac{1}{2}\right), H^{-2}(\omega)\right),
  \\
  \partial_d \Sigma^\star_{\alpha d} \in L^2\left(\left(-\frac{1}{2}, \frac{1}{2}\right), H^{-1}(\omega)\right) \qquad \mbox{ and } \qquad \partial_d \Sigma^\star_{dd} \in L^2\left(\left(-\frac{1}{2}, \frac{1}{2}\right), H^{-2}(\omega)\right),
\end{gather*}
and such that, up to the extraction of a subsequence, 
\begin{gather}
  \Sigma_{\alpha \beta}^\eps \mathop{\rightharpoonup}_{\eps \to 0} \Sigma_{\alpha\beta}^\star \quad \mbox{ weakly in $L^2(\Omega)$},
  \label{eq:sigma_eps_alpha_beta}
  \\
  \Sigma^\eps_{\alpha d} \mathop{\rightharpoonup}_{\eps \to 0} \Sigma^\star_{\alpha d} \quad \mbox{ weakly in $\dps L^2\left( \left(-\frac{1}{2}, \frac{1}{2}\right), H^{-1}(\omega)\right)$},
  \label{eq:sigma_eps_alpha_d}
  \\
  \Sigma_{dd}^\eps \mathop{\rightharpoonup}_{\eps \to 0} \Sigma_{dd}^\star \quad \mbox{ weakly in $\dps L^2\left(\left(-\frac{1}{2}, \frac{1}{2}\right), H^{-2}(\omega)\right)$},
  \label{eq:sigma_eps_dd}
  \\
  \partial_d \Sigma^\eps_{\alpha d} \mathop{\rightharpoonup}_{\eps \to 0} \partial_d \Sigma^\star_{\alpha d} \quad \mbox{ weakly in $\dps L^2\left( \left(-\frac{1}{2}, \frac{1}{2}\right), H^{-1}(\omega)\right)$},
  \label{eq:d_sigma_eps_alpha_d}
  \\
  \partial_d \Sigma_{dd}^\eps \mathop{\rightharpoonup}_{\eps \to 0} \partial_d \Sigma_{dd}^\star \quad \mbox{ weakly in $\dps L^2\left(\left(-\frac{1}{2}, \frac{1}{2}\right), H^{-2}(\omega)\right)$}.
  \label{eq:d_sigma_eps_dd}
\end{gather} 
Furthermore, for any $\dps v \in \left( \mathcal{C}^\infty\left( \left[-\frac{1}{2}, \frac{1}{2}\right], \mathcal{D}(\omega)\right) \right)^d$ and for any $\eps$, we have
\begin{align}
  \int_\Omega \Sigma^\eps: e(v) & = \sum_{1\leq \alpha, \beta \leq d-1} \langle \Sigma_{\alpha\beta}^\star, e_{\alpha\beta}(v) \rangle_{L^2(\Omega)}
  \nonumber
  \\
  & + 2 \sum_{1\leq \alpha\leq d-1} \langle \Sigma_{\alpha d}^\star, e_{\alpha d}(v) \rangle_{L^2\left(\left(-\frac{1}{2}, \frac{1}{2}\right), H^{-1}(\omega)\right), L^2\left(\left(-\frac{1}{2}, \frac{1}{2}\right), H^1_0(\omega)\right)}
  \nonumber
  \\
  & + \langle \Sigma_{dd}^\star, e_{dd}(v) \rangle_{L^2\left(\left(-\frac{1}{2}, \frac{1}{2}\right), H^{-2}(\omega)\right), L^2\left(\left(-\frac{1}{2}, \frac{1}{2}\right), H^2_0(\omega)\right)}.
  \label{eq:bureau4}
\end{align}
In addition, there exists some $C>0$ independent of $\eps$ and $\omega$ such that
\begin{multline} \label{eq:stop6}
  \sum_{1\leq \alpha, \beta \leq d-1} \left\| \Sigma_{\alpha\beta}^\star \right\|_{L^2(\Omega)} + \sum_{1\leq \alpha \leq d-1} \left( \left\| \Sigma_{\alpha d}^\star \right\|_{L^2\left( \left( -\frac{1}{2}, \frac{1}{2}\right), H^{-1}(\omega)\right)} + \left\| \partial_d \Sigma_{\alpha d}^\star \right\|_{L^2\left( \left( -\frac{1}{2}, \frac{1}{2}\right), H^{-1}(\omega)\right)} \right) \\ + \left\| \Sigma^\star_{dd} \right\|_{L^2\left( \left( -\frac{1}{2}, \frac{1}{2}\right), H^{-2}(\omega)\right)} + \left\| \partial_d \Sigma^\star_{dd} \right\|_{L^2\left( \left( -\frac{1}{2}, \frac{1}{2}\right), H^{-2}(\omega)\right)} \leq C \left( \|f\|_{L^2(\Omega)} + \|e(g)\|_{L^2(\Omega)} + \|h_\pm\|_{L^2(\omega)} \right).
\end{multline}
\end{lemma}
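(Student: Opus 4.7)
The strategy is to transfer the $\eps$-scalings in the definition~\eqref{eq:def_grand_Sigma} onto the differential operator, so that the rescaled stress $\Sigma^\eps$ satisfies the \emph{unscaled} elasticity equilibrium equations. I will first observe that, directly from the PDE $-\div^\eps \sigma^\eps = f$ underlying~\eqref{pbelas} and from the boundary condition $\sigma^\eps n = (\eps (h_\pm)', \eps^2 (h_\pm)_d)^T$ on $\Gamma_\pm$, the components of $\Sigma^\eps$ satisfy
\begin{gather*}
\partial_\beta \Sigma^\eps_{\alpha\beta} + \partial_d \Sigma^\eps_{\alpha d} = -f_\alpha, \qquad \partial_\beta \Sigma^\eps_{d\beta} + \partial_d \Sigma^\eps_{dd} = -f_d \quad \text{in } \Omega,
\end{gather*}
together with the boundary traces $\Sigma^\eps_{\alpha d}(\cdot, \pm 1/2) = \pm (h_\pm)_\alpha$ and $\Sigma^\eps_{dd}(\cdot, \pm 1/2) = \pm (h_\pm)_d$. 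These relations, crucially containing no $\eps$-factors, are the engine of the bootstrap arguments to follow.

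I will then proceed by successive bootstrap. First, $\|\Sigma^\eps_{\alpha\beta}\|_{L^2(\Omega)} = \|\sigma^\eps_{\alpha\beta}\|_{L^2(\Omega)}$ is uniformly bounded by the a priori estimate~\eqref{eq:bureau}. Next, the first equilibrium equation gives that $\partial_d \Sigma^\eps_{\alpha d} = -f_\alpha - \partial_\beta \Sigma^\eps_{\alpha\beta}$ is uniformly bounded in $L^2((-1/2,1/2), H^{-1}(\omega))$. Combining this with the representation formula
$$
\Sigma^\eps_{\alpha d}(\cdot, x_d) = -(h_-)_\alpha + \int_{-1/2}^{x_d} \partial_d \Sigma^\eps_{\alpha d}(\cdot, s) \, ds
$$
and the trace bound $(h_-)_\alpha \in L^2(\omega) \hookrightarrow H^{-1}(\omega)$, I will obtain a uniform $L^\infty((-1/2,1/2), H^{-1}(\omega))$-bound on $\Sigma^\eps_{\alpha d}$ (and hence an $L^2$-bound). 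Repeating the argument one derivative order lower, the second equilibrium equation together with the just-obtained bound on $\Sigma^\eps_{d\beta}$ gives a uniform bound on $\partial_d \Sigma^\eps_{dd}$ in $L^2((-1/2,1/2), H^{-2}(\omega))$, and the trace $(h_-)_d \in L^2(\omega) \hookrightarrow H^{-2}(\omega)$ then yields the desired bound on $\Sigma^\eps_{dd}$.

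From these uniform bounds, routine weak compactness in the reflexive Bochner spaces $L^2((-1/2,1/2), H^{-k}(\omega))$ produces a subsequence along which all the convergences~\eqref{eq:sigma_eps_alpha_beta}--\eqref{eq:d_sigma_eps_dd} hold, and the lower semicontinuity of norms under weak convergence immediately yields~\eqref{eq:stop6}. For the identity~\eqref{eq:bureau4}, I will use that any smooth $v \in \left(\mathcal{C}^\infty([-1/2,1/2], \mathcal{D}(\omega))\right)^d$ is an admissible test function in~\eqref{formvarelast}; combined with the elementary rescaling identity $\int_\Omega \Sigma^\eps : e(v) = \int_\Omega \sigma^\eps : e^\eps(v)$, the variational formulation shows that the LHS of~\eqref{eq:bureau4} is in fact \emph{$\eps$-independent}, equal to $\int_\Omega f \cdot v + \int_{\Gamma_\pm} h_\pm \cdot v$. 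Splitting this LHS into its three natural pieces and passing to the limit $\eps \to 0$ component-wise, using the weak convergences just established, identifies this $\eps$-independent quantity with the three duality pairings on the RHS.

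The main technical delicacy is the careful bookkeeping of Sobolev regularity during the bootstrap: the $x'$-regularity degrades from $L^2$ to $H^{-1}$ to $H^{-2}$, while $L^2$-regularity in the transverse direction $x_d$ must be preserved. The key structural point is that the boundary traces $h_\pm$, which play the role of ``initial data'' in the $x_d$-integration, lie in $L^2(\omega)$, which embeds continuously into all the relevant negative-order Sobolev spaces; this is exactly what allows the integration-in-$x_d$ representation to produce uniform bounds. No subtlety beyond this bookkeeping is expected, since both the equilibrium equations and the boundary relations are linear and $\eps$-free in the $\Sigma^\eps$-variables.
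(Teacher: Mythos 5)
Your proposal is correct, and it reaches the conclusion by a route that is essentially the transpose of the paper's. The paper proceeds in ``weak form'': it bounds $\Sigma^\eps_{\alpha d}$ (resp.\ $\Sigma^\eps_{dd}$) directly by duality, testing the variational formulation~\eqref{formvarelast} against antiderivative test functions $v^w_\alpha(x',z)=\int_{-1/2}^z w_\alpha(x',t)\,dt$ with $w\in L^2((-1/2,1/2),H^1_0(\omega))$ (resp.\ $w_d\in L^2((-1/2,1/2),H^2_0(\omega))$), and only afterwards bounds the $x_d$-derivatives via $-\div\Sigma^\eps=f$. You reverse the order: you bound $\partial_d\Sigma^\eps_{\alpha d}=-f_\alpha-\partial_\beta\Sigma^\eps_{\alpha\beta}$ first and then recover $\Sigma^\eps_{\alpha d}$ by integrating in $x_d$ from the boundary trace $\mp(h_\mp)_\alpha$ (your representation formula is exactly the identity~\eqref{eq:Sigad} that the paper derives later, but at the $\eps$-level rather than in the limit). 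Both arguments carry the same information --- the paper's integration by parts in the $d$-th variable inside the duality bracket is your fundamental theorem of calculus --- and both yield constants independent of $\eps$ and $\omega$. What your route costs is one extra justification that you gloss over: you must make sense of the traces $\Sigma^\eps_{\alpha d}(\cdot,\pm\tfrac12)=\pm(h_\pm)_\alpha$ and $\Sigma^\eps_{dd}(\cdot,\pm\tfrac12)=\pm(h_\pm)_d$ for each fixed $\eps$, and check that this $H_{\rm div}$-type normal trace (each row of $\Sigma^\eps$ lies in $L^2$ with divergence $-f_i\in L^2$, so Appendix~\ref{app:Hdiv} applies) coincides with the trace furnished by the embedding $H^1((-1/2,1/2),H^{-k}(\omega))\hookrightarrow C([-1/2,1/2],H^{-k}(\omega))$ used in your integration formula; this is standard (test against $\psi(x')\theta(x_d)$) but should be said. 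The paper's duality route avoids any trace discussion at the $\eps$-level, invoking normal traces only for the limit $\Sigma^\star$ in Lemma~\ref{lem:lemma1_BC}. Your treatment of the convergences, of~\eqref{eq:stop6} by weak lower semicontinuity, and of the identity~\eqref{eq:bureau4} via the $\eps$-independence of $\int_\Omega\Sigma^\eps:e(v)=\int_\Omega f\cdot v+\int_{\Gamma_\pm}h_\pm\cdot v$ coincides with the paper's Step~5.
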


\begin{proof}
We know from~\eqref{eq:bureau} that the sequence $\left( \sigma^\eps\right)_{\eps>0}$ is bounded in $(L^2(\Omega))^{d \times d}$. Thus, for any $1\leq \alpha, \beta \leq d-1$, there exists $\Sigma^\star_{\alpha\beta} \in L^2(\Omega)$ such that, up to the extraction of a subsequence, 
$$
\Sigma_{\alpha \beta}^\eps \mathop{\rightharpoonup}_{\eps \to 0} \Sigma_{\alpha\beta}^\star \quad \mbox{ weakly in $L^2(\Omega)$},
$$
and
\begin{equation} \label{eq:stop3}
  \sum_{1\leq \alpha, \beta \leq d-1} \left\| \Sigma_{\alpha\beta}^\star \right\|_{L^2(\Omega)} \leq C \left( \|f\|_{L^2(\Omega)} + \|e(g)\|_{L^2(\Omega)} + \|h_\pm\|_{L^2(\omega)} \right).
\end{equation}
The remainder of the proof falls in five steps.

\medskip

\noindent
{\bf Step~1.} We show that, for any $1\leq \alpha \leq d-1$, the sequence $\left( \Sigma_{\alpha d}^\eps \right)_{\eps>0}$ is bounded in $\dps L^2\left(\left(-\frac{1}{2}, \frac{1}{2}\right), H^{-1}(\omega)\right)$. Using the variational formulation~\eqref{formvarelast} of the oscillatory problem, we have that, for any $v = (v_i)_{1\leq i \leq d} \in V$ such that $v_d = 0$,
\begin{equation}\label{eq:eq1}
  \int_\Omega \Sigma_{\alpha d}^\eps \, \partial_d v_\alpha = \int_\Omega f_\alpha \, v_\alpha + \int_{\Gamma_\pm} (h_\pm)_\alpha \, v_\alpha - \int_\Omega \sigma_{\alpha \beta}^\eps \, e_{\alpha \beta}(v).
\end{equation}
For any $\dps w = (w_\alpha)_{1\leq \alpha \leq d-1} \in \left( L^2\left(\left(-\frac{1}{2}, \frac{1}{2}\right), H^1_0(\omega)\right) \right)^{d-1}$, we introduce, for any $1\leq \alpha \leq d-1$, the function $v^w_\alpha$ defined on $\Omega$ by
\begin{equation} \label{eq:def_vw_1}
  \forall (x',z) \in \omega \times \left(-\frac{1}{2}, \frac{1}{2}\right), \qquad v^w_\alpha(x',z) := \int_{-1/2}^z w_\alpha(x',t) \, dt.
\end{equation}
We also set $v^w_d = 0$. It then holds that $v^w = (v^w_i)_{1\leq i \leq d}$ belongs to $V$, and that it can be used as a test function in~\eqref{eq:eq1}. We then obtain that
\begin{equation}\label{eq:neweq1}
  \int_\Omega \Sigma_{\alpha d}^\eps \, w_\alpha = \int_\Omega f_\alpha \, v^w_\alpha + \int_{\Gamma_\pm} (h_\pm)_\alpha \, v^w_\alpha - \int_\Omega \sigma_{\alpha \beta}^\eps \, e_{\alpha \beta}(v^w).
\end{equation}
Using the fact that $\| v^w_\alpha \|_{L^2(\Omega)} \leq \|w_\alpha\|_{L^2(\Omega)}$ and that $\| \partial_\beta v^w_\alpha \|_{L^2(\Omega)} \leq \|\partial_\beta w_\alpha \|_{L^2(\Omega)}$ together with Lemma~\ref{lem:trace}, we obtain that there exists $C>0$ independent of $\eps$ and $\omega$ such that 
\begin{align*}
  \left| \int_\Omega \Sigma_{\alpha d}^\eps \, w_\alpha\right| & \leq \|f_\alpha\|_{L^2(\Omega)} \, \|v_\alpha^w\|_{L^2(\Omega)} + \| (h_+)_\alpha \|_{L^2(\omega)} \, \|v_\alpha^w\|_{L^2(\Gamma_+)} + \| (h_-)_\alpha \|_{L^2(\omega)} \, \|v_\alpha^w\|_{L^2(\Gamma_-)}
  \\
  & \quad + \frac{1}{2} \sum_{1\leq \alpha, \beta \leq d-1} \|\sigma_{\alpha \beta}^\eps\|_{L^2(\Omega)} \left( \| \partial_\alpha v^w_\beta \|_{L^2(\Omega)} + \|\partial_\beta v^w_\alpha\|_{L^2(\Omega)} \right)
  \\
  & \leq \|f_\alpha\|_{L^2(\Omega)} \, \|v_\alpha^w\|_{L^2(\Omega)} + \sqrt{2} \left( \| (h_+)_\alpha \|_{L^2(\omega)} + \| (h_-)_\alpha \|_{L^2(\omega)} \right) \left(\|v_\alpha^w\|_{L^2(\Omega)} + \|\partial_d v_\alpha^w\|_{L^2(\Omega)}\right)
  \\
  & \quad + \frac{1}{2} \sum_{1\leq \alpha, \beta \leq d-1} \|\sigma_{\alpha \beta}^\eps\|_{L^2(\Omega)} \left( \| \partial_\alpha v^w_\beta \|_{L^2(\Omega)} + \|\partial_\beta v^w_\alpha\|_{L^2(\Omega)} \right)
  \\
  & \leq \|f_\alpha\|_{L^2(\Omega)} \, \|w_\alpha\|_{L^2(\Omega)} + 2\sqrt{2} \left( \| (h_+)_\alpha \|_{L^2(\omega)} + \| (h_-)_\alpha \|_{L^2(\omega)} \right) \|w_\alpha\|_{L^2(\Omega)}
  \\
  & \quad + \frac{1}{2} \sum_{1\leq \alpha, \beta \leq d-1} \|\sigma_{\alpha \beta}^\eps\|_{L^2(\Omega)} \left( \| \partial_\alpha w_\beta \|_{L^2(\Omega)} + \| \partial_\beta w_\alpha \|_{L^2(\Omega)} \right).
\end{align*}
Using~\eqref{eq:bureau}, we deduce that
$$
\left| \int_\Omega \Sigma_{\alpha d}^\eps \, w_\alpha\right| \leq C \left( \|f\|_{L^2(\Omega)} + \|e(g)\|_{L^2(\Omega)} + \|h_\pm\|_{L^2(\omega)} \right) \|w\|_{L^2\left(\left( -\frac{1}{2}, \frac{1}{2}\right), H^1_0(\omega)\right)}.
$$
Since this bound holds for any $\dps w \in \left( L^2\left(\left(-\frac{1}{2}, \frac{1}{2}\right), H^1_0(\omega)\right) \right)^{d-1}$, we infer that there exists some $C>0$ independent of $\eps$ and $\omega$ such that
$$
\sum_{1\leq \alpha \leq d-1} \left\| \Sigma^\eps_{\alpha d} \right\|_{L^2\left( \left(-\frac{1}{2}, \frac{1}{2}\right), H^{-1}(\omega)\right)} \leq C \left( \|f\|_{L^2(\Omega)} + \|e(g)\|_{L^2(\Omega)} + \|h_\pm\|_{L^2(\omega)} \right).
$$
Hence, for any $1\leq \alpha \leq d-1$, there exists some $\dps \Sigma_{\alpha d}^\star \in L^2\left( \left(-\frac{1}{2}, \frac{1}{2}\right), H^{-1}(\omega)\right)$ such that, up to the extraction of a subsequence, 
$$
\Sigma^\eps_{\alpha d} \mathop{\rightharpoonup}_{\eps \to 0} \Sigma^\star_{\alpha d} \quad \mbox{ weakly in $\dps L^2\left( \left(-\frac{1}{2}, \frac{1}{2}\right), H^{-1}(\omega)\right)$}
$$
and
\begin{equation} \label{eq:stop4}
\sum_{1\leq \alpha \leq d-1} \left\| \Sigma^\star_{\alpha d} \right\|_{L^2\left( \left(-\frac{1}{2}, \frac{1}{2}\right), H^{-1}(\omega)\right)} \leq C \left( \|f\|_{L^2(\Omega)} + \|e(g)\|_{L^2(\Omega)} + \|h_\pm\|_{L^2(\omega)} \right).
\end{equation}

\medskip

\noindent
{\bf Step~2.} The convergence of $\left(\partial_d \Sigma_{\alpha d}^\eps\right)_{\eps >0}$ comes from the fact that, by definition of $\Sigma^\eps$, we have
\begin{equation} \label{eq:stop}
  - \div \Sigma^\eps = - \div^\eps \sigma^\eps = f \quad \text{in $\Omega$}.
\end{equation}
The $\alpha$-th component of this relation yields that
$$
- \partial_d \Sigma_{\alpha d}^\eps = f_\alpha + \partial_\beta \Sigma_{\alpha\beta}^\eps \quad \text{in $\Omega$},
$$
and thus, passing to the limit $\eps \to 0$ (along the adequate subsequence) in the sense of distributions, we obtain
$$
- \partial_d \Sigma_{\alpha d}^\star = f_\alpha + \partial_\beta \Sigma_{\alpha\beta}^\star \quad \text{in $\Omega$}.
$$
In addition, for any $1 \leq \alpha \leq d-1$, $\Sigma_{\alpha\beta}^\eps$ belongs to $L^2(\Omega)$ for any $1 \leq \beta \leq d-1$, thus $\partial_\beta \Sigma_{\alpha\beta}^\eps$ belongs to $\dps L^2\left( \left(-\frac{1}{2}, \frac{1}{2}\right), H^{-1}(\omega)\right)$, and therefore similarly for $\partial_d \Sigma_{\alpha d}^\eps$. For any $\dps \varphi \in L^2\left( \left(-\frac{1}{2}, \frac{1}{2}\right), H^1_0(\omega)\right)$, we next write that
\begin{equation} \label{eq:stop2}
\langle \partial_d \Sigma_{\alpha d}^\eps,\varphi \rangle_{L^2\left(\left(-\frac{1}{2}, \frac{1}{2}\right), H^{-1}(\omega)\right), L^2\left(\left(-\frac{1}{2}, \frac{1}{2}\right), H^1_0(\omega)\right)} = - \langle f_\alpha, \varphi \rangle_{L^2(\Omega)} + \langle \Sigma_{\alpha\beta}^\eps,\partial_\beta \varphi \rangle_{L^2(\Omega)},
\end{equation}
and thus
\begin{align*}
  \lim_{\eps \to 0} \ \langle \partial_d \Sigma_{\alpha d}^\eps,\varphi \rangle_{L^2\left(\left(-\frac{1}{2}, \frac{1}{2}\right), H^{-1}(\omega)\right), L^2\left(\left(-\frac{1}{2}, \frac{1}{2}\right), H^1_0(\omega)\right)}
  &=
  - \langle f_\alpha, \varphi \rangle_{L^2(\Omega)} + \langle \Sigma_{\alpha\beta}^\star,\partial_\beta \varphi \rangle_{L^2(\Omega)}
  \\
  &=
  \langle \partial_d \Sigma_{\alpha d}^\star,\varphi \rangle_{L^2\left(\left(-\frac{1}{2}, \frac{1}{2}\right), H^{-1}(\omega)\right), L^2\left(\left(-\frac{1}{2}, \frac{1}{2}\right), H^1_0(\omega)\right)}.
\end{align*}
We thus have shown that $\left(\partial_d \Sigma_{\alpha d}^\eps\right)_{\eps >0}$ converges weakly in $\dps L^2\left( \left(-\frac{1}{2}, \frac{1}{2}\right), H^{-1}(\omega)\right)$ to $\partial_d \Sigma_{\alpha d}^\star$. We also infer from~\eqref{eq:stop2} that
$$
\| \partial_d \Sigma_{\alpha d}^\eps \|_{L^2\left(\left(-\frac{1}{2}, \frac{1}{2}\right), H^{-1}(\omega)\right)} \leq \| f_\alpha \|_{L^2(\Omega)} + \sum_{\beta=1}^{d-1} \| \Sigma_{\alpha\beta}^\eps \|_{L^2(\Omega)} = \| f_\alpha \|_{L^2(\Omega)} + \sum_{\beta=1}^{d-1} \| \sigma_{\alpha\beta}^\eps \|_{L^2(\Omega)},
$$
and thus, using again~\eqref{eq:bureau}, we obtain that
\begin{equation} \label{eq:stop5}
  \sum_{1\leq \alpha \leq d-1} \left\| \partial_d \Sigma^\star_{\alpha d} \right\|_{L^2\left( \left(-\frac{1}{2}, \frac{1}{2}\right), H^{-1}(\omega)\right)} \leq C \left( \|f\|_{L^2(\Omega)} + \|e(g)\|_{L^2(\Omega)} + \|h_\pm\|_{L^2(\omega)} \right).
\end{equation}

\medskip

\noindent
{\bf Step~3.} We next turn to $\Sigma_{dd}^\eps$. For any $v = (v_i)_{1\leq i \leq d} \in V$ such that $v_\alpha = 0$ for all $1\leq \alpha \leq d-1$, we have, using~\eqref{formvarelast}, 
\begin{equation}\label{eq:eq2}
  \int_\Omega \Sigma_{dd}^\eps \, \partial_d v_d = \int_\Omega f_d \, v_d + \int_{\Gamma_\pm} (h_\pm)_d \, v_d - \int_\Omega \Sigma_{\alpha d}^\eps \, \partial_\alpha v_d.
\end{equation}
For any $\dps w_d \in L^2\left(\left(-\frac{1}{2}, \frac{1}{2}\right), H^2_0(\omega)\right)$, we introduce the function $v^w_d$ defined on $\Omega$ by
\begin{equation}\label{eq:vd}
  \forall (x',z) \in \omega \times \left(-\frac{1}{2}, \frac{1}{2}\right), \qquad v^w_d(x',z) := \int_{-1/2}^z w_d(x',t) \, dt,
\end{equation}
and we check that $\dps \partial_\alpha v^w_d \in L^2\left( \left(-\frac{1}{2},\frac{1}{2}\right), H^1_0(\omega)\right)$ for any $1 \leq \alpha \leq d-1$. In addition, we introduce $v^w_\alpha = 0$ for all $1 \leq \alpha \leq d-1$. We then have that $v^w = (v^w_i)_{1\leq i \leq d}$ belongs to $V$ and can thus be used as a function test in~\eqref{eq:eq2}. Following similar arguments as above, we obtain
\begin{align*}
  \left| \int_\Omega \Sigma_{dd}^\eps \, w_d \right|
  & \leq
  \|f_d\|_{L^2(\Omega)} \, \|v_d^w\|_{L^2(\Omega)} + \| (h_\pm)_d \|_{L^2(\omega)} \, \| v_d^w \|_{L^2(\Gamma_\pm)}
  \\
  & \quad + \sum_{1\leq \alpha\leq d-1} \| \Sigma_{\alpha d}^\eps \|_{L^2\left(\left(-\frac{1}{2}, \frac{1}{2}\right), H^{-1}(\omega)\right)} \, \| \partial_\alpha v^w_d \|_{L^2\left(\left(-\frac{1}{2}, \frac{1}{2}\right), H^1_0(\omega)\right)}
  \\
  & \leq C \left( \|f\|_{L^2(\Omega)} + \|e(g)\|_{L^2(\Omega)} + \|h_\pm\|_{L^2(\omega)} \right) \|w_d\|_{L^2\left(\left( -\frac{1}{2}, \frac{1}{2}\right), H^2_0(\omega)\right)},
\end{align*}
for some $C>0$ independent of $\omega$ and $\eps$. There thus exists some $\dps \Sigma_{dd}^\star \in L^2\left(\left( -\frac{1}{2}, \frac{1}{2}\right), H^{-2}(\omega)\right)$ such that, up to the extraction of a subsequence, 
$$
\Sigma_{dd}^\eps \mathop{\rightharpoonup}_{\eps \to 0} \Sigma_{dd}^\star \quad \mbox{ weakly in $\dps L^2\left(\left( -\frac{1}{2}, \frac{1}{2}\right), H^{-2}(\omega)\right)$}
$$
and 
\begin{equation} \label{eq:stop7}
\left\| \Sigma^\star_{dd} \right\|_{L^2\left(\left( -\frac{1}{2}, \frac{1}{2}\right), H^{-2}(\omega)\right)} \leq C \left( \|f\|_{L^2(\Omega)} + \|e(g)\|_{L^2(\Omega)} + \|h_\pm\|_{L^2(\omega)} \right).
\end{equation}

\medskip

\noindent
{\bf Step~4.} To study the convergence of $\left(\partial_d \Sigma_{dd}^\eps\right)_{\eps >0}$, we write the $d$-th component of~\eqref{eq:stop}, which reads
$$
- \partial_d \Sigma_{dd}^\eps = f_d + \partial_\beta \Sigma_{d\beta}^\eps.
$$
Using the weak convergence in $\dps L^2\left(\left(-\frac{1}{2}, \frac{1}{2}\right), H^{-1}(\omega)\right)$ of $\left(\Sigma_{d\beta}^\eps\right)_{\eps>0}$ to $\Sigma_{d\beta}^\star$ (established in Step~1) for any $1\leq \beta\leq d-1$, and using arguments similar as those of Step~2, we obtain that $\partial_d \Sigma_{dd}^\eps$ converges weakly in $\dps L^2\left( \left(-\frac{1}{2}, \frac{1}{2}\right), H^{-2}(\omega)\right)$ to $\partial_d \Sigma_{dd}^\star$. As in Step~2, we have
\begin{equation} \label{eq:stop8}
  \left\| \partial_d \Sigma^\star_{dd} \right\|_{L^2\left(\left( -\frac{1}{2}, \frac{1}{2}\right), H^{-2}(\omega)\right)} \leq C \left( \|f\|_{L^2(\Omega)} + \|e(g)\|_{L^2(\Omega)} + \|h_\pm\|_{L^2(\omega)} \right).
\end{equation}

\medskip

\noindent
{\bf Step~5.} Collecting~\eqref{eq:stop3}, \eqref{eq:stop4}, \eqref{eq:stop5}, \eqref{eq:stop7} and~\eqref{eq:stop8}, we obtain~\eqref{eq:stop6}. We are thus left with showing~\eqref{eq:bureau4}. Considering some smooth $\dps v \in \left( \mathcal{C}^\infty\left(\left[-\frac{1}{2}, \frac{1}{2}\right], \mathcal{D}(\omega)\right) \right)^d$ and using the weak convergences that we have just established, we have
\begin{align}
  \int_\Omega \Sigma^\eps: e(v) \mathop{\longrightarrow}_{\eps \to 0}
  & \sum_{1\leq \alpha, \beta \leq d-1} \langle \Sigma_{\alpha\beta}^\star, e_{\alpha\beta}(v) \rangle_{L^2(\Omega)}
  \nonumber
  \\
  & + 2 \sum_{1\leq \alpha\leq d-1} \langle \Sigma_{\alpha d}^\star, e_{\alpha d}(v) \rangle_{L^2\left(\left(-\frac{1}{2}, \frac{1}{2}\right), H^{-1}(\omega)\right), L^2\left(\left(-\frac{1}{2}, \frac{1}{2}\right), H^1_0(\omega)\right)}
  \nonumber
  \\
  & + \langle \Sigma_{dd}^\star, e_{dd}(v) \rangle_{L^2\left(\left(-\frac{1}{2}, \frac{1}{2}\right), H^{-2}(\omega)\right), L^2\left(\left(-\frac{1}{2}, \frac{1}{2}\right), H^2_0(\omega)\right)}.
  \label{eq:bureau2}
\end{align}
On the other hand, any such $v$ belongs to $V$ and we thus have, in view of~\eqref{formvarelast}, that
\begin{equation} \label{eq:bureau3}
  \int_\Omega \Sigma^\eps: e(v) = \int_\Omega \sigma^\eps: e^\eps(v) = \int_\Omega f \cdot v + \int_{\Gamma_+} h_+ \cdot v + \int_{\Gamma_-} h_- \cdot v. 
\end{equation}
In view of~\eqref{eq:bureau3}, we see that the left hand side of~\eqref{eq:bureau2} is actually independent of $\eps$. We thus deduce~\eqref{eq:bureau4}. This concludes the proof of Lemma~\ref{lem:lemma1}.
\end{proof}

\begin{lemma} \label{lem:lemma1_BC}
  The symmetric matrix-valued field $\Sigma^\star := \left( \Sigma^\star_{ij} \right)_{1\leq i,j\leq d}$ identified in Lemma~\ref{lem:lemma1} satisfies the following relations:
  \begin{equation} \label{eq:porto1}
    - \div \Sigma^\star = f \quad \text{in $\Omega$},
  \end{equation}
  and
  \begin{equation} \label{eq:porto2}
    \Sigma^\star \, n = h_+ \quad \text{on $\Gamma_+$}, \qquad \Sigma^\star \, n = h_- \quad \text{on $\Gamma_-$}.
  \end{equation}
\end{lemma}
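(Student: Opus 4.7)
The starting point is the identity
$$
L(v) := \sum_{\alpha,\beta} \langle \Sigma^\star_{\alpha\beta}, e_{\alpha\beta}(v)\rangle + 2\sum_{\alpha} \langle \Sigma^\star_{\alpha d}, e_{\alpha d}(v)\rangle + \langle \Sigma^\star_{dd}, e_{dd}(v)\rangle = \int_\Omega f \cdot v + \int_{\Gamma_+} h_+ \cdot v + \int_{\Gamma_-} h_- \cdot v,
$$
valid for every $v \in \left(\mathcal{C}^\infty([-1/2,1/2], \mathcal{D}(\omega))\right)^d$, with the brackets denoting the appropriate dualities. This identity follows directly from combining~\eqref{eq:bureau3} and~\eqref{eq:bureau4}. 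The plan is to extract~\eqref{eq:porto1} and~\eqref{eq:porto2} from this single variational identity by suitable choices of test functions.

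\emph{Proof of~\eqref{eq:porto1}.} Restricting to $v \in (\mathcal{D}(\Omega))^d$, which is a subset of the admissible test functions, the boundary integrals vanish, so $L(v) = \int_\Omega f \cdot v$. For such compactly supported $v$, a formal integration by parts (justified in the distributional sense using the symmetry of $\Sigma^\star$) rewrites $L(v)$ as $\langle -\div \Sigma^\star, v\rangle_{\mathcal{D}'(\Omega), \mathcal{D}(\Omega)}$, which yields $-\div \Sigma^\star = f$ in $(\mathcal{D}'(\Omega))^d$.

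\emph{Proof of~\eqref{eq:porto2}.} I would use separated-variables test functions $v(x', x_d) = \phi(x_d) \psi(x') e_i$ with $\phi \in \mathcal{C}^\infty([-1/2,1/2])$ and $\psi \in \mathcal{D}(\omega)$, treating the cases $i = \alpha \in \{1, \dots, d-1\}$ and $i = d$ separately. For $i = \alpha$, a direct computation of the strains using the symmetry of $\Sigma^\star$ yields
$$
L(v) = \sum_\beta \int_\Omega \Sigma^\star_{\alpha\beta} \, \phi \, \partial_\beta \psi + \langle \Sigma^\star_{\alpha d}, \phi' \psi \rangle.
$$
The crucial point is that, thanks to the convergences~\eqref{eq:sigma_eps_alpha_d} and~\eqref{eq:d_sigma_eps_alpha_d} of Lemma~\ref{lem:lemma1}, one has $\Sigma^\star_{\alpha d} \in H^1((-1/2,1/2), H^{-1}(\omega))$, which embeds continuously into $\mathcal{C}^0([-1/2,1/2], H^{-1}(\omega))$, so that the traces $\Sigma^\star_{\alpha d}(\cdot, \pm 1/2) \in H^{-1}(\omega)$ are well defined and an integration by parts in $x_d$ gives
$$
\langle \Sigma^\star_{\alpha d}, \phi' \psi \rangle = \phi(1/2) \, \langle \Sigma^\star_{\alpha d}(\cdot, 1/2), \psi \rangle - \phi(-1/2) \, \langle \Sigma^\star_{\alpha d}(\cdot, -1/2), \psi \rangle - \langle \partial_d \Sigma^\star_{\alpha d}, \phi \psi \rangle.
$$
Using~\eqref{eq:porto1} (already established) to identify the bulk contributions of $L(v)$ with $\int_\Omega f_\alpha \phi \psi$, the identity $L(v) = \text{RHS}(v)$ reduces to
$$
\phi(1/2) \, \langle \Sigma^\star_{\alpha d}(\cdot, 1/2) - (h_+)_\alpha, \psi \rangle + \phi(-1/2) \, \langle (h_-)_\alpha - \Sigma^\star_{\alpha d}(\cdot, -1/2), \psi \rangle = 0.
$$
Choosing $\phi$ with $\phi(1/2) = 1$ and $\phi(-1/2) = 0$, then conversely, and using the density of $\mathcal{D}(\omega)$ in $H^1_0(\omega)$, one obtains $\Sigma^\star_{\alpha d}(\cdot, \pm 1/2) = (h_\pm)_\alpha$ in $H^{-1}(\omega)$. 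The case $i = d$ proceeds analogously: with $v_d = \phi \psi$ and $v_\alpha = 0$, one computes $L(v) = \sum_\alpha \langle \Sigma^\star_{\alpha d}, \phi \partial_\alpha \psi\rangle + \langle \Sigma^\star_{dd}, \phi' \psi\rangle$, and the same strategy, now invoking the regularity $\Sigma^\star_{dd} \in H^1((-1/2,1/2), H^{-2}(\omega))$ from Lemma~\ref{lem:lemma1}, yields $\Sigma^\star_{dd}(\cdot, \pm 1/2) = (h_\pm)_d$ in $H^{-2}(\omega)$.

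The main technical subtlety lies in carefully tracking the various duality spaces and verifying that the integration-by-parts formulas in $x_d$ are rigorously justified at the low regularity at hand. This is precisely why the auxiliary control of $\partial_d \Sigma^\star_{\alpha d}$ and $\partial_d \Sigma^\star_{dd}$ provided by Lemma~\ref{lem:lemma1} is built into its statement: without it, the traces used above would not make sense and the boundary conditions~\eqref{eq:porto2} could not be formulated.
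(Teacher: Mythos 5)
Your argument is correct in substance and, for the boundary conditions, takes a genuinely different (more explicit) route than the paper. For~\eqref{eq:porto1} the two proofs are essentially identical: both restrict to $v \in (\mathcal{D}(\Omega))^d$ and read off $-\div\Sigma^\star = f$ in the sense of distributions (the paper passes to the limit in $-\div\Sigma^\eps = f$, you use the limit identity~\eqref{eq:bureau4} directly; these are equivalent). For~\eqref{eq:porto2}, the paper never constructs pointwise-in-$x_d$ traces: it \emph{defines} the normal trace $\Sigma^\star n$ on $\Gamma_\pm$ by the abstract $H_{\rm div}$-type duality $\langle \Sigma^\star n, v\rangle_{\Gamma_\pm} = \langle \Sigma^\star, e(v)\rangle_\Omega + \int_\Omega v\cdot\div\Sigma^\star$ and then identifies this functional with $\int_{\Gamma_+}h_+\cdot v + \int_{\Gamma_-}h_-\cdot v$ in one line via~\eqref{eq:bureau4}, \eqref{eq:porto1} and~\eqref{eq:bureau3}. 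You instead exploit the regularity $\Sigma^\star_{\alpha d},\,\partial_d\Sigma^\star_{\alpha d}\in L^2\left(\left(-\frac12,\frac12\right),H^{-1}(\omega)\right)$ (and similarly for $\Sigma^\star_{dd}$ in $H^{-2}(\omega)$) to get continuity in $x_d$ with values in $H^{-1}(\omega)$, hence well-defined traces at $x_d=\pm 1/2$, and then separate the two faces by the choice of $\phi$. Your version buys a concrete meaning for the traces on each face separately and makes transparent why the $\partial_d$-bounds of Lemma~\ref{lem:lemma1} are needed; the paper's version is shorter but leaves the separation of $\Gamma_+$ from $\Gamma_-$ implicit in the duality statement. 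Both are legitimate.

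One bookkeeping error to fix: on $\Gamma_-$ the outward normal is $n=-e_d$, so the boundary identity you derive should read $\phi(1/2)\,\langle \Sigma^\star_{\alpha d}(\cdot,1/2) - (h_+)_\alpha,\psi\rangle - \phi(-1/2)\,\langle \Sigma^\star_{\alpha d}(\cdot,-1/2) + (h_-)_\alpha,\psi\rangle = 0$, giving $\Sigma^\star_{\alpha d}(\cdot,-1/2) = -(h_-)_\alpha$, i.e. $(\Sigma^\star n)_\alpha = (h_-)_\alpha$ on $\Gamma_-$ as claimed in~\eqref{eq:porto2}. As written, your conclusion $\Sigma^\star_{\alpha d}(\cdot,-1/2)=(h_-)_\alpha$ would correspond to $\Sigma^\star n = -h_-$ on the bottom face; the same correction applies to the $dd$ component.
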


\begin{proof}
We deduce from~\eqref{formvarelast} that, for any $v \in (\mathcal{D}(\Omega))^d$, we have
$$
\int_\Omega \Sigma^\eps: e(v) = \int_\Omega \sigma^\eps: e^\eps(v) = \int_\Omega f \cdot v,
$$
which implies that $- \div \Sigma^\eps = f$ in the sense of distributions. The convergences~\eqref{eq:sigma_eps_alpha_beta}, \eqref{eq:sigma_eps_alpha_d} and~\eqref{eq:sigma_eps_dd} ensure that $\Sigma^\eps$ converges to $\Sigma^\star$ in the sense of distributions. We hence get that~\eqref{eq:porto1} holds (in the sense of distributions).

Consider now some $\dps v \in \left( \mathcal{C}^\infty\left( \left[-\frac{1}{2}, \frac{1}{2}\right], \mathcal{D}(\omega)\right) \right)^d$. Observing that $\div \Sigma^\star \in (L^2(\Omega))^d$ (recall Assumption~\eqref{eq:hyp_regul_f}), we may define the trace of $\Sigma^\star \, n$ on $\Gamma_\pm$ by the duality relation
$$
\langle \Sigma^\star \, n, v \rangle_{\Gamma_\pm} = \langle \Sigma^\star, e(v) \rangle_\Omega + \int_\Omega v \cdot \div \Sigma^\star,
$$
where $\langle \Sigma^\star, e(v) \rangle_\Omega$ is defined as in the right-hand side of~\eqref{eq:bureau4}:
\begin{align*}
  \langle \Sigma^\star, e(v) \rangle_\Omega &= \sum_{1\leq \alpha, \beta \leq d-1} \langle \Sigma_{\alpha\beta}^\star, e_{\alpha\beta}(v) \rangle_{L^2(\Omega)}
  \\
  & + 2 \sum_{1\leq \alpha\leq d-1} \langle \Sigma_{\alpha d}^\star, e_{\alpha d}(v) \rangle_{L^2\left(\left(-\frac{1}{2}, \frac{1}{2}\right), H^{-1}(\omega)\right), L^2\left(\left(-\frac{1}{2}, \frac{1}{2}\right), H^1_0(\omega)\right)}
  \\
  & + \langle \Sigma_{dd}^\star, e_{dd}(v) \rangle_{L^2\left(\left(-\frac{1}{2}, \frac{1}{2}\right), H^{-2}(\omega)\right), L^2\left(\left(-\frac{1}{2}, \frac{1}{2}\right), H^2_0(\omega)\right)}.
\end{align*}
We then write
\begin{align*}
  \langle \Sigma^\star \, n,v \rangle_{\Gamma_\pm}
  &=
  \langle \Sigma^\star, e(v) \rangle_\Omega + \int_\Omega v \cdot \div \Sigma^\star
  \\
  &=
  \int_\Omega \Sigma^\eps : e(v) - \int_\Omega f \cdot v \qquad \text{[using~\eqref{eq:bureau4} and~\eqref{eq:porto1}]}
  \\
  &=
  \int_{\Gamma_+} h_+ \cdot v + \int_{\Gamma_-} h_- \cdot v,
\end{align*}
where the last line stems from the variational formulation of the original problem (see e.g.~\eqref{eq:bureau3}). We therefore deduce~\eqref{eq:porto2}.
\end{proof}
  

We have shown in Lemma~\ref{lem:lemma1} that, up to the extraction of a subsequence, the matrix $\left( \Sigma_{\alpha\beta}^\eps \right)_{1\leq \alpha,\beta\leq d-1}$ weakly converges to some $\left( \Sigma_{\alpha\beta}^\star \right)_{1\leq \alpha,\beta\leq d-1}$ in $(L^2(\Omega))^{(d-1)\times (d-1)}$. The aim of Lemma~\ref{lem:Sigmaab} is to characterize this weak limit, by actually providing an explicit expression for this matrix $\left( \Sigma_{\alpha\beta}^\star \right)_{1\leq \alpha,\beta\leq d-1}$. This will imply that the whole sequence $\left( \Sigma_{\alpha\beta}^\eps \right)_{1\leq \alpha,\beta\leq d-1}$ weakly converges, and not only a subsequence.

In Lemma~\ref{lem:Sigmaab} below, we are going to relate $\left( \Sigma_{\alpha\beta}^\star \right)_{1\leq \alpha,\beta\leq d-1}$, which is the limit of the stress tensor, with $u^\star$, which is the limit of the displacement (see~\eqref{eq:Sigmaformula}). For standard, scalar-valued, oscillatory diffusion problems of the form $- \div \left( \mathcal{A}_{\rm per}(\cdot/\eps) \nabla u^\eps \right) = f$ in some fixed domain $\Omega$, this amounts to writing that the limit of $\mathcal{A}_{\rm per}(\cdot/\eps) \nabla u^\eps$ is $\mathcal{A}^\star \nabla u^\star$, where $\mathcal{A}^\star$ is the homogenized matrix. Such a result can e.g. be obtained by studying the homogenized limit of the problem using the celebrated div-curl lemma (see~\cite{cherkaev}), and more precisely by studying the limit (in the sense of distributions) of $(p + \nabla w_p(\cdot/\eps))^T \mathcal{A}_{\rm per}(\cdot/\eps) \nabla u^\eps$ in two different manners ($w_p$ is of course the periodic corrector in the direction $p$). The idea of the approach is thus to pass to the limit (in two different manners) in the quantity $\dps \int_\Omega \phi \, (p + \nabla w_p(\cdot/\eps))^T \mathcal{A}_{\rm per}(\cdot/\eps) \nabla u^\eps$ for any $\phi \in \mathcal{D}(\Omega)$. In what follows, we are going to build upon this approach by considering the quantity $\dps \int_\Omega \phi \, e^\eps(v^\eps) : \sigma^\eps$ (see Steps~1 and~2 of the proof of Lemma~\ref{lem:Sigmaab}), where the stress tensor $\sigma^\eps$ of course plays the role of $\mathcal{A}_{\rm per}(\cdot/\eps) \nabla u^\eps$, where $v^\eps$ is an appropriately chosen function (essentially defined using the bending correctors $W^{\alpha\beta}$, see~\eqref{eq:def_v_eps} below) playing the role of $p \cdot x + \eps \, w_p(x/\eps)$, and where we have replaced the gradient operator by the scaled symmetric gradient operator (we have thus replaced $p + \nabla w_p(\cdot/\eps)$ by $e^\eps(v^\eps)$). For technical reasons, we are however unable to pass to the limit in $\dps \int_\Omega \phi \, e^\eps(v^\eps) : \sigma^\eps$ for any function $\phi \in \mathcal{D}(\Omega)$, but we have to restrict ourselves to functions $\phi$ independent of $x_d$. 
In order to obtain sufficient information to completely identify the limit $\left( \Sigma_{\alpha\beta}^\star \right)_{1\leq \alpha,\beta\leq d-1}$ of the stress tensor, we are going to consider an enriched family of bending correctors which are defined by~\eqref{eq:varWxi} below for a large class of functions $\xi$, and not only the bending corrector defined by~\eqref{el-prcor2}.

We now proceed in details and start by introducing the enriched bending correctors. For any $1\leq \alpha, \beta \leq d-1$ and any function $\xi \in L^2(-1/2,1/2)$, we denote by $W^{\alpha\beta,\xi} \in \mathcal{W}(\mathcal{Y})$ the unique solution to the problem
\begin{equation}\label{eq:varWxi}
  \forall v \in \mathcal{W}(\mathcal{Y}), \qquad \int_{\mathcal{Y}} A \Big( e\left(W^{\alpha\beta, \xi}\right) - \xi(x_d) \, e_\alpha \otimes e_\beta \Big) : e(v) = 0. 
\end{equation}
In contrast to the corrector problem~\eqref{el-prcor2}, we have replaced the factor $x_d$ there (which was reminiscent of the Kirchoff-Love space) by an arbitrary function $\xi$ only depending on $x_d$. For any $x_d \in (-1/2,1/2)$, we denote by $T^{\star,\xi}_{\alpha\beta}(x_d) \in \mathbb{R}_s^{d\times d}$ the symmetric matrix defined by
$$
T^{\star,\xi}_{\alpha\beta}(x_d) = \int_Y A(x',x_d) \Big( e\left( W^{\alpha\beta, \xi} \right) (x',x_d) - \xi(x_d) \, e_\alpha \otimes e_\beta \Big) \, dx'.
$$
Note that $T^{\star,\xi}_{\alpha\beta} = T^{\star,\xi}_{\beta\alpha}$, and that the mean in the above definition is only taken with respect to $x' \in Y$, and not with respect to $x=(x',x_d) \in \Y$.

In the specific case when $\xi = \Id$, we see that $W^{\alpha\beta,\Id} = W^{\alpha\beta}$ and $\dps \int_{-1/2}^{1/2} x_d \left( T^{\star,\Id}_{\alpha\beta}(x_d) \right)_{\gamma \delta} \, dx_d = (k^\star_{22})_{\alpha \beta \gamma \delta} = - (K^\star_{22})_{\alpha \beta \gamma \delta}$, where $K^\star_{22}$ is the homogenized matrix defined by~\eqref{eq:def_Kstar22} and where $k^\star_{22}$ is defined in~\eqref{eq:stop9} (the relation between $K^\star_{22}$ and $k^\star_{22}$ is shown in~\eqref{eq:stop9_bis} below). 

As in Lemma~\ref{lem:symcorr}, under the symmetry assumption~\eqref{hyp:symA} on $A$, we have that $W^{\alpha\beta,\xi} \in \O^{d-1}\times \E$ and $\dps \left( T^{\star,\xi}_{\alpha\beta} \right)_{\gamma \delta} \in \O$ for any odd function $\xi$ (and likewise $W^{\alpha\beta,\xi} \in \E^{d-1}\times \O$ and $\dps \left( T^{\star,\xi}_{\alpha\beta} \right)_{\gamma \delta} \in \E$ for any even function $\xi$).

\begin{lemma} \label{lem:Sigmaab}
Assume that $A$ satisfies the symmetries~\eqref{hyp:symA} and that we are in the bending case~\eqref{eq:ass_bending}. For any $1\leq \alpha, \beta, \gamma, \delta \leq d-1$, there exists a unique odd function $S^\star_{\alpha\beta\gamma\delta} \in L^2(-1/2,1/2)$ such that, for any odd function $\xi \in L^2(-1/2,1/2)$, 
\begin{equation}\label{eq:defSstar_imp}
\int_{-1/2}^{1/2} \xi(x_d) \, S^\star_{\alpha\beta \gamma \delta}(x_d) \, dx_d = \int_{-1/2}^{1/2} x_d \, \left( T^{\star,\xi}_{\alpha\beta}(x_d)\right)_{\gamma \delta} \, dx_d.
\end{equation}
In addition, $S^\star$ is given by
\begin{equation}\label{eq:defSstar_exp}
S^\star_{\alpha\beta\gamma\delta}(x_d) = (e_\alpha \otimes e_\beta) : \int_Y A(\cdot,x_d) \, \big( e(W^{\gamma \delta})(\cdot,x_d) - x_d \, e_\gamma \otimes e_\delta \big),
\end{equation}
it satisfies the bound
\begin{equation}\label{eq:lundi4}
\| S^\star_{\alpha\beta\gamma\delta} \|_{L^2(-1/2,1/2)} \leq \frac{c_+^2}{c_-} + c_+,
\end{equation}
for any $1\leq \alpha, \beta, \gamma, \delta \leq d-1$ (where we recall that $c_+$ and $c_-$ are the upper and lower bounds on the elasticity tensor $A$, respectively) and it also satisfies the following symmetries: $S^\star_{\alpha\beta \gamma\delta} = S^\star_{\beta\alpha \gamma\delta} = S^\star_{\alpha\beta \delta\gamma}$ (note however that $S^\star_{\alpha\beta \gamma\delta}$ and $S^\star_{\gamma\delta \alpha\beta}$ are in general not equal).

\medskip

Besides, in dimension $d=2$, the symmetric matrix $\left( \Sigma^\star_{\alpha\beta} \right)_{1\leq \alpha, \beta \leq d-1} \in (L^2(\Omega))^{(d-1)\times (d-1)}$ identified in Lemma~\ref{lem:lemma1} as the weak limit of $\left( \Sigma^\eps_{\alpha\beta} \right)_{1\leq \alpha, \beta \leq d-1}$ satisfies, for any $x'\in \omega$ and $x_d \in (-1/2,1/2)$,
\begin{equation}\label{eq:Sigmaformula}
\forall 1\leq \alpha, \beta \leq d-1, \qquad \Sigma^\star_{\alpha\beta}(x',x_d) = S^\star_{\alpha\beta\gamma\delta}(x_d) \, \partial_{\gamma\delta}(u^\star_d + g_d)(x'),
\end{equation}
where $u^\star_d$ is the solution to the homogenized problem~\eqref{formvar:pb_bending}. 
This weak limit is thus unique and independent of the subsequence considered in Lemma~\ref{lem:lemma1}.
\end{lemma}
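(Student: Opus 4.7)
The plan splits into two parts: the purely algebraic and dimension-independent statements about $S^\star$ on its own (existence, explicit formula, uniqueness, bound, symmetries), and then the identification~\eqref{eq:Sigmaformula} in dimension $d=2$, which is the technical core.

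\medskip

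\textbf{Properties of $S^\star$.} I would test~\eqref{eq:varWxi} against $v = W^{\gamma\delta} \in \mathcal{W}(\mathcal{Y})$ and~\eqref{el-prcor2} against $v = W^{\alpha\beta,\xi} \in \mathcal{W}(\mathcal{Y})$. Using the symmetry $A_{ijkl}=A_{klij}$, both integrals equal the common mixed energy $\int_{\mathcal{Y}} A e(W^{\alpha\beta,\xi}) : e(W^{\gamma\delta})$, whence
\[
\int_{\mathcal{Y}} \xi(x_d) (e_\alpha \otimes e_\beta) : A e(W^{\gamma\delta}) \;=\; \int_{\mathcal{Y}} x_d (e_\gamma \otimes e_\delta) : A e(W^{\alpha\beta,\xi}).
\]
Subtracting the common term $\int_{\mathcal{Y}} \xi(x_d) x_d (e_\alpha \otimes e_\beta) : A(e_\gamma \otimes e_\delta)$ from both sides turns the left hand side into $\int_{-1/2}^{1/2} \xi \, S^\star_{\alpha\beta\gamma\delta}$ with $S^\star$ given by~\eqref{eq:defSstar_exp}, and the right hand side into $\int_{-1/2}^{1/2} x_d (T^{\star,\xi}_{\alpha\beta})_{\gamma\delta}$; this establishes~\eqref{eq:defSstar_imp}. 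Uniqueness among odd functions is immediate by testing against the difference of two candidates. The symmetries in $(\alpha,\beta)$ and $(\gamma,\delta)$ follow from the symmetries of $A$ and from $W^{\gamma\delta}=W^{\delta\gamma}$; oddness in $x_d$ comes from~\eqref{hyp:symA} combined with $W^{\gamma\delta} \in \O^{d-1}\times \E$ (as in Lemma~\ref{lem:symcorr}). Finally, testing~\eqref{el-prcor2} against $W^{\gamma\delta}$ itself yields $\|e(W^{\gamma\delta})\|_{L^2(\mathcal{Y})} \leq c_+/c_-$, whence~\eqref{eq:defSstar_exp} and Cauchy--Schwarz on $Y$ produce~\eqref{eq:lundi4}.

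\medskip

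\textbf{Identification of $\Sigma^\star_{\alpha\beta}$.} For $\phi \in \mathcal{D}(\omega)$ and smooth odd $\xi$, the goal is to show
\[
\int_\Omega \phi(x') \, \xi(x_d) \, \Sigma^\star_{\alpha\beta}(x) \, dx \;=\; \int_\omega \phi(x') \, \partial_{\gamma\delta}(u^\star_d + g_d)(x') \, dx' \int_{-1/2}^{1/2} \xi(x_d) \, S^\star_{\alpha\beta\gamma\delta}(x_d) \, dx_d.
\]
Setting $M^{\alpha\beta,\xi} := A \big( e(W^{\alpha\beta,\xi}) - \xi \, e_\alpha \otimes e_\beta \big)$, the symmetry of $A$ applied to $\sigma^\eps = A^\eps e^\eps(u^\eps + g)$ gives the pointwise identity
\[
\xi(x_d) \, \sigma^\eps_{\alpha\beta} \;=\; \sigma^\eps : e(W^{\alpha\beta,\xi})(\cdot/\eps,x_d) \;-\; e^\eps(u^\eps+g) : M^{\alpha\beta,\xi}(\cdot/\eps,x_d).
\]
Integrating against $\phi$, the first term is handled by introducing the admissible test function $\tilde v^\eps \in V$ with $\tilde v^\eps_\rho := \eps \, \phi \, W^{\alpha\beta,\xi}_\rho(\cdot/\eps,x_d)$ for $\rho < d$ and $\tilde v^\eps_d := \eps^2 \, \phi \, W^{\alpha\beta,\xi}_d(\cdot/\eps,x_d)$; a direct expansion of $e^\eps(\tilde v^\eps)$ shows $e^\eps(\tilde v^\eps) = \phi \, e(W^{\alpha\beta,\xi})(\cdot/\eps,x_d) + r^\eps$ with $\|r^\eps\|_{L^2(\Omega)} = O(\eps)$, so that~\eqref{formvarelast} together with $\|\tilde v^\eps\|_{L^2(\Omega)} = O(\eps)$ forces $\int_\Omega \phi \sigma^\eps : e(W^{\alpha\beta,\xi})(\cdot/\eps,x_d) \to 0$. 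For the second term I would split $M^{\alpha\beta,\xi}(\cdot/\eps,x_d) = T^{\star,\xi}_{\alpha\beta}(x_d) + Z^\eps$, where $Z^\eps$ is the oscillating remainder. The relations $\div M^{\alpha\beta,\xi}=0$ and $M^{\alpha\beta,\xi} e_d|_{\mathcal{Y}_\pm}=0$ force $T^{\star,\xi}_{\alpha\beta}$ to have vanishing $d$-row and $d$-column, so its contribution reduces to the in-plane block; the weak convergence $e_{\gamma\delta}(u^\eps + g) \rightharpoonup -x_d \partial_{\gamma\delta}(u^\star_d + g_d)$ in $L^2(\Omega)$, combined with~\eqref{eq:defSstar_imp}, produces exactly the desired right-hand side. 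For the $Z^\eps$-contribution I would verify that $Z := M^{\alpha\beta,\xi} - T^{\star,\xi}_{\alpha\beta}$ satisfies hypotheses (i)--(vi) of Lemma~\ref{lemma:minZ2}; the nontrivial ones (iii) and (v) follow exactly from the vanishing $d$-column of $T^{\star,\xi}_{\alpha\beta}$ and from the corrector boundary condition. Applying that lemma with $\varphi = \phi$ and $v = u^\eps \in V$ gives the desired $O(\eps)$ bound on the $u^\eps$-part, and the remaining $g$-part pairs the fixed $L^\infty$ strain $e(g)$ with $Z^\eps \rightharpoonup 0$ in $L^2$, hence also vanishes in the limit.

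\medskip

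\textbf{Parity and main obstacle.} Both sides of~\eqref{eq:Sigmaformula} are odd in $x_d$: the left by Lemma~\ref{lem:membrane} (which forces $u^\eps + g \in \O^{d-1} \times \E$) combined with~\eqref{hyp:symA}, and the right by the oddness of $S^\star$ shown above. Hence testing against all $\phi \otimes \xi$ with $\phi \in \mathcal{D}(\omega)$ and $\xi$ smooth odd suffices, and density concludes~\eqref{eq:Sigmaformula} almost everywhere; the uniqueness of the limit then upgrades the weak convergence of the whole sequence $(\Sigma^\eps_{\alpha\beta})$, not only of a subsequence. The main obstacle is the pairing of $Z^\eps$ with $e^\eps(u^\eps)$ in the $Q^\eps$-term, which rests entirely on Lemma~\ref{lemma:minZ2}; it is precisely this dependence that forces the restriction $d=2$ in the statement.
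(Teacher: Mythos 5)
Your treatment of the algebraic part (existence, explicit formula, uniqueness, bound and symmetries of $S^\star$) is correct and coincides with the paper's, up to reorganizing the same reciprocity computation between~\eqref{eq:varWxi} and~\eqref{el-prcor2}. For the identification~\eqref{eq:Sigmaformula}, however, you take a genuinely different and leaner route. The paper builds the oscillating test function $v^\eps$ of~\eqref{eq:def_v_eps}, which must realize the macroscopic strain $-\xi_q\,e_\alpha\otimes e_\beta$ as $e^\eps$ of an actual displacement; this forces the polynomial pieces involving $\nu(x')=x_\alpha x_\beta/2$, produces the parasitic strain $\frac{2q}{\eps^2}\,x_d^{2q-1}\,\nu\,e_d\otimes e_d$, and therefore requires computing $\int_\Omega\phi\,\sigma^\eps:e^\eps(v^\eps)$ in two different ways, invoking the negative-Sobolev weak convergences of $\Sigma^\eps_{\alpha d}$ and $\Sigma^\eps_{dd}$ from Lemma~\ref{lem:lemma1}, the limit equilibrium equation and boundary conditions of Lemma~\ref{lem:lemma1_BC}, a reduction to monomials $\xi_q$, and a cancellation of the $\Sigma^\star_{dd}$ contributions between the two computations. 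You avoid all of this by keeping $\xi\,\sigma^\eps_{\alpha\beta}=\sigma^\eps:(\xi\,e_\alpha\otimes e_\beta)$ on the left-hand side of your pointwise identity and only realizing the purely oscillating part $\phi\,e(W^{\alpha\beta,\xi})(\cdot/\eps,x_d)$ as $e^\eps(\tilde v^\eps)+O(\eps)$, so that the variational formulation~\eqref{formvarelast} kills that term directly; the remaining term splits into the $T^{\star,\xi}_{\alpha\beta}$ part, handled by the weak convergence $e_{\gamma\delta}(u^\eps+g)\rightharpoonup -x_d\,\partial_{\gamma\delta}(u^\star_d+g_d)$ together with~\eqref{eq:defSstar_imp}, and a mean-zero oscillation controlled by Lemma~\ref{lemma:minZ2} exactly as in the paper. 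Your route needs only the $L^2$ weak convergence~\eqref{eq:sigma_eps_alpha_beta} from Section~\ref{sec:tech_res_bending} and works for arbitrary smooth odd $\xi$, and it isolates cleanly that the restriction $d=2$ enters solely through Lemma~\ref{lemma:minZ2}. Two small points to tighten: under~\eqref{eq:assump_g} the strain $e(g)$ is only $L^2$, not $L^\infty$ — which is all your weak-convergence pairing actually requires; and the vanishing of the $d$-column of $T^{\star,\xi}_{\alpha\beta}$ is most safely obtained, as in the paper's proof of~\eqref{eq:theatre7}, by testing~\eqref{eq:varWxi} with $v=\Lambda(x_d)\,e_i$, rather than by passing the normal-trace condition through the $Y$-average.
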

The restriction to the two-dimensional case in the second part of the lemma only comes from the fact that its proof uses Lemma~\ref{lemma:minZ2}.

\begin{remark}
The only property of $u^\star$ that we use to prove~\eqref{eq:Sigmaformula} is actually the fact that, up to a subsequence extraction, $u^\eps_d$ weakly converges to $u^\star_d$. We do not use the fact that $u^\star_d$ is a solution to~\eqref{formvar:pb_bending}. For that reason, the relation~\eqref{eq:Sigmaformula} can be used to {\em identify} the homogenized equation, that is~\eqref{formvar:pb_bending}, as discussed below and in Appendix~\ref{app:diff_preuve2_bend}.
\end{remark}

The interest of~\eqref{eq:defSstar_imp} is of course that $S^\star$ is independent of $\xi$. Note that~\eqref{eq:defSstar_imp} trivially holds for any even function $\xi$, because both integrands are odd functions and thus both integrals vanish. The relation~\eqref{eq:Sigmaformula} is useful for a twofold reason. First, it allows to obtain the homogenized equation, as shown in Appendix~\ref{app:diff_preuve2_bend}. We have outlined above the fact that Lemma~\ref{lem:Sigmaab} may be understood as a div-curl lemma type result, and it is well-known, for standard, scalar-valued, oscillatory diffusion problems, that such a result directly yields the homogenized equation. Second, the relation~\eqref{eq:Sigmaformula} is going to be used in Lemma~\ref{lem:lem00} below to show additional regularity on $\Sigma^\star$, a pivotal ingredient to eventually obtain a strong convergence homogenization result.

\begin{proof}
For any $1\leq \alpha, \beta, \gamma, \delta \leq d-1$, we observe that $S^\star_{\alpha\beta \gamma \delta}$ defined by~\eqref{eq:defSstar_exp} indeed belongs to $L^2(-1/2,1/2)$. In addition, it is an odd function in view of the symmetries~\eqref{hyp:symA} of $A$ (which in particular imply that the bending correctors $W^{\gamma \delta}$ belong to $\O^{d-1}\times \E$, see Lemma~\ref{lem:symcorr}). 

We next compute that, for any $\xi \in L^2(-1/2,1/2)$,
\begin{align*}
  & \int_{-1/2}^{1/2} x_d \, \left( T^{\star,\xi}_{\alpha\beta}(x_d)\right)_{\gamma \delta} \, dx_d
  \\
  &=
  \int_{\Y} x_d \, e_\gamma \otimes e_\delta : A(x',x_d) \Big( e\left( W^{\alpha\beta, \xi} \right) (x',x_d) - \xi(x_d) \, e_\alpha \otimes e_\beta \Big) \, dx' \, dx_d
  \\
  &=
  \int_{\Y} \Big( x_d \, e_\gamma \otimes e_\delta - e\left( W^{\gamma\delta} \right) (x',x_d) \Big) : A(x',x_d) \Big( e\left( W^{\alpha\beta, \xi} \right) (x',x_d) - \xi(x_d) \, e_\alpha \otimes e_\beta \Big) \, dx' \, dx_d
  \\
  &=
  -\int_{\Y} \Big( e\left( W^{\alpha\beta, \xi} \right) (x',x_d) - \xi(x_d) \, e_\alpha \otimes e_\beta \Big) : A(x',x_d) \Big( e\left( W^{\gamma\delta} \right) (x',x_d) - x_d \, e_\gamma \otimes e_\delta \Big) \, dx' \, dx_d
  \\
  &=
  \int_{\Y} \xi(x_d) \, e_\alpha \otimes e_\beta : A(x',x_d) \Big( e\left( W^{\gamma\delta} \right) (x',x_d) - x_d \, e_\gamma \otimes e_\delta \Big) \, dx' \, dx_d
  \\
  &=
  \int_{-1/2}^{1/2} \xi(x_d) \, S^\star_{\alpha\beta \gamma \delta}(x_d) \, dx_d,
\end{align*}
where we have successively used the variational formulation~\eqref{eq:varWxi} with the test function $v = W^{\gamma\delta}$, the symmetry of $A$, the variational formulation~\eqref{el-prcor2} with the test function $v = W^{\alpha\beta, \xi}$, and the definition~\eqref{eq:defSstar_exp} of $S^\star$. We have thus obtained~\eqref{eq:defSstar_imp} for any $\xi \in L^2(-1/2,1/2)$. The symmetries of $S^\star$ stem from the fact that, for any $1\leq \alpha,\beta \leq d-1$, $T^{\star,\xi}_{\alpha\beta}$ is a symmetric matrix, and from the fact that $T^{\star,\xi}_{\alpha\beta} = T^{\star,\xi}_{\beta\alpha}$. 

The uniqueness of $S^\star_{\alpha\beta \gamma \delta}$ such that~\eqref{eq:defSstar_imp} holds comes from the following argument. Denoting by $L^2_{\rm odd}(-1/2,1/2)$ the set of odd functions of $L^2(-1/2,1/2)$, it can be easily checked that the application
$$
F_{\alpha\beta\gamma\delta}: \left\{ 
\begin{array}{ccc}
 L^2_{\rm odd}(-1/2,1/2) & \to & \mathbb{R} \\
 \xi & \mapsto & \dps \int_{-1/2}^{1/2} x_d \, \left( T^{\star,\xi}_{\alpha\beta}(x_d)\right)_{\gamma \delta} \, dx_d
\end{array} \right.
$$
is a continuous linear form. Hence, from the Riesz representation theorem in $L^2_{\rm odd}(-1/2,1/2)$, we obtain the existence of a unique function $S_{\alpha\beta\gamma\delta}^\star \in L^2_{\rm odd}(-1/2,1/2)$ satisfying~\eqref{eq:defSstar_imp}.

The bound~\eqref{eq:lundi4} is obtained as follows. Taking $v = W^{\gamma\delta}$ in the variational formulation~\eqref{el-prcor2} defining the corrector $W^{\gamma\delta}$, we obtain
\begin{equation} \label{eq:dimanche}
c_- \, \| e(W^{\gamma\delta}) \|^2_{L^2(\Y)} \leq c_+ \, \| e(W^{\gamma\delta}) \|_{L^2(\Y)} \, \| x_d \|_{L^2(\Y)},
\end{equation}
and thus $\| e(W^{\gamma\delta}) \|_{L^2(\Y)} \leq c_+/c_-$. We then infer~\eqref{eq:lundi4} from~\eqref{eq:defSstar_exp}, the Cauchy-Schwarz inequality and the above bound.
\medskip

We now turn to the second part of the lemma statement. Let us consider a subsequence of $\left(\Sigma^\eps\right)_{\eps >0}$ which weakly converges to some limit $\Sigma^\star$ in the sense of Lemma~\ref{lem:lemma1}. We are going to prove that, for any $1\leq \alpha, \beta \leq d-1$, $\Sigma_{\alpha\beta}^\star$ is given by~\eqref{eq:Sigmaformula}. Note that, although we only identify the $(d-1) \times (d-1)$ submatrix $\left( \Sigma_{\alpha\beta}^\star \right)_{1\leq \alpha,\beta\leq d-1}$, we use in the proof below the convergence of the whole $d \times d$ matrix $\Sigma^\eps$.

To obtain the desired result, we need to prove that, for any $1\leq \alpha, \beta \leq d-1$ and for any odd function $\xi \in L^2(-1/2,1/2)$, $\Sigma_{\alpha\beta}^\star$ satisfies
\begin{equation}\label{eq:fref}
\text{for a.a. $x' \in \omega$}, \quad \int_{-1/2}^{1/2} \xi(x_d) \, \Sigma_{\alpha\beta}^\star(x',x_d) \, dx_d = \left( \int_{-1/2}^{1/2} x_d \left( T^{\star,\xi}_{\alpha\beta}(x_d) \right)_{\gamma \delta} \, dx_d \right) \partial_{\gamma \delta}(u_d^\star + g_d)(x').
\end{equation}
This is indeed sufficient since both $S_{\alpha\beta\gamma\delta}^\star$ (by construction) and $\Sigma_{\alpha\beta}^\star(x',\cdot)$ are odd functions of $x_d$. The fact that $\Sigma_{\alpha\beta}^\star(x',\cdot)$ is odd stems from the fact that $u^\eps \in \O^{d-1}\times \E$ (see Lemma~\ref{lem:membrane}), that $g \in \O^{d-1}\times \E$ since it belongs to $\GKL^{\mathcal{B}}$ and thus $e^\eps_{\gamma \delta}(u^\eps+g) \in \O$, $e^\eps_{\gamma d}(u^\eps+g) \in \E$ and $e^\eps_{dd}(u^\eps+g) \in \O$. Using the symmetries~\eqref{hyp:symA} of $A$, we therefore obtain that $\Sigma_{\alpha\beta}^\eps(x',\cdot)$ is odd, and thus similarly for its limit $\Sigma_{\alpha\beta}^\star(x',\cdot)$. Similarly to~\eqref{eq:defSstar_imp}, the relation~\eqref{eq:fref} trivially holds for any even function $\xi \in L^2(-1/2,1/2)$, since both integrands are then odd functions of $x_d$, yielding vanishing integrals.

Using the density of the polynomial functions in $L^2(-1/2,1/2)$, and the fact that both sides of~\eqref{eq:fref} are linear and continuous applications of $\xi \in L^2(-1/2,1/2)$, it is sufficient to prove~\eqref{eq:fref} for any odd monomial function of the form $\dps \xi_q(x_d) = \frac{x_d^{2q+1}}{2q+1}$ for any $q \in \mathbb{N}$. We are thus left with showing that, for any $1 \leq \alpha, \beta \leq d-1$ and any $q \in \mathbb{N}$,
\begin{equation}\label{eq:qref}
\forall x' \in \omega, \quad \int_{-1/2}^{1/2} \xi_q(x_d) \, \Sigma_{\alpha\beta}^\star(x',x_d) \, dx_d = \left( \int_{-1/2}^{1/2} x_d \, \left( T^{\star,\xi_q}_{\alpha\beta}(x_d) \right)_{\gamma \delta} \, dx_d \right) \partial_{\gamma \delta}(u_d^\star + g_d).
\end{equation}
Throughout the remainder of this proof, we fix some $1\leq \alpha, \beta \leq d-1$ and some $q \in \mathbb{N}$. Let us define $\nu(x') = x_\alpha \, x_\beta / 2$ for any $x' \in \omega$ and let us introduce the vector-valued function $v^\eps = (v^\eps_i)_{1\leq i \leq d}$ defined by
\begin{equation} \label{eq:def_v_eps}
\begin{aligned}
  \forall 1 \leq \gamma \leq d-1, \qquad v_\gamma^\eps(x) & := \eps \, W_\gamma^{\alpha\beta, \xi_q}\left( \frac{x'}{\eps}, x_d\right) - \frac{x_d^{2q+1}}{2q+1} \, \partial_\gamma \nu(x'),
  \\
  v_d^\eps(x) &:= \eps^2 \, W_d^{\alpha\beta, \xi_q}\left( \frac{x'}{\eps}, x_d\right) + x_d^{2q} \, \nu(x').
\end{aligned}
\end{equation}
By construction, it holds that
\begin{equation} \label{eq:theatre5}
e^\eps(v^\eps)(x',x_d) = e\left( W^{\alpha\beta, \xi_q}\right)\left( \frac{x'}{\eps}, x_d\right) - \frac{1}{2} \, \frac{x_d^{2q+1}}{2q+1} \, \left(e_\alpha \otimes e_\beta + e_\beta \otimes e_\alpha \right) + \frac{2q}{\eps^2} \, x_d^{2q-1} \, \nu(x') \, e_d\otimes e_d.
\end{equation}
The proof of~\eqref{eq:qref} falls in three steps.

\medskip

\noindent {\bf Step~1.} Let $\phi \in \mathcal{D}(\omega)$. It holds that 
\begin{align}
  \int_\Omega \sigma^\eps : e^\eps(v^\eps) \, \phi
  & = \int_\Omega \Sigma^\eps : e(v^\eps) \, \phi
  \nonumber
  \\
  & = -\int_\Omega (\Sigma^\eps \nabla \phi) \cdot v^\eps - \int_\Omega \div \Sigma^\eps \cdot v^\eps \, \phi + \int_{\partial \Omega} \phi \, v^\eps \cdot (\Sigma^\eps n)
  \nonumber
  \\
  & = - \int_\Omega \sigma_{\gamma \delta}^\eps \, \partial_\gamma \phi \, v_\delta^\eps - \frac{1}{\eps} \int_\Omega \sigma_{\gamma d}^\eps \, \partial_\gamma \phi \, v_d^\eps - \int_\Omega \div \Sigma^\eps \cdot v^\eps \, \phi + \int_{\partial \Omega} \phi \, v^\eps \cdot (\Sigma^\eps n).
  \label{eq:theatre3}
\end{align}
We are going to pass to the limit $\eps \to 0$ in~\eqref{eq:theatre3}. First, using the fact that $-\div \Sigma^\eps = f$ in $\Omega$ and $\Sigma^\eps n = h_\pm$ on $\Gamma_\pm$, we obtain
$$
-\int_\Omega \div \Sigma^\eps \cdot v^\eps \, \phi + \int_{\partial \Omega} \phi \, v^\eps \cdot (\Sigma^\eps n) = \int_\Omega f \cdot v^\eps \, \phi + \int_{\Gamma_\pm} \phi \, v^\eps \cdot h_\pm,
$$
and thus, using the specific expression of $v^\eps$, we get
\begin{align*}
  & \lim_{\eps \to 0} - \int_\Omega \div \Sigma^\eps \cdot v^\eps \, \phi + \int_{\partial \Omega} \phi \, v^\eps \cdot (\Sigma^\eps n)
  \\
  & = \int_\Omega f_d(x) \, x_d^{2q} \, \nu(x') \, \phi(x') \, dx - \int_\Omega f_\gamma(x) \, \frac{x_d^{2q+1}}{2q+1} \, \partial_\gamma\nu(x') \, \phi(x') \, dx
  \\
  & + (1/2)^{2q} \int_\omega \phi(x') \, \nu(x') \, (h_++h_-)_d(x') - \frac{(1/2)^{2q+1}}{2q+1} \int_\omega \phi(x') \, \partial_\gamma \nu(x') \, (h_+-h_-)_\gamma(x').
\end{align*}
Second, using~\eqref{eq:sigma_eps_alpha_beta}, we have
\begin{align*}
  & \lim_{\eps \to 0} \int_\Omega \sigma_{\gamma \delta}^\eps \, \partial_\gamma \phi \, v_\delta^\eps
  \\
  &= \lim_{\eps \to 0} \int_\Omega \Sigma_{\gamma \delta}^\eps \, \partial_\gamma \phi \, v_\delta^\eps
  \\
  &= - \int_\Omega \Sigma_{\gamma \delta}^\star(x) \, \partial_\gamma \phi(x') \, \partial_\delta \nu(x') \, \frac{x_d^{2q+1}}{2q+1} \, dx
  \\
  & = \left\langle \int_{-1/2}^{1/2} \partial_\gamma \Sigma_{\gamma\delta}^\star(\cdot, x_d) \, \frac{x_d^{2q+1}}{2q+1} \, dx_d, \phi \, \partial_\delta \nu \right\rangle_{\mathcal{D}'(\omega), \mathcal{D}(\omega)} + \int_\Omega \phi(x') \, \Sigma_{\gamma\delta}^\star(x) \, \partial_{\gamma \delta} \nu(x') \, \frac{x_d^{2q+1}}{2q+1} \, dx
  \\
  & = \left\langle \partial_\delta \nu \left( \int_{-1/2}^{1/2} \partial_\gamma \Sigma_{\gamma\delta}^\star(\cdot,x_d) \, \frac{x_d^{2q+1}}{2q+1} \, dx_d \right), \phi \right\rangle_{\mathcal{D}'(\omega), \mathcal{D}(\omega)} + \int_\omega \phi \left( \int_{-1/2}^{1/2} \frac{x_d^{2q+1}}{2q+1} \, \Sigma_{\alpha\beta}^\star(\cdot,x_d) \, dx_d \right).
\end{align*}
Third, we compute that
\begin{align*}
  \frac{1}{\eps} \int_\Omega \sigma_{\gamma d}^\eps \, \partial_\gamma \phi \, v_d^\eps
  &=
  \frac{1}{\eps} \int_\Omega \sigma_{\gamma d}^\eps(x) \, \partial_\gamma \phi(x') \, \nu(x') \, x_d^{2q} \, dx + \eps \int_\Omega \sigma_{\gamma d}^\eps(x',x_d) \, W_d^{\alpha\beta, \xi_q}\left( \frac{x'}{\eps}, x_d\right) \partial_\gamma \phi(x') \, dx
  \\
  & = \int_\Omega \Sigma_{\gamma d}^\eps(x) \, \partial_\gamma \phi(x') \, \nu(x') \, x_d^{2q} \, dx + \eps \int_\Omega \sigma_{\gamma d}^\eps(x) \, W_d^{\alpha\beta, \xi_q}\left( \frac{x'}{\eps}, x_d\right) \partial_\gamma \phi(x') \, dx.
\end{align*}
Using that $\sigma^\eps$ is bounded in $L^2(\Omega)$ (see~\eqref{eq:bureau}), that $\Omega \ni x=(x',x_d) \to \partial_\gamma \phi(x') \, \nu(x') \, x_d^{2q}$ belongs to $L^2\big( (-1/2,1/2), H^1_0(\omega) \big)$ and the weak convergence~\eqref{eq:sigma_eps_alpha_d}, we deduce that
\begin{align*}
  & \lim_{\eps \to 0} \frac{1}{\eps} \int_\Omega \sigma_{\gamma d}^\eps \, \partial_\gamma \phi \, v_d^\eps
  \\
  & = \left\langle \int_{-1/2}^{1/2} x_d^{2q} \, \Sigma_{\gamma d}^\star(\cdot, x_d) \, dx_d, \partial_\gamma \phi \, \nu \right\rangle_{\mathcal{D}'(\omega), \mathcal{D}(\omega)}
  \\
  & = - \left\langle \int_{-1/2}^{1/2} x_d^{2q} \, \partial_\gamma \Sigma_{\gamma d}^\star(\cdot, x_d) \, dx_d, \phi \, \nu \right\rangle_{\mathcal{D}'(\omega), \mathcal{D}(\omega)} - \left\langle \int_{-1/2}^{1/2} x_d^{2q} \, \Sigma_{\gamma d}^\star(\cdot, x_d) \, dx_d, \phi \, \partial_\gamma \nu \right\rangle_{\mathcal{D}'(\omega), \mathcal{D}(\omega)}
  \\
  & = - \left\langle \nu \int_{-1/2}^{1/2} x_d^{2q} \, \partial_\gamma \Sigma_{\gamma d}^\star(\cdot, x_d) \, dx_d, \phi \right\rangle_{\mathcal{D}'(\omega), \mathcal{D}(\omega)} - \left\langle \partial_\gamma \nu \int_{-1/2}^{1/2} x_d^{2q} \, \Sigma_{\gamma d}^\star(\cdot, x_d) \, dx_d, \phi \right\rangle_{\mathcal{D}'(\omega), \mathcal{D}(\omega)}.
\end{align*}
Collecting these results and using the symmetry of $\Sigma^\star$, we are in position to pass to the limit $\eps \to 0$ in~\eqref{eq:theatre3}. We thus obtain that, in the sense of distributions in $\mathcal{D}'(\omega)$,
\begin{align}
  & \lim_{\eps \to 0} \int_{-1/2}^{1/2} \sigma^\eps(\cdot,x_d) : e^\eps(v^\eps)(\cdot,x_d) \, dx_d
  \nonumber
  \\
  &= (1/2)^{2q} \, \nu \, (h_++h_-)_d + \nu \int_{-1/2}^{1/2} x_d^{2q} \, \big( \partial_\gamma \Sigma_{\gamma d}^\star(\cdot, x_d) + f_d(\cdot, x_d) \big) \, dx_d
  \nonumber
  \\
  & - \frac{(1/2)^{2q+1}}{2q+1} \, \partial_\gamma \nu \, (h_+-h_-)_\gamma + \partial_\gamma \nu \int_{-1/2}^{1/2} \left( x_d^{2q} \, \Sigma_{\gamma d}^\star(\cdot,x_d) - \frac{x_d^{2q+1}}{2q+1} \left(\partial_\delta \Sigma_{\gamma\delta}^\star(\cdot,x_d) + f_\gamma(\cdot,x_d) \right) \right) dx_d
  \nonumber
  \\
  & - \int_{-1/2}^{1/2} \frac{x_d^{2q+1}}{2q+1} \, \Sigma_{\alpha\beta}^\star(\cdot,x_d) \, dx_d.
  \label{eq:theatre4}
\end{align}
The above expression can be simplified. We first note, using that $\partial_\delta \Sigma_{\gamma \delta}^\star + \partial_d \Sigma_{\gamma d}^\star = -f_\gamma$ (see~\eqref{eq:porto1}), that the second term of the third line of~\eqref{eq:theatre4} reads
\begin{align*}
  & \partial_\gamma \nu \int_{-1/2}^{1/2} \left( x_d^{2q} \, \Sigma_{\gamma d}^\star(\cdot,x_d) - \frac{x_d^{2q+1}}{2q+1} \left(\partial_\delta \Sigma_{\gamma\delta}^\star(\cdot,x_d) + f_\gamma(\cdot,x_d) \right) \right) dx_d
  \\
  & = \partial_\gamma \nu \int_{-1/2}^{1/2} \left( x_d^{2q} \, \Sigma_{\gamma d}^\star(\cdot,x_d) + \frac{x_d^{2q+1}}{2q+1} \, \partial_d \Sigma_{\gamma d}^\star(\cdot,x_d) \right) dx_d
  \\
  & = \partial_\gamma \nu \int_{-1/2}^{1/2} \partial_d \left( \frac{x_d^{2q+1}}{2q+1} \, \Sigma_{\gamma d}^\star(\cdot,x_d) \right)
  \\
  & = \partial_\gamma \nu \, \frac{(1/2)^{2q+1}}{2q+1} \left( \Sigma_{\gamma d}^\star(\cdot,1/2) + \Sigma_{\gamma d}^\star(\cdot,-1/2) \right)
  \\
  & = \partial_\gamma \nu \, \frac{(1/2)^{2q+1}}{2q+1} \big( (h_+)_\gamma - (h_-)_\gamma \big),
\end{align*}
the last equality being obtained using the boundary conditions of $\Sigma^\star \, n$ on $\Gamma_\pm$ (see~\eqref{eq:porto2}). The third line of~\eqref{eq:theatre4} thus vanishes.
%
%
%

Turning to the second line of~\eqref{eq:theatre4}, using the symmetry of $\Sigma^\star$ and noting that $\partial_\gamma \Sigma_{\gamma d}^\star + f_d = - \partial_d \Sigma_{dd}^\star$, we obtain
\begin{align*}
  & \nu \int_{-1/2}^{1/2} x_d^{2q} \, \big( \partial_\gamma \Sigma_{\gamma d}^\star(\cdot,x_d) + f_d(\cdot,x_d) \big) \, dx_d
  \\
  &=
  - \nu \int_{-1/2}^{1/2} x_d^{2q} \, \partial_d \Sigma_{dd}^\star(\cdot,x_d) \, dx_d
  \\
  &=
  2q \, \nu \int_{-1/2}^{1/2} x_d^{2q-1} \, \Sigma_{dd}^\star(\cdot,x_d) \, dx_d - \nu \, (1/2)^{2q} \, \big( \Sigma_{dd}^\star(\cdot,1/2) - \Sigma_{dd}^\star(\cdot,-1/2) \big)
  \\
  &=
  2q \, \nu \int_{-1/2}^{1/2} x_d^{2q-1} \, \Sigma_{dd}^\star(\cdot,x_d) \, dx_d - \nu \, (1/2)^{2q} \, \big( (h_+)_d + (h_-)_d \big),
\end{align*}
the last equality being again obtained using the boundary conditions of $\Sigma^\star \, n$ on $\Gamma_\pm$ (see again~\eqref{eq:porto2}). 
We thus deduce from~\eqref{eq:theatre4} that, in $\mathcal{D}'(\omega)$, 
$$
\lim_{\eps \to 0} \int_{-1/2}^{1/2} \sigma^\eps(\cdot,x_d) : e^\eps(v^\eps)(\cdot,x_d) \, dx_d = 2 q \, \nu \int_{-1/2}^{1/2} x_d^{2q-1} \, \Sigma_{dd}^\star(\cdot,x_d) \, dx_d - \int_{-1/2}^{1/2} \frac{x_d^{2q+1}}{2q+1} \, \Sigma_{\alpha\beta}^\star(\cdot,x_d) \, dx_d.
$$

\medskip

\noindent
{\bf Step~2.} We are now going to compute again the limit of $\sigma^\eps : e^\eps(v^\eps)$, but by considering this product in a different manner. Using the symmetry of $A^\eps$, it holds that
$$
\sigma^\eps : e^\eps(v^\eps) = A^\eps e^\eps(u^\eps+g) : e^\eps(v^\eps) = A^\eps e^\eps(v^\eps) : e^\eps(u^\eps+g).
$$
Using~\eqref{eq:theatre5}, we have
\begin{align*}
  A^\eps(x) e^\eps(v^\eps)(x)
  &=
  A^\eps(x)\left[ e\left( W^{\alpha\beta, \xi_q}\right)\left( \frac{x'}{\eps},x_d\right) - \frac{x_d^{2q+1}}{2q+1} \, e_\alpha \otimes e_\beta \right] + \frac{2q}{\eps^2} \, x_d^{2q-1} \, \nu(x') \, A^\eps(x) \, e_d\otimes e_d
  \\
  &= T^\eps(x) + U^\eps(x)
\end{align*}
with 
$$
T^\eps(x):= A^\eps(x) \left[ e\left( W^{\alpha\beta, \xi_q}\right)\left( \frac{x'}{\eps},x_d\right) - \frac{x_d^{2q+1}}{2q+1} \, e_\alpha \otimes e_\beta \right]
$$
and 
$$
U^\eps(x) := \frac{2q}{\eps^2} \, x_d^{2q-1} \, \nu(x') \, A^\eps(x) \, e_d\otimes e_d.
$$ 
For any $\phi \in \mathcal{D}(\omega)$, we compute that
\begin{align}
  \int_\Omega \sigma^\eps : e^\eps(v^\eps) \, \phi
  & = \int_\Omega A^\eps e^\eps(v^\eps) : e^\eps(u^\eps +g) \, \phi
  \nonumber
  \\
  & = \int_\Omega T^\eps : e^\eps(u^\eps + g) \, \phi + \int_\Omega U^\eps : e^\eps(u^\eps + g) \, \phi
  \nonumber
  \\
  & = \int_\Omega \left(T^\eps - T^{\star,\xi_q}_{\alpha\beta}\right) : e^\eps(u^\eps) \, \phi + \int_\Omega \left(T^\eps - T^{\star,\xi_q}_{\alpha\beta}\right) : e^\eps(g) \, \phi
  \nonumber
  \\
  & \qquad \qquad \qquad + \int_\Omega T^{\star,\xi_q}_{\alpha\beta} : e^\eps(u^\eps + g) \, \phi + \int_\Omega U^\eps : e^\eps(u^\eps + g) \, \phi.
  \label{eq:theatre6}
\end{align}
We successively study the limit of the four terms of the right-hand side of~\eqref{eq:theatre6} when $\eps \to 0$.

We first consider the third term of~\eqref{eq:theatre6}, namely $\dps \int_\Omega T^{\star,\xi_q}_{\alpha\beta} : e^\eps(u^\eps + g) \, \phi$, and claim that
\begin{equation} \label{eq:theatre7}
  \forall 1 \leq i \leq d, \quad \forall x_d \in (-1/2,1/2), \qquad \left( T^{\star,\xi_q}_{\alpha\beta}(x_d) \right)_{id} = 0.
\end{equation}
Indeed, for any fixed $1 \leq i \leq d$ and for any $\lambda \in L^2(-1/2,1/2)$, introduce $\dps \Lambda(x_d) := C + \int_{-1/2}^{x_d} \lambda(t) \, dt$ where the constant $C$ is chosen such that the mean of $\Lambda$ over $(-1/2,1/2)$ vanishes and define next $v := (v_j)_{1 \leq j \leq d} \in \mathcal{W}(\mathcal{Y})$ by $v_i(x) = \Lambda(x_d)$ and $v_j = 0$ for $j \neq i$. Using $v$ as test function in~\eqref{eq:varWxi} (with of course $\xi = \xi_q$) yields that $\dps \int_{-1/2}^{1/2} \lambda(x_d) \, T^{\star,\xi_q}_{\alpha\beta}(x_d) : e_i \otimes e_d \, dx_d = 0$. Since this equality holds for any $\lambda \in L^2(-1/2,1/2)$, we obtain~\eqref{eq:theatre7}. We therefore can write
$$
\int_\Omega T^{\star,\xi_q}_{\alpha\beta} : e^\eps(u^\eps + g) \, \phi = \int_\Omega \left(T^{\star,\xi_q}_{\alpha\beta}\right)_{\gamma \delta} \, e^\eps_{\gamma\delta}(u^\eps + g) \, \phi = \int_\Omega \left(T^{\star,\xi_q}_{\alpha\beta}\right)_{\gamma \delta} \, e_{\gamma\delta}(u^\eps + g) \, \phi.
$$
We know that the sequence $(u^\eps)_{\eps>0}$ weakly converges in $(H^1(\Omega))^d$ to $u^\star$. Consequently, for all $1\leq \gamma, \delta \leq d-1$, the sequence $\left(e_{\gamma \delta}(u^\eps+g) \right)_{\eps >0}$ weakly converges in $L^2(\Omega)$ to $e_{\gamma \delta}(u^\star + g)$. Since $u^\star$ and $g$ belong to $\GKL^{\mathcal{B}}$, we have $e_{\gamma \delta}(u^\star + g)(x) = -x_d \, \partial_{\gamma \delta}(u^\star_d + g_d)(x')$. We thus deduce that
\begin{equation} \label{eq:theatre8}
\lim_{\eps \to 0} \int_\Omega T^{\star,\xi_q}_{\alpha\beta} : e^\eps(u^\eps + g) \, \phi = - \int_\Omega x_d \, \left(T^{\star,\xi_q}_{\alpha\beta}(x_d)\right)_{\gamma \delta} \, \partial_{\gamma \delta}(u^\star_d + g_d)(x') \, \phi(x') \, dx. 
\end{equation}

We next consider the first term of~\eqref{eq:theatre6}, recast it as
$$
\int_\Omega \left(T^\eps - T^{\star,\xi_q}_{\alpha\beta}\right) : e^\eps(u^\eps) \, \phi
=
\int_\Omega Z\left(\frac{x'}{\eps},x_d\right) : e^\eps(u^\eps)(x',x_d) \, \phi(x') \, dx' \, dx_d
$$
with
$$
Z(x',x_d) = A(x',x_d) \left[ e\left( W^{\alpha\beta, \xi_q}\right)(x',x_d) - \frac{x_d^{2q+1}}{2q+1} \, e_\alpha \otimes e_\beta \right] - T^{\star,\xi_q}_{\alpha\beta}(x_d),
$$
and bound it using Lemma~\ref{lemma:minZ2}. We check that $u^\eps \in V$ and recall that $\phi \in \mathcal{D}(\omega) \subset W^{2,\infty}(\omega)$. The matrix-valued function $Z$ defined above satisfies the assumptions of Lemma~\ref{lemma:minZ2}: this is obvious for assumptions~(i) and~(vi). Assumption~(v) holds in view of the boundary condition implied by~\eqref{eq:varWxi} and of~\eqref{eq:theatre7}, and Assumption~(iv) stems from the fact that
$$
\div Z = - \div T^{\star,\xi_q}_{\alpha\beta} = - e_i \left[ \partial_j \left( T^{\star,\xi_q}_{\alpha\beta} \right)_{ij} \right] = - e_i \left[ \partial_d \left( T^{\star,\xi_q}_{\alpha\beta} \right)_{id} \right] = 0,
$$
where we have used that $T^{\star,\xi_q}_{\alpha\beta}$ only depends on $x_d$ and eventually~\eqref{eq:theatre7}. Assumptions~(ii) and~(iii) are also satisfied, in view of the fact that $\dps \int_Y Z(x',x_d) \, dx' = 0$ for any $x_d \in (-1/2,1/2)$, which is a direct consequence of the definition of $T^{\star,\xi_q}_{\alpha\beta}$. We are thus in position to use Lemma~\ref{lemma:minZ2}, and deduce that
$$
\left| \int_\Omega \left(T^\eps - T^{\star,\xi_q}_{\alpha\beta} \right) : e^\eps(u^\eps) \, \phi \right| \leq C \, \eps \, |\omega|^{1/2} \, \|\nabla \phi\|_{W^{1,\infty}(\omega)} \, \| e^\eps(u^\eps) \|_{L^2(\Omega)},
$$
and hence, using the a priori bound on $e^\eps(u^\eps)$ (see~\eqref{est:sig}, \eqref{eq:inde_eps_vec} and Assumption~\eqref{eq:assump_g}, which implies that $e^\eps(g) = e(g)$), we get
\begin{equation} \label{eq:theatre9}
\lim_{\eps \to 0} \int_\Omega \left(T^\eps - T^{\star,\xi_q}_{\alpha\beta} \right) : e^\eps(u^\eps) \, \phi = 0.
\end{equation}

We next consider the second term of~\eqref{eq:theatre6}. Recalling Assumption~\eqref{eq:assump_g}, we compute that
\begin{equation} \label{eq:theatre14}
\int_\Omega \left(T^\eps - T^{\star,\xi_q}_{\alpha\beta}\right) : e^\eps(g) \, \phi
=
\int_\Omega \left(T^\eps - T^{\star,\xi_q}_{\alpha\beta}\right) : e(g) \, \phi
\mathop{\longrightarrow}_{\eps \to 0}
0,
\end{equation}
where the last convergence comes from the fact that the oscillatory function $T^\eps$ weakly converges to its $Y$-average (see Lemma~\ref{limmoyenne}), which is exactly $T^{\star,\xi_q}_{\alpha\beta}$.

We now turn to the fourth term of~\eqref{eq:theatre6} and write
\begin{align*}
  \int_\Omega U^\eps : e^\eps(u^\eps + g) \, \phi
  & = \frac{2q}{\eps^2} \int_\Omega x_d^{2q-1} \, \nu(x') \, A^\eps(x) \, e_d\otimes e_d : e^\eps(u^\eps +g)(x) \, \phi(x') \, dx
  \\
  & = \frac{2q}{\eps^2} \int_\Omega x_d^{2q-1} \, \nu(x') \, \sigma_{dd}^\eps(x) \, \phi(x') \, dx
  \\
  & = 2q \int_\Omega x_d^{2q-1} \, \nu(x') \, \Sigma_{dd}^\eps(x) \, \phi(x') \,dx.
\end{align*}
Using the convergence~\eqref{eq:sigma_eps_dd}, we deduce that
\begin{equation} \label{eq:theatre10}
\lim_{\eps \to 0} \int_\Omega U^\eps : e^\eps(u^\eps + g) \, \phi = \left\langle 2q \, \nu \int_{-1/2}^{1/2} x_d^{2q-1} \, \Sigma_{dd}^\star(\cdot,x_d) \, dx_d, \phi \right\rangle_{\mathcal{D}'(\omega), \mathcal{D}(\omega)}.
\end{equation}
Collecting~\eqref{eq:theatre6}, \eqref{eq:theatre8}, \eqref{eq:theatre9}, \eqref{eq:theatre14} and~\eqref{eq:theatre10}, we thus have proved that, in the sense of distributions in $\mathcal{D}'(\omega)$, 
\begin{multline*}
\lim_{\eps \to 0} \int_{-1/2}^{1/2} \sigma^\eps(\cdot,x_d) : e^\eps(v^\eps)(\cdot,x_d) \, dx_d \\ = 2q \, \nu \int_{-1/2}^{1/2} x_d^{2q-1} \, \Sigma_{dd}^\star(\cdot,x_d) \, dx_d - \left( \int_{-1/2}^{1/2} x_d \left( T^{\star,\xi_q}_{\alpha\beta}(x_d) \right)_{\gamma \delta} \, dx_d \right) \partial_{\gamma \delta}(u^\star_d + g_d).
\end{multline*}

\medskip

\noindent
{\bf Step~3.} Collecting the results obtained at Step~1 and Step~2, we obtain
$$
\int_{-1/2}^{1/2} \frac{x_d^{2q+1}}{2q+1} \, \Sigma_{\alpha\beta}^\star(\cdot,x_d) \, dx_d = \left( \int_{-1/2}^{1/2} x_d \left( T^{\star,\xi_q}_{\alpha\beta}(x_d) \right)_{\gamma \delta} \, dx_d \right) \partial_{\gamma \delta}(u^\star_d + g_d),
$$
which is the desired result~\eqref{eq:qref}. This concludes the proof of Lemma~\ref{lem:Sigmaab}.
\end{proof}




\medskip

Under some stronger regularity assumptions on $A$, $f$, $g$ and $h_\pm$, the following lemma states that the matrix $\Sigma^\star$ built in Lemma~\ref{lem:lemma1} is actually more regular than shown there. Before stating this lemma, we first recall that the homogenized matrix $K_{22}^\star$ defined in Theorem~\ref{limitel} satisfies
\begin{equation} \label{eq:lundi5}
  \forall \tau \in \R_s^{(d-1)\times(d-1)}, \qquad \frac{c_-}{12} \ \tau : \tau \leq K_{22}^\star \, \tau : \tau \leq c_+ \left( \frac{c_+}{c_-} + 1 \right) \tau : \tau.
\end{equation}
The lower bound is given by~\eqref{eq:titi11} below and the upper bound is obtained by observing that
$$
K_{22}^\star \, \tau : \tau = \int_\Y A \big( e(W^\tau) - x_d \, \tau \big) : \big( e(W^\tau) - x_d \, \tau \big) = - \int_\Y x_d \, A \big( e(W^\tau) - x_d \, \tau \big) : \tau,
$$
where $W^\tau = \tau_{\alpha \beta} \, W^{\alpha\beta}$, and hence (using a bound similar to~\eqref{eq:dimanche} on $e(W^\tau)$)
$$
K_{22}^\star \, \tau : \tau \leq c_+ \, |\tau| \, \big( \| e(W^\tau) \|_{L^2(\Y)} + |\tau| \big) \leq c_+ \left( \frac{c_+}{c_-} + 1 \right) |\tau|^2.
$$

\begin{lemma}\label{lem:lem00}
Assume that $d=2$, that $A$ satisfies the symmetries~\eqref{hyp:symA} and that we are in the bending case~\eqref{eq:ass_bending}. Assume furthermore that $f$ and $h_\pm$ satisfy the regularity assumptions~\eqref{eq:hyp_regul_f} and~\eqref{eq:hyp_regul_h}, and that
\begin{equation} \label{eq:hyp_regul_g}
  g_d \in H^4(\omega).
\end{equation}
Then, it holds that, for all $1\leq \alpha, \beta \leq d-1$,
$$
\Sigma_{\alpha \beta}^\star \in L^2\left( \left(-\frac{1}{2}, \frac{1}{2}\right), H^2(\omega)\right), \quad \Sigma_{\alpha d}^\star \in L^2\left( \left(-\frac{1}{2}, \frac{1}{2}\right), H^1(\omega)\right) \quad \text{and} \quad \Sigma_{dd}^\star \in L^2(\Omega).
$$
Under the shape regularity assumption~\eqref{eq:shape_regul} on $\omega$, we have the bounds
\begin{align}
  \forall 1 \leq \alpha \leq d-1, \ \ \| \Sigma_{\alpha d}^\star \|_{L^2(\Omega)} & \leq C \, \overline{{\cal N}}^{\rm bend}_\Omega(f,g,h_\pm),
  \label{eq:borne_L2_Sigma_star_alphad}
  \\
  \| \Sigma_{dd}^\star \|_{L^2(\Omega)} & \leq C \, {\cal N}^{\rm bend}(f,g,h_\pm),
  \label{eq:borne_L2_Sigma_star_dd}
\end{align}
for some constant $C$ only depending on $c_-$ and $c_+$ and where ${\cal N}^{\rm bend}(f,g,h_\pm)$ and $\overline{{\cal N}}^{\rm bend}_\Omega(f,g,h_\pm)$ are defined by
\begin{multline} \label{eq:def_norme_N_bend}
  {\cal N}^{\rm bend}(f,g,h_\pm) = \| f_d \|_{L^2(\Omega)} + \sum_{\alpha,\beta=1}^{d-1} \| \partial_\beta f_\alpha \|_{L^2(\Omega)} + \| \nabla^4_{d-1} g_d \|_{L^2(\omega)} \\ + \| (h_\pm)_d \|_{L^2(\omega)} + \sum_{\alpha,\beta=1}^{d-1} \| \partial_\beta (h_\pm)_\alpha \|_{L^2(\omega)},
\end{multline}
%
%
and
\begin{equation} \label{eq:def_norme_N_bend_full}
  \overline{{\cal N}}^{\rm bend}_\Omega(f,g,h_\pm) = \| f_\alpha \|_{L^2(\Omega)} + \| (h_\pm)_\alpha \|_{L^2(\omega)} + |\omega|^{\frac{1}{d-1}} \, {\cal N}^{\rm bend}(f,g,h_\pm).
\end{equation}
%
%
\end{lemma}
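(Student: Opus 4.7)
The plan is to combine the explicit representation of the in-plane components of $\Sigma^\star$ provided by Lemma~\ref{lem:Sigmaab} with the divergence relation~\eqref{eq:porto1} and the boundary conditions~\eqref{eq:porto2}, and then to integrate successively in $x_d$ to recover the regularity of the transverse components. More precisely, by Lemma~\ref{lem:Sigmaab}, for any $1\leq \alpha,\beta \leq d-1$,
$$
\Sigma^\star_{\alpha\beta}(x',x_d) = S^\star_{\alpha\beta\gamma\delta}(x_d)\,\partial_{\gamma\delta}(u^\star_d+g_d)(x'),
$$
so that the regularity of $\Sigma^\star_{\alpha\beta}$ in $x'$ is directly controlled by that of $u^\star_d+g_d$, while the $L^2$ bound in $x_d$ is provided by~\eqref{eq:lundi4}. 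Everything thus reduces to proving that $u^\star_d \in H^4(\omega)$ with an appropriate bound, and then performing two successive integrations in $x_d$.

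For the first step, I would apply elliptic regularity to the fourth-order bending problem~\eqref{formvar:pb_bending}. Setting $v:=\widehat u^\star_d \in H^2_0(\omega)$, integration by parts recasts~\eqref{formvar:pb_bending} in strong form as
$$
(K^\star_{22})_{\alpha\beta\gamma\delta}\,\partial_{\alpha\beta\gamma\delta}v = \m(f_d)+(h_+)_d+(h_-)_d + \partial_\alpha \m(x_d\,f_\alpha) + \tfrac{1}{2}\partial_\alpha\big((h_+)_\alpha-(h_-)_\alpha\big) - (K^\star_{22})_{\alpha\beta\gamma\delta}\,\partial_{\alpha\beta\gamma\delta}g_d
$$
in $\omega$, with $v\in H^2_0(\omega)$. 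Under assumptions~\eqref{eq:hyp_regul_f}, \eqref{eq:hyp_regul_h} and~\eqref{eq:hyp_regul_g}, the right-hand side belongs to $L^2(\omega)$, with norm bounded by ${\cal N}^{\rm bend}(f,g,h_\pm)$. Since the constant-coefficient operator $v\mapsto (K^\star_{22})_{\alpha\beta\gamma\delta}\,\partial_{\alpha\beta\gamma\delta}v$ is uniformly elliptic by~\eqref{eq:lundi5}, classical $H^4$-regularity for bi-Laplacian type operators on the smooth domain $\omega$ yields $v\in H^4(\omega)$, and hence $u^\star_d+g_d\in H^4(\omega)$ together with
$$
\|\nabla^4_{d-1}(u^\star_d+g_d)\|_{L^2(\omega)} \leq C\,{\cal N}^{\rm bend}(f,g,h_\pm),
$$
the $\omega$-dependence of the constant being controlled through a scaling argument in the spirit of the proof of Lemma~\ref{poincarebis2}, which relies on the shape regularity~\eqref{eq:shape_regul}.

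Once this is established, the regularity $\Sigma^\star_{\alpha\beta}\in L^2((-1/2,1/2),H^2(\omega))$ follows at once from~\eqref{eq:Sigmaformula} and~\eqref{eq:lundi4}. The relation $-\partial_d\Sigma^\star_{\alpha d}=f_\alpha+\partial_\beta \Sigma^\star_{\alpha\beta}$ (from~\eqref{eq:porto1}) together with the boundary condition $\Sigma^\star_{\alpha d}(\cdot,-1/2)=-(h_-)_\alpha$ (from~\eqref{eq:porto2}) then allows one to write
$$
\Sigma^\star_{\alpha d}(x',x_d) = -(h_-)_\alpha(x') - \int_{-1/2}^{x_d} \big(f_\alpha(x',s)+\partial_\beta \Sigma^\star_{\alpha\beta}(x',s)\big)\,ds.
$$
Each term on the right belongs to $L^2((-1/2,1/2),H^1(\omega))$, and taking the $L^2(\Omega)$ norm yields $\Sigma^\star_{\alpha d}\in L^2(\Omega)$ together with the bound~\eqref{eq:borne_L2_Sigma_star_alphad} (the factor $|\omega|^{1/(d-1)}$ in $\overline{{\cal N}}^{\rm bend}_\Omega$ accounts for the $L^2 \hookrightarrow H^{-1}$ type scaling with explicit $\omega$-dependence controlled by~\eqref{eq:shape_regul}). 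Repeating exactly the same argument one more time with $-\partial_d\Sigma^\star_{dd}=f_d+\partial_\alpha \Sigma^\star_{\alpha d}$ and the boundary value $\Sigma^\star_{dd}(\cdot,-1/2)=-(h_-)_d$ produces $\Sigma^\star_{dd}\in L^2(\Omega)$ and the bound~\eqref{eq:borne_L2_Sigma_star_dd}.

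The main obstacle, and the only non-trivial step, is the $H^4(\omega)$-regularity of $u^\star_d$: beyond invoking classical elliptic regularity for a bi-Laplacian-type operator on the smooth domain $\omega$, the delicate point is to track carefully the dependence of the constants on $\omega$ so that the final estimates~\eqref{eq:borne_L2_Sigma_star_alphad}--\eqref{eq:borne_L2_Sigma_star_dd} feature only the explicit powers of $|\omega|^{1/(d-1)}$ encoded in $\overline{{\cal N}}^{\rm bend}_\Omega$. The remaining two integrations in $x_d$, by contrast, are straightforward once this has been accomplished.
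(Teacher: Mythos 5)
Your proposal is correct and follows essentially the same route as the paper's proof: $H^4$ elliptic regularity for the homogenized bending problem, transfer to $\Sigma^\star_{\alpha\beta}$ via the representation~\eqref{eq:Sigmaformula} and the bound~\eqref{eq:lundi4}, and two successive integrations in $x_d$ using $-\div\Sigma^\star=f$ with the boundary data $h_\pm$, with the $\omega$-dependence of the constants tracked by scaling under~\eqref{eq:shape_regul}. The only (immaterial) difference is that the paper re-derives the integration formulas for $\Sigma^\star_{\alpha d}$ and $\Sigma^\star_{dd}$ by passing to the limit in the weak formulations~\eqref{eq:neweq1} and~\eqref{eq:eq2} with the test functions $v^w$, anchoring at $\Gamma_+$, whereas you invoke the strong form and trace identities already established in Lemma~\ref{lem:lemma1_BC} and anchor at $\Gamma_-$.
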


It is easy to see in the proof below that we also have the following additional bounds:
\begin{gather*}
\sum_{\alpha,\beta,\rho,\tau=1}^{d-1} \| \partial_{\rho\tau} \Sigma_{\alpha \beta}^\star \|_{L^2(\Omega)} + \sum_{\alpha,\rho=1}^{d-1} \| \partial_\rho \Sigma_{\alpha d}^\star \|_{L^2(\Omega)} \leq C \, {\cal N}^{\rm bend}(f,g,h_\pm),
\\
\sum_{\alpha,\beta=1}^{d-1} \| \Sigma_{\alpha \beta}^\star \|_{L^2(\Omega)} \leq C \, |\omega|^{\frac{2}{d-1}} \, {\cal N}^{\rm bend}(f,g,h_\pm), \quad \sum_{\alpha,\beta,\rho=1}^{d-1} \| \partial_\rho \Sigma_{\alpha \beta}^\star \|_{L^2(\Omega)} \leq C \, |\omega|^{\frac{1}{d-1}} \, {\cal N}^{\rm bend}(f,g,h_\pm).
\end{gather*}
The restriction to the two-dimensional case only comes from the fact that the proof of the above result uses Lemma~\ref{lem:Sigmaab}.

\begin{proof}
We start by proving that $u^\star_d\in H^4(\omega)$. The homogenized equation (see~\eqref{formvar:pb_bending}) reads
\begin{multline*}
\nabla^2_{d-1} : K_{22}^\star \nabla^2_{d-1} u_d^\star = \m(f_d) + (h_+)_d + (h_-)_d + \div' \left( \m(x_d \, f') \right) \\ + \frac{1}{2} \div' \left( h'_+-h'_- \right) - \nabla^2_{d-1} : K_{22}^\star \nabla^2_{d-1} g_d \quad \mbox{in $\mathcal{D}'(\omega)$}.
\end{multline*}
The assumptions~\eqref{eq:hyp_regul_f}, \eqref{eq:hyp_regul_g} and~\eqref{eq:hyp_regul_h} on $f$, $g$ and $h_\pm$ and the bound~\eqref{eq:lundi5} on $K_{22}^\star$ imply that
$$
\nabla^2_{d-1} : K_{22}^\star \nabla^2_{d-1} u_d^\star \in L^2(\omega) 
$$
with $\| \nabla^2_{d-1} : K_{22}^\star \nabla^2_{d-1} u_d^\star \|_{L^2(\omega)} \leq C \, {\cal N}^{\rm bend}(f,g,h_\pm)$ for some $C$ only depending on $c_-$ and $c_+$. We furthermore have $u^\star_d \in H^2_0(\omega)$. Thus, by standard elliptic regularity, we obtain that $u_d^\star \in H^4(\omega)$. A scaling argument (using the shape regularity assumption~\eqref{eq:shape_regul} on $\omega$) shows that $\| \nabla^4_{d-1} u_d^\star \|_{L^2(\omega)} \leq C \, {\cal N}^{\rm bend}(f,g,h_\pm)$, $\| \nabla^3_{d-1} u_d^\star \|_{L^2(\omega)} \leq C \, |\omega|^{\frac{1}{d-1}} \, {\cal N}^{\rm bend}(f,g,h_\pm)$ and $\| \nabla^2_{d-1} u_d^\star \|_{L^2(\omega)} \leq C \, |\omega|^{\frac{2}{d-1}} \, {\cal N}^{\rm bend}(f,g,h_\pm)$ for some $C$ only depending on $c_-$ and $c_+$. Using now Lemma~\ref{lem:Sigmaab} (and more precisely~\eqref{eq:Sigmaformula}) and~\eqref{eq:lundi4}, we deduce that, for all $1\leq \alpha, \beta \leq d-1$, we have $\dps \Sigma_{\alpha \beta}^\star \in L^2\left( \left(-\frac{1}{2}, \frac{1}{2}\right), H^2(\omega)\right)$ with, for all $1 \leq \rho,\tau \leq d-1$,
\begin{gather*}
\left\| \partial_{\rho\tau} \Sigma^\star_{\alpha\beta} \right\|_{L^2(\Omega)} \leq C \, {\cal N}^{\rm bend}(f,g,h_\pm),
\\
\left\| \partial_\rho \Sigma^\star_{\alpha\beta} \right\|_{L^2(\Omega)} \leq C \, |\omega|^{\frac{1}{d-1}} \, {\cal N}^{\rm bend}(f,g,h_\pm),
\\
\left\| \Sigma^\star_{\alpha\beta} \right\|_{L^2(\Omega)} \leq C \, |\omega|^{\frac{2}{d-1}} \, {\cal N}^{\rm bend}(f,g,h_\pm),
\end{gather*}
for some $C$ only depending on $c_-$ and $c_+$. 

\medskip
 
We next prove that $\dps \Sigma_{\alpha d}^\star \in L^2\left( \left( -\frac{1}{2}, \frac{1}{2} \right), H^1(\omega) \right)$ for all $1\leq \alpha \leq d-1$. Passing to the limit $\eps \to 0$ in~\eqref{eq:neweq1} and using~\eqref{eq:sigma_eps_alpha_d} and~\eqref{eq:sigma_eps_alpha_beta}, it holds that, for any $\dps w \in \left( L^2\left( \left( -\frac{1}{2}, \frac{1}{2} \right), H^1_0(\omega) \right) \right)^{d-1}$,
\begin{align*}
  & \left\langle \Sigma_{\alpha d}^\star, w_\alpha \right\rangle_{L^2\left( \left( -\frac{1}{2}, \frac{1}{2} \right), H^{-1}(\omega) \right),L^2\left( \left( -\frac{1}{2}, \frac{1}{2} \right), H^1_0(\omega) \right)}
  \\
  & =
  \int_\Omega f_\alpha \, v^w_\alpha + \int_{\Gamma_\pm} (h_\pm)_\alpha \, v^w_\alpha - \int_\Omega \Sigma_{\alpha \beta}^\star \, e_{\alpha \beta}(v^w)
  \\
  & = \int_\Omega f_\alpha \, v^w_\alpha + \int_{\Gamma_\pm} (h_\pm)_\alpha \, v^w_\alpha + \int_\Omega \partial_\beta \Sigma_{\alpha \beta}^\star \, v^w_\alpha \quad \text{[since $v^w$ vanishes on $\partial \omega \times (-1/2,1/2)$]}
  \\
  & = \int_\Omega w_\alpha(x',z) \left( \int_z^{1/2} \big( f_\alpha(x',\cdot) + \partial_\beta \Sigma_{\alpha\beta}^\star(x',\cdot) \big) \right) dx' \, dz + \int_\Omega w_\alpha(x',z) \, (h_+)_\alpha(x') \, dx' \, dz,
\end{align*}
where $v^w$ is defined in terms of $w$ by~\eqref{eq:def_vw_1} (and thus vanishes on $\Gamma_-$) and where we have used an integration by parts with respect to the $d$-th variable in the last line. Since $w$ is arbitrary, this implies that, for any $1 \leq \alpha \leq d-1$,
\begin{equation}\label{eq:Sigad}
  \Sigma_{\alpha d}^\star(x',z) = (h_+)_\alpha(x') + \int_z^{1/2} \big( f_\alpha(x',\cdot) + \partial_\beta \Sigma_{\alpha\beta}^\star(x',\cdot) \big) \quad \mbox{ in $\mathcal{D}'(\Omega)$}.
\end{equation}
Since $\dps \Sigma_{\alpha\beta}^\star \in L^2\left( \left(-\frac{1}{2}, \frac{1}{2}\right), H^2(\omega) \right)$ for all $1\leq \alpha, \beta \leq d-1$ and in view of the regularity~\eqref{eq:hyp_regul_f} and~\eqref{eq:hyp_regul_h} on $f$ and $h_+$, we infer from~\eqref{eq:Sigad} that $\dps \Sigma_{\alpha d}^\star \in L^2\left( \left(-\frac{1}{2}, \frac{1}{2}\right), H^1(\omega)\right)$ for all $1 \leq \alpha \leq d-1$. In addition, we have the following bounds: for all $1 \leq \rho \leq d-1$,
\begin{gather*}
  \left\| \partial_\rho \Sigma^\star_{\alpha d} \right\|_{L^2(\Omega)} \leq C \, {\cal N}^{\rm bend}(f,g,h_\pm),
  \\
\left\| \Sigma^\star_{\alpha d} \right\|_{L^2(\Omega)} \leq C \left( \| f_\alpha \|_{L^2(\Omega)} + \| (h_\pm)_\alpha \|_{L^2(\omega)} + |\omega|^{\frac{1}{d-1}} \, {\cal N}^{\rm bend}(f,g,h_\pm) \right),
\end{gather*}
for some $C$ only depending on $c_-$ and $c_+$.
 
\medskip

We next prove that $\Sigma_{dd}^\star \in L^2(\Omega)$. Let $\dps w_d \in L^2\left( \left( -\frac{1}{2}, \frac{1}{2} \right), H^2_0(\omega)\right)$ and consider $v^w$ defined by~\eqref{eq:vd}. Passing to the limit $\eps \to 0$ in~\eqref{eq:eq2} and using~\eqref{eq:sigma_eps_dd} and~\eqref{eq:sigma_eps_alpha_d} and the fact that $v^w$ vanishes on $\Gamma_-$, we obtain that
\begin{align*}
  & \left\langle \Sigma_{dd}^\star, w_d \right\rangle_{L^2\left( \left( -\frac{1}{2}, \frac{1}{2} \right), H^{-2}(\omega) \right),L^2\left( \left( -\frac{1}{2}, \frac{1}{2} \right), H^2_0(\omega) \right)}
  \\
  &=
  \int_\Omega f_d \, v^w_d + \int_{\Gamma_\pm} (h_\pm)_d \, v^w_d - \int_\Omega \Sigma_{\alpha d}^\star \, \partial_\alpha v^w_d
  \\
  & =
  \int_\Omega w_d(x',z) \left( \int_z^{1/2} \big( f_d(x',\cdot) + \partial_\alpha \Sigma_{\alpha d}^\star(x',\cdot) \big) \right) dx' \, dz + \int_\Omega w_d(x',z) \, (h_+)_d(x') \, dx' \, dz,
\end{align*}
which implies, since $w_d$ is arbitrary, that
$$
\Sigma_{dd}^\star(x',z) = (h_+)_d(x') + \int_z^{1/2} \big( f_d(x',\cdot) + \partial_\alpha \Sigma_{\alpha d}^\star(x',\cdot) \big) \quad \mbox{ in $\mathcal{D}'(\Omega)$}.
$$
In view of~\eqref{eq:hyp_regul_f} and~\eqref{eq:hyp_regul_h} and since $\dps \Sigma_{\alpha d}^\star \in L^2\left( \left(-\frac{1}{2}, \frac{1}{2}\right), H^1(\omega)\right)$ for all $1\leq \alpha \leq d-1$, we deduce that $\Sigma_{dd}^\star \in L^2(\Omega)$ with
$$
\left\| \Sigma^\star_{dd} \right\|_{L^2(\Omega)} \leq C \, {\cal N}^{\rm bend}(f,g,h_\pm),
$$
for some $C$ only depending on $c_-$ and $c_+$. This concludes the proof of Lemma~\ref{lem:lem00}.
\end{proof}

A consequence of Lemmas~\ref{lem:lemma1} and~\ref{lem:lem00} is the following result. 

\begin{lemma}\label{lem:lem2}
Under the same assumptions as in Lemma~\ref{lem:lem00}, it holds that
$$
\forall v \in V, \qquad \int_\Omega \left( \Sigma^\eps - \Sigma^\star \right) : e(v) = 0.
$$
\end{lemma}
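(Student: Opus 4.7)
The strategy is to evaluate the two integrals $\int_\Omega \Sigma^\eps : e(v)$ and $\int_\Omega \Sigma^\star : e(v)$ independently and show that each one equals the same expression, namely $\int_\Omega f \cdot v + \int_{\Gamma_+} h_+ \cdot v + \int_{\Gamma_-} h_- \cdot v$.

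First, for the term involving $\Sigma^\eps$, the plan is simply to recognize that, by definition of $\Sigma^\eps$ (see~\eqref{eq:def_grand_Sigma}) and of $\sigma^\eps$ (see~\eqref{eq:def_sigma_eps}), we have $\int_\Omega \Sigma^\eps : e(v) = \int_\Omega \sigma^\eps : e^\eps(v)$ for any $v \in V$, and then to use the variational formulation~\eqref{formvarelast} of the oscillatory problem. This yields directly that $\int_\Omega \Sigma^\eps : e(v) = \int_\Omega f \cdot v + \int_{\Gamma_\pm} h_\pm \cdot v$ for all $v \in V$.

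Second, for the term involving $\Sigma^\star$, the idea is to integrate by parts using the equation $-\div \Sigma^\star = f$ in $\Omega$ and the boundary conditions $\Sigma^\star \, n = h_\pm$ on $\Gamma_\pm$ furnished by Lemma~\ref{lem:lemma1_BC}. The key observation which makes this rigorous is that Lemma~\ref{lem:lem00} upgrades the regularity statements of Lemma~\ref{lem:lemma1}: under the extra regularity assumptions~\eqref{eq:hyp_regul_f}, \eqref{eq:hyp_regul_h} and~\eqref{eq:hyp_regul_g}, every component of $\Sigma^\star$ actually belongs to $L^2(\Omega)$ (and not merely to negative-index Sobolev spaces in the in-plane variable), and $\div \Sigma^\star = -f \in (L^2(\Omega))^d$ by~\eqref{eq:hyp_regul_f}. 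Hence $\Sigma^\star$ is a symmetric matrix-field in $H_{\rm div}(\Omega)$ in the classical sense, so its normal trace $\Sigma^\star \, n$ on $\partial\Omega$ is well defined (Appendix~\ref{app:Hdiv}) and the usual Green formula~\eqref{eq:IPP_elas} applies. For any $v \in V$, since $v$ vanishes on the lateral part $\partial\omega \times (-1/2,1/2)$ of $\partial\Omega$, this integration by parts gives
\begin{equation*}
\int_\Omega \Sigma^\star : e(v) = -\int_\Omega v \cdot \div \Sigma^\star + \int_{\Gamma_+} v \cdot (\Sigma^\star n) + \int_{\Gamma_-} v \cdot (\Sigma^\star n) = \int_\Omega f \cdot v + \int_{\Gamma_\pm} h_\pm \cdot v,
\end{equation*}
where we used~\eqref{eq:porto1} and~\eqref{eq:porto2} in the last step. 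Subtracting the two identities concludes the proof.

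There is no real difficulty beyond verifying that the various regularities are enough to make the integration by parts classical; the whole content of the lemma sits precisely in the preparatory work done in Lemmas~\ref{lem:lemma1}, \ref{lem:lemma1_BC} and~\ref{lem:lem00}. In particular, no density argument nor passage to the limit in $\eps$ is needed here: both sides of the claimed identity are evaluated explicitly and coincide with the same linear form on $V$.
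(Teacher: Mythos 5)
Your route is genuinely different from the paper's. The paper never integrates by parts against $\Sigma^\star$: it observes (via~\eqref{eq:bureau4} of Lemma~\ref{lem:lemma1}) that for smooth test functions $\phi_n \in \left( \mathcal{C}^\infty\left( \left[-\frac{1}{2}, \frac{1}{2}\right], \mathcal{D}(\omega)\right) \right)^d$ the quantity $\int_\Omega \Sigma^\eps : e(\phi_n)$ coincides with a duality pairing against $\Sigma^\star$ which is independent of $\eps$; Lemma~\ref{lem:lem00} turns that pairing into the $L^2$ integral $\int_\Omega \Sigma^\star : e(\phi_n)$; and a density argument in $V$ transfers the identity to arbitrary $v$. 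You instead evaluate both sides explicitly as the load functional $\int_\Omega f\cdot v + \int_{\Gamma_\pm} h_\pm\cdot v$. Your first half is fine and is exactly~\eqref{eq:bureau3}. Your second half is where the content lies, and it has a gap.

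The gap is the identification of the boundary term for a \emph{general} $v\in V$. With the $L^2$ regularity of Lemma~\ref{lem:lem00} and $\div\Sigma^\star=-f\in (L^2(\Omega))^d$, each row of $\Sigma^\star$ does lie in $H_{\rm div}(\Omega)$, so the Stokes formula of Appendix~\ref{app:Hdiv} gives, for every $v\in V$,
\begin{equation*}
\int_\Omega \Sigma^\star : e(v) = \int_\Omega f\cdot v + \sum_{i=1}^d \langle \gamma_n(\Sigma^\star_{i\cdot}), v_i\rangle_{H^{-1/2}(\partial\Omega),\,H^{1/2}(\partial\Omega)},
\end{equation*}
with a pairing over the \emph{whole} boundary $\partial\Omega$. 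But~\eqref{eq:porto2} does not hand you $\langle \gamma_n(\Sigma^\star_{i\cdot}), v_i\rangle_{\partial\Omega} = \int_{\Gamma_\pm}h_\pm\cdot v$ for such $v$: Lemma~\ref{lem:lemma1_BC} \emph{defines} the trace $\Sigma^\star n$ on $\Gamma_\pm$ by a duality relation tested only against $v \in \left( \mathcal{C}^\infty\left( \left[-\frac{1}{2}, \frac{1}{2}\right], \mathcal{D}(\omega)\right) \right)^d$, i.e. functions supported away from the lateral boundary, and moreover that duality relation uses the pairing version of $\langle\Sigma^\star,e(v)\rangle_\Omega$ rather than the $L^2$ integral. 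Splitting the $H^{-1/2}(\partial\Omega)$ pairing into contributions from $\Gamma_+$, $\Gamma_-$ and the lateral face for a $v\in V$ that merely vanishes on the lateral face is not automatic (the trace of $v$ on $\Gamma_\pm$ need not lie in the Lions--Magenes space on which such a localization is licit). The clean way to close this is to approximate $v$ by functions in $\left( \mathcal{C}^\infty\left( \left[-\frac{1}{2}, \frac{1}{2}\right], \mathcal{D}(\omega)\right) \right)^d$ and pass to the limit using the continuity of both sides in the $H^1(\Omega)$ norm --- that is, precisely the density argument you assert is not needed. Once you accept that density step, your argument goes through; but as written, the claim that ``no density argument is needed'' is exactly where the proof is incomplete.
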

We emphasize that this result holds without taking the limit $\eps \to 0$. This is critical, since we will use that equality later on for functions $v$ that depend on $\eps$. 

\begin{proof}
Let $\eps >0$ and $v\in V$. Since the space $\dps \left( \mathcal{C}^\infty\left( \left[-\frac{1}{2}, \frac{1}{2}\right], \mathcal{D}(\omega) \right) \right)^d$ is dense in $V$, there exists a regularization of $v$, namely $\dps \phi_n \in \left( \mathcal{C}^\infty \left( \left[-\frac{1}{2}, \frac{1}{2}\right], \mathcal{D}(\omega)\right) \right)^d$ such that $\| e^\eps(v-\phi_n) \|_{L^2(\Omega)} \leq 1/n$ for any $n \in \N^\star$.

Then, using Lemma~\ref{lem:lemma1} (and more precisely~\eqref{eq:bureau4}), we compute that
\begin{align*}
  \int_\Omega \Sigma^\eps : e(v) =
  & \int_\Omega \Sigma^\eps : e(\phi_n) + \int_\Omega \Sigma^\eps : e(v-\phi_n)
  \\
  =
  & \sum_{1\leq \alpha, \beta \leq d-1} \langle \Sigma_{\alpha\beta}^\star, e_{\alpha\beta}(\phi_n) \rangle_{L^2(\Omega)}
  \\
  & + 2 \sum_{1\leq \alpha\leq d-1} \langle \Sigma_{\alpha d}^\star, e_{\alpha d}(\phi_n) \rangle_{L^2\left(\left(-\frac{1}{2}, \frac{1}{2}\right), H^{-1}(\omega)\right), L^2\left(\left(-\frac{1}{2}, \frac{1}{2}\right), H^1_0(\omega)\right)}
  \\
  & + \langle \Sigma_{dd}^\star, e_{dd}(\phi_n) \rangle_{L^2\left(\left(-\frac{1}{2}, \frac{1}{2}\right), H^{-2}(\omega)\right), L^2\left(\left(-\frac{1}{2}, \frac{1}{2}\right), H^2_0(\omega)\right)} + \int_\Omega \Sigma^\eps : e(v-\phi_n).
\end{align*}
Using now Lemma~\ref{lem:lem00} (and hence the fact that all components of $\Sigma^\star$ belong to $L^2(\Omega)$), we get
\begin{align}
  \int_\Omega \Sigma^\eps : e(v)
  &=
  \int_\Omega \Sigma^\star : e(\phi_n) + \int_\Omega \Sigma^\eps : e(v-\phi_n)
  \nonumber
  \\
  &=
  \int_\Omega \Sigma^\star : e(v) + \int_\Omega \Sigma^\star : e(\phi_n - v) + \int_\Omega \Sigma^\eps : e(v-\phi_n).
  \label{eq:bureau5}
\end{align}
Moreover, it holds that 
$$
\left| \int_\Omega \Sigma^\eps : e(v-\phi_n) \right| = \left| \int_\Omega \sigma^\eps : e^\eps(v-\phi_n) \right| \leq \frac{1}{n} \, \|\sigma^\eps\|_{L^2(\Omega)}
$$
and likewise
$$
\left| \int_\Omega \Sigma^\star : e(\phi_n-v) \right| \leq \| \Sigma^\star \|_{L^2(\Omega)} \, \|e(v-\phi_n) \|_{L^2(\Omega)} \leq \| \Sigma^\star \|_{L^2(\Omega)} \|e^\eps(v - \phi_n) \|_{L^2(\Omega)} \leq \frac{1}{n} \, \| \Sigma^\star \|_{L^2(\Omega)}.
$$
Thus, letting $n$ go to $+\infty$ in~\eqref{eq:bureau5}, we obtain that $\dps \int_\Omega \Sigma^\eps : e(v) = \int_\Omega \Sigma^\star : e(v)$. This concludes the proof of Lemma~\ref{lem:lem2}.
\end{proof}

\subsubsection{Main result (bending case)}

We are now in position to state and prove our main result in the bending case. As in the membrane case (see~\eqref{eq:defnorm_vec}), for any $u \in (H^1(\Omega))^d$, we define the norm
$$
\|u\|_{H^1_\eps(\Omega)} := \sqrt{ \frac{\| u \|_{L^2_w(\Omega)}^2}{\left[ \max\left(|\omega|^{\frac{1}{d-1}}, \eps^2 \, |\omega|^{-\frac{1}{d-1}}\right) \right]^2} + \| e^\eps(u) \|_{L^2(\Omega)}^2 },
$$
where the weighted $L^2_w(\Omega)$ norm is defined by~\eqref{eq:def_L2_w}. This definition in particular implies that, on the space $V$, the norms $\| e^\eps(u) \|_{L^2(\Omega)}$ and $\| u \|_{H^1_\eps(\Omega)}$ are equivalent, with equivalence constants which are independent of $\eps$ and $\omega$ (see~\eqref{eq:poinK_corro_vec}).

\begin{theorem} \label{thconvforte3_a}
Under Assumptions~\eqref{eq:inde_eps_vec} and~\eqref{eq:assump_g}, consider the solution $u^\eps$ to~\eqref{formvarelast} and its homogenized limit $u^\star$, solution to~\eqref{formvarelasthomog}. Assume that $d=2$, that $A$ satisfies the symmetries~\eqref{hyp:symA}, that we are in the bending case~\eqref{eq:ass_bending} and that $\omega$ satisfies~\eqref{eq:shape_regul}. Assume furthermore that the regularity assumptions~\eqref{eq:hyp_regul_f}, \eqref{eq:hyp_regul_g} and~\eqref{eq:hyp_regul_h} on $f$, $g$ and $h_\pm$ are satisfied. For any $x = (x',x_d) \in \Omega$ and any $1 \leq \gamma \leq d-1$, let 
\begin{align*}
u_\gamma^{\eps,1}(x)
& :=
u_\gamma^\star(x) + \eps \, W_\gamma^{\alpha \beta} \left(\frac{x'}{\eps},x_d \right) \partial_{\alpha \beta}(u^\star_d + g_d)(x')
\\
&=
-x_d \, \partial_\gamma u_d^\star(x') + \eps \, W_\gamma^{\alpha \beta} \left(\frac{x'}{\eps},x_d \right) \partial_{\alpha \beta}(u^\star_d + g_d)(x')
\end{align*}
and 
$$
u_d^{\eps,1}(x) := u_d^\star(x') + \eps^2 \, W_d^{\alpha \beta} \left(\frac{x'}{\eps},x_d \right) \partial_{\alpha \beta}(u^\star_d + g_d)(x'),
$$
where, for any $1 \leq \alpha,\beta \leq d-1$, $W^{\alpha \beta}$ is the corrector defined by~\eqref{el-prcor2}.

We also assume that $\nabla^2 (u_d^\star+g_d)$ belongs to $\left( W^{2,\infty}(\omega) \right)^{d \times d}$ and that, for any $1 \leq \alpha,\beta \leq d-1$, we have $\dps W^{\alpha \beta} \in \left[ W^{1,\infty}\left(\mathbb{R}^{d-1} \times \left( -\demi, \demi \right) \right) \right]^d$. Then, there exists a constant $C>0$ independent of $\eps$, $\omega$ (but depending on the constant $\eta$ of~\eqref{eq:shape_regul}), $u^\star$, $f$, $g$ and $h_\pm$ such that
\begin{multline} \label{eq:theatre13}
  \| u^\eps - u^{\eps,1} \|_{H^1_\eps(\Omega)} \leq C \, \sqrt{\eps} \, \Big( |\omega|^{\frac{d-2}{2(d-1)}} \, \| \nabla^2 (u_d^\star+g_d) \|_{L^\infty(\omega)} + \sqrt{\eps} \, |\omega|^{1/2} \, \| \nabla^3 (u_d^\star+g_d) \|_{W^{1,\infty}(\omega)} \\ + \sqrt{\eps} \ \overline{{\cal N}}^{\rm bend}_\Omega(f,g,h_\pm) + \eps^{3/2} \, {\cal N}^{\rm bend}(f,g,h_\pm) \Big),
\end{multline}
where we recall that $\| \cdot \|_{H^1_\eps(\Omega)}$ is defined by~\eqref{eq:defnorm_vec}, and where $\overline{{\cal N}}^{\rm bend}_\Omega(f,g,h_\pm)$ and ${\cal N}^{\rm bend}(f,g,h_\pm)$ are defined by~\eqref{eq:def_norme_N_bend_full} and~\eqref{eq:def_norme_N_bend}.
\end{theorem}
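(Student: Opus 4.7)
Setting $h := u^\star_d + g_d$ for brevity, the plan is to mimic the overall architecture of the proof of Theorem~\ref{thconvforte2} (boundary-layer correction, coercivity splitting, bulk estimate, conclusion) but to replace the crucial step where one plugs $\m((\overline{v}^\eps)')$ into the homogenized variational formulation, since here no natural $H^2_0(\omega)$ test function can be constructed from $\overline{v}^\eps \in V$. That missing ingredient will be supplied by the identity of Lemma~\ref{lem:lem2}, together with the structural formula~\eqref{eq:Sigmaformula} for $\Sigma^\star_{\alpha\beta}$ and the $L^2$ regularity of $\Sigma^\star_{\alpha d},\Sigma^\star_{dd}$ proved in Lemma~\ref{lem:lem00}. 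First, using a smooth cutoff $\tau_\eps$ as in Step~1 of the proof of Theorem~\ref{thconvforte2}, I would define $v^{\eps,1}\in V$ coinciding with $u^{\eps,1}$ on $\Omega_\eps$ and bound $\|v^{\eps,1}-u^{\eps,1}\|_{H^1_\eps(\Omega)}$; since the leading oscillatory term of $u^{\eps,1}$ is driven by $\nabla^2 h$, the bound is of the form $C\sqrt{\eps}\bigl(|\omega|^{\frac{d-2}{2(d-1)}}\|\nabla^2 h\|_{L^\infty(\omega)}+\sqrt{\eps}\,|\omega|^{1/2}\|\nabla^3 h\|_{L^\infty(\omega)}\bigr)$, exactly as in the membrane case modulo the shift $e(u^\star+g) \mapsto \nabla^2 h$.

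Setting $\overline{v}^\eps := u^\eps - v^{\eps,1}\in V$ and using coercivity of $A$, I write
\[
c_-\|e^\eps(\overline{v}^\eps)\|_{L^2(\Omega)}^2 \leq \int_\Omega A^\eps e^\eps(u^\eps-u^{\eps,1}):e^\eps(\overline{v}^\eps) + \int_\Omega A^\eps e^\eps(u^{\eps,1}-v^{\eps,1}):e^\eps(\overline{v}^\eps),
\]
the second term being handled by the boundary-layer estimate above. For the first term, using $\sigma^\eps = A^\eps e^\eps(u^\eps+g)$ and the definition~\eqref{eq:def_grand_Sigma} of $\Sigma^\eps$, and applying Lemma~\ref{lem:lem2} (this is where the bending case departs from the membrane one), I obtain
\[
\int_\Omega A^\eps e^\eps(u^\eps-u^{\eps,1}):e^\eps(\overline{v}^\eps) = \int_\Omega \Sigma^\star : e(\overline{v}^\eps) - \int_\Omega A^\eps e^\eps(u^{\eps,1}+g):e^\eps(\overline{v}^\eps),
\]
and the left-hand side is re-expanded component by component using $e_{\alpha d}(\overline{v}^\eps)=\eps\,e^\eps_{\alpha d}(\overline{v}^\eps)$ and $e_{dd}(\overline{v}^\eps)=\eps^2\,e^\eps_{dd}(\overline{v}^\eps)$. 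A direct computation of $e^\eps(u^{\eps,1}+g)$ yields the expansion
\[
e^\eps(u^{\eps,1}+g)(x) = \bigl[e(W^{\rho\sigma}) - x_d\, e_\rho\otimes^s e_\sigma\bigr]\!\left(\tfrac{x'}{\eps},x_d\right)\partial_{\rho\sigma}h(x') + \eps R^\eps(x),
\]
where $R^\eps$ is controlled in $L^2(\Omega)$ by $C|\omega|^{1/2}\|\nabla^3 h\|_{L^\infty(\omega)}$, and substitution of~\eqref{eq:Sigmaformula} recasts the bulk error in the form
\[
\int_\Omega Z_{\rho\sigma}\!\left(\tfrac{x'}{\eps},x_d\right)\partial_{\rho\sigma}h(x') : e^\eps(\overline{v}^\eps)(x)\,dx + 2\eps\!\int_\Omega \Sigma^\star_{\alpha d}\,e^\eps_{\alpha d}(\overline{v}^\eps) + \eps^2\!\int_\Omega \Sigma^\star_{dd}\,e^\eps_{dd}(\overline{v}^\eps) - \eps\!\int_\Omega A^\eps R^\eps : e^\eps(\overline{v}^\eps),
\]
where $Z_{\rho\sigma}(x) := S^\star_{\rho\sigma}(x_d) - A(x)\bigl(e(W^{\rho\sigma})(x) - x_d\,e_\rho\otimes^s e_\sigma\bigr)$, with $S^\star_{\rho\sigma}(x_d)$ being the $d\times d$ matrix whose $(\alpha,\beta)$ entries are $S^\star_{\alpha\beta\rho\sigma}(x_d)$ from~\eqref{eq:defSstar_exp} and whose $(i,d)$ entries vanish.

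The main obstacle is then to verify that this matrix $Z_{\rho\sigma}$ fulfils all six hypotheses of Lemma~\ref{lemma:minZ2}; the non-trivial ones are (ii), (iii), (iv) and~(v). Property~(ii) in the $(\alpha,\beta)$ slot follows directly from the definition~\eqref{eq:defSstar_exp} of $S^\star$, while in the $(i,d)$ slot it follows from the boundary condition in~\eqref{el-prcor2_edp} combined with the fact (established by taking test functions depending only on $x_d$ in~\eqref{el-prcor2}) that $\int_Y\bigl[A(e(W^{\rho\sigma})-x_d\,e_\rho\otimes e_\sigma)\bigr]_{id}(x',\cdot)\,dx' = 0$; the same observation handles~(iii) and~(v). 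Property~(iii) in the in-plane slot reduces to the symmetry identity $\int_\Y x_d[A(e(W^{\rho\sigma})-x_d\,e_\rho\otimes e_\sigma)]_{\alpha\beta} = \int_\Y x_d[A(e(W^{\alpha\beta})-x_d\,e_\alpha\otimes e_\beta)]_{\rho\sigma}$, which expresses the symmetry $(K^\star_{22})_{\alpha\beta\rho\sigma}=(K^\star_{22})_{\rho\sigma\alpha\beta}$ inherited from that of $A$, combined with the defining relation~\eqref{eq:defSstar_imp} applied to $\xi = \Id$. Property~(iv) follows from the corrector equation~\eqref{el-prcor2_edp} and the fact that $S^\star_{\rho\sigma}(x_d)$ depends only on $x_d$ and has vanishing last column. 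Once these verifications are in place, Lemma~\ref{lemma:minZ2} (applicable because $d=2$) with $\varphi = \partial_{\rho\sigma}h\in W^{2,\infty}(\omega)$ bounds the first integral by $C\,\eps\,|\omega|^{1/2}\,\|\nabla^2 h\|_{W^{2,\infty}(\omega)}\,\|e^\eps(\overline{v}^\eps)\|_{L^2(\Omega)}$; the two subsequent integrals are bounded by Cauchy--Schwarz together with the $L^2$ bounds~\eqref{eq:borne_L2_Sigma_star_alphad}--\eqref{eq:borne_L2_Sigma_star_dd} of Lemma~\ref{lem:lem00}, yielding contributions of order $\eps\,\overline{{\cal N}}^{\rm bend}_\Omega(f,g,h_\pm)\,\|e^\eps(\overline{v}^\eps)\|_{L^2(\Omega)}$ and $\eps^2\,{\cal N}^{\rm bend}(f,g,h_\pm)\,\|e^\eps(\overline{v}^\eps)\|_{L^2(\Omega)}$ respectively; the remainder $R^\eps$ contributes $C\,\eps\,|\omega|^{1/2}\|\nabla^3 h\|_{L^\infty(\omega)}\,\|e^\eps(\overline{v}^\eps)\|_{L^2(\Omega)}$. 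Dividing by $\|e^\eps(\overline{v}^\eps)\|_{L^2(\Omega)}$, using the norm equivalence~\eqref{eq:poinK_corro_vec}, and combining with the Step~1 estimate on $\|u^{\eps,1}-v^{\eps,1}\|_{H^1_\eps(\Omega)}$ yields~\eqref{eq:theatre13}.
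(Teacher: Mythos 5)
Your proposal is correct and follows essentially the same route as the paper's proof: the same boundary-layer correction, the same use of Lemma~\ref{lem:lem2} together with~\eqref{eq:Sigmaformula} to replace the unavailable $H^2_0(\omega)$ test function, the same matrix $Z_{\rho\sigma}$ fed into Lemma~\ref{lemma:minZ2}, and the same treatment of the residual $\Sigma^\star_{\alpha d}$, $\Sigma^\star_{dd}$ terms via Lemma~\ref{lem:lem00}. The only cosmetic differences are the regrouping of the three remainder terms and your detour through the symmetry of $K^\star_{22}$ to check Assumption~(iii) for the in-plane entries, which follows directly (pointwise in $x_d$) from~\eqref{eq:defSstar_exp}.
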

Up to lower order terms, we have 
$$
e^\eps(u^{\eps,1}+g)(x) \approx \left[ e(W^{\alpha \beta})\left(\frac{x'}{\eps},x_d\right) - x_d \, e_\alpha \otimes e_\beta \right] \partial_{\alpha \beta}(u^\star_d+g_d)(x').
$$

Similarly to the above results, the restriction to the two-dimensional case stems from the fact that we use Lemma~\ref{lemma:minZ2} (and its consequences, in particular Lemma~\ref{lem:lem2}) in the proof. Should Lemma~\ref{lemma:minZ2} hold in higher-dimensional settings, so would Theorem~\ref{thconvforte3_a}.

\begin{remark} \label{rem:regul_ustar_bend}
  As for the diffusion case (see Remark~\ref{rem:regul_ustar_diff}) and the elasticity membrane case (see Remark~\ref{rem:regul_ustar_memb}), we wish to point out that the assumption $\nabla^2 (u^\star_d + g_d) \in (W^{2,\infty}(\omega))^{d \times d}$ is a standard assumption when proving convergence rates of two-scale expansions (see, e.g.,~\cite[p.~28]{jikov}). Note that, in view of~\eqref{formvarelasthomog}, which can be recast as~\eqref{formvar:pb_bending} in the bending case, this assumption implies that $\dps \m(f_d) + (h_+)_d + (h_-)_d + \div' \left[ \m(x_d \, f') + \frac{1}{2} (h'_+-h'_-) \right]$ belongs to $L^\infty(\omega)$.
\end{remark}

\begin{proof} The proof falls in six steps. In the first step, we correct for the boundary mismatch between $u^\eps$ and its approximation $u^{\eps,1}$. In Step~2, we decompose the error in the bulk of the domain between $u^{\eps,1}$ and $u^\eps$ in three contributions, which are each estimated in Steps~3--5. We collect all the estimates in Step~6 to conclude the proof.

\medskip

\noindent
{\bf Step~1}: Let $\tau_\eps \in \mathcal{D}(\omega)$ such that $0 \leq \tau_\eps \leq 1$ in $\omega$ and such that $\tau_\eps(x') = 1$ for any $x' \in \omega$ such that $\text{dist}(x',\partial \omega) \geq \eps$. Since $\omega$ is smooth, we can choose $\tau_\eps$ such that $\eps \|\nabla \tau_\eps \|_{L^\infty(\omega)} \leq C$ for some $C>0$ independent of $\omega$ and $\eps$. We define $\omega_\eps := \{ x' \in \omega \text{ such that } \text{dist}(x',\partial \omega) \geq \eps \}$ and $\dps \Omega_\eps := \omega_\eps \times \left( -\frac{1}{2},\frac{1}{2} \right)$. Note that $|\Omega \setminus \Omega_\eps| \leq C \, \eps \, |\omega|^{\frac{d-2}{d-1}}$. 

We introduce the function $v^{\eps,1}$ defined for $x = (x',x_d) \in \Omega$ by
\begin{align*}
  \forall 1 \leq \gamma \leq d-1, \qquad v_\gamma^{\eps,1}(x) := u_\gamma^\star(x) + \eps \, \tau_\eps \, W^{\alpha \beta}_\gamma \left( \frac{x'}{\eps},x_d \right) \partial_{\alpha \beta}(u^\star_d + g_d)(x'),
  \\
  v_d^{\eps,1}(x) := u_d^\star(x') + \eps^2 \, \tau_\eps \, W^{\alpha \beta}_d \left( \frac{x'}{\eps}, x_d \right) \partial_{\alpha \beta}(u^\star_d + g_d)(x').
\end{align*}
By definition of $\tau_\eps$, we have $v^{\eps,1} \in V$ and $v^{\eps,1} = u^{\eps,1}$ in $\Omega_\eps$. In this first step of the proof, we bound $\| u^{\eps,1} - v^{\eps,1} \|_{H^1_\eps(\Omega)}$. We compute that 
\begin{gather*}
u^{\eps,1}_\gamma(x) - v^{\eps,1}_\gamma(x) = \eps \, (1-\tau_\eps(x')) \, W_\gamma^{\alpha \beta} \left( \frac{x'}{\eps},x_d \right) \partial_{\alpha \beta}(u^\star_d+g_d)(x'),
\\
u^{\eps,1}_d(x) - v^{\eps,1}_d(x) = \eps^2 \, (1-\tau_\eps(x')) \, W_d^{\alpha \beta} \left( \frac{x'}{\eps},x_d \right) \partial_{\alpha \beta}(u^\star_d+g_d)(x').
\end{gather*}
We thus get that
\begin{align}
  \sum_{\gamma=1}^{d-1} \| v^{\eps,1}_\gamma - u^{\eps,1}_\gamma \|^2_{L^2(\Omega)}
  &\leq
  C \, \eps^2 \, \| 1-\tau_\eps \|^2_{L^2(\Omega \setminus \Omega_\eps)} \sup_{1 \leq \alpha,\beta \leq d-1} \| W^{\alpha \beta} \|_{L^\infty}^2 \, \| \nabla^2 (u^\star_d+g_d) \|^2_{L^\infty(\omega)}
  \nonumber
  \\
  &\leq
  C \, \eps^2 \, |\Omega \setminus \Omega_\eps| \, \| \nabla^2 (u^\star_d+g_d) \|^2_{L^\infty(\omega)}
  \nonumber
  \\
  &\leq
  C \, \eps^3 \, |\omega|^{\frac{d-2}{d-1}} \, \| \nabla^2 (u^\star_d+g_d) \|^2_{L^\infty(\omega)}
  \nonumber
  \\
  &\leq
  C \, \eps \, |\omega|^{\frac{d-2}{d-1}} \, \left[ \max\left(|\omega|^{\frac{1}{d-1}}, \eps^2 \, |\omega|^{-\frac{1}{d-1}}\right) \right]^2 \, \| \nabla^2 (u^\star_d+g_d) \|^2_{L^\infty(\omega)},
  \label{diffapp3_L2_el_bend_gamma}
\end{align}
where the last estimate stems from the fact that $\dps \eps \leq \max\left(|\omega|^{\frac{1}{d-1}}, \eps^2 \, |\omega|^{-\frac{1}{d-1}}\right)$. We also have that
\begin{align}
  |\omega|^{\frac{-2}{d-1}} \, \| v^{\eps,1}_d - u^{\eps,1}_d \|^2_{L^2(\Omega)}
  &\leq
  C \, \eps^5 \, |\omega|^{\frac{-2}{d-1}} \, |\omega|^{\frac{d-2}{d-1}} \, \| \nabla^2 (u^\star_d+g_d) \|^2_{L^\infty(\omega)}
  \nonumber
  \\
  &\leq
  C \, \eps \, |\omega|^{\frac{d-2}{d-1}} \, \left[ \max\left(|\omega|^{\frac{1}{d-1}}, \eps^2 \, |\omega|^{-\frac{1}{d-1}}\right) \right]^2 \, \| \nabla^2 (u^\star_d+g_d) \|^2_{L^\infty(\omega)},
  \label{diffapp3_L2_el_bend_d}
\end{align}
where the last estimate stems from the fact that $\dps \eps^2 \, |\omega|^{\frac{-1}{d-1}} \leq \max\left(|\omega|^{\frac{1}{d-1}}, \eps^2 \, |\omega|^{-\frac{1}{d-1}}\right)$. Collecting~\eqref{diffapp3_L2_el_bend_gamma} and~\eqref{diffapp3_L2_el_bend_d}, we thus deduce that
\begin{equation} \label{diffapp3_L2_el_bend}
  \| v^{\eps,1} - u^{\eps,1} \|^2_{L^2_w(\Omega)} \leq C \, \eps \, |\omega|^{\frac{d-2}{d-1}} \, \left[ \max\left(|\omega|^{\frac{1}{d-1}}, \eps^2 \, |\omega|^{-\frac{1}{d-1}}\right) \right]^2 \, \| \nabla^2 (u^\star_d+g_d) \|^2_{L^\infty(\omega)}.
\end{equation}
We next compute that $e^\eps(u^{\eps,1} - v^{\eps,1}) = E_0^\eps - E_1^\eps + \eps \, E_2^\eps$, where
\begin{align*}
  E_0^\eps(x) &= (1-\tau_\eps(x')) \, e(W^{\alpha\beta}) \left(\frac{x'}{\eps}, x_d \right) \partial_{\alpha \beta}(u^\star_d+g_d)(x'),
  \\
  E_1^\eps(x) &= \eps \, \nabla \tau_\eps(x') \otimes^s W^{\alpha\beta}\left(\frac{x'}{\eps}, x_d \right) \partial_{\alpha \beta}(u^\star_d+g_d)(x'),
  \\
  E_2^\eps(x) &= (1-\tau_\eps(x')) \, W^{\alpha\beta}\left(\frac{x'}{\eps}, x_d \right) \otimes^s \nabla \left(\partial_{\alpha \beta}(u^\star_d+g_d) \right)(x'),
\end{align*}
where we recall that $\otimes^s$ is the symmetrized tensor product: $p \otimes^s q = (p \otimes q + q \otimes p)/2$. Using the fact that $W^{\alpha\beta} \in (W^{1,\infty})^d$, that $u_d^\star+g_d$ belongs to $W^{3,\infty}(\omega)$, that $0 \leq \tau_\eps \leq 1$ and $\eps \, \|\nabla \tau_\eps\|_{L^\infty} \leq C$, we obtain that
\begin{align*}
  \| E_0^\eps \|^2_{L^2(\Omega)} & \leq C \, |\Omega \setminus \Omega_\eps| \, \sup_{1\leq \alpha, \beta \leq d-1} \| \nabla W^{\alpha \beta} \|_{L^\infty}^2 \, \| \nabla^2 (u^\star_d+g_d) \|^2_{L^\infty(\omega)}
  \\
  & \leq C \, \eps \, |\omega|^{\frac{d-2}{d-1}} \, \| \nabla^2 (u^\star_d+g_d) \|^2_{L^\infty(\omega)},
  \\
  \| E_1^\eps \|^2_{L^2(\Omega)} & \leq C \, |\Omega \setminus \Omega_\eps| \, \sup_{1\leq \alpha, \beta\leq d-1} \| W^{\alpha\beta} \|_{L^\infty}^2 \, \| \partial_{\alpha \beta}(u^\star_d+g_d) \|_{L^\infty(\omega)}^2
  \\
  & \leq C \, \eps \, |\omega|^{\frac{d-1}{d-2}} \, \| \nabla^2 (u^\star_d+g_d) \|_{L^\infty(\omega)}^2,
  \\
  \| E_2^\eps \|_{L^2(\Omega)}^2 & \leq C \, \sup_{1\leq \alpha, \beta \leq d-1} \| W^{\alpha\beta} \|_{L^\infty}^2 \, \| \nabla^3 (u^\star_d+g_d) \|_{L^2(\Omega)}^2
  \\
  & \leq C \, |\omega| \, \| \nabla^3 (u^\star_d+g_d) \|_{L^\infty(\omega)}^2.
\end{align*}
This implies that
\begin{equation}\label{eq:rel00_a}
 \| e^\eps(u^{\eps,1} - v^{\eps,1}) \|_{L^2(\Omega)}^2 \leq C \, \eps \left( \eps \, |\omega| \, \| \nabla^3 (u^\star_d+g_d) \|_{L^\infty(\omega)}^2 + |\omega|^{\frac{d-2}{d-1}} \, \| \nabla^2 (u^\star_d+g_d) \|^2_{L^\infty(\omega)} \right).
\end{equation}
Collecting~\eqref{diffapp3_L2_el_bend} and~\eqref{eq:rel00_a}, we deduce
\begin{equation} \label{diffapp3_H1_el_bend}
  \| v^{\eps,1} - u^{\eps,1} \|^2_{H^1_\eps(\Omega)} \leq C \, \eps \left( \eps \, |\omega| \, \| \nabla^3 (u^\star_d+g_d) \|_{L^\infty(\omega)}^2 + |\omega|^{\frac{d-2}{d-1}} \, \| \nabla^2 (u^\star_d+g_d) \|^2_{L^\infty(\omega)} \right),
\end{equation}
where we recall that the norm $\| \cdot \|_{H^1_\eps(\Omega)}$ is defined by~\eqref{eq:defnorm_vec}.

\medskip

\noindent
{\bf Step~2}: We now bound $\overline{v}^\eps := u^\eps - v^{\eps,1}$. Using the coercivity of $A^\eps$, we have
\begin{equation} \label{eq:tgv1_a}
c_- \| e^\eps(\overline{v}^\eps) \|_{L^2(\Omega)}^2 \leq \int_\Omega A^\eps e^\eps(\overline{v}^\eps) : e^\eps(\overline{v}^\eps) = T_1 ^\eps + T_2^\eps,
\end{equation}
where 
$$
T_1^\eps := \int_\Omega A^\eps e^\eps(u^\eps - u^{\eps,1}) : e^\eps(\overline{v}^\eps) \qquad \mbox{ and } \qquad T_2^\eps := \int_\Omega A^\eps e^\eps(u^{\eps,1} - v^{\eps,1}) : e^\eps(\overline{v}^\eps).
$$
The term $T_2^\eps$ can easily be bounded using~\eqref{eq:rel00_a}:
\begin{equation} \label{eq:tgv3_a}
| T_2^\eps | \leq C \sqrt{\eps} \left( \sqrt{\eps} \, |\omega|^{1/2} \, \| \nabla^3 (u^\star_d+g_d) \|_{L^\infty(\omega)} + |\omega|^{\frac{d-2}{2(d-1)}} \, \| \nabla^2 (u^\star_d+g_d) \|_{L^\infty(\omega)} \right) \| e^\eps(\overline{v}^\eps) \|_{L^2(\Omega)}.
\end{equation}
We now turn to bounding $T_1^\eps$. We introduce the matrix-valued field $\widetilde{\Sigma}^\star := \left( \widetilde{\Sigma}^\star_{ij} \right)_{1\leq i,j \leq d}$ defined by
$$
\widetilde{\Sigma}^\star_{\alpha\beta} = \Sigma^\star_{\alpha\beta} \ \text{ for any $1 \leq \alpha, \beta \leq d-1$} \qquad \mbox{ and } \qquad \widetilde{\Sigma}^\star_{id} = \widetilde{\Sigma}^\star_{di} = 0 \ \text{ for any $1 \leq i \leq d$},
$$
where we recall that $\Sigma^\star_{\alpha\beta}$ is the weak limit of $\sigma^\eps_{\alpha\beta}$, and is given by~\eqref{eq:Sigmaformula} in terms of $u^\star$ and $S^\star$. We also recall that the entries $\dps \{ S^\star_{\alpha\beta\gamma\delta} \}_{1 \leq \alpha,\beta,\gamma,\delta \leq d-1}$ of the tensor $S^\star$ are given by~\eqref{eq:defSstar_exp}. To simplify the notations in the proof below, we set $S^\star_{id\gamma\delta} = S^\star_{dj\gamma\delta} = 0$ for any $1 \leq i,j \leq d$ and any $1 \leq \gamma,\delta \leq d-1$, and we thus have, by definition of $\widetilde{\Sigma}^\star$ and in view of~\eqref{eq:Sigmaformula}, that
\begin{equation}\label{eq:Sigmaformula_general}
\forall 1\leq i,j \leq d, \qquad \widetilde{\Sigma}^\star_{ij}(x',x_d) = S^\star_{ij\gamma\delta}(x_d) \, \partial_{\gamma\delta}(u^\star_d + g_d)(x').
\end{equation}
We write
\begin{equation} \label{eq:tgv4_a}
T_1^\eps = \int_\Omega A^\eps e^\eps(u^\eps + g) : e^\eps(\overline{v}^\eps) - \int_\Omega A^\eps e^\eps(u^{\eps,1} + g) : e^\eps(\overline{v}^\eps) = R_1^\eps - R_2^\eps + R_3^\eps
\end{equation}
with
\begin{align*}
  R_1^\eps & := \int_\Omega A^\eps e^\eps(u^\eps + g) : e^\eps(\overline{v}^\eps) - \int_\Omega \widetilde{\Sigma}^\star : e^\eps(\overline{v}^\eps),
  \\ 
  R_2^\eps & := \int_\Omega A^\eps e^\eps(u^{\eps,1} + g) : e^\eps(\overline{v}^\eps) - \int_\Omega A^\eps \left( e(W^{\alpha\beta})_\eps - x_d \, e_\alpha \otimes e_\beta \right) \partial_{\alpha\beta}(u^\star_d+g_d) : e^\eps(\overline{v}^\eps),
  \\ 
  R_3^\eps & := \int_\Omega \Big[ \widetilde{\Sigma}^\star - A^\eps \left( e(W^{\alpha\beta})_\eps - x_d \, e_\alpha \otimes e_\beta \right) \partial_{\alpha \beta}(u_d^\star+g_d) \Big] : e^\eps(\overline{v}^\eps),
\end{align*}
where we have used the short-hand notation $\dps e(W^{\alpha \beta})_\eps(x) = e(W^{\alpha \beta}) \left(\frac{x'}{\eps},x_d \right)$. We successively estimate the three terms in the right-hand side of~\eqref{eq:tgv4_a}.

\medskip

\noindent
{\bf Step~3.} Let us first estimate $R_3^\eps$. We write
\begin{align}
  R_3^\eps
  &=
  \int_\Omega \Big[ \widetilde{\Sigma}^\star_{ij} - (e_i \otimes e_j) : A^\eps \left( e(W^{\alpha\beta})_\eps - x_d \, e_\alpha \otimes e_\beta \right) \partial_{\alpha \beta}(u_d^\star+g_d) \Big] \, e^\eps_{ij}(\overline{v}^\eps)
  \nonumber
  \\
  &=
  \int_\Omega \partial_{\alpha \beta}(u_d^\star+g_d) \Big[ S^\star_{ij\alpha\beta} - (e_i \otimes e_j) : A^\eps \left( e(W^{\alpha\beta})_\eps - x_d \, e_\alpha \otimes e_\beta \right) \Big] \, e^\eps_{ij}(\overline{v}^\eps)
  \nonumber
  \\
  &=
  \int_\Omega \partial_{\alpha \beta}(u_d^\star+g_d)(x') \, Z_{\alpha\beta}\left(\frac{x'}{\eps},x_d\right) : e^\eps(\overline{v}^\eps)(x) \, dx,
  \label{eq:theatre12}
\end{align}
where we have used~\eqref{eq:Sigmaformula_general} in the second line and where the matrix-valued function $Z_{\alpha\beta}$ of the third line is defined by
$$
\left[ Z_{\alpha\beta}(x',x_d) \right]_{ij} = S^\star_{ij\alpha\beta}(x_d) - (e_i \otimes e_j) : A(x',x_d) \left( e(W^{\alpha\beta})(x',x_d) - x_d \, e_\alpha \otimes e_\beta \right).
$$
We are aiming at using Lemma~\ref{lemma:minZ2}. We observe that $\overline{v}^\eps \in V$ and that $\partial_{\alpha \beta}(u_d^\star+g_d)$ only depends on $x'$ and belongs to $W^{2,\infty}(\omega)$. The function $Z_{\alpha\beta}$ obviously satisfies Assumption~(i) and~(vi). For any $x' \in \R^{d-1}$ and $x_d = \pm 1/2$, and for any $1 \leq i \leq d$, we compute that
\begin{multline*}
e_i^T \, Z_{\alpha\beta} \, e_d = \left[ Z_{\alpha\beta} \right]_{id} = - (e_i \otimes e_d) : A \left( e(W^{\alpha\beta}) - x_d \, e_\alpha \otimes e_\beta \right) \\ = - e_i \cdot \left[ A \left( e(W^{\alpha\beta}) - x_d \, e_\alpha \otimes e_\beta \right) e_d \right]
\end{multline*}
which vanishes in view of the boundary condition in~\eqref{el-prcor2_edp}. Assumption~(v) thus holds. To verify Assumption~(iv), we first note that the second term in $Z_{\alpha\beta}$ is divergence free, and then compute that, for any $1 \leq i \leq d$,
$$
\left[ \div Z_{\alpha\beta} \right]_i = \partial_j \left[ Z_{\alpha\beta} \right]_{ij} = \partial_j S^\star_{ij\alpha\beta} = \partial_d S^\star_{id\alpha\beta} = 0,
$$
where we have used that $S^\star$ only depends on $x_d$. We have thus shown Assumption~(iv). We are now left with verifying Assumptions~(ii) and~(iii). To that aim, we compute
$$
\int_Y \left[ Z_{\alpha\beta}(\cdot,x_d) \right]_{ij} = S^\star_{ij\alpha\beta}(x_d) - (e_i \otimes e_j) : \int_Y A(\cdot,x_d) \left( e(W^{\alpha\beta})(\cdot,x_d) - x_d \, e_\alpha \otimes e_\beta \right).
$$
If $1 \leq i,j \leq d-1$, then this quantity vanishes as a direct consequence of~\eqref{eq:defSstar_exp}, and we thus have that $\dps \int_{\mathcal{Y}} \left[ Z_{\alpha\beta} \right]_{\gamma\delta} = \int_{\mathcal{Y}} x_d \, \left[ Z_{\alpha\beta} \right]_{\gamma\delta} = 0$ for any $1 \leq \gamma,\delta \leq d-1$. If $j=d$ (and likewise if $i=d$ since $Z_{\alpha\beta}$ is a symmetric matrix), then the first term in the definition of $\left[ Z_{\alpha\beta} \right]_{ij}$ vanishes (because $S^\star_{id\alpha\beta} = 0$ by definition) and we thus have, for $r=0,1$, that
$$
\int_{\mathcal{Y}} x_d^r \, \left[ Z_{\alpha\beta} \right]_{id} = -\int_{\mathcal{Y}} x_d^r \, A \big( e(W^{\alpha \beta}) - x_d \, e_\alpha \otimes e_\beta \big) : (e_d \otimes e_i) = 0,
$$
where the last equality comes from choosing $\dps v(x) = \left( \frac{x_d^{r+1}}{r+1} - C_r \right) e_i$ as test function (for any $1 \leq i \leq d$) in~\eqref{el-prcor2}, where the constant $C_r$ is adjusted such that the mean of $v$ over $(-1/2,1/2)$ vanishes (we obviously have $C_0 = 0$). This implies that Assumptions~(ii) and~(iii) indeed hold. We are thus in position to use Lemma~\ref{lemma:minZ2}, and deduce from~\eqref{eq:theatre12} that
$$
| R_3^\eps | \leq C \, \eps \, |\omega|^{1/2} \, \| \nabla^3 (u^\star_d+g_d) \|_{W^{1,\infty}(\omega)} \, \| e^\eps(\overline{v}^\eps) \|_{L^2(\Omega)}.
$$

\medskip

\noindent
{\bf Step~4.} Inserting in the definition of $R_2^\eps$ the expression of $u^{\eps,1}$, and using the short-hand notation $\dps W^{\alpha \beta}_\eps(x) = W^{\alpha \beta}\left(\frac{x'}{\eps},x_d \right)$, we find
\begin{align*}
  R_2^\eps
  &=
  \int_\Omega A^\eps \left[ e^\eps(u^{\eps,1}+g) - \left( e(W^{\alpha\beta} )_\eps - x_d \, e_\alpha \otimes e_\beta \right) \partial_{\alpha\beta}(u^\star_d+g_d) \right] : e^\eps(\overline{v}^\eps)
  \\
  &=
  \eps \int_\Omega A^\eps \left[ W^{\alpha\beta}_\eps \otimes^s \nabla\left( \partial_{\alpha \beta}(u_d^\star + g_d)\right) \right] : e^\eps(\overline{v}^\eps),
\end{align*}
and thus
$$
| R_2^\eps | \leq C \, \eps \, \| \nabla^3 (u_d^\star+g_d) \|_{L^2(\Omega)} \, \| e^\eps(\overline{v}^\eps) \|_{L^2(\Omega)} \leq C \, \eps \, |\omega|^{1/2} \, \| \nabla^3 (u_d^\star+g_d) \|_{L^\infty(\omega)} \, \| e^\eps(\overline{v}^\eps) \|_{L^2(\Omega)}.
$$

\medskip

\noindent
{\bf Step~5.} To bound $R_1^\eps$, we are going to use the same notations as in Lemmas~\ref{lem:lemma1}, \ref{lem:lem00} and~\ref{lem:lem2}. Since the last row and the last column of $\widetilde{\Sigma}^\star$ vanish and since the other entries of $\widetilde{\Sigma}^\star$ are equal to those of $\Sigma^\star$, we can write
\begin{align*}
  R_1^\eps
  &=
  \int_\Omega A^\eps e^\eps(u^\eps + g) : e^\eps(\overline{v}^\eps) - \int_\Omega \widetilde{\Sigma}^\star : e^\eps(\overline{v}^\eps)
  \\
  &=
  \int_\Omega \sigma^\eps : e^\eps(\overline{v}^\eps) - \int_\Omega \widetilde{\Sigma}^\star_{\alpha\beta} \, e_{\alpha\beta}(\overline{v}^\eps)
  \\
  &=
  \int_\Omega \Sigma^\eps : e(\overline{v}^\eps) - \int_\Omega \Sigma^\star_{\alpha\beta} \, e_{\alpha\beta}(\overline{v}^\eps)
  \\
  &=
  \int_\Omega \Sigma^\star : e(\overline{v}^\eps) - \int_\Omega \Sigma^\star_{\alpha\beta} \, e_{\alpha\beta}(\overline{v}^\eps),
\end{align*}
where, in the last line, we have used Lemma~\ref{lem:lem2} (recall that $\overline{v}^\eps$ belongs to $V$). We deduce that
$$
R_1^\eps = \int_\Omega \Sigma^\star_{\alpha d} \, e_{\alpha d}(\overline{v}^\eps) + \int_\Omega \Sigma^\star_{dd} \, e_{dd}(\overline{v}^\eps) = \eps \int_\Omega \Sigma^\star_{\alpha d} \, e^\eps_{\alpha d}(\overline{v}^\eps) + \eps^2 \int_\Omega \Sigma^\star_{dd} \, e^\eps_{dd}(\overline{v}^\eps),
$$
and thus, using~\eqref{eq:borne_L2_Sigma_star_alphad} and~\eqref{eq:borne_L2_Sigma_star_dd},
\begin{align*}
| R_1^\eps |
& \leq
C \, \eps \left( \eps \, \| \Sigma^\star_{dd} \|_{L^2(\Omega)} + \sum_{\alpha=1}^{d-1} \| \Sigma^\star_{\alpha d} \|_{L^2(\Omega)} \right) \| e^\eps(\overline{v}^\eps) \|_{L^2(\Omega)}
\\
& \leq
C \, \eps \left( \eps \, {\cal N}^{\rm bend}(f,g,h_\pm) + \overline{{\cal N}}^{\rm bend}_\Omega(f,g,h_\pm) \right) \| e^\eps(\overline{v}^\eps) \|_{L^2(\Omega)}.
\end{align*}

\medskip

\noindent
{\bf Step~6.} Collecting~\eqref{eq:tgv1_a}, \eqref{eq:tgv3_a}, \eqref{eq:tgv4_a} and the bounds shown in Steps~3--5, we obtain that
\begin{multline*}
\| e^\eps(\overline{v}^\eps) \|_{L^2(\Omega)} \leq C \, \sqrt{\eps} \, \Big( |\omega|^{\frac{d-2}{2(d-1)}} \, \| \nabla^2 (u_d^\star+g_d) \|_{L^\infty(\omega)} + \sqrt{\eps} \, |\omega|^{1/2} \, \| \nabla^3 (u_d^\star+g_d) \|_{W^{1,\infty}(\omega)} \\ + \sqrt{\eps} \ \overline{{\cal N}}^{\rm bend}_\Omega(f,g,h_\pm) + \eps^{3/2} \, {\cal N}^{\rm bend}(f,g,h_\pm) \Big).
\end{multline*}
Since $\overline{v}^\eps$ belongs to $V$, we can use the estimate~\eqref{eq:poinK_corro_vec}, and we obtain from the above bound that
\begin{multline*}
\| \overline{v}^\eps \|_{H^1_\eps(\Omega)} \leq C \, \sqrt{\eps} \, \Big( |\omega|^{\frac{d-2}{2(d-1)}} \, \| \nabla^2 (u_d^\star+g_d) \|_{L^\infty(\omega)} + \sqrt{\eps} \, |\omega|^{1/2} \, \| \nabla^3 (u_d^\star+g_d) \|_{W^{1,\infty}(\omega)} \\ + \sqrt{\eps} \ \overline{{\cal N}}^{\rm bend}_\Omega(f,g,h_\pm) + \eps^{3/2} \, {\cal N}^{\rm bend}(f,g,h_\pm) \Big).
\end{multline*}
Collecting this bound with~\eqref{diffapp3_H1_el_bend}, we deduce~\eqref{eq:theatre13}, which concludes the proof of Theorem~\ref{thconvforte3_a}.
\end{proof}





\appendix

\section{Rescaling of the problems~\eqref{pb:diff1} and~\eqref{pbelas_eps}} \label{app:scaling}

We briefly show here the equivalence between~\eqref{pb:diff1}, \eqref{pb:diff2} and~\eqref{var:diff} on the one hand, and~\eqref{pbelas_eps}, \eqref{pbelas} and~\eqref{formvarelast} on the other hand.

\subsection{The diffusion case} \label{app:scaling_scalaire}

We begin with the diffusion case, and the problem~\eqref{pb:diff1} posed on the thin plate $\Omega^\eps$. Its variational formulation is to find $\widetilde{u}^\eps \in V^\eps$ such that, for any $\widetilde{v} \in V^\eps$,
$$
\int_{\Omega^\eps} \mathcal{A}^\eps \nabla \widetilde{u}^\eps \cdot \nabla \widetilde{v} = \int_{\Omega^\eps} \widetilde{f}^\eps \, \widetilde{v} - \int_{\Omega^\eps} \mathcal{A}^\eps \nabla g^\eps \cdot \nabla \widetilde{v} + \eps \int_\omega h^\eps_+ \, \widetilde{v}\left(\cdot,\frac{\eps}{2}\right) + \eps \int_\omega h^\eps_- \, \widetilde{v}\left(\cdot,-\frac{\eps}{2}\right).
$$
After a change of variables, this equation reads as
\begin{multline*}
\eps \int_\Omega \mathcal{A}^\eps(x',\eps \, x_d) \, \nabla \widetilde{u}^\eps(x',\eps \, x_d) \cdot \nabla \widetilde{v}(x',\eps \, x_d) = \eps \int_\Omega \widetilde{f}^\eps(x',\eps \, x_d) \, \widetilde{v}(x',\eps \, x_d) \\ - \eps \int_\Omega \mathcal{A}^\eps(x',\eps \, x_d) \, \nabla g^\eps(x') \cdot \nabla \widetilde{v}(x',\eps \, x_d) + \eps \int_\omega h^\eps_+ \, \widetilde{v}\left(\cdot,\frac{\eps}{2}\right) + \eps \int_\omega h^\eps_- \, \widetilde{v}\left(\cdot,-\frac{\eps}{2}\right),
\end{multline*}
where we have explicitely used in the notation that $g^\eps$ is independent of the last variable. Using the rescaling~\eqref{eq:scaling_u_scal} for the load and the solution, and using the same rescaling on the test function as for $\widetilde{u}^\eps$, we can recast the above variational formulation as finding $u^\eps \in V$ such that, for any $v \in V$,
$$
\int_\Omega A^\eps \nabla^\eps u^\eps \cdot \nabla^\eps v = \int_\Omega f^\eps \, v - \int_\Omega A^\eps \nabla^\eps g^\eps \cdot \nabla^\eps v + \int_\omega h^\eps_+ \, v\left(\cdot,\frac{1}{2}\right) + \int_\omega h^\eps_- \, v\left(\cdot,-\frac{1}{2}\right),
$$
which is exactly~\eqref{var:diff}. We next use the following integration by part result, to deduce~\eqref{pb:diff2} from~\eqref{var:diff}: for any vector-valued function $z$ and any scalar-valued function $v$ (with $z$ and $v$ sufficiently regular),
\begin{equation} \label{eq:IPP_scalaire}
\int_\Omega z \cdot \nabla^\eps v = - \int_\Omega v \, \div^\eps z + \int_{\partial \Omega} z_\alpha \, v \, n_\alpha + \frac{1}{\eps} \int_{\partial \Omega} z_d \, v \, n_d. 
\end{equation}

\subsection{The elasticity case} \label{app:scaling_vectoriel}

We next turn to the elasticity case. The variational formulation of the problem~\eqref{pbelas_eps} posed on the thin plate $\Omega^\eps$ is to find $\widetilde{u}^\eps \in V^\eps$ such that, for any $\widetilde{v} \in V^\eps$,
$$
\int_{\Omega^\eps} \mathcal{A}^\eps e(\widetilde{u}^\eps) : e(\widetilde{v}) = \int_{\Omega^\eps} \widetilde{f}^\eps \cdot \widetilde{v} - \int_{\Omega^\eps} \mathcal{A}^\eps e(\widetilde{g}^\eps) : e(\widetilde{v}) + \eps \int_\omega \widetilde{h}^\eps_+ \cdot \widetilde{v}\left(\cdot,\frac{\eps}{2}\right) + \eps \int_\omega \widetilde{h}^\eps_- \cdot \widetilde{v}\left(\cdot,-\frac{\eps}{2}\right).
$$
After a change of variables, this equation reads as
\begin{multline*}
\eps \int_\Omega \mathcal{A}^\eps(x',\eps \, x_d) \, e(\widetilde{u}^\eps)(x',\eps \, x_d) : e(\widetilde{v})(x',\eps \, x_d) = \eps \int_\Omega \widetilde{f}^\eps(x',\eps \, x_d) \cdot \widetilde{v}(x',\eps \, x_d) \\ - \eps \int_\Omega \mathcal{A}^\eps(x',\eps \, x_d) \, e(\widetilde{g}^\eps)(x',\eps \, x_d) : e(\widetilde{v})(x',\eps \, x_d) + \eps \int_\omega \widetilde{h}^\eps_+ \cdot \widetilde{v}\left(\cdot,\frac{\eps}{2}\right) + \eps \int_\omega \widetilde{h}^\eps_- \cdot \widetilde{v}\left(\cdot,-\frac{\eps}{2}\right).
\end{multline*}
Using the rescaling~\eqref{eq:scaling_u_el} for the load and the solution, and using the same rescaling on the test function as for $\widetilde{u}^\eps$ (and observing that $f^\eps(x) \cdot v(x) = \widetilde{f}^\eps(x',\eps \, x_d) \cdot \widetilde{v}(x',\eps \, x_d)$ and $h^\eps(x) \cdot v(x) = \widetilde{h}^\eps(x',\eps \, x_d) \cdot \widetilde{v}(x',\eps \, x_d)$), we can recast the above variational formulation as finding $u^\eps \in V$ such that, for any $v \in V$,
$$
\int_\Omega A^\eps e^\eps(u^\eps) : e^\eps(v) = \int_\Omega f^\eps \cdot v - \int_\Omega A^\eps e^\eps(g^\eps) : e^\eps(v) + \int_\omega h^\eps_+ \cdot v\left(\cdot,\frac{1}{2}\right) + \int_\omega h^\eps_- \cdot v\left(\cdot,-\frac{1}{2}\right),
$$
which is exactly~\eqref{formvarelast}. We next use the following integration by part result, to deduce~\eqref{pbelas} from~\eqref{formvarelast}: for any function $z$ such that $z(x)$ is a symmetric matrix and for any vector-valued function $v$ (with $z$ and $v$ sufficiently regular),
\begin{equation} \label{eq:IPP_elas}
\int_\Omega e^\eps(v) : z = - \int_\Omega v \cdot \div^\eps z + \int_{\partial \Omega} v_\gamma \, n_\delta \, z_{\gamma \delta} + \frac{1}{\eps} \int_{\partial \Omega} (v_\gamma \, n_d + v_d \, n_\gamma) \, z_{d\gamma} + \frac{1}{\eps^2} \int_{\partial \Omega} v_d \, n_d \, z_{dd}.
\end{equation}

\section{$H_{\rm div}$ space} \label{app:Hdiv}

We recall here some results about the $H_{\rm div}$ space. Let $\Omega \subset \R^d$ be a bounded regular domain of $\R^d$. We define the space
$$
H_{\rm div}(\Omega):= \left\{ v \in (L^2(\Omega))^d, \ \ \mbox{\rm div} \, v \in L^2(\Omega) \right\},
$$
which is a Hilbert space for the scalar product
$$
\forall v,w \in H_{\rm div}(\Omega), \quad \langle v,w \rangle = \int_\Omega v \cdot w + \int_\Omega (\mbox{\rm div} \, v) \ (\mbox{\rm div} \, w).
$$
The space $\left(\mathcal{C}^\infty(\overline{\Omega})\right)^d$ is dense in $H_{\rm div}(\Omega)$. Let us consider the normal trace application
$$
\gamma_n : \left\{ 
\begin{array}{ccc}
\left(\mathcal{C}^\infty(\overline{\Omega})\right)^d & \to & \mathcal{C}^0(\partial \Omega)\\
v & \mapsto & (v \cdot n)|_{\partial \Omega}
\end{array}
\right. ,
$$
where $n$ denotes the unit exterior normal vector to $\partial \Omega$. The application $\gamma_n$ can be uniquely extended as a continuous application from $H_{\rm div}(\Omega)$ to $H^{-1/2}(\partial \Omega)$, and the following Stokes formula holds: 
$$
\forall v \in H_{\rm div}(\Omega), \quad \forall w\in H^1(\Omega), \quad \int_\Omega v \cdot \nabla w + \int_\Omega w \; \mbox{\rm div} \, v = \langle \gamma_n(v), \gamma_0(w) \rangle_{H^{-1/2}(\partial \Omega), H^{1/2}(\partial \Omega)}, 
$$
where $\gamma_0$ denotes the trace application from $H^1(\Omega)$ to $H^{1/2}(\partial \Omega)$.



\section{Korn inequalities} \label{app:korn}

For the sake of completeness, we provide here a proof of the Korn inequality we have stated in~\eqref{eq:korn}. We start by recalling a well-known result:

\begin{lemma}[Korn's inequality in $H^1$, see~\cite{ciarlet1988mathematical}] \label{simple2}
Let $\Omega \subset \R^d$ be a bounded regular domain. Then, there exists a constant $C(\Omega)>0$ such that, for any $u \in (H^1(\Omega))^d$, we have
$$
\| u \|_{H^1(\Omega)}^2 \leq C(\Omega) \left( \|u\|^2_{L^2(\Omega)} + \|e(u)\|^2_{L^2(\Omega)} \right).
$$
\end{lemma}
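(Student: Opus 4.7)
The plan is to follow the classical argument, due to J.-L.\ Lions and presented in~\cite{ciarlet1988mathematical}, whose starting point is the algebraic identity (valid for smooth $u$, and extending to distributions by density)
\[
\partial_{jk} u_i = \partial_j e_{ik}(u) + \partial_k e_{ij}(u) - \partial_i e_{jk}(u),
\]
which expresses the second derivatives of the components of $u$ as first-order distributional derivatives of components of $e(u)$. First, I would reduce the inequality to proving that, whenever $u \in (L^2(\Omega))^d$ satisfies $e(u) \in (L^2(\Omega))^{d \times d}$, one automatically has $\nabla u \in (L^2(\Omega))^{d \times d}$, with the quantitative control stated in the lemma. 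This reduction is immediate since $\| u \|_{H^1}^2 = \| u \|_{L^2}^2 + \| \nabla u \|_{L^2}^2$.

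The key analytic ingredient is then Lions' lemma: on a bounded regular (in fact, Lipschitz suffices) domain $\Omega$, every distribution $v \in \mathcal{D}'(\Omega)$ satisfying $v \in H^{-1}(\Omega)$ and $\nabla v \in (H^{-1}(\Omega))^d$ actually lies in $L^2(\Omega)$, with
\[
\| v \|_{L^2(\Omega)} \leq C(\Omega) \, \bigl( \| v \|_{H^{-1}(\Omega)} + \| \nabla v \|_{(H^{-1}(\Omega))^d} \bigr).
\]
I would apply this result to $v := \partial_k u_i$ for each pair $(i,k)$. On the one hand, $v$ is the distributional derivative of an $L^2$ function, so $v \in H^{-1}(\Omega)$ with $\| v \|_{H^{-1}(\Omega)} \leq \| u_i \|_{L^2(\Omega)}$. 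On the other hand, by the identity above, for each $j$ we have $\partial_j v = \partial_j \partial_k u_i \in H^{-1}(\Omega)$, with norm controlled by $\| e(u) \|_{L^2(\Omega)}$. Lions' lemma then yields $\partial_k u_i \in L^2(\Omega)$ with
\[
\| \partial_k u_i \|_{L^2(\Omega)} \leq C(\Omega) \bigl( \| u_i \|_{L^2(\Omega)} + \| e(u) \|_{L^2(\Omega)} \bigr),
\]
and summing over $i,k$ and squaring yields the claimed bound.

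The main obstacle is Lions' lemma itself, which is the genuinely non-trivial analytical input. Its proof on a Lipschitz or smoothly bounded domain typically proceeds either via a Bogovski\u\i{}-type right-inverse to the divergence operator, or by local charts combined with a partition of unity and a careful mollification argument; the regularity hypothesis on $\partial \Omega$ enters the whole proof exclusively through this step. Once Lions' lemma is granted, the derivation of the Korn inequality is an essentially algebraic manipulation with distributional derivatives, as outlined above.
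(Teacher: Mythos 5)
The paper states this lemma without proof, simply citing~\cite{ciarlet1988mathematical}, so there is no in-paper argument to compare against; your proposal is the classical proof from that reference and is correct. The algebraic identity $\partial_{jk}u_i = \partial_j e_{ik}(u) + \partial_k e_{ij}(u) - \partial_i e_{jk}(u)$ is verified, the reduction to showing $\nabla u \in L^2$ is sound, and the application of the (quantitative) lemma of J.-L.~Lions to $v = \partial_k u_i$ is exactly the standard route; you correctly isolate Lions' lemma as the one genuinely non-trivial input and identify where the boundary regularity is used. One minor remark: Lions' lemma is usually stated qualitatively ($v \in H^{-1}$, $\nabla v \in (H^{-1})^d$ imply $v \in L^2$), and the quantitative estimate you invoke is then obtained via the open mapping theorem applied to the identity between $L^2(\Omega)$ and the space $\{v \in H^{-1} : \nabla v \in (H^{-1})^d\}$; it would be worth making that step explicit if Lions' lemma is quoted in its qualitative form.
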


In the specific case of functions which vanish on the boundary of the domain, we have the following result.

\begin{lemma}[Korn's inequality in $H_0^1$, see~\cite{ciarlet1988mathematical}] \label{Korn2}
Let $\Omega \subset \R^d$ be a regular domain. For any $u \in (H_0^1(\Omega))^d$, we have
$$
\|\nabla u\|_{L^2(\Omega)} \leq \sqrt{2} \, \| e(u) \|_{L^2(\Omega)}.
$$
\end{lemma}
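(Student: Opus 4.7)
The plan is to prove the inequality by a direct computation that is classical in the study of linear elasticity, relying only on integration by parts. By density of $(\mathcal{D}(\Omega))^d$ in $(H^1_0(\Omega))^d$, it suffices to establish the bound for $u \in (\mathcal{D}(\Omega))^d$, the general case then following by passing to the limit on both $\|\nabla u\|_{L^2(\Omega)}$ and $\|e(u)\|_{L^2(\Omega)}$, which are continuous on $(H^1_0(\Omega))^d$.

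For such a smooth compactly supported $u$, I would expand the squared norm of the symmetric gradient as
$$
\|e(u)\|_{L^2(\Omega)}^2 = \frac{1}{4}\sum_{i,j=1}^{d}\int_\Omega (\partial_i u_j + \partial_j u_i)^2 = \frac{1}{2}\|\nabla u\|_{L^2(\Omega)}^2 + \frac{1}{2}\sum_{i,j=1}^{d}\int_\Omega \partial_i u_j \, \partial_j u_i,
$$
so that the whole proof reduces to showing that the crossed term is non-negative. This is the only step with any content, but it is elementary: since $u \in (\mathcal{D}(\Omega))^d$ has no boundary contribution, two successive integrations by parts yield
$$
\int_\Omega \partial_i u_j \, \partial_j u_i = -\int_\Omega u_j \, \partial_j \partial_i u_i = \int_\Omega \partial_j u_j \, \partial_i u_i,
$$
and summing over $i,j$ gives $\sum_{i,j}\int_\Omega \partial_i u_j \, \partial_j u_i = \int_\Omega (\operatorname{div} u)^2 \geq 0$.

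Combining these two identities, I obtain $\|e(u)\|_{L^2(\Omega)}^2 \geq \frac{1}{2}\|\nabla u\|_{L^2(\Omega)}^2$, i.e. $\|\nabla u\|_{L^2(\Omega)} \leq \sqrt{2}\,\|e(u)\|_{L^2(\Omega)}$, which is the desired inequality. There is no real obstacle here: the regularity of $\Omega$ is only used through the density of $(\mathcal{D}(\Omega))^d$ in $(H^1_0(\Omega))^d$, and the constant $\sqrt{2}$ is independent of $\Omega$ precisely because the boundary terms drop out in the $H^1_0$ setting, in contrast with the general $H^1$ Korn inequality recalled in Lemma~\ref{simple2}.
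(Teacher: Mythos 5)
Your proof is correct and is the classical double-integration-by-parts argument for Korn's inequality in $H^1_0$; the paper does not reprove this lemma but simply cites it from the literature, and your argument (including the observation that the cross term equals $\int_\Omega (\operatorname{div} u)^2 \geq 0$ and that density of $(\mathcal{D}(\Omega))^d$ in $(H^1_0(\Omega))^d$ suffices) is exactly the standard one.
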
 

In this work, we need a slightly modified version of Lemma~\ref{Korn2}, since we work in $V$ and not in $(H_0^1(\Omega))^d$.

\begin{lemma}\label{lem:Korn3}
Let $V$ be given by~\eqref{def:V2}. Then, there exists a constant $C$ such that
\begin{equation}\label{Korn}
\forall u \in V, \quad \| u \|_{H^1(\Omega)} \leq C \, \| e(u) \|_{L^2(\Omega)}.
\end{equation}
\end{lemma}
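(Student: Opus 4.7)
My plan is to argue by contradiction, combining Lemma~\ref{simple2} with a compactness argument à la Peetre-Tartar, and then exploit the geometry of the lateral boundary $\partial\omega\times(-1/2,1/2)$ to kill the limiting rigid motion. Concretely, suppose no such constant $C$ exists; then there exists a sequence $(u_n)_{n\in\N}\subset V$ with $\|u_n\|_{H^1(\Omega)}=1$ and $\|e(u_n)\|_{L^2(\Omega)}\to 0$. Since $(u_n)$ is bounded in $(H^1(\Omega))^d$, by Rellich-Kondrachov I can extract a subsequence (still denoted $u_n$) such that $u_n\rightharpoonup u$ weakly in $(H^1(\Omega))^d$ and $u_n\to u$ strongly in $(L^2(\Omega))^d$, for some $u\in(H^1(\Omega))^d$. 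The weak continuity of the lateral trace gives $u\in V$, and weak lower semicontinuity of the $L^2$ norm of $e(\cdot)$ yields $\|e(u)\|_{L^2(\Omega)}\leq \liminf_n \|e(u_n)\|_{L^2(\Omega)}=0$, so $e(u)=0$ in $\Omega$.

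A classical consequence of $e(u)=0$ on the connected open set $\Omega$ is that $u$ is a rigid motion: there exist $a\in\R^d$ and a skew-symmetric matrix $B\in\R^{d\times d}$ such that $u(x)=a+Bx$ for all $x\in\Omega$. The next step, which is the main geometric obstacle of the proof, is to show that the boundary condition $u=0$ on $\partial\omega\times(-1/2,1/2)$ forces $a=0$ and $B=0$. Fix $x'\in\partial\omega$. The identity $a+Bx'+x_d\,Be_d=0$, viewed as an affine function of $x_d\in(-1/2,1/2)$ vanishing identically, gives $Be_d=0$ and $a+Bx'=0$ (by skew-symmetry, this also means $e_d^T B=0$, i.e., the last row and column of $B$ vanish). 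Since $x'\in\partial\omega$ is arbitrary, the residual relation $a+Bx'=0$ holds for every $x'\in\partial\omega$. Because $\omega\subset\R^{d-1}$ is a bounded smooth domain with non-empty interior, $\partial\omega$ is not contained in any affine hyperplane of $\R^{d-1}$, so it contains $d$ affinely independent points $x'_0,\ldots,x'_{d-1}$. Subtracting, we get that $B(x'_i-x'_0)=0$ for $i=1,\ldots,d-1$, and since the vectors $\{x'_i-x'_0\}_{i=1}^{d-1}$ span $\R^{d-1}$, the first $d-1$ columns of $B$ vanish as well. Hence $B=0$, and then $a=0$. Therefore $u\equiv 0$.

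To close the contradiction, I now invoke Lemma~\ref{simple2}: since $u_n\to 0$ strongly in $(L^2(\Omega))^d$ and $\|e(u_n)\|_{L^2(\Omega)}\to 0$, one has
$$
\|u_n\|_{H^1(\Omega)}^2\leq C(\Omega)\left(\|u_n\|_{L^2(\Omega)}^2+\|e(u_n)\|_{L^2(\Omega)}^2\right)\xrightarrow[n\to\infty]{}0,
$$
which contradicts the normalization $\|u_n\|_{H^1(\Omega)}=1$. This concludes the proof. The only delicate step is the geometric argument showing that the lateral portion of $\partial\Omega$ carries enough affinely independent points to force a rigid motion to vanish; all the remaining ingredients (compactness, weak lower semicontinuity, trace continuity, the characterization of $\ker e$) are classical.
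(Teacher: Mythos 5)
Your proof is correct and follows essentially the same route as the paper: a contradiction argument combining Lemma~\ref{simple2} with Rellich compactness, the characterization of $\ker e$ as rigid motions, and the lateral boundary condition to force the limit to vanish. The only difference is cosmetic — you spell out the geometric step showing a rigid motion vanishing on $\partial\omega\times(-1/2,1/2)$ must be zero (which the paper leaves implicit), and you phrase the final contradiction via $\|u_n\|_{H^1(\Omega)}\to 0$ rather than via a lower bound on $\|u\|_{L^2(\Omega)}$, which are equivalent.
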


\begin{proof}
We argue by contradiction and assume that~\eqref{Korn} does not hold. Then, for any $n \in \N^\star$, there exists $u_n \in V$ such that 
\begin{equation}\label{u<e(u)}
\|u_n\|_{H^1(\Omega)} = 1, \qquad \| e(u_n) \|_{L^2(\Omega)} \leq \frac{1}{n}.
\end{equation}
Since $u_n$ is bounded in $(H^1(\Omega))^d$, there exists $u \in (H^1(\Omega))^d$ such that, up to the extraction of a subsequence, $\dps u_n \mathop{\rightharpoonup}_{n\to +\infty} u$ weakly in $(H^1(\Omega))^d$. Thus, when $n\to+\infty$, we have that $u_n$ strongly converges to $u$ in $(L^2(\Omega))^d$ and $e(u_n)$ weakly converges to $e(u)$ in $(L^2(\Omega))^{d\times d}$. The estimate~\eqref{u<e(u)} yields that $\dps \| e(u_n) \|_{L^2(\Omega)} \mathop{\longrightarrow}_{n\to +\infty} 0$, which implies that $e(u)=0$ and thus that $u$ is a rigid displacement. In addition, $u\in V$, and therefore $u=0$. Using Lemma~\ref{simple2}, we also deduce from~\eqref{u<e(u)} that
$$
\| u_n \|^2_{L^2(\Omega)} \geq \frac{1}{C(\Omega)} - \frac{1}{n^2}. 
$$
Passing to the limit $n \to \infty$ and using the fact that $u_n$ strongly converges to $u$ in $(L^2(\Omega))^d$, we obtain that $\dps \| u \|^2_{L^2(\Omega)} \geq 1/C(\Omega)$, which provides a contradiction with the fact that $u=0$.
\end{proof}


\section{Proofs of the homogenization limits}\label{app:diff}

In this section, for the sake of completeness, we provide a proof of Theorem~\ref{limitdiff} (resp. Theorem~\ref{limitel}), which was originally shown in~\cite[Theorem~8.1]{caillerieDiffusion} (resp.~\cite[Theorem~6.2]{caillerieElasticite}). 
We use below the following well-known result, where we recall (see~\eqref{eq:def_Y_Y}) that $Y = (0,1)^{d-1}$.

\begin{lemma} \label{limmoyenne}
Let $B$ be a function in $\dps L^2_{\rm loc} \left( \mathbb{R}^{d-1} \times \left( -\frac{1}{2}, \frac{1}{2} \right) \right)$ such that, for any $\dps z \in \left( -\frac{1}{2}, \frac{1}{2} \right)$, the function $B(\cdot,z)$ is $Y$-periodic. Then, for any $\dps z \in \left( -\frac{1}{2}, \frac{1}{2} \right)$ and any bounded $\omega \subset \mathbb{R}^{d-1}$,
$$
B\left(\frac{\cdot}{\eps},z \right) \underset{\eps \rightarrow 0}{\rightharpoonup} \frac{1}{|Y|} \int_Y B(y',z) \, dy' \quad \text{ weakly in $L^2(\omega)$}.
$$
\end{lemma}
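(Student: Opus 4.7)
The plan is to reduce the statement to the classical weak convergence of a periodic oscillating function to its mean, by freezing the transverse variable $z$. First I would fix $z \in (-1/2,1/2)$ and set $b(y') := B(y',z)$. By hypothesis, $b \in L^2_{\rm loc}(\mathbb{R}^{d-1})$ and $b$ is $Y$-periodic, so the goal becomes showing that $b(\cdot/\eps) \rightharpoonup |Y|^{-1}\int_Y b$ weakly in $L^2(\omega)$.

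The first step is to obtain a uniform $L^2$-bound on $b(\cdot/\eps)$ in $L^2(\omega)$. Covering $\omega/\eps$ by translates of $Y$, a change of variables $x' = \eps y'$ combined with the $Y$-periodicity of $b$ gives
\begin{equation*}
\int_\omega \bigl|b(x'/\eps)\bigr|^2 \, dx' = \eps^{d-1} \int_{\omega/\eps} |b|^2 \leq C \, |\omega| \, \|b\|_{L^2(Y)}^2,
\end{equation*}
the constant $C$ being independent of $\eps$ and accounting for the boundary cells that are not entirely contained in $\omega/\eps$.

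Next I would test the convergence against a dense subclass of $L^2(\omega)$. It is enough to treat characteristic functions of axis-aligned cubes $Q \subset \omega$, by density of their linear span in $L^2(\omega)$ combined with the uniform bound above. For such a $Q$ of side $L$, split $Q/\eps$ into those unit cells of $\mathbb{Z}^{d-1}$-translates of $Y$ that are fully contained in $Q/\eps$ and a boundary layer of width $O(1)$. Using periodicity on the interior cells and the $L^2$-bound to control the boundary layer (whose Lebesgue measure in the original variable is $O(\eps \, L^{d-2})$), I obtain
\begin{equation*}
\int_Q b(x'/\eps) \, dx' = \eps^{d-1} \int_{Q/\eps} b = \frac{|Q|}{|Y|} \int_Y b + O(\eps) \xrightarrow[\eps \to 0]{} \frac{|Q|}{|Y|} \int_Y b,
\end{equation*}
which is precisely the tested weak-convergence identity against $\mathbf{1}_Q$.

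Finally, given any $\varphi \in L^2(\omega)$ and $\eta > 0$, I approximate $\varphi$ by a finite linear combination $\varphi_\eta$ of characteristic functions of cubes in $\omega$ with $\|\varphi - \varphi_\eta\|_{L^2(\omega)} \leq \eta$. Writing
\begin{equation*}
\int_\omega \left( b(\cdot/\eps) - |Y|^{-1}\int_Y b \right) \varphi = \int_\omega \left( b(\cdot/\eps) - |Y|^{-1}\int_Y b \right) (\varphi - \varphi_\eta) + \int_\omega \left( b(\cdot/\eps) - |Y|^{-1}\int_Y b \right) \varphi_\eta,
\end{equation*}
the first term is controlled by the uniform $L^2$-bound times $\eta$, while the second one tends to $0$ as $\eps \to 0$ by the cube case. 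Letting $\eps \to 0$ and then $\eta \to 0$ yields the conclusion. I do not expect a real obstacle here: the only delicate point is the careful handling of the incomplete cells along $\partial(\omega/\eps)$, which is a routine boundary-layer estimate made quantitative by the uniform $L^2$-control.
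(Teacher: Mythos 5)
Your proof is correct: it is the standard argument for weak convergence of a periodic oscillating function to its mean (uniform $L^2$-bound by cell-counting, identification of the limit on characteristic functions of cubes with a boundary-layer estimate, then density). The paper itself offers no proof of this lemma --- it is cited as a well-known result --- so there is nothing to compare against; the only point worth noting is that, strictly speaking, the slice $B(\cdot,z)$ is an $L^2_{\rm loc}$ function only for almost every $z$, but this looseness is already present in the lemma's statement and is not introduced by your argument.
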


\subsection{Proof of Theorem~\ref{limitdiff}} \label{app:diff_preuve1}

To identify the homogenized problem, we use the oscillating test function method. Let $\phi \in \mathcal{D}(\omega)$ and $\dps v(x) := \phi(x') + \eps \, w^\alpha\left(\frac{x'}{\eps},x_d \right) \partial_\alpha \phi(x')$ for any $x = (x',x_d) \in \Omega$. By definition, $v \in V$ and it is thus an admissible test function in~\eqref{var:diff}. We note that $\dps \nabla^\eps v(x) = \nabla \phi(x') + \nabla w^\alpha\left(\frac{x'}{\eps},x_d \right) \partial_\alpha \phi(x') + \eps \, w^\alpha\left(\frac{x'}{\eps},x_d \right) \nabla \partial_\alpha \phi(x')$. Using this function $v$ as test function in~\eqref{var:diff}, we get
\begin{equation} \label{formvarfctest}
  c^\eps(u^\eps,\phi) + r^\eps(u^\eps,\phi) = d^\eps(\phi) + s^\eps(\phi),
\end{equation}
where
\begin{align*}
  c^\eps(u^\eps,\phi) &:= \int_\Omega A^\eps(x) \nabla^\eps u^\eps(x) \cdot \left( \nabla w^\alpha\left(\frac{x'}{\eps},x_d \right) + e_\alpha \right) \partial_\alpha \phi(x'),
  \\
  r^\eps(u^\eps,\phi) &:= \eps \int_\Omega w^\alpha\left(\frac{x'}{\eps},x_d \right) A^\eps(x) \nabla^\eps u^\eps(x) \cdot \nabla \partial_\alpha \phi(x'),
  \\
  d^\eps(\phi) &:= b^\eps(\phi) - \int_\Omega A^\eps(x) \nabla^\eps g(x') \cdot \nabla w^\alpha\left(\frac{x'}{\eps},x_d \right) \partial_\alpha \phi(x'),
\end{align*}
and 
\begin{multline*}
s^\eps(\phi) := \eps \int_\Omega f(x) \, w^\alpha\left(\frac{x'}{\eps},x_d \right) \partial_\alpha \phi(x') - \eps \int_\Omega w^\alpha\left(\frac{x'}{\eps},x_d \right) A^\eps(x) \nabla^\eps g(x') \cdot \nabla \partial_\alpha \phi(x') \\ + \eps \int_{\Gamma_\pm} h_\pm(x') \, w^\alpha\left(\frac{x'}{\eps},\pm \frac{1}{2} \right) \partial_\alpha \phi(x'),
\end{multline*}
where $\dps \Gamma_\pm = \omega \times \left\{ \pm \frac{1}{2} \right\}$. The following limits are immediate in view of~\eqref{bornesigma} (which, together with~\eqref{eq:inde_eps}, implies that $\| \nabla^\eps u^\eps \|_{L^2(\Omega)}$ is bounded) and of the fact that $\nabla^\eps g = \nabla g$ (because $g$ does not depend on $x_d$):
$$
r^\eps(u^\eps,\phi) \underset{\eps \rightarrow 0}{\rightarrow} 0 \qquad \text{and} \qquad s^\eps(\phi) \underset{\eps \rightarrow 0}{\rightarrow} 0.
$$
We now identify the limit of $c^\eps(u^\eps,\phi)$. Since $A$ is symmetric, we have
$$
c^\eps(u^\eps,\phi) = \int_\Omega \nabla^\eps u^\eps(x) \cdot A^\eps(x) \left( \nabla w^\alpha\left(\frac{x'}{\eps},x_d \right) + e_\alpha \right) \partial_\alpha \phi(x').
$$
Introducing the vector-valued function $\dps Z_\alpha = A \left( \nabla w^\alpha + e_\alpha \right)$, we write, using an integration by parts (see~\eqref{eq:IPP_scalaire}) and the fact that $\phi \in \mathcal{D}(\omega)$,
\begin{align*}
  c^\eps(u^\eps,\phi) &= \int_\Omega \nabla^\eps u^\eps(x) \cdot Z_\alpha\left(\frac{x'}{\eps},x_d \right) \partial_\alpha \phi(x')
  \\
  &= -\int_\Omega u^\eps(x) \, \div^\eps \left[ Z_\alpha\left(\frac{x'}{\eps},x_d \right) \partial_\alpha \phi(x') \right] \pm \frac{1}{\eps} \int_{\Gamma_\pm} u^\eps\left(x',\pm \demi\right) \partial_\alpha \phi(x') \, Z_\alpha\left(\frac{x'}{\eps},\pm \demi \right) \cdot e_d. 
\end{align*}
Using the second line of the corrector equation~\eqref{prcor2}, we see that the above integral over $\Gamma_\pm$ vanishes. Since $Z_\alpha$ is divergence-free and $\partial_\alpha \phi$ is independent of $x_d$, we compute that
$$
\div^\eps \left[ Z_\alpha\left(\frac{x'}{\eps},x_d \right) \partial_\alpha \phi(x') \right]
=
\left[ Z_\alpha\left(\frac{x'}{\eps},x_d \right) \cdot e_\beta \right] \partial_{\alpha \beta} \phi(x')
=
Z_\alpha\left(\frac{x'}{\eps},x_d \right) \cdot \partial_\alpha \nabla \phi(x').
$$
We thus deduce that
$$
c^\eps(u^\eps,\phi) = - \int_\Omega u^\eps(x) \left[ A\left(\frac{x'}{\eps},x_d \right) \left( \nabla w^\alpha\left(\frac{x'}{\eps},x_d \right) + e_\alpha \right) \right] \cdot \partial_\alpha \nabla \phi(x').
$$
Using the Rellich theorem, it holds that, up to the extraction of a subsequence, $(u^\eps)_{\eps >0}$ converges strongly in $L^2(\Omega)$ to $u^\star$. Besides, since $\| \nabla^\eps u^\eps \|_{L^2(\Omega)}$ is bounded, we have that $\| \eps^{-1} \partial_d u^\eps \|_{L^2(\Omega)}$ is bounded, and therefore $\partial_d u^\star=0$. Since $u^\star\in V$, we thus get that $u^\star \in H^1_0(\omega)$.

Using Lemma~\ref{limmoyenne} and the strong convergence of $(u^\eps)_{\eps>0}$ to $u^\star$ in $L^2(\Omega)$, we obtain
$$
c^\eps(u^\eps,\phi) \underset{\eps \rightarrow 0}{\rightarrow} c^\star(u^\star,\phi) := -\int_\Omega u^\star \left[ \int_Y A(\cdot,x_d) \left( \nabla w^\alpha\left(\cdot, x_d \right) + e_\alpha \right) \right] \cdot \partial_\alpha \nabla \phi(x'). 
$$
Using that $u^\star$ and $\phi$ do not depend on $x_d$, we infer that
\begin{align*}
  c^\star(u^\star,\phi) &= -\int_\omega u^\star \left[ \int_{\mathcal{Y}} A \left( \nabla w^\alpha + e_\alpha \right) \right] \cdot \partial_\alpha \nabla \phi \qquad \text{[$u^\star$ and $\phi$ ind. of $x_d$]}
  \\
  &= \int_\omega \left[ \int_{\mathcal{Y}} A \left( \nabla w^\alpha + e_\alpha \right) \right] \partial_\alpha u^\star \cdot \nabla \phi \qquad \text{[$\phi$ belongs to $\mathcal{D}(\omega)$]}
  \\
  &= \int_\omega A^\star \nabla' u^\star \cdot \nabla' \phi.
\end{align*}
The last equality stems from the fact that
$$
A^\star_{\alpha \beta} = \int_{\mathcal{Y}} A \left( \nabla w^\alpha + e_\alpha \right) \cdot \left( \nabla w^\beta + e_\beta \right) = \int_{\mathcal{Y}} A \left( \nabla w^\alpha + e_\alpha \right) \cdot e_\beta,
$$
where the last equality comes from using the test function $v=w^\beta$ in~\eqref{prcor1}. 

We next turn to the term $d^\eps(\phi)$. Using again Lemma~\ref{limmoyenne} and similar arguments as above, it holds that
\begin{align*}
  \lim_{\eps \to 0} d^\eps(\phi)
  &= \int_\omega (\m(f)+h_\pm) \, \phi - \int_\omega \left[ \int_{\mathcal{Y}} A \left( \nabla w^\alpha + e_\alpha \right) \right] \partial_\alpha g \cdot \nabla \phi
  \\
  &= \int_\omega (\m(f)+h_\pm) \, \phi - \int_\omega A^\star \nabla' g \cdot \nabla' \phi.
\end{align*}
Passing to the limit $\eps \to 0$ in~\eqref{formvarfctest}, we thus obtain $\dps \int_\omega A^\star \nabla' u^\star \cdot \nabla' \phi = \int_\omega (\m(f)+h_\pm) \, \phi - \int_\omega A^\star \nabla' g \cdot \nabla' \phi$, which means that $u^\star \in H^1_0(\omega)$ is a solution to~\eqref{eq:fv_star}.

By definition, $A^\star$ is obviously symmetric. The coercivity of $A^\star$ can be obtained by standard arguments: for any $\xi = (\xi_\alpha)_{1 \leq \alpha \leq d-1} \in \R^{d-1}$, we compute
$$
\xi^T A^\star \xi
=
\int_{\mathcal{Y}} A (\nabla w^\xi + \xi) \cdot (\nabla w^\xi + \xi)
\geq
c_- \int_{\mathcal{Y}} \left| \nabla w^\xi + \xi \right|^2
\geq
c_- \int_{\mathcal{Y}} \left| \nabla' w^\xi + \xi \right|^2
\geq
c_- \left| \int_{\mathcal{Y}} (\nabla' w^\xi + \xi) \right|^2,
$$
where $w^\xi = \xi_\alpha \, w^\alpha$. Using next the periodicity of $w^\xi$ with respect to its first $d-1$ variables, we obtain that $\xi^T A^\star \xi \geq c_- \, \xi^T \xi$ and thus the coercivity of $A^\star$. The homogenized problem~\eqref{eq:fv_star} is thus well-posed, and hence $u^\star$ is uniquely defined. The whole sequence $u^\eps$ (and not only a subsequence) therefore converges to $u^\star$. This concludes the proof of Theorem~\ref{limitdiff}.

\subsection{Proof of Theorem~\ref{limitel} in the membrane case} \label{app:diff_preuve2_memb}

Using the method of the oscillating test function, we are going to prove Theorem~\ref{limitel} in the membrane case. The proof falls in seven steps. The decoupling assumption~\eqref{hyp:symA} on $A$, and the fact that we are in the membrane case~\eqref{eq:ass_membrane}, is only used in Step 3b. All the other steps are also valid in the bending case, and actually do not use the decoupling assumption~\eqref{hyp:symA}. They are thus written without assuming that~\eqref{hyp:symA} and~\eqref{eq:ass_membrane} hold. The bending case is considered in Appendix~\ref{app:diff_preuve2_bend}, using a different strategy of proof.

\medskip

\noindent
{\bf Step~1: structure of $u^\star$.} We begin by showing that $u^\star$ belongs to $\VKL$. We recall (see~\eqref{est:sig}, \eqref{eq:inde_eps_vec} and~\eqref{eq:assump_g}) that $\| e^\eps(u^\eps) \|_{L^2(\Omega)}$ is bounded. This implies that $\| \partial_d u^\eps_d \|_{L^2(\Omega)} \leq C \eps^2$. The function $u^\eps$ converges to $u^\star$ weakly in $\left( H^1(\Omega) \right)^d$, therefore $\partial_d u^\star_d = 0$.

From the bound on $\| e^\eps(u^\eps) \|_{L^2(\Omega)}$, we also get that $\| \partial_d u^\eps_\alpha + \partial_\alpha u^\eps_d \|_{L^2(\Omega)} \leq C \eps$. Using again that $u^\eps$ converges to $u^\star$ weakly in $\left(H^1(\Omega)\right)^d$, we deduce that $\partial_d u^\star_\alpha + \partial_\alpha u^\star_d = 0$. Since $\partial_d u^\star_d = 0$, there exists some $\widetilde{u}^\star$ in $\left(H^1(\omega)\right)^d$ (which is independent of $x_d$) such that $u^\star_\alpha = \widetilde{u}_\alpha^\star - x_d \, \partial_\alpha \widetilde{u}^\star_d$ and $u^\star_d = \widetilde{u}^\star_d$.
 
The function $u^\eps$ belongs to $V$ and converges to $u^\star$ weakly in $\left(H^1(\Omega)\right)^d$. We thus obtain that $\m(u^\eps) \in (H^1_0(\omega))^d$ and $\m(u^\eps) \underset{\eps \rightarrow 0}{\rightharpoonup} \m(u^\star) = \widetilde{u}^\star$ weakly in $(H^1(\omega))^d$. We hence have $\widetilde{u}^\star \in (H^1_0(\omega))^d$. Since $x_d \, \nabla \widetilde{u}^\star_d = \widetilde{u}^\star - u^\star$, we have $\nabla \widetilde{u}^\star_d = 0$ on $\partial \omega$ and $\nabla \widetilde{u}^\star_d \in (H^1_0(\omega))^d$. We have shown previously that $\widetilde{u}^\star_d \in H^1_0(\omega)$, and thus $u^\star_d \in H^2_0(\omega)$. We hence have that $u^\star \in \VKL$. As for any element of $\VKL$, we can associate to $u^\star$ a function $\widehat{u}^\star \in \left(H^1_0(\omega)\right)^{d-1} \times H^2_0(\omega)$, and this element turns out to be $\widehat{u}^\star = \widetilde{u}^\star$.

\medskip

\noindent
{\bf Step~2: oscillating test function.} To identify the homogenized limit of~\eqref{formvarelast}, we make use of the oscillating test function method. Let $\widehat{\phi} \in (\mathcal{D}(\omega))^d$. By defining $\phi$ as $\phi_\alpha = \widehat{\phi}_\alpha - x_d \, \partial_\alpha \widehat{\phi}_d$ and $\phi_d = \widehat{\phi}_d$, we get that $\phi \in \VKL$. Let us define, for any $1\leq \gamma \leq d-1$ and any $x=(x',x_d) \in \Omega$,
\begin{align*}
  v_\gamma(x) &:= \phi_\gamma(x) + \eps \left[ w^{\alpha \beta}_\gamma \left(\frac{x'}{\eps},x_d \right) e_{\alpha \beta} (\widehat{\phi})(x') + W^{\alpha \beta}_\gamma \left(\frac{x'}{\eps},x_d \right) \partial_{\alpha \beta} \widehat{\phi}_d(x') \right],
  \\
v_d(x) &:= \phi_d(x) + \eps^2 \left[ w^{\alpha \beta}_d \left(\frac{x'}{\eps},x_d \right) e_{\alpha \beta} (\widehat{\phi})(x') + W^{\alpha \beta}_d \left(\frac{x'}{\eps},x_d \right) \partial_{\alpha \beta} \widehat{\phi}_d(x') \right].
\end{align*}
By definition, $v$ belongs to $V$ and is thus an admissible test function in~\eqref{formvarelast}. We note that
\begin{multline*}
e^\eps(v)(x) = \left[ e_\alpha \otimes e_\beta + e(w^{\alpha \beta})\left(\frac{x'}{\eps},x_d \right) \right] e_{\alpha \beta}(\widehat{\phi})(x') \\ + \left[ -x_d \, e_\alpha \otimes e_\beta + e(W^{\alpha \beta})\left(\frac{x'}{\eps},x_d \right) \right] \partial_{\alpha \beta}\widehat{\phi}_d(x') + \eps \, R^\eps(x),
\end{multline*}
where $\| R^\eps \|_{L^2(\Omega)} \leq C$. Using $v$ as test function in~\eqref{formvarelast}, we get
\begin{equation}
\label{el-formvarfctest}
c^\eps(u^\eps,\phi) + r^\eps(u^\eps,\phi) = d^\eps(\phi) + s^\eps(\phi),
\end{equation}
where
\begin{multline*}
  c^\eps(u^\eps,\phi)
  :=
  \int_\Omega A^\eps(x) \, e^\eps(u^\eps)(x) : \left( \left[ e_\alpha \otimes e_\beta + e(w^{\alpha \beta})\left(\frac{x'}{\eps},x_d \right) \right] e_{\alpha \beta}(\widehat{\phi})(x') \right. \\ \left. + \left[ -x_d \, e_\alpha \otimes e_\beta + e(W^{\alpha \beta})\left(\frac{x'}{\eps},x_d \right) \right] \partial_{\alpha \beta}\widehat{\phi}_d(x') \right),
\end{multline*}
\begin{align*}
  r^\eps(u^\eps,\phi)
  &:=
  \eps \int_\Omega A^\eps e^\eps(u^\eps) : R^\eps,
  \\
  d^\eps(\phi)
  &:=
  \int_\Omega f \cdot \phi + \int_\omega h_\pm \cdot \phi\left(\cdot,\pm \frac{1}{2}\right) - c^\eps(g,\phi),
\end{align*}
and
\begin{align*}
  s^\eps(\phi) & := \eps \int_\Omega f_\gamma(x) \left[ w^{\alpha \beta}_\gamma \left(\frac{x'}{\eps},x_d \right) e_{\alpha \beta}(\widehat{\phi})(x') + W^{\alpha \beta}_\gamma \left(\frac{x'}{\eps},x_d \right) \partial_{\alpha \beta} \widehat{\phi}_d(x') \right]
  \\
  & + \eps^2 \int_\Omega f_d(x) \left[ w^{\alpha \beta}_d \left(\frac{x'}{\eps},x_d \right) e_{\alpha \beta}(\widehat{\phi})(x') + W^{\alpha \beta}_d \left(\frac{x'}{\eps},x_d \right) \partial_{\alpha \beta} \widehat{\phi}_d(x') \right]
  \\
  & + \eps \int_{\Gamma_\pm} (h_\pm(x'))_\gamma \left[ w^{\alpha \beta}_\gamma \left(\frac{x'}{\eps},\pm \frac{1}{2} \right) e_{\alpha \beta} (\widehat{\phi})(x') + W^{\alpha \beta}_\gamma \left(\frac{x'}{\eps},\pm \frac{1}{2} \right) \partial_{\alpha \beta} \widehat{\phi}_d(x') \right]
  \\
  & + \eps^2 \int_{\Gamma_\pm} (h_\pm(x'))_d \left[ w^{\alpha \beta}_d \left(\frac{x'}{\eps},\pm \frac{1}{2} \right) e_{\alpha \beta} (\widehat{\phi})(x') + W^{\alpha \beta}_d \left(\frac{x'}{\eps},\pm \frac{1}{2} \right) \partial_{\alpha \beta} \widehat{\phi}_d(x') \right]
  \\
  & - \eps \int_\Omega A^\eps \, e^\eps(g) : R^\eps.
\end{align*}
The following limits are immediate in view of the $L^2$ bound on $e^\eps(u^\eps)$ and $R^\eps$ and the fact that $g \in \GKL$:
$$
r^\eps(u^\eps,\phi) \underset{\eps \rightarrow 0}{\rightarrow} 0 \qquad \text{and} \qquad s^\eps(\phi) \underset{\eps \rightarrow 0}{\rightarrow} 0. 
$$

\medskip

\noindent
{\bf Step~3: limit of $c^\eps(u^\eps,\phi)$.} We proceed in three steps to identify the limit of $c^\eps(u^\eps,\phi)$. Only the second step is restricted to the membrane case.

\medskip

\noindent
{\bf Step~3a: integration by parts.} Since $A$ is symmetric in the sense that $A_{ijkl} = A_{klij}$, we write
\begin{equation} \label{eq:titi9}
c^\eps(u^\eps,\phi)
= \int_\Omega e^\eps(u^\eps)(x) : \left[ Z_{\alpha \beta}^w\left(\frac{x'}{\eps},x_d \right) e_{\alpha \beta}(\widehat{\phi})(x') + Z_{\alpha \beta}^W\left(\frac{x'}{\eps},x_d \right) \partial_{\alpha \beta}\widehat{\phi}_d(x') \right],
\end{equation}
where $x=(x',x_d) \in \Omega$ and where the matrix-valued functions $Z_{\alpha \beta}^w$ and $Z_{\alpha \beta}^W$ are defined by
$$
Z_{\alpha \beta}^w = A \left( e_\alpha \otimes e_\beta + e(w^{\alpha \beta}) \right) \qquad \text{and} \qquad Z_{\alpha \beta}^W = A \left( -x_d \, e_\alpha \otimes e_\beta + e(W^{\alpha \beta}) \right).
$$
We recall the integration by parts relation~\eqref{eq:IPP_elas}: for any function $z$ such that $z(x)$ is a symmetric matrix and for any vector-valued function $v$ (with $z$ and $v$ sufficiently regular), we have
$$
\int_\Omega e^\eps(v) : z = - \int_\Omega v \cdot \div^\eps z + \int_{\partial \Omega} v_\gamma \, n_\delta \, z_{\gamma \delta} + \frac{1}{\eps} \int_{\partial \Omega} (v_\gamma \, n_d + v_d \, n_\gamma) \, z_{d\gamma} + \frac{1}{\eps^2} \int_{\partial \Omega} v_d \, n_d \, z_{dd}.
$$
Using an integration by parts in~\eqref{eq:titi9} (we note that, in view of~\eqref{eq:symm_A}, both $Z_{\alpha \beta}^w$ and $Z_{\alpha \beta}^W$ are symmetric matrices, for any $1 \leq \alpha,\beta \leq d-1$) and the fact that $\widehat{\phi} \in (\mathcal{D}(\omega))^d$, we obtain
\begin{align}
c^\eps(u^\eps,\phi)
&= -\int_\Omega u^\eps(x) \cdot \div^\eps \left[ Z_{\alpha \beta}^w\left(\frac{x'}{\eps},x_d \right) e_{\alpha \beta}(\widehat{\phi})(x') + Z_{\alpha \beta}^W\left(\frac{x'}{\eps},x_d \right) \partial_{\alpha \beta}\widehat{\phi}_d(x') \right]
\nonumber
\\
& \quad \pm \frac{1}{\eps} \int_{\Gamma_\pm} u^\eps(x) \cdot e_\gamma \ \left[ Z_{\alpha \beta}^w\left(\frac{x'}{\eps},x_d \right) e_{\alpha \beta}(\widehat{\phi})(x') + Z_{\alpha \beta}^W\left(\frac{x'}{\eps},x_d \right) \partial_{\alpha \beta}\widehat{\phi}_d(x') \right]_{d\gamma}
\nonumber
\\
& \quad \pm \frac{1}{\eps^2} \int_{\Gamma_\pm} u^\eps(x) \cdot e_d \ \left[ Z_{\alpha \beta}^w\left(\frac{x'}{\eps},x_d \right) e_{\alpha \beta}(\widehat{\phi})(x') + Z_{\alpha \beta}^W\left(\frac{x'}{\eps},x_d \right) \partial_{\alpha \beta}\widehat{\phi}_d(x') \right]_{dd},
\label{eq:titi8}
\end{align}
where, on the top (resp. bottom) face $\Gamma_+$ (resp. $\Gamma_-$) of $\Omega$, we have $x_d = 1/2$ (resp. $x_d = -1/2$) in the last two lines of~\eqref{eq:titi8}. In view of the boundary conditions satisfied by the correctors $w^{\alpha \beta}$ and $W^{\alpha \beta}$ (see the second line of~\eqref{el-prcor1_edp} and of~\eqref{el-prcor2_edp}), we have that $Z_{\alpha \beta}^w \, e_d = Z_{\alpha \beta}^W \, e_d = 0$ on $\mathcal{Y}_\pm$. The integrand of the last two lines of~\eqref{eq:titi8} therefore vanishes on $\Gamma_\pm$.

We next compute, for any function $z_\eps$ such that $z_\eps(x)$ is a symmetric matrix and for any scalar-valued function $\psi$, that
\begin{align*}
  \left[ \div^\eps (z_\eps \, \psi) \right]_\gamma
  &=
  \psi \left[ \div^\eps z_\eps \right]_\gamma + (z_\eps)_{\gamma \delta} \, \partial_\delta \psi + \frac{1}{\eps} \, (z_\eps)_{\gamma d} \, \partial_d \psi,
  \\
  \left[ \div^\eps (z_\eps \, \psi) \right]_d
  &=
  \psi \left[ \div^\eps z_\eps \right]_d + \frac{1}{\eps} \, (z_\eps)_{d \delta} \, \partial_\delta \psi + \frac{1}{\eps^2} \, (z_\eps)_{dd} \, \partial_d \psi.
\end{align*}
Using the corrector equation satisfied by $w^{\alpha \beta}$ and $W^{\alpha \beta}$ (see the first line of~\eqref{el-prcor1_edp} and of~\eqref{el-prcor2_edp}), which implies that $Z_{\alpha \beta}^w$ and $Z_{\alpha \beta}^W$ are divergence free, we compute that
\begin{align*}
  & e_\gamma \cdot \div^\eps \left[ Z_{\alpha \beta}^w\left(\frac{x'}{\eps},x_d \right) e_{\alpha \beta}(\widehat{\phi})(x') + Z_{\alpha \beta}^W\left(\frac{x'}{\eps},x_d \right) \partial_{\alpha \beta}\widehat{\phi}_d(x') \right]
  \\
  &=
  \left[ Z_{\alpha \beta}^w\left(\frac{x'}{\eps},x_d \right) \right]_{\gamma \delta} \partial_\delta e_{\alpha \beta}(\widehat{\phi})(x') + \left[ Z_{\alpha \beta}^W\left(\frac{x'}{\eps},x_d \right) \right]_{\gamma \delta} \partial_{\delta \alpha \beta}\widehat{\phi}_d(x')
  \\
  &=
  e_\gamma \cdot \left[ Z_{\alpha \beta}^w\left(\frac{x'}{\eps},x_d \right) \nabla e_{\alpha \beta}(\widehat{\phi})(x') + Z_{\alpha \beta}^W\left(\frac{x'}{\eps},x_d \right) \nabla \partial_{\alpha \beta}\widehat{\phi}_d(x') \right]
\end{align*}
and likewise
\begin{align*}
  & e_d \cdot \div^\eps \left[ Z_{\alpha \beta}^w\left(\frac{x'}{\eps},x_d \right) e_{\alpha \beta}(\widehat{\phi})(x') + Z_{\alpha \beta}^W\left(\frac{x'}{\eps},x_d \right) \partial_{\alpha \beta}\widehat{\phi}_d(x') \right]
  \\
  &=
  \frac{1}{\eps} \, e_d \cdot \left[ Z_{\alpha \beta}^w\left(\frac{x'}{\eps},x_d \right) \nabla e_{\alpha \beta}(\widehat{\phi})(x') + Z_{\alpha \beta}^W\left(\frac{x'}{\eps},x_d \right) \nabla \partial_{\alpha \beta}\widehat{\phi}_d(x') \right].
\end{align*}
We thus deduce from~\eqref{eq:titi8} that
$$
c^\eps(u^\eps,\phi) = -\int_\Omega u^\eps_\gamma(x) \ e_\gamma \cdot \left[ Z_{\alpha \beta}^w\left(\frac{x'}{\eps},x_d \right) \nabla e_{\alpha \beta}(\widehat{\phi})(x') + Z_{\alpha \beta}^W\left(\frac{x'}{\eps},x_d \right) \nabla \partial_{\alpha \beta}\widehat{\phi}_d(x') \right] - c^\eps_d(u^\eps,\phi),
$$
with
\begin{equation} \label{eq:def_ceps_d}
c^\eps_d(u^\eps,\phi) = \frac{1}{\eps} \int_\Omega u^\eps_d(x) \ e_d \cdot \left[ Z_{\alpha \beta}^w\left(\frac{x'}{\eps},x_d \right) \nabla e_{\alpha \beta}(\widehat{\phi})(x') + Z_{\alpha \beta}^W\left(\frac{x'}{\eps},x_d \right) \nabla \partial_{\alpha \beta}\widehat{\phi}_d(x') \right],
\end{equation}
and we recast the above expression as
\begin{multline} \label{eq:dimanche1}
c^\eps(u^\eps,\phi) = (\eps-1) \, c^\eps_d(u^\eps,\phi) - \int_\Omega u^\eps(x) \cdot A\left(\frac{x'}{\eps},x_d \right) \left( \left[ e_\alpha \otimes e_\beta + e(w^{\alpha \beta})\left(\frac{x'}{\eps},x_d \right) \right] \nabla e_{\alpha \beta}(\widehat{\phi})(x') \right. \\ \left. + \left[ -x_d \, e_\alpha \otimes e_\beta + e(W^{\alpha \beta})\left(\frac{x'}{\eps},x_d \right) \right] \nabla \partial_{\alpha \beta}\widehat{\phi}_d(x') \right).
\end{multline}

\medskip

\noindent
{\bf Step~3b: bound on $c^\eps_d(u^\eps,\phi)$ using the membrane case assumptions.} We are now going to use the decoupling assumption~\eqref{hyp:symA} and the fact that we are in the membrane case~\eqref{eq:ass_membrane} to estimate $c^\eps_d(u^\eps,\phi)$. For any $\dps (x',z) \in \omega \times (-1/2,1/2)$, we have
$$
u^\eps_d(x',z) = u^\eps_d\left(x',-\frac{1}{2} \right) + \int_{-1/2}^z e_{dd}(u^\eps)
$$
and
$$
u^\eps_d(x',z) = u^\eps_d\left(x',\frac{1}{2} \right) - \int_z^{1/2} e_{dd}(u^\eps),
$$
which implies that
$$
u^\eps_d(x',z) = \frac{1}{2} \left[ u^\eps_d\left(x',-\frac{1}{2} \right) + u^\eps_d\left(x',\frac{1}{2} \right) \right] + \frac{1}{2} \int_{-1/2}^z e_{dd}(u^\eps) - \frac{1}{2} \int_z^{1/2} e_{dd}(u^\eps).
$$
In the membrane case, we have $u^\eps_d \in \O$ (see Lemma~\ref{lem:membrane}), and therefore
$$
u^\eps_d(x',z) = \frac{\eps^2}{2} \int_{-1/2}^z e^\eps_{dd}(u^\eps) - \frac{\eps^2}{2} \int_z^{1/2} e^\eps_{dd}(u^\eps).
$$
In view of the bound~\eqref{est:sig} (together with~\eqref{eq:inde_eps_vec} and~\eqref{eq:assump_g}), we obtain that $\| u^\eps_d \|_{L^2(\Omega)} \leq C \, \eps^2$. Using the periodicity of $Z_{\alpha \beta}^w$ and $Z_{\alpha \beta}^W$ with respect to their first $d-1$ variables (for any $1 \leq \alpha,\beta \leq d-1$), we deduce from this bound and~\eqref{eq:def_ceps_d} that
\begin{equation} \label{eq:special_memb}
\left| c^\eps_d(u^\eps,\phi) \right| \leq C \, \eps.
\end{equation}
We are now in position to pass to the limit $\eps \to 0$ in~\eqref{eq:dimanche1}. The first term converges to zero in view of~\eqref{eq:special_memb}, and we use the Rellich theorem (from which we infer that $(u^\eps)_{\eps>0}$ converges strongly to $u^\star$ in $\left(L^2(\Omega)\right)^d$) to handle the second term. Using Lemma~\ref{limmoyenne}, we deduce that
$$
c^\eps(u^\eps,\phi) \underset{\eps \rightarrow 0}{\rightarrow} c^\star(u^\star,\phi)
$$
with
\begin{multline} \label{eq:dimanche2}
- c^\star(u^\star,\phi) 
=
\int_\Omega u^\star \cdot \left[ \left\{ \int_Y A(\cdot,x_d) \left( e_\alpha \otimes e_\beta + e(w^{\alpha \beta})\left(\cdot,x_d\right) \right) \right\} \nabla e_{\alpha \beta}(\widehat{\phi}) \right. \\ \left. + \left\{ \int_Y A(\cdot,x_d) \left( -x_d \, e_\alpha \otimes e_\beta + e(W^{\alpha \beta})\left(\cdot,x_d\right) \right) \right\} \nabla \partial_{\alpha \beta}\widehat{\phi}_d \right].
\end{multline}

\medskip

\noindent
{\bf Step~3c: conclusion of Step~3.} The sequel of the proof does not need the decoupling assumption~\eqref{hyp:symA}. We have shown at the beginning of the proof that $u^\star = \widehat{u}^\star - x_d \, \nabla \widehat{u}^\star_d$ with $\widehat{u}^\star$ independent of $x_d$. Using that $\widehat{\phi}$ is also independent of $x_d$, we recast~\eqref{eq:dimanche2} as
\begin{align}
  - c^\star(u^\star,\phi)
  &=
  \int_\Omega (\widehat{u}^\star - x_d \, \nabla \widehat{u}^\star_d) \cdot \left[ \left\{ \int_Y A(\cdot,x_d) \left( e_\alpha \otimes e_\beta + e(w^{\alpha \beta})\left(\cdot,x_d\right) \right) \right\} \nabla e_{\alpha \beta}(\widehat{\phi}) \right.
  \nonumber
  \\
  & \left. \qquad + \left\{ \int_Y A(\cdot,x_d) \left( -x_d \, e_\alpha \otimes e_\beta + e(W^{\alpha \beta})\left(\cdot,x_d\right) \right) \right\} \nabla \partial_{\alpha \beta}\widehat{\phi}_d \right]
  \nonumber
  \\
  &= \int_\omega (\widehat{u}^\star)_\gamma \left[ (k^\star_{11})_{\alpha \beta \gamma \delta} \ \partial_\delta e_{\alpha \beta}(\widehat{\phi}) + (k^\star_{12})_{\alpha \beta \gamma \delta} \ \partial_\delta \partial_{\alpha \beta}\widehat{\phi}_d \right]
  \nonumber
  \\
  & \qquad - \partial_\gamma \widehat{u}^\star_d \left[ (k^\star_{21})_{\alpha \beta \gamma \delta} \ \partial_\delta e_{\alpha \beta}(\widehat{\phi}) + (k^\star_{22})_{\alpha \beta \gamma \delta} \ \partial_\delta \partial_{\alpha \beta}\widehat{\phi}_d \right]
  \nonumber
  \\
  & \qquad + \int_\omega (\widehat{u}^\star)_d \left[ (k^\star_{11})_{\alpha \beta d \delta} \ \partial_\delta e_{\alpha \beta}(\widehat{\phi}) + (k^\star_{12})_{\alpha \beta d \delta} \ \partial_\delta \partial_{\alpha \beta}\widehat{\phi}_d \right],
  \label{eq:vendredi}
\end{align}
with
\begin{align}
  (k^\star_{11})_{\alpha \beta \gamma \delta} &:= e^T_\gamma \left[ \int_\Y A \big( e(w^{\alpha \beta}) + e_\alpha \otimes e_\beta \big) \right] e_\delta,
  \label{eq:def_petit_k_11}
  \\
  (k^\star_{12})_{\alpha \beta \gamma \delta} &:= e^T_\gamma \left[ \int_\Y A \big( e(W^{\alpha \beta}) - x_d \, e_\alpha \otimes e_\beta \big) \right] e_\delta
  \nonumber
\end{align}
and likewise when $\gamma$ is replaced by $d$, and where $k^\star_{22}$ and $k^\star_{21}$ are defined by
\begin{align}
  (k^\star_{22})_{\alpha \beta \gamma \delta} &:= e^T_\gamma \left[ \int_\Y A \big( e(W^{\alpha \beta}) - x_d \, e_\alpha \otimes e_\beta \big) \, x_d \right] e_\delta,
  \label{eq:stop9}
  \\
  (k^\star_{21})_{\alpha \beta \gamma \delta} &:= e^T_\gamma \left[ \int_\Y A \big( e(w^{\alpha \beta}) + e_\alpha \otimes e_\beta \big) \, x_d \right] e_\delta.
  \label{eq:def_petit_k_21}
\end{align}

\medskip

\noindent
{\bf Step~4: properties of the homogenized tensors.} Using~\eqref{el-prcor1} with the test function $w^{\gamma \delta} \in \mathcal{W}(\mathcal{Y})$, respectively $W^{\gamma \delta} \in \mathcal{W}(\mathcal{Y})$, we see that
$$
(K^\star_{11})_{\alpha \beta \gamma \delta} = \int_\Y A \big( e(w^{\alpha \beta}) + e_\alpha \otimes e_\beta \big) : e_\gamma \otimes e_\delta = (k^\star_{11})_{\alpha \beta \gamma \delta},
$$
respectively 
$$
(K^\star_{12})_{\alpha \beta \gamma \delta} = - \int_\Y x_d \, A \big( e(w^{\alpha \beta}) + e_\alpha \otimes e_\beta \big) : e_\gamma \otimes e_\delta = - (k^\star_{21})_{\alpha \beta \gamma \delta},
$$
where $k^\star_{11}$ and $k^\star_{21}$ are defined by~\eqref{eq:def_petit_k_11} and~\eqref{eq:def_petit_k_21}, respectively. Similarly, using the test function $W^{\gamma \delta} \in \mathcal{W}(\mathcal{Y})$ in~\eqref{el-prcor2}, we observe that
\begin{equation} \label{eq:stop9_bis}
(K^\star_{22})_{\alpha \beta \gamma \delta} = - \int_\Y x_d \, A \big( e(W^{\alpha \beta}) - x_d \, e_\alpha \otimes e_\beta \big) : e_\gamma \otimes e_\delta = - (k^\star_{22})_{\alpha \beta \gamma \delta}.
\end{equation}
Using that $A$ is symmetric (i.e. $A_{ijkl} = A_{klij}$), we also write that
\begin{align*}
  (K^\star_{12})_{\alpha \beta \gamma \delta}
  &=
  \int_\Y A \big( e(W^{\gamma \delta}) - x_d \, e_\gamma \otimes e_\delta \big) : \big( e(w^{\alpha \beta}) + e_\alpha \otimes e_\beta \big)
  \\
  &=
  \int_\Y A \big( e(W^{\gamma \delta}) - x_d \, e_\gamma \otimes e_\delta \big) : e_\alpha \otimes e_\beta 
  \\
  &=
  (k^\star_{12})_{\gamma \delta \alpha \beta},
\end{align*}
where we have used~\eqref{el-prcor2} with the test function $w^{\alpha \beta} \in \mathcal{W}(\mathcal{Y})$ in the second line.

\medskip

We also claim that, for any $1 \leq \alpha, \beta, \delta \leq d-1$, we have
\begin{equation} \label{eq:vendredi3}
  (k^\star_{11})_{\alpha \beta d \delta} = (k^\star_{12})_{\alpha \beta d \delta} = 0.
\end{equation}
Indeed, since $X : \R^d \ni x \mapsto x_d \, e_\delta$ is an admissible test function in~\eqref{el-prcor1}, we can write 
\begin{align*}
  0
  &= 
  \int_{\mathcal{Y}} A \big( e(w^{\alpha \beta}) + e_\alpha \otimes e_\beta \big) : e(X)
  \\
  &= 
  \frac{1}{2} \int_{\mathcal{Y}} A \big( e(w^{\alpha \beta}) + e_\alpha \otimes e_\beta \big) : \big( e_\delta \otimes e_d + e_d \otimes e_\delta \big)
  \\
  &= 
  \int_{\mathcal{Y}} A \big( e(w^{\alpha \beta}) + e_\alpha \otimes e_\beta \big) : e_d \otimes e_\delta \qquad \text{[$A$ is symmetric: $A_{ijkl} = A_{jikl}$]}
  \\
  &=
  e^T_d \left[ \int_\Y A \big( e(w^{\alpha \beta}) + e_\alpha \otimes e_\beta \big) \right] e_\delta
  \\
  &= 
  (k^\star_{11})_{\alpha \beta d \delta}.
\end{align*}
Likewise, since $X$ is also an admissible test function in~\eqref{el-prcor2}, we can write 
\begin{align*}
  0
  &= 
  \int_{\mathcal{Y}} A \big( e(W^{\alpha \beta}) - x_d \, e_\alpha \otimes e_\beta \big) : e(X)
  \\
  &= 
  \int_{\mathcal{Y}} A \big( e(W^{\alpha \beta}) - x_d \, e_\alpha \otimes e_\beta \big) : e_d \otimes e_\delta \qquad \text{[$A$ is symmetric: $A_{ijkl} = A_{jikl}$]}
  \\
  &=
  e^T_d \left[ \int_\Y A \big( e(W^{\alpha \beta}) - x_d \, e_\alpha \otimes e_\beta \big) \right] e_\delta
  \\
  &= 
  (k^\star_{12})_{\alpha \beta d \delta}.
\end{align*}
This hence proves~\eqref{eq:vendredi3}.

\medskip

\noindent
{\bf Step~5: identification of $c^\star(u^\star,\phi)$.} Using the properties of $k^\star_{11}$, $k^\star_{22}$, $k^\star_{12}$ and $k^\star_{21}$ that we have shown in Step~4, we infer from~\eqref{eq:vendredi} that
\begin{multline*}
  - c^\star(u^\star,\phi)
  =
  \int_\omega (\widehat{u}^\star)_\gamma \left[ (K^\star_{11})_{\alpha \beta \gamma \delta} \ \partial_\delta e_{\alpha \beta}(\widehat{\phi}) + (K^\star_{12})_{\gamma \delta \alpha \beta} \ \partial_\delta \partial_{\alpha \beta}\widehat{\phi}_d \right]
  \\
  + \partial_\gamma \widehat{u}^\star_d \left[ (K^\star_{12})_{\alpha \beta \gamma \delta} \ \partial_\delta e_{\alpha \beta}(\widehat{\phi}) + (K^\star_{22})_{\alpha \beta \gamma \delta} \ \partial_\delta \partial_{\alpha \beta}\widehat{\phi}_d \right].
\end{multline*}
Using the symmetry properties of $K^\star_{11}$ and $K^\star_{22}$ (namely $(K^\star_{11})_{\alpha \beta \gamma \delta} = (K^\star_{11})_{\gamma \delta \alpha \beta}$ and likewise for $K^\star_{22}$), we recast the above as 
\begin{multline}
  - c^\star(u^\star,\phi)
  =
  \int_\omega (\widehat{u}^\star)_\gamma \left[ (K^\star_{11})_{\gamma \delta \alpha \beta} \ \partial_\delta e_{\alpha \beta}(\widehat{\phi}) + (K^\star_{12})_{\gamma \delta \alpha \beta} \ \partial_\delta \partial_{\alpha \beta}\widehat{\phi}_d \right]
  \\
  + \partial_\gamma \widehat{u}^\star_d \left[ ((K^\star_{12})^T)_{\gamma \delta \alpha \beta} \ \partial_\delta e_{\alpha \beta}(\widehat{\phi}) + (K^\star_{22})_{\gamma \delta \alpha \beta} \ \partial_\delta \partial_{\alpha \beta}\widehat{\phi}_d \right].
  \label{eq:vendredi2}
\end{multline}
We next note that, by simple tensor algebra,
\begin{align*}
(\widehat{u}^\star)' \cdot (K^\star_{11} : e_\alpha \otimes e_\beta) \nabla' e_{\alpha \beta}(\widehat{\phi})
&=
(\widehat{u}^\star)_\gamma \ (K^\star_{11} : e_\alpha \otimes e_\beta)_{\gamma \delta} \ \partial_\delta e_{\alpha \beta}(\widehat{\phi})
\\
&=
(\widehat{u}^\star)_\gamma \ (K^\star_{11})_{\gamma \delta \alpha \beta} \ \partial_\delta e_{\alpha \beta}(\widehat{\phi}).
\end{align*}
We thus deduce from~\eqref{eq:vendredi2} that
\begin{multline*}
  - c^\star(u^\star,\phi)
  =
  \int_\omega (\widehat{u}^\star)' \cdot \left[ (K^\star_{11} : e_\alpha \otimes e_\beta) \nabla' e_{\alpha \beta}(\widehat{\phi}) + (K^\star_{12} : e_\alpha \otimes e_\beta) \nabla' \partial_{\alpha \beta}\widehat{\phi}_d \right]
  \\
  + \nabla' \widehat{u}^\star_d \cdot \left[ ((K^\star_{12})^T : e_\alpha \otimes e_\beta) \nabla' e_{\alpha \beta}(\widehat{\phi}) + (K^\star_{22} : e_\alpha \otimes e_\beta) \nabla' \partial_{\alpha \beta}\widehat{\phi}_d \right].
\end{multline*}
Using an integration by parts and the symmetry of the matrices $K^\star_{11} : e_\alpha \otimes e_\beta$, $K^\star_{12} : e_\alpha \otimes e_\beta$, $(K^\star_{12})^T : e_\alpha \otimes e_\beta$ and $K^\star_{22} : e_\alpha \otimes e_\beta$, we compute
\begin{multline*}
  c^\star(u^\star,\phi)
  =
  \int_\omega e_{\alpha \beta}(\widehat{\phi}) \, \Big[ (K^\star_{11} : e_\alpha \otimes e_\beta) : \nabla' (\widehat{u}^\star)' \Big] + \partial_{\alpha \beta}\widehat{\phi}_d \, \Big[ (K^\star_{12} : e_\alpha \otimes e_\beta) : \nabla' (\widehat{u}^\star)' \Big]
  \\
  + e_{\alpha \beta}(\widehat{\phi}) \, \Big[ ((K^\star_{12})^T : e_\alpha \otimes e_\beta) : \nabla^2_{d-1} \widehat{u}^\star_d \Big] + \partial_{\alpha \beta}\widehat{\phi}_d \, \Big[ (K^\star_{22} : e_\alpha \otimes e_\beta) : \nabla^2_{d-1} \widehat{u}^\star_d \Big].
\end{multline*}
Note that the boundary term in the integration by parts vanishes because $\widehat{\phi} \in (\mathcal{D}(\omega))^d$. We thus deduce that
\begin{align*}
  c^\star(u^\star,\phi)
  &=
  \int_\omega \Big( K^\star_{11} : e'(\widehat{\phi}') \Big) : \nabla' (\widehat{u}^\star)' + \Big( K^\star_{12} : \nabla^2_{d-1} \widehat{\phi}_d \Big) : \nabla' (\widehat{u}^\star)'
  \\
  & \qquad + \Big( (K^\star_{12})^T : e'(\widehat{\phi}') \Big) : \nabla^2_{d-1} \widehat{u}^\star_d + \Big( K^\star_{22} : \nabla^2_{d-1} \widehat{\phi}_d \Big) : \nabla^2_{d-1} \widehat{u}^\star_d
  \\
  &=
  \int_\omega \Big( K^\star_{11} : e'(\widehat{\phi}') \Big) : e'((\widehat{u}^\star)') + \Big( K^\star_{12} : \nabla^2_{d-1} \widehat{\phi}_d \Big) : e'((\widehat{u}^\star)')
  \\
  & \qquad + \Big( (K^\star_{12})^T : e'(\widehat{\phi}') \Big) : \nabla^2_{d-1} \widehat{u}^\star_d + \Big( K^\star_{22} : \nabla^2_{d-1} \widehat{\phi}_d \Big) : \nabla^2_{d-1} \widehat{u}^\star_d
  \\
  &=
  \int_\omega K^\star \, \mathcal{P} u^\star : \mathcal{P} \phi,
\end{align*}
where we have used in the second equation the symmetry of the matrices $K^\star_{11} : e'(\widehat{\phi}')$ and $K^\star_{12} : \nabla^2_{d-1} \widehat{\phi}_d$, and the symmetry of $K^\star$ in the last line.

\medskip

\noindent
{\bf Step~6: homogenized equation.} We are now in position to pass to the limit $\eps \to 0$ in~\eqref{el-formvarfctest}. Indeed, we have now identified the limit of the two terms of the left-hand side and that of $s^\eps(\phi)$ on the right-hand side. We are thus left with identifying the limit of $d^\eps(\phi)$. Since $\phi = \widehat{\phi} - x_d \, \nabla \widehat{\phi}_d$ where $\widehat{\phi}$ is independent of $x_d$, we compute that
\begin{align*}
  d^\eps(\phi)
  &=
  \int_\Omega f \cdot \phi + \int_\omega h_\pm \cdot \phi\left(\cdot,\pm \frac{1}{2}\right) - c^\eps(g,\phi)
  \\
  &=
  \int_\omega (\m(f)+h_++h_-) \cdot \widehat{\phi} - \int_\omega \m(x_d \, f_\alpha) \, \partial_\alpha \widehat{\phi}_d - \frac{1}{2} \int_\omega \big( (h_+)_\alpha - (h_-)_\alpha \big) \, \partial_\alpha \widehat{\phi}_d - c^\eps(g,\phi).
\end{align*}
Using similar computations as above to identify the limit of the last term (recall that, to identify the limit of $c^\eps(u^\eps,\phi)$, the only property we have used on $u^\eps$ is that it converges strongly in $(L^2(\Omega))^d$ to some $u^\star \in \GKL$, and that we have assumed that $g \in \GKL$), we obtain that
$$
\lim_{\eps \to 0} d^\eps(\phi)
=
\int_\omega (\m(f)+h_++h_-) \cdot \widehat{\phi} - \int_\omega \m(x_d \, f_\alpha) \, \partial_\alpha \widehat{\phi}_d - \frac{1}{2} \int_\omega \big( (h_+)_\alpha - (h_-)_\alpha \big) \, \partial_\alpha \widehat{\phi}_d - \int_\omega K^\star \, \mathcal{P} g : \mathcal{P} \phi.
$$
We have thus shown that, for any $\widehat{\phi} \in \left(\mathcal{D}(\omega)\right)^d$, we have
\begin{multline*}
\int_\omega K^\star \, \mathcal{P} u^\star : \mathcal{P} \phi = \int_\omega (\m(f)+h_++h_-) \cdot \widehat{\phi} - \int_\omega \m(x_d \, f_\alpha) \, \partial_\alpha \widehat{\phi}_d \\ - \frac{1}{2} \int_\omega \big( (h_+)_\alpha - (h_-)_\alpha \big) \, \partial_\alpha \widehat{\phi}_d - \int_\omega K^\star \, \mathcal{P} g : \mathcal{P} \phi,
\end{multline*}
where $\phi$ is defined from $\widehat{\phi}$ as explained at the beginning of Step~2.

We next claim that 
\begin{multline*}
\int_\omega K^\star \, \mathcal{P} u^\star : \mathcal{P} v = \int_\omega (\m(f)+h_++h_-) \cdot \widehat{v} - \int_\omega \m(x_d \, f_\alpha) \, \partial_\alpha \widehat{v}_d \\ - \frac{1}{2} \int_\omega \big( (h_+)_\alpha - (h_-)_\alpha \big) \, \partial_\alpha \widehat{v}_d - \int_\omega K^\star \, \mathcal{P} g : \mathcal{P} v
\end{multline*}
for any $v \in \VKL$, as stated in~\eqref{formvarelasthomog}. Indeed, for any $v$ in $\VKL$ (and denoting $\widehat{v}$ the associated function in $\left(H^1_0(\omega)\right)^{d-1} \times H^2_0(\omega)$) and for any $\widehat{\phi} \in \left(\mathcal{D}(\omega)\right)^d$ (with $\phi$ defined as above), we compute
\begin{align*}
  \| \P \phi - \P v \|_{L^2(\omega)}^2
  &=
  \| e'(\widehat{\phi}') - e'(\widehat{v}') \|_{L^2(\omega)}^2 + \| \nabla^2_{d-1} \widehat{\phi}_d - \nabla^2_{d-1} \widehat{v}_d \|_{L^2(\omega)}^2
  \\
  & \leq \| \widehat{\phi}' - \widehat{v}' \|^2_{H^1(\omega)} + \| \widehat{\phi}_d - \widehat{v}_d \|_{H^2(\omega)}^2.
\end{align*}
The density of $\left(\mathcal{D}(\omega)\right)^{d-1}$ in $\left(H^1_0(\omega)\right)^{d-1}$ for the $H^1$ norm and of $\mathcal{D}(\omega)$ in $H^2_0(\omega)$ for the $H^2$ norm allows us to conclude.

\medskip

\noindent
{\bf Step~7: coercivity of the homogenized tensor.} We eventually show that $K^\star$ is coercive. Let $\sigma$ and $\tau$ be in $\R_s^{(d-1)\times(d-1)}$, and let $w^\sigma := \sigma_{\alpha \beta} \, w^{\alpha \beta}$ and $W^\tau = \tau_{\alpha \beta} \, W^{\alpha\beta}$ where $w^{\alpha \beta}$ and $W^{\alpha\beta}$ are the correctors. We then have
\begin{align}
  \frac{1}{c_-} \, K^\star \begin{pmatrix} \sigma \\ \tau \end{pmatrix} : \begin{pmatrix} \sigma \\ \tau \end{pmatrix}
  &=
  \frac{1}{c_-} \, \int_\Y A \big( e(w^\sigma) + \sigma + e(W^\tau) - x_d \, \tau \big) : \big( e(w^\sigma) + \sigma + e(W^\tau) - x_d \, \tau \big)
  \nonumber
  \\
  &\geq \int_\Y \big| e(w^\sigma) + \sigma + e(W^\tau) - x_d \, \tau \big|^2
  \nonumber
  \\
  &\geq \int_\Y \big| e'(w^\sigma + W^\tau) + \sigma - x_d \, \tau \big|^2
  \nonumber
  \\
  & = \int_\Y \big| e'(w^\sigma + W^\tau) + \sigma \big|^2 + x_d^2 \, \tau : \tau - 2 \, x_d \, \tau : \big( e'(w^\sigma + W^\tau) + \sigma \big)
  \nonumber 
  \\
  & \geq \sum_{\alpha,\beta=1}^{d-1} \left| \int_\Y e_{\alpha \beta}(w^\sigma + W^\tau) + \sigma_{\alpha \beta} \right|^2 + \frac{1}{12} \, \tau : \tau - 2 \int_\Y x_d \, \tau : e'(w^\sigma + W^\tau),
  \label{eq:titi10}
\end{align}
where we have used Jensen inequality and the fact that $\dps \int_\Y x_d \, \tau : \sigma = 0$. Since $w^\sigma$ is periodic with respect to its first $d-1$ variables, we have $\dps \int_\Y e_{\alpha \beta}(w^\sigma) = 0$, and likewise for $W^\tau$. In addition, for any $1 \leq \alpha,\beta \leq d-1$, we have, using the periodicity of $w^\sigma + W^\tau$ with respect to its first $d-1$ variables, that
$$
\int_\Y x_d \, \frac{\partial (w^\sigma_\alpha + W^\tau_\alpha)}{\partial x_\beta} = 0.
$$
We thus deduce from~\eqref{eq:titi10} that
\begin{equation} \label{eq:titi11}
K^\star \begin{pmatrix} \sigma \\ \tau \end{pmatrix} : \begin{pmatrix} \sigma \\ \tau \end{pmatrix} \geq c_- \left( \sigma : \sigma + \frac{1}{12} \, \tau : \tau \right),
\end{equation}
which implies that $K^\star$ is coercive, in the sense that there exists $\widetilde{c}_- > 0$ such that $\dps K^\star \begin{pmatrix} \sigma \\ \tau \end{pmatrix} : \begin{pmatrix} \sigma \\ \tau \end{pmatrix} \geq \widetilde{c}_- \begin{pmatrix} \sigma \\ \tau \end{pmatrix} : \begin{pmatrix} \sigma \\ \tau \end{pmatrix}$.

As a consequence, the bilinear form in the left-hand side of the homogenized problem~\eqref{formvarelasthomog} is coercive in $\VKL$. We indeed compute that, for any $v \in \VKL$,
\begin{align*}
  \int_\omega K^\star \, \mathcal{P} v : \mathcal{P} v
  &\geq
  \widetilde{c}_- \left( \| e'(\widehat{v}') \|_{L^2(\omega)}^2 + \| \nabla^2_{d-1} \widehat{v}_d \|_{L^2(\omega)}^2 \right)
  \\
  &\geq
  \widetilde{c}_- \left( \demi \| \nabla' \widehat{v}' \|_{L^2(\omega)}^2 + C \| \nabla' \widehat{v}_d \|_{H^1(\omega)}^2 \right),
\end{align*}
where we have used the Korn inequality in $H^1_0$ (see Lemma~\ref{Korn2}) for $\widehat{v}' \in (H^1_0(\omega))^{d-1}$ and the Poincar\'e inequality for each component of $\nabla' \widehat{v}_d \in (H^1_0(\omega))^{d-1}$. Using the Poincar\'e inequality for each component of $\widehat{v} \in (H^1_0(\omega))^d$, we infer that
$$
\int_\omega K^\star \, \mathcal{P} v : \mathcal{P} v \geq c \left( \| \widehat{v}' \|_{H^1(\omega)}^2 + \| \widehat{v}_d \|_{H^2(\omega)}^2 \right),
$$
thus the coercivity claim. The homogenized problem~\eqref{formvarelasthomog} is therefore well-posed. This hence shows that the whole sequence $u^\eps$ (and not only a subsequence) converges to $u^\star$, and concludes the proof of Theorem~\ref{limitel}.

\subsection{Proof of Theorem~\ref{limitel} in the bending case} \label{app:diff_preuve2_bend}


Under the decoupling assumption~\eqref{hyp:symA}, we consider here the bending case~\eqref{eq:ass_bending}. We also assume that $d=2$ (in order to be able to use~\eqref{eq:Sigmaformula}). We are going to establish that the homogenized limit $u^\star$ of $u^\eps$ is a solution to~\eqref{formvar:pb_bending}, which is exactly~\eqref{formvarelasthomog} in the bending case (see Lemma~\ref{lem:ustarsym}). Using the results established in Appendix~\ref{app:diff_preuve2_memb} (see in particular Step~7 of the proof there, which implies that $K^\star_{22}$ and the associated bilinear form are coercive), we already know that~\eqref{formvar:pb_bending} is well-posed.

Our proof relies on the tools we have introduced in Section~\ref{sec:tech_res_bending}. Let $\sigma^\eps$ be defined by~\eqref{eq:def_sigma_eps}, and let $\Sigma^\eps$ be defined by~\eqref{eq:def_grand_Sigma}. The variational formulation~\eqref{formvarelast} satisfied by $u^\eps$ reads
$$
\forall v \in V, \qquad \int_\Omega \Sigma^\eps : e(v) = \int_\Omega \sigma^\eps : e^\eps(v) = \int_\Omega A^\eps e^\eps(u^\eps + g) : e^\eps(v) = \int_\Omega f \cdot v + \int_{\Gamma_\pm} h_\pm \cdot v.
$$
For any $\psi \in H^2_0(\omega)$, we consider the above equation with $\dps v = \left( \begin{array}{c} - x_d \, \nabla' \psi \\ \psi \end{array} \right) \in V$ (note that $v \in \VKL^{\mathcal{B}}$). We compute that $[ e(v) ]_{\alpha\beta} = - x_d \, \partial_{\alpha\beta} \psi$ and $[ e(v) ]_{\alpha d} = [ e(v) ]_{dd} = 0$. We hence obtain
\begin{multline*}
- \sum_{\alpha,\beta=1}^{d-1} \int_\Omega \Sigma^\eps_{\alpha\beta} \, x_d \, \partial_{\alpha\beta} \psi = \int_\omega \big[ \m(f_d) + (h_+)_d + (h_-)_d \big] \, \psi \\ - \int_\omega \m(x_d \, f') \cdot \nabla' \psi - \frac{1}{2} \int_\omega \big[ h'_+ - h'_- \big] \cdot \nabla' \psi.
\end{multline*}
In Lemma~\ref{lem:lemma1}, we have established some weak convergence results of $\Sigma^\eps$ to some $\Sigma^\star$. In particular, using~\eqref{eq:sigma_eps_alpha_beta}, we infer from the above equation that
\begin{multline} \label{eq:kevinn1}
- \sum_{\alpha,\beta=1}^{d-1} \int_\Omega \Sigma^\star_{\alpha\beta} \, x_d \, \partial_{\alpha\beta} \psi = \int_\omega \big[ \m(f_d) + (h_+)_d + (h_-)_d \big] \, \psi \\ - \int_\omega \m(x_d \, f') \cdot \nabla' \psi - \frac{1}{2} \int_\omega \big[ h'_+ - h'_- \big] \cdot \nabla' \psi.
\end{multline}
Successively using~\eqref{eq:Sigmaformula}, \eqref{eq:defSstar_exp}, \eqref{eq:stop9_bis} and the symmetry of $K^\star_{22}$, we recast the first term of the above equation as
\begin{align*}
  & \sum_{\alpha,\beta=1}^{d-1} \int_{-1/2}^{1/2} x_d \, \Sigma^\star_{\alpha\beta}(\cdot,x_d)
  \\
  &= \sum_{\alpha,\beta,\gamma,\delta=1}^{d-1} \partial_{\gamma_\delta} (u^\star_d + g_d) \int_{-1/2}^{1/2} x_d \, S^\star_{\alpha\beta\gamma\delta}(x_d)
  \\
  &= \sum_{\alpha,\beta,\gamma,\delta=1}^{d-1} \partial_{\gamma_\delta} (u^\star_d + g_d) \int_{-1/2}^{1/2} x_d \, (e_\alpha \otimes e_\beta) : \int_Y A(\cdot,x_d) \, \big( e(W^{\gamma \delta})(\cdot,x_d) - x_d \, e_\gamma \otimes e_\delta \big)
  \\
  &= - \sum_{\alpha,\beta,\gamma,\delta=1}^{d-1} (K^\star_{22})_{\gamma \delta \alpha \beta} \, \partial_{\gamma_\delta}(u^\star_d + g_d)
  \\
  &= - \sum_{\alpha,\beta,\gamma,\delta=1}^{d-1} (K^\star_{22})_{\alpha \beta \gamma \delta} \, \partial_{\gamma_\delta}(u^\star_d + g_d).
\end{align*}
We thus deduce from~\eqref{eq:kevinn1} that, for any $\psi \in H^2_0(\omega)$,
\begin{multline*}
  \int_\omega K^\star_{22} \, \nabla^2_{d-1} (u^\star_d + g_d) : \nabla^2_{d-1} \psi = \int_\omega \big[ \m(f_d) + (h_+)_d + (h_-)_d \big] \, \psi \\ - \int_\omega \m(x_d \, f') \cdot \nabla' \psi - \frac{1}{2} \int_\omega \big[ h'_+ - h'_- \big] \cdot \nabla' \psi,
\end{multline*}
which is exactly~\eqref{formvar:pb_bending}.

\section*{Acknowledgments}


The work of FL is partly supported by ONR and EOARD. FL acknowledges the continuous support from these two agencies, in particular through the current grants ONR N00014-25-1-2299 and EOARD FA8655-24-1-7057. AL acknowledges some financial support from Inria. The authors warmly thank Annie Raoult for carefully reading a preliminary version of this manuscript and Gr\'egoire Allaire for enlightening discussions on the topic. 








\bibliographystyle{plain} 
\bibliography{biblio_theorique_long}	

\begin{thebibliography}{10}

\bibitem{allaire2012shape}
G.~Allaire.
\newblock {\em Shape optimization by the homogenization method}, volume 146 of
  {\em Applied Mathematical Sciences}.
\newblock Springer, New York, 2002.

\bibitem{bensoussan2011asymptotic}
A.~Bensoussan, J.-L. Lions, and G.~Papanicolaou.
\newblock {\em Asymptotic analysis for periodic structures}, volume 374.
\newblock American Mathematical Soc., 2011.

\bibitem{livre_blanc_lebris}
X.~Blanc and C.~Le~Bris.
\newblock {\em Homogenization theory for multiscale problems: An introduction},
  volume~21 of {\em Modeling, Simulation and Applications}.
\newblock Springer, 2023.

\bibitem{caillerieDiffusion}
D.~Caillerie.
\newblock Homog\'en\'eisation des \'equations de la diffusion stationnaire dans
  les domaines cylindriques aplatis.
\newblock {\em RAIRO Analyse num{\'e}rique}, 15(4):295--319, 1981.

\bibitem{caillerieElasticite}
D.~Caillerie.
\newblock Thin elastic and periodic plates.
\newblock {\em Mathematical Methods in the Applied Sciences}, 6(1):159--191,
  1984.

\bibitem{ciarlet1988mathematical}
P.G. Ciarlet.
\newblock {\em Mathematical Elasticity: Volume I: three-dimensional
  elasticity}.
\newblock North-Holland, 1988.

\bibitem{ciarlet1988mathematical_vol2}
P.G. Ciarlet.
\newblock {\em Mathematical Elasticity: Volume II: theory of plates}.
\newblock North-Holland, 1997.

\bibitem{ciarlet1979justification}
P.G. Ciarlet and P.~Destuynder.
\newblock Justification of the two-dimensional linear plate model.
\newblock {\em J. Mec.}, 18(2):315--344, 1979.

\bibitem{CD}
D.~Cioranescu and P.~Donato.
\newblock {\em An introduction to homogenization}.
\newblock Oxford University Press, New York, 1999.

\bibitem{dauge}
M.~Dauge and I.~Gruais.
\newblock Asymptotics of arbitrary order for a thin elastic clamped plate, {I}.
  {O}ptimal error estimates.
\newblock {\em Asymptotic Analysis}, 13(2):167--197, 1996.

\bibitem{destuynder1981comparaison}
P.~Destuynder.
\newblock Comparaison entre les mod\`eles tridimensionnels et bidimensionnels
  de plaques en {\'e}lasticit{\'e}.
\newblock {\em RAIRO Analyse num{\'e}rique}, 15(4):331--369, 1981.

\bibitem{efendiev2009multiscale}
Y.~Efendiev and T.~Hou.
\newblock {\em {M}ultiscale {F}inite {E}lement {M}ethods: {T}heory and
  {A}pplications}, volume~4 of {\em Surveys and Tutorials in the Applied
  Mathematical Sciences}.
\newblock Springer New York, 2009.

\bibitem{MsFEM-ELLL}
V.~Ehrlacher, A.~Leb\'ee, F.~Legoll, and A.~Lesage.
\newblock Multiscale {F}inite {E}lement {M}ethods for elastic heterogeneous
  plates.
\newblock {\em in preparation}.

\bibitem{engquist2008asymptotic}
B.~Engquist and P.E. Souganidis.
\newblock Asymptotic and numerical homogenization.
\newblock {\em Acta Numerica}, 17:147--190, 2008.

\bibitem{griso2018homogenization}
G.~Griso and B.~Miara.
\newblock Homogenization of periodically heterogeneous thin beams.
\newblock {\em Chinese Annals of Mathematics, Series B}, 39(3):397--426, 2018.

\bibitem{gustafsson2003non}
B.~Gustafsson and J.~Mossino.
\newblock Non-periodic explicit homogenization and reduction of dimension: the
  linear case.
\newblock {\em IMA Journal of Applied Mathematics}, 68(3):269--298, 2003.

\bibitem{gustafsson2006compensated}
B.~Gustafsson and J.~Mossino.
\newblock Compensated compactness for homogenization and reduction of
  dimension: the case of elastic laminates.
\newblock {\em Asymptotic Analysis}, 47(1-2):139--169, 2006.

\bibitem{hornung2014derivation}
P.~Hornung, S.~Neukamm, and I.~Vel{\v{c}}i{\'c}.
\newblock Derivation of a homogenized nonlinear plate theory from {3D}
  elasticity.
\newblock {\em Calculus of Variations and Partial Differential Equations},
  51(3-4):677--699, 2014.

\bibitem{hornung2018stochastic}
P.~Hornung, M.~Pawelczyk, and I.~Vel{\v{c}}i{\'c}.
\newblock Stochastic homogenization of the bending plate model.
\newblock {\em Journal of Mathematical Analysis and Applications},
  458(2):1236--1273, 2018.

\bibitem{jikov}
V.V. Jikov, S.M. Kozlov, and O.A. Ole{\u\i}nik.
\newblock {\em Homogenization of differential operators and integral
  functionals}.
\newblock Springer-Verlag, Berlin, 1994.

\bibitem{kohn_vogelius86}
R.V. Kohn and M.~Vogelius.
\newblock A new model for thin plates with rapidly varying thickness. {III}:
  Comparison of different scalings.
\newblock {\em Quarterly of Applied Mathematics}, 44(1):35--48, 1986.

\bibitem{le2005systemes}
C.~Le~Bris.
\newblock {\em Syst{\`e}mes multi-{\'e}chelles: mod{\'e}lisation et
  simulation}, volume~47 of {\em Math\'ematiques et applications}.
\newblock Springer, 2005.

\bibitem{lebris_legoll_jcp}
C.~Le~Bris and F.~Legoll.
\newblock Examples of computational approaches for elliptic, possibly
  multiscale {PDE}s with random inputs.
\newblock {\em J. Comput. Physics}, 328:455--473, 2017.

\bibitem{adrien_phd}
A.~Lesage.
\newblock {\em Multi-scale approaches for the computation and the optimization
  of heterogeneous plates}.
\newblock PhD thesis, Universit\'e Paris-Est, 2020.
\newblock (available at {\tt https://theses.hal.science/tel-03587182}).

\bibitem{tomasz2000plates}
T.~Lewinski and J.J. Telega.
\newblock {\em Plates, laminates and shells: asymptotic analysis and
  homogenization}, volume~52 of {\em Advances in Mathematics for Applied
  Sciences}.
\newblock World Scientific, 2000.

\bibitem{marohnic2015homogenization}
M.~Marohni{\'c} and I.~Vel{\v{c}}i{\'c}.
\newblock Homogenization of bending theory for plates: the case of oscillations
  in the direction of thickness.
\newblock {\em Communications on Pure and Applied Analysis}, 14(6):2151--2168,
  2015.

\bibitem{marohnic2016non}
M.~Marohni{\'c} and I.~Vel{\v{c}}i{\'c}.
\newblock Non-periodic homogenization of bending-torsion theory for
  inextensible rods from {3D} elasticity.
\newblock {\em Annali di Matematica Pura ed Applicata (1923-)},
  195(4):1055--1079, 2016.

\bibitem{cherkaev}
F.~Murat and L.~Tartar.
\newblock H-convergence.
\newblock In A.~Cherkaev and R.~V. Kohn, editors, {\em Topics in the
  mathematical modelling of composite materials}, volume~31 of {\em Progress in
  Nonlinear Differential Equations and their Applications}, pages 21--44.
  Birkhauser, 1997.

\bibitem{velvcic2015derivation}
I.~Vel{\v{c}}i{\'c}.
\newblock On the derivation of homogenized bending plate model.
\newblock {\em Calculus of Variations and Partial Differential Equations},
  53(3-4):561--586, 2015.

\end{thebibliography}

\end{document}